\newif\iflong
\newtheorem{definition}{Definition}[section]
\newtheorem{theorem}[definition]{Theorem} 
\newtheorem{proposition}[definition]{Proposition} 
\newtheorem{lemma}[definition]{Lemma} 
\newtheorem{remark}[definition]{Remark} 
\newtheorem{ground}[definition]{Ground assumption} 
\newtheorem{example}[definition]{Example} 
\newtheorem{corollary}[definition]{Corollary} 
\def\Ab{\mbox{\bf Ab}}
\def\C{\mbox{$\cal C$}}
\def\ie{{\emph{i.e.~ }}}
\newcommand\look[1]{{\bf #1}}
\newcommand\I{\overrightarrow{I}}
\newcommand\J{\overrightarrow{J}}
\newcommand\K{\overrightarrow{K}}
\newcommand\dipi[1]{\overrightarrow{\pi}_1(#1)}
\newcommand\diS{\mbox{$\overrightarrow{S}^1$}}
\newcommand\diCub{\mbox{$\overrightarrow{C}$}}
\newcommand\ForAuthors[1]
\newcommand\Int[0]{\mathbb{Z}}
\newtheorem*{definition15}{Definition \ref{def:hommodule}}
\newtheorem*{definition18}{Definition \ref{def:relative}}
\newtheorem*{theorem1}{Theorem \ref{thm:bisimequivnathom}}
\newtheorem*{theorem2}{Theorem \ref{thm:invariance}}
\newtheorem*{theorem3}{Theorem \ref{thm:kunneth}}
\newtheorem*{theorem4}{Theorem \ref{prop:relhomology}}
\newtheorem*{theorem5}{Theorem \ref{thm:MayerVietoris}}
\tikzset{
  curarrow/.style={
  rounded corners=8pt,
  execute at begin to={every node/.style={fill=red}},
    to path={-- ([xshift=-50pt]\tikztostart.center)
    |- (#1) node[fill=white] {$\scriptstyle \partial_*$}
    -| ([xshift=50pt]\tikztotarget.center)
    -- (\tikztotarget)}
    }
}
\definecolor{mred}{rgb}{0.7,0.1,0.1}
\definecolor{mblue}{rgb}{0,0,0.8}
\definecolor{mgreen}{rgb}{0,0.6,0.3}
\newcommand\Fact[1]{{\mathcal{F}}#1}
\newcommand\RFact[1]{{\mathcal{R}}#1}
\newcommand\LFact[1]{{\mathcal{L}}#1}
\newcommand\map[3]{#1: #2 \longrightarrow #3}
\newcommand\twomap[3]{#1: #2 \Longrightarrow #3}
\newcommand\Pairs[1]{\textbf{Pairs}(#1)}
\newcommand\ab[0]{\textbf{Ab}}
\newcommand\gp[0]{\textbf{V}}
\newcommand\natsys[2]{NatSys(#1,#2)}
\newcommand\natsysnu[2]{NatSys_{\nu}(#1,#2)}
\newcommand\syshd[2]{\overrightarrow{h}_{#1}(#2)}
\newcommand\Real[0]{\mathbb{R}}
\newcommand\M[0]{\mathcal{M}}
\newcommand\N[0]{\mathbb{N}}
\newcommand\D[0]{\mathcal{D}}
\newcommand\Trace[0]{\mathcal{T}}
\newcommand\diP[1]{\overrightarrow{P}(#1)}
\def\Z{\mbox{$\mathbb{Z}$}}
\def\TM{\mbox{$\mathbb T$}}
\def\SM{\mbox{$\mathbb M$}}
\newcommand\functor[1][l]{\csname#1functor\endcsname}
\newcommand\lfunctor[3]{%
  \setbox0=\hbox{$#2$}%
  \kern\wd0%
  \ensurestackMath{\Centerstack[c]{#1\\ \mathllap{#2\;\,}\mathclap{\DownArrow}\\#3}}%
}
\newcommand\rfunctor[3]{%
  \setbox0=\hbox{$#2$}%
  \ensurestackMath{\Centerstack[c]{#1\\\mathclap{\DownArrow}\mathrlap{\,\;#2}\\#3}}%
  \kern\wd0%
}
\newcommand\DownArrow{\rotatebox[origin=c]{-90}{$\longrightarrow$\,}}
\title{Directed Homology and Persistence Modules}
\author[1]{Eric Goubault}\email{eric.goubault@polytechnique.edu}
\affil[2]{\orgdiv{LIX}, \orgname{CNRS, Ecole Polytechnique, Institut Polytechnique de Paris}, \orgaddress{\city{Palaiseau}, \postcode{91128 Cedex}, \country{France}}}
\date{}
\begin{document}

\abstract{We give a mostly self-contained account on a construction for a directed homology theory based on modules over algebras, linking it to both persistent homology and natural homology. We study its first properties, among which are relative homology and Mayer-Vietoris long exact sequences and a K\"unneth formula.}

\keywords{Directed topology, natural homology, persistent homology.}

\maketitle

\section{Introduction}

Persistence modules have been introduced originally for topological data analysis, in the seminal work \cite{Carlsson} and further generalized to multiparameter filtrations in \cite{multidimpersistence}. In topological data analysis, the data set is organized as a filtration of spaces, where the data is seen at different ``resolutions'' (at least in the Vietoris-Rips complex approach). In case this is an actual filtration of topological spaces, i.e. a set of spaces totally ordered by inclusion, the evolution of the geometry, in particular of the homology, of the spaces in the filtration is well studied. This gives rise to a simple persistence module over the path algebra of the (quiver of the) linear order of the filtration. 

Generalized persistence modules were introduced in \cite{Bubenik}, where a persistence module was defined to be any functor from a partially ordered set, or more generally a preordered set, to an arbitrary category (in general, a category of vector spaces). This view is a categorification of modules, as is well known. This is also a natural way to encode representations of these preordered sets, which, by Morita equivalence, are the same as persistence modules over their path algebra \cite{assocalg}.  

Directed topology has its roots in a very different application area, concurrency and distributed systems theory \cite{thebook,grandisbook} rather than data analysis. Its interest lies in the study of (multi-dimensional) dynamical systems, discrete (precubical sets, for application to concurrency theory) or continuous time (differential inclusions, for e.g. applications in control), that appear in the study of multi-agent systems. In directed topology, topological spaces under study are ``directed'', meaning they have ``preferred directions'', for instance a cone in the tangent space, if we are considering a manifold, or the canonical local coordinate system in a precubical set. The topological invariants needed for classifying such spaces have been long in the making. Indeed, classical homotopy or homology theories would not distinguish between distinct directed structures on the same underlying topological space. Natural homology has been introduced in \cite{Dubut} to deal with this, by defining a natural system \cite{bauwir} of modules, describing the evolution of the standard (simplicial, or singular) homology of certain path spaces, along their endpoints. Indeed, this is in spirit similar to persistent homology. 

This paper is trying to make a case about the fact that both frameworks have been introducing tools for different reasons, that are slowly overlapping, if not converging. Not surprisingly, the topological data analysis community has also become interested in dynamical systems recently, see e.g. \cite{Munch2013ApplicationsOP,conleyindex}. 

First steps in that direction have been published in \cite{calk2023persistent}, where natural homology has been shown to be, for a certain class of directed spaces, a colimit of one-parameter persistence modules. Another step was made in \cite{goubault2023semiabelian} where the author constructed a directed homology theory based on the semi-abelian category of (unital) algebras. 

In directed topology, a particular effort has been made into axiomatizing directed homotopy and homology theories, in the style of algebraic homotopy \cite{baues,grandisbook} and Eilenberg-Steenrod axioms \cite{Dubut}. This has less been the case in persistence, although some recent work \cite{miller2020modules} is paving the way. In this paper, we are discussing K\"unneth formulas and relative homology sequences for directed homology, and hope for some interest in the persistent homology community. 

One fundamental hurdle in both frameworks, is the cost of evaluating such homologies in practice. In one-parameter homology, the representations are ``tame'', and persistence modules are just modules over a univariate polynomial ring. In multiparameter homology, or more complicated preordered sets filtrations, representations can be ``wild'', and similarly for arbitrary directed spaces. 

For certain classes of directed spaces, mostly partially ordered topological spaces that are the geometric realization of finite precubical sets, component categories \cite{apcs,components} have provided a framework in which to represent, in a finite manner, the essential homology types of the path spaces involved. In topological data analysis, tame persistence modules have been studied in e.g. \cite{miller2020modules}. An obvious goal is indeed to get these different point of views to coincide, or at least to be expressed in a joint framework. This would allow in particular the use of advances in persistence modules' theory to help with practical computations for directed topology, using e.g. local information as in \cite{levi}. 

\paragraph{Organization of the paper}

This article has been written so that it would be mostly self-contai\-ned and so that it would use only elementary and detailed arguments. As this draws from a variety of mathematical concepts, we begin by introducing the core notions at length in the first few sections, complementing them for the interested reader in Appendices. The 
background on directed topology is given in  Section \ref{sec:directedtopology}, the one on associative algebra and modules, Section \ref{sec:assoc}, and on paths algebras and their modules, in Section \ref{sec:pathalgebras}.

We then come into the heart of the matter: Section \ref{sec:homologymodules} introduces (directed) homology modules as certain bimodules on the underlying path algebra of a precubical set. Of particular importance for the theory are the notions of restriction and extension of scalars that we recap in Section \ref{sec:assoc}, and particularize to the bimodules we need, in Section \ref{sec:homologymodules}.

We make links with persistence modules in Section \ref{sec:persistencemod} and with natural homology, in Section \ref{sec:naturalhomology}. For the advanced reader, we show evidence in Appendix \ref{sec:tameness} that component categories should provide tame characterizations of the first (directed) homology bimodule, for a class of precubical set, interesting for applications (as they come from the semantics of a well studied concurrent language, the PV language, with no loops, see \cite{thebook}). 

Finally, we prove a few facts on homology modules of (a certain class of) precubical sets. First, they are ``invariants'' under homeomorphism of their geometric realization. Invariance meaning here ``bisimulation equivalence'', introduced in \cite{Dubut}. We also prove a K\"unneth formula which links the homology modules of a tensor product of precubical sets, introduced in this paper, with the tensor product of the homology modules for each space, at least for the restricted case of ``cubical complexes'' (as of \cite{Dubut}). 
Last but not least, we prove results concerning exact sequences in this homology theory, and in particular the relative homology exact sequence and Mayer-Vietoris. 

\paragraph{Main results}

The first important result is in Section \ref{sec:homologymodules} with the definition of the homology module of a finite precubical set $X$, as standard chain homology but in a category of bimodules over the path algebra $R[X]$ of $X$:

\begin{definition15} 
The \look{homology module} of a finite precubical set $X$ is defined as the homology $(HM_{i+1})_{i\geq 0}[X]$ in the abelian category of $R[X]$-bimodules, of the complex of $R[X]$-bimodules defined in Lemma \ref{lem:precubboundary}, shifted by one, i.e. 
$$
HM_{i+1}[X]=Ker \ \partial_{\mid R_{i+1}[X]}/Im \ \partial_{\mid R_{i+2}[X]}
$$
\end{definition15}

And then, in the same Section, after some having gone through technicalities to find the right notion of a relative pair of precubical sets, we are able to define relative homology modules in a similar manner as done classically: 

\begin{definition18}
Let $(X,Y)$ be a relative pair of precubical sets. We define 
the \look{relative $i$th-homology $R[X]$-bimodule} $HM_{i}[X,Y]$ as the homology of the quotient of $R_i[X]$ with the sub-$R[X]$-bimodule ${ }^X R_i[Y]$ within the category of $R[X]$-bimodules, with boundary operator defined in the quotient as $\partial [x] = [\partial x]$, $[x]$ denoting a class with representative $x$ in $R_i[X]/{ }^X R_i[Y]$.
\end{definition18}

We then show that our definition of homology module captures the information that is present in another directed homology theory, natural homology \cite{Dubut}, at least for cubical complexes, a simple class of precubical sets, up to applying a barycentric subdivision: 

\begin{theorem1}
 Let $X$ be a cubical complex and $Subd(X)$ be its barycentric subdivision. Then $HM_i[Subd(X)]$ is bisimulation equivalent to the natural homology $HN_i[\mid X\mid ]$, for all $i\geq 1$.    
\end{theorem1}

In turn, this implies that homology modules are an invariant under dihomeomorphism, up to some technical details: 

\begin{theorem2}
Suppose $\mid X\mid $ and $\mid Y\mid $ are dihomeomorphic, i.e. isomorphic in the category of directed spaces. Then for all $i\geq 1$, $HM_i[Subd(X)]$ is bisimulation equivalent to $HM_i[Subd(Y)]$.
\end{theorem2}

Finally, we claim, once again for the restricted class of cubical complexes, that our module homology theory enjoys a number of nice features that classical homology theories have, such as a form of K\"unneth theorem: 

\begin{theorem3}
Let $X$ and $Y$ be cubical complexes and $R$ a field. Then 
$$
HM_*[X\otimes Y]=HM_*[X]\boxtimes HM_*[Y]
$$
\noindent with the action of shuffles of paths $p$ of $X$ with $q$ of $Y$ on $HM_*[X]\boxtimes HM_*[Y]$ to be the action of $p$ on the first component, and action of $q$ on the second component.
\end{theorem3}

As well as a relative homology sequence: 

\begin{theorem4}
Suppose $(X,Y)$ is a relative pair of precubical sets. We have the following relative homology sequence: 

\begin{center}
    \begin{tikzcd}[arrow style=math font,cells={nodes={text height=2ex,text depth=0.75ex}}]
    & & 0 \arrow[draw=none]{d}[name=X,shape=coordinate]{} \\
       HM_1[X,Y] \arrow[curarrow=X]{urr}{} 
       & HM_{1}[X] \arrow[l] \arrow[draw=none]{d}[name=Y, shape=coordinate]{} & \arrow[l] \cdots \\
       HM_{i}[X,Y] \arrow[curarrow=Y]{urr}{} & HM_{i}[X] \arrow[l] \arrow[draw=none]{d}[name=Z,shape=coordinate]{} & { }^X HM_i[Y] \arrow[l] \\
       HM_{i+1}[X,Y] \arrow[curarrow=Z]{urr}{} & HM_{i+1}[X] \arrow[l] & \cdots \arrow[l]
   \end{tikzcd}
\end{center}
\end{theorem4}

And a Mayer-Vietoris like theorem: 

\begin{theorem5}
Let $X$ a precubical set, $X_1$ and $X_2$ be two sub precubical sets that form a good cover of $X$, and such that $(X,X_1)$ and $(X,X_2)$ are relative pairs. 
Then we have the following long exact sequence: 
\begin{center}
    \begin{tikzcd}[arrow style=math font,cells={nodes={text height=2ex,text depth=0.75ex}}]
    & & 0 \arrow[draw=none]{d}[name=X,shape=coordinate]{} \\
       HM_{1}[X] \arrow[curarrow=X]{urr}{} 
       &  { }^X HM_1[X_1]\oplus { }^X HM_1[X_2]    \arrow[l] \arrow[draw=none]{d}[name=Y, shape=coordinate]{} & \arrow[l] \cdots \\
       HM_{i}[X] \arrow[curarrow=Y]{urr}{} & { }^X HM_{i}[X_1]\oplus { }^X HM_i[X_2] \arrow[l] \arrow[draw=none]{d}[name=Z,shape=coordinate]{} & { }^X HM_i[X_1\cap X_2]) \arrow[l] \\
       HM_{i+1}[X] \arrow[curarrow=Z]{urr}{} & { }^X HM_{i+1}[X_1]\oplus { }^X HM_{i+1}[X_2] \arrow[l] & \cdots \arrow[l]
   \end{tikzcd}
\end{center}
\end{theorem5}

\section{Precubical sets and directed spaces}

\label{sec:directedtopology}



\paragraph{Precubical sets}

We remind the reader of the structure of precubical set (see e.g. \cite{Massey}) we are going to use in the sequel: 

\begin{definition}
\label{def:precub}
A \look{precubical set} is a graded set $X=(X_i)_{i \in \N}$ with two families
of operators:
$$
d^0_i, d^1_i: \ X_{n} \rightarrow X_{n-1}
$$ 
($i, j=0,\ldots,n-1$) satisfying the relations
$$
\begin{array}{ccc}
d^k_i \circ d^l_j & = & d^l_{j-1} \circ d^k_i
\end{array}
$$ 
($i < j$, $k, l = 0,1$)

\look{A morphism of precubical sets} $f$ from precubical set $X$ to precubical set $Y$ is a graded function $f: \ X_n \rightarrow Y_n$ for all $n\in \N$ such that it commutes with all boundary operators $d^\epsilon_i$, $\epsilon \in \{0,1\}$.
We denote by $Precub$ the category of precubical sets.
\end{definition}

Note that the 1-skeleton $X_{\leq 1}=(X_0,X_1,d^0_0: X_1\rightarrow X_0, d^1_0: X_1 \rightarrow X_0)$ of any precubical set $X$, is a quiver, or directed graph \cite{assocalg}. 

Precubical sets form a presheaf category on a site $\Box$, whose objects are $[n]$, $n\in \N$ and whose morphisms are $\delta^\epsilon_i: \ [n-1] \rightarrow [n]$ for $n >0$, $\epsilon \in \{0,1\}$, $i \in \{1,\ldots,n\}$, such that $\delta^\eta_j \delta^\epsilon_i  = \delta^\epsilon_i \delta^\eta_{j-1}$, for 
$i < j$. 

We write $d^0_\mathcal{I}$ (resp. $d^1_\mathcal{I}$) for the composition $d^0_{i_1} \circ \ldots \circ d^0_{i_k}$ (resp. $d^1_{i_1} \circ \ldots \circ d^0_{i_k}$) if $\mathcal{I}=\{i_1,\ldots,i_k\}$ with $i_1 < \ldots < i_k$. 
We write $d^0$ (resp. $d^1$) for $d^0_{\{0,\ldots,n-1\}}$ (resp. $d^1_{\{0,\ldots,n-1\}}$).

\paragraph{Directed spaces}

Precubical sets are natural combinatorial models for directed spaces, whose definition we recap below. 

Let $I=[0,1]$ denote the unit
segment with the topology inherited from $\Real$. 

\begin{definition}[\cite{grandisbook}]
A \look{directed topological space}, or \look{d-space}, is a pair  $(X,dX)$ consisting of a topological space $X$ equipped with
a subset $dX\subset X^I$ of continuous paths $p:I \rightarrow X$, called directed paths or
d-paths, satisfying three axioms: 
\begin{itemize}
\item every constant map $I\rightarrow X$ is directed
\item $dX$ is closed under composition with continuous non-decreasing maps from $I$ to $I$
\item $dX$ is closed under concatenation.
\end{itemize}
\end{definition}

We shall abbreviate the notation $(X,dX)$, and write $X$ when the context makes it unambiguous. 

Note that for a d-space $X$, the space of d-paths $dX\subseteq X^I$ is a topological space, equipped with the compact-open topology. We denote by $\diP{X}$ the topological space of d-paths of $X$ modulo reparametrization, and call it the \look{trace space}. We write $\diP{X}^w_v$ for the sub-topological space of $\diP{X}$ made of traces from point $v$ to point $w$ of $X$. 

A map $f: X\to Y$ between d-spaces $(X, dX)$ and $(Y, dY)$ is said to be a \look{d-map} or \look{directed map} if it is continuous and for any d-path $p\in dX$ 
the composition $f\circ p:I\to Y$ belongs to $dY$. In other words we require that $f$ preserves d-paths. We write $df~: dX \rightarrow dY$ for the induced map between directed path
spaces, as well as between trace spaces, by a slight abuse of notation. We denote by ${\cal D}$ the category of d-spaces. 

A \look{directed homeomorphism} or \look{dihomeomorphism} is a homeomorphism which is a directed map, and whose inverse is also a directed map. One of the goals of directed topology is to help classify directed spaces up to dihomeomorphisms. 

Classical examples of d-spaces arise as geometric realizations of precubical sets, as found in e.g. the semantics of concurrent and distributed systems \cite{thebook}. We will use some of these classical example to explain the directed homology theory we are designing in this article. 


Let $\I=[0,1]$ with directed paths all increasing paths in the usual ordering inherited from real numbers, and $\I^n$ the $n$th power of $\I$ (for $n > 0$) as a directed space: as ${\cal D}$ is complete (as well as co-complete), this is well defined. $\I^n$ has as directed paths all continuous paths that are increasing on each coordinate. 


\begin{definition}
The \look{geometric realization} $\mid X \mid$ of a precubical set $X$ is the Yoneda extension of the following functor $F: \ \Box \rightarrow {\cal D}$: 
\begin{itemize}
    \item $F([n])=\I^n$
    \item $F(\delta^\epsilon_i)$ is the map from $\I^{n-1}$ to $\I^n$ which sends $(x_1,\ldots,x_{n-1})\in \I^{n-1}$ to $(x_1,\ldots,x_{i},\epsilon,x_{i+1},\ldots,x_{n-1})$. 
\end{itemize}
\end{definition}

For more details, see e.g. 
\cite{Ziemanski2}. 

Finally, as we are going to construct a directed homology theory that we are going to compare with the construction of \cite{Dubut}, we recap the definition of natural homology of a directed space below. 

First, we need to define the trace category \cite{Dubut} of a directed space.   
Let $X$ be a directed space. We define the \look{trace category} $\T{X}$ as follows: 
\begin{itemize}
    \item $\T{X}$ has as objects, all points of $X$ 
    \item and as morphisms from $s$ to $t$, all traces from $s$ to $t$
     \item the composition $q\circ p$ is the concatenation $p*q$ of such traces, which is associative since this is taken modulo reparameterization: 
     $$
     p * q(s)=\left\{\begin{array}{ll}
     p(2s) & \mbox{if $s\leq \frac{1}{2}$} \\
     q(2s-1) & \mbox{if $\frac{1}{2} \leq s \leq 1$}
     \end{array}\right.
     $$
\end{itemize}
And then, we recap the notion of \look{factorization category} $F\C$, or twisted arrow category \cite{maclane} of a category $\C$: it is the category whose objects are morphisms $f$ in $\C$ and whose morphisms from $f$ to $g$ are pairs of morphisms $\langle u,v \rangle$ of $\C$ such that $g=v\circ f \circ u$. Now we can formally define natural homology: 

\begin{definition}
The \look{natural homology} of directed space $X$ is the collection of functors $HN_i(X): F\T{X}\rightarrow Ab$, $i \geq 1$, from the factorization category $F\T{X}$ of the trace category, to the category of abelian groups with: 
\begin{itemize}
\item $HN_i(X)(p)=H_i(\diP{X}^b_a)$, where $p \in \diP{X}$ is a trace from $a$ to $b \in X$, and $H_i$ is the $i$th singular homology functor,
\item $HN_i(X)(\langle u,v\rangle)(p)=[v \circ p \circ u]$ where $u$ is a trace from $a'$ to $a$, $v$ a trace from $b$ to $b'$, and $[x]$ denotes here the class in $H_i(X)(\diP{X}^{b'}_{a'})$ of $x$. 
\end{itemize}
\end{definition}



\paragraph{Cube chains and trace spaces}

Our results will apply to a particular class of precubical sets, that has been introduced in \cite{Ziemianski}: 

\begin{definition}
Let $X$ be a finite precubical set. We say that $X$ is \look{proper} if the map: 
$$
\bigcup\limits_{n\geq 0} X_n : \ c \rightarrow \{d^0(c),d^1(c)\} \in 2^{X_0}$$
\noindent is an injection, i.e., the cubes of $X$ can be distinguished by its start and end vertices. 

The \look{non-looping length covering} of $X$ is the precubical set $\tilde{X}$ with $\tilde{X}_n=X_n\times \Z$ and $d^\epsilon_i(c,k)=(d^\epsilon_i(c),k+\epsilon)$.
\end{definition}

We write ${Cub}$ for the full sub-category of $Precub$, of finite precubical sets with proper non-looping length covering. 


The following definition is essential in the calculations on trace spaces done by Krzysztof Ziemianski \cite{Ziemanski2}. We twist a little the original definition so that 1-cube chains can be empty, and can consist in that case of constant paths on vertices, this convention will prove easier for the rest of the paper. 

\begin{definition}
Let $X$ be a pre-cubical set and let $v$, $w \in X_0$ be two of its vertices. A \look{cube chain} in $X$ from $v$ to $w$ is a sequence of cubes indexed by $v$ and $w$, $c = (c_1,\ldots,c_l)_{v,w}$, where $c_k \in X_{n_k}$ and $n_k > 0$, $l\geq 0$, such that either $l=0$ and $v=w$ or: 
\begin{itemize}
    \item $d^0(c_1) = v$,
\item $d^1(c_l)=w$,
\item $d^1(c_i) = d^0(c_{i+1})$ for $i = 1,\ldots, l - 1$.
\end{itemize}
We will often leave out the index $v,w$ to cube chains when the context makes it clear what they are. 
\end{definition}

The sequence $(n_1,\ldots,n_l)$ will be called the \look{type of a cube chain} $c$, $dim(c) = n_1 + \ldots + n_l - l$ the \look{dimension} of $c$, and $n_1 + \ldots + n_l$, the \look{length} of $c$. By convention, we set $dim(()_{v,v})=0$: the dimension of constant paths on a vertex is zero. These cube chains in $X$ from $v$ to $w$ will be denoted by $Ch(X)^w_v$, and the set of cube chains of dimension equal to $m$ (resp. less than $m$, less or equal to $m$) by $Ch^{=m}(X)^w_v$ (resp. $Ch^{<m}(X)^w_v$, $Ch^{\leq m}(X)^w_v$). Note that a cube chain has dimension 0 if and only if it contains 1-cubes only, or is an empty cube chain. 

\begin{remark}
\label{rem:0cubechain}
Cube chains of dimension 0 are naturally identified with directed paths (including the constant paths on vertices) of the underlying quiver of the pre-cubical set. 
\end{remark}

\begin{remark}
\label{rem:1cubechain}
1-cube chains are necessarily of the form $(c_1,\ldots,c_l)$ with a unique $c_i$ of dimension 2, all the others being of dimension 1. Indeed, each cell of dimension strictly greater than 1 contributes to adding to the dimension of the cube chain. 
\end{remark}

\begin{remark}
\label{rem:dimension}
We note that the dimension of cube chains is additive in the following sense: let $c=(c_1,\ldots,c_l)$ and $d=(d_1,\ldots,d_k)$ be two cube chains. Their concatenation, when defined, i.e. when $d^1(c_l)=d^0(d_1)$, is the cube chain $e=(c_1,\ldots,c_l,d_1,\ldots,d_k)$. The dimension of $e$ is the sum of the dimensions of each cell $c_i$ and $d_j$ minus $k+l$, hence is equal to the sum of the dimension of $c$ with the dimension of $d$. 
\end{remark}

For a cube chain $c = (c_1,\ldots,c_l) \in Ch(X)^w_v$ of type $(n_1,\ldots,n_l)$ and dimension $i=\sum\limits_{j=1}^l n_i-l$, an integer $k \in \{1,\ldots,l\}$ and a subset $\mathcal{I} \subseteq \{0,\ldots,n_k-1\}$ having $r$ elements, where $0 < r < n_k$, define a cube chain (as done in \cite{Ziemianski}):
$$
d_{k,\mathcal{I}}(c) = (c_1,\ldots,c_{k-1},d^0_{\overline{\mathcal{I}}}(c_k),d^1_\mathcal{I}(c_k),c_{k+1},\ldots,c_l) \in Ch(X)^w_v$$ 
\noindent where $\overline{\mathcal{I}} = \{0,\ldots,n_k-1\} \backslash \mathcal{I}$. 
This defines a cell complex with cells of dimension $i$, $Ch^{=i}(X)$, being $i$-cube chains, and with faces given by the $d_{k,\mathcal{I}}$. 

For $R$ a given commutative field we define: 

\begin{definition}
\label{def:modcub}
Let $X$ be a precubical set. We call {$\look{R_{i+1}[X]}$} for $i\geq 0$ the $R$-vector space generated by all cube chains of dimension $i$. 
\end{definition}

The formula below is given in 
\cite{Ziemianski}: 

\begin{definition}
\label{def:boundaryoperator}
Define a \look{boundary map} from the 
$R_{i+1}[X]$ 
to $R_i[X]$ 
as follows: 

\begin{multline*}
\partial c = \sum\limits_{k=1}^l \sum\limits_{r=1}^{n_k-1} \sum\limits_{\mbox{\tiny $\mathcal{I}$} \subseteq\{0,\ldots,n_k-1\}: \ \mid \mbox{\tiny $\mathcal{I}$} \mid=r}
(-1)^{n_1+\ldots+n_{k-1}+k+r+1} sgn(\mbox{$\mathcal{I}$}) {d_{k,\mbox{$\mathcal{I}$}}(c)}
\end{multline*}
\noindent where
$$
sgn(\mathcal{I})=\left\{\begin{array}{ll}
1 & \mbox{if $\sum\limits_{i\in \mathcal{I}} i \equiv \sum\limits_{i=1}^r i \ mod \ 2$} \\
-1 & \mbox{otherwise}
\end{array}\right.
$$
Then $R_*[X]=(R_{i+1}[X],\partial)_{i\in \N}$ is a chain complex. The restriction to cube chains from any $v \in X_0$ to any $w \in X_0$ is a sub-chain complex of $R_*[X]$ that we write $R^w_{{v},*}[X]$.
\end{definition}

The following is a direct consequence of Theorem 1.7 of \cite{Ziemianski}: 

\begin{lemma}
\label{lem:Kris}
Let $X \in Cub$ be a finite precubical set that has proper non-looping length covering. 
Then for all $v, w \in X_0$, the chain homology of $R_{v,*}^w[X]$ is isomorphic to the homology of the trace space $\diP{\mid X \mid}^w_v$.
\end{lemma}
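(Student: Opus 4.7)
The plan is to reduce the statement entirely to Ziemianski's Theorem 1.7 of \cite{Ziemianski}, which is essentially the engine behind this lemma: the work consists in recognizing that the chain complex $R^w_{v,*}[X]$ defined combinatorially in Definition \ref{def:boundaryoperator} coincides, up to the degree shift, with the cellular chain complex of the CW-decomposition of the trace space $\diP{\mid X\mid}^w_v$ that Ziemianski constructs.

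First I would recall precisely what Ziemianski gives. For a finite precubical set $X \in Cub$ whose non-looping length covering is proper and any pair $v,w \in X_0$, his theorem produces a CW-complex $K(X)^w_v$ together with a homotopy equivalence $K(X)^w_v \simeq \diP{\mid X\mid}^w_v$. The cells of $K(X)^w_v$ of dimension $i$ are in bijection with cube chains $c = (c_1,\dots,c_l)_{v,w}$ of dimension $i = \sum_j n_j - l$, and the attaching maps are designed so that the topological boundary of the cell indexed by $c$ hits, up to signs and orientation, exactly the cells indexed by the faces $d_{k,\mathcal{I}}(c)$ of $c$ as a combinatorial object. The assumption of a proper non-looping length covering is precisely what guarantees that distinct cube chains give distinct cells (no collapsing/identification), and hence that cube chains form a genuine CW-basis.

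Second, I would pass to cellular homology. By the cellular chain complex construction, $C_i^{CW}(K(X)^w_v; R)$ is the free $R$-module on $i$-dimensional cells, i.e., on $Ch^{=i}(X)^w_v$, which by Definition \ref{def:modcub} is precisely $R_{i+1}[X]^w_v$. The cellular boundary is computed by degrees of the attaching maps onto $(i-1)$-cells, and the incidence numbers produced by Ziemianski's attaching maps are exactly the signed coefficients $(-1)^{n_1+\ldots+n_{k-1}+k+r+1}\,\mathrm{sgn}(\mathcal{I})$ of Definition \ref{def:boundaryoperator}. Hence the cellular boundary operator equals $\partial$, so $C^{CW}_*(K(X)^w_v; R) = R^w_{v,*}[X]$ as chain complexes of $R$-modules. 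Taking homology and combining with cellular-equals-singular together with the homotopy equivalence $K(X)^w_v \simeq \diP{\mid X\mid}^w_v$ yields the desired isomorphism.

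The only non-routine step is the verification that the signs in Definition \ref{def:boundaryoperator} match the incidence numbers coming from Ziemianski's attaching maps; but this bookkeeping is precisely what is done inside the proof of Theorem 1.7 in \cite{Ziemianski}, so I would quote that directly rather than recompute it. The main conceptual obstacle, and the reason the hypothesis $X \in Cub$ appears, is that without properness of the non-looping length covering, distinct cube chains could be glued to the same cell and the cellular identification $C^{CW}_* = R^w_{v,*}[X]$ would fail; invoking $X \in Cub$ is therefore essential and not cosmetic. With that in place, the statement of the lemma is immediate.
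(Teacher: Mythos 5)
Your argument is correct and follows the same route as the paper, which gives no proof beyond the single remark that the lemma is ``a direct consequence of Theorem 1.7 of \cite{Ziemianski}.'' Your exposition faithfully unpacks what that theorem supplies (a CW-model of $\diP{\mid X\mid}^w_v$ whose cells are indexed by cube chains and whose cellular boundary is the $\partial$ of Definition \ref{def:boundaryoperator}, hence whose cellular chain complex is $R^w_{v,*}[X]$ up to the degree shift) and how the lemma then follows from cellular-equals-singular homology; so the two proofs coincide in substance, yours merely being more explicit.
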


In order to illustrate these definitions and constructs, we give below some sample calculations, which will also be useful later. 

\begin{example}[``2 holes on diagonal/antidiagonal'']
\label{ex:2holes}
We consider the following two precubical sets: 
\begin{center}
    \begin{minipage}{5.5cm}
    \[\begin{tikzcd}
  9 \arrow[r,"i"] \arrow[d,"k"] \arrow[dr,phantom,"C"] & 8 \arrow[d,"h"] \arrow[r,"j"] & 7 \arrow[d,"l"]\\
  6 \arrow[r,"d"] \arrow[d,"c"] & 5 \arrow[d,"e"] \arrow[dr,phantom,"D"] \arrow[r,"f"] & 4 \arrow[d,"g"] \\
  3 \arrow[r,"a"] & 2 \arrow[r,"b"] & 1
\end{tikzcd}
\]
\end{minipage}
    \begin{minipage}{5.5cm}
    \[\begin{tikzcd}
  9 \arrow[r,"i"] \arrow[d,"k"]  & 8 \arrow[d,"h"] \arrow[r,"j"] \arrow[dr,phantom,"E"] & 7 \arrow[d,"l"]\\
  6 \arrow[r,"d"] \arrow[d,"c"] \arrow[dr,phantom,"F"] & 5 \arrow[d,"e"]  \arrow[r,"f"] & 4 \arrow[d,"g"] \\
  3 \arrow[r,"a"] & 2 \arrow[r,"b"] & 1
\end{tikzcd}
\]
\end{minipage}
\end{center}

We call the one on the left the ``2 holes on the anti-diagonal'', and the one on the right, ``2 holes on the diagonal''. 

Cube chains of dimension 0 are just directed paths of the underlying quiver: $(i,j,l,g)$, $(i,h,f,g)$, $(i,h,e,b)$, $(k,d,f,g)$, $(k,d,e,b)$, $(k,c,a,b)$ for both examples, with all there sub-sequences of consecutive edges.

Cube chains of dimension 1 are, for the 2 holes on the anti-diagonal example: $(C,f,g)$, $(C,e,b)$, $(i,h,D)$, $(k,d,D)$ and all sub-sequences of consecutive cubes including either $C$ or $D$. Said in a different manner, the cube chains of dimension 1 for this example are ``whiskerings'' of $C$ (resp. $D$) with any cube chains of dimension 0 which compose either on the left or on the right with $C$ (resp. $D$). 

The cube chains of dimension 1 for the 2 holes on the diagonal example are 
$(k,F,b)$, $(i,E,g)$ and suitable sub-sequences. 

Now, let us give an example calculation of the boundary maps and operator. For instance in the last example, consider the chain $(k,F,b)$ of dimension 1. It is of type $(1,2,1)$. Let us compute $d_{2,\mathcal{I}}(k,F,b)$ for all $\mathcal{I} \subseteq \{0,\ldots,1\}$ having $r$ elements, where $0 < r < 2$, i.e. having exactly one element. Hence we have to compute $d_{2,\{0\}}(k,F,b)$ and $d_{2,\{1\}}(k,F,b)$. Supposing the boundaries of 2-cells indexed by $0$ are the vertical 1-cells, and that the boundary indexed by $1$ are the horizontal ones:   
\begin{itemize}
\item $d_{2,\{0\}}(k,F,b)=(k,d^0_{\{1\}}(F),d^1_{\{0\}}(F),b)=(k,d,e,b)$
\item $d_{2,\{1\}}(k,F,b)=(k,d^0_{\{0\}}(F),d^1_{\{1\}}(F),b)=(k,c,a,b)$
\end{itemize}


Then, $$\begin{array}{rcl}
\partial(k,F,b) & = & \sum\limits_{\mathcal{I} \subseteq\{0,\ldots,1\} \mbox{ of card 1}}
(-1)^{5} sgn(\mbox{$\mathcal{I}$}) {d_{2,\mbox{$\mathcal{I}$}}(k,F,b)} \\
& = & -sgn(\{0\}) d_{2,\{0\}}(k,F,b)-sgn(\{1\}) d_{2,\{1\}}(k,F,b) \\
& = & d_{2,\{0\}}(k,F,b)-d_{2,\{1\}}(k,F,b) \\
& = & (k,d,e,b) - (k,c,a,b)
\end{array}$$
\end{example}


\begin{example}[3-cube]
\label{ex:3cube}
Consider now the full 3-cube $X$ below, with 8 vertices having 0 and 1 coordinates along the three axes (we will note them by concatenating their coordinates: 000, 001, 010, 100, 110, 101, 001, 111) and 12 1-cells $a01$, $0b1$, $a11$, $1b1$, $01c$, $00c$, $11c$, $10c$, $a00$, $0b0$, $a10$, $1b0$, with $d^0(a01)=001$, $d^1(a01)=101$, $d^0(0b1)=001$, $d^1(0b1)=011$, $d^0(a11)=011$, $d^1(a11)=111$, $d^0(1b1)=101$, $d^1(1b1)=111$, $d^0(01c)=010$, $d^1(01c)=011$, $d^0(00c)=000$, $d^1(00c)=001$, $d^0(11c)=110$, $d^1(11c)=111$, $d^0(10c)=100$, $d^1(10c)=101$, $d^0(a00)=000$, $d^1(a00)=100$, $d^0(0b0)=000$, $d^1(0b0)=010$, $d^0(a10)=010$, $d^1(a10)=110$, $d^0(1b0)=100$ and $d^1(1b0)=110$: 

\begin{center}
\begin{tikzpicture}[scale=3.5,tdplot_main_coords]
    \coordinate (O) at (0,0,0);
    \tdplotsetcoord{P}{1.414213}{54.68636}{45}
    \draw[-,fill=green,fill opacity=0.1] (Pz) -- (Pyz) -- (P) -- (Pxz) -- cycle;
    \draw[-] (Pz) edge node {$a01$} (Pxz); 
    \draw[-] (Pxz) edge node {$1b1$} (P); 
    \draw[-] (Pz) edge node {$0b1$} (Pyz);
    \draw[-] (Pyz) edge node[above,right] {$a11$} (P);
    \draw[-] (Py) edge node {$a10$} (Pxy); 
  \node at (O) {\tiny $000$};
  \node at (P) {\tiny $111$};
  
  \node at (Px) {\tiny $100$};
  \node at (Py) {\tiny $010$};
  \node at (Pz) {\tiny $001$};
  \node at (Pxy) {\tiny $110$};
  \node at (Pyz) {\tiny $011$};
  \node at (Pxz) {\tiny $101$};

   \draw[-,fill=red,fill opacity=0.1] (Px) -- (Pxy) -- (P) -- (Pxz) -- cycle;
   \draw[-] (Pxy) edge node {$11c$} (P); 
   \draw[-] (Py) edge node[below] {$01c$} (Pyz);
    
   \draw[-,fill=magenta,fill opacity=0.1] (Py) -- (Pxy) -- (P) -- (Pyz) -- cycle;
    \draw[-] (Px) edge node {$1b0$} (Pxy); 
    \draw[-] (Px) edge node[right] {$10c$} (Pxz);

    \draw[-,fill=gray!50,fill opacity=0.1] (O) -- (Py) -- (Pyz) -- (Pz) -- cycle;
    \draw[-] (O) edge node[left] {$00c$} (Pz); 
     \draw[-] (O) edge node[left] {$a00$} (Px); 

    \draw[-,fill=yellow,fill opacity=0.1] (O) -- (Px) -- (Pxz) -- (Pz) -- cycle;

    \draw[-,fill=green,fill opacity=0.1] (Pz) -- (Pyz) -- (P) -- (Pxz) -- cycle;

    \draw[-,fill=red,fill opacity=0.1] (Px) -- (Pxy) -- (P) -- (Pxz) -- cycle;

    \draw[-,fill=magenta,fill opacity=0.1] (Py) -- (Pxy) -- (P) -- (Pyz) -- cycle;
  \end{tikzpicture}
\end{center}

The 2-cells are $A$ and $A'$ orthogonal to the vertical axis, respectively the bottom and top faces, $B$ and $B'$ which are the back and front faces and $C$ and $C'$ which are the left and right faces on the figure above. More precisely, $A$ has boundaries $d^0_0(A)=a00$, $d^0_1(A)=0b0$, $d^1_0(A)=a10$ and $d^1_1(A)=1b0$; $B$ has boundaries $d^1_0(B)=a01$, $d^0_1(B)=00c$, $d^0_0(B)=a00$ and $d^1_1(B)=10c$; $C$ has boundaries $d^1_0(C)=0b1$, $d^0_1(C)=00c$, $d^1_1(C)=01c$ and $d^0_0(C)=0b0$; $A'$ has boundaries $d^0_0(A')=a01$, $d^0_1(A')=0b1$, $d^1_0(A')=a11$ and $d^1_1(A')=1b1$; $B'$ has boundaries $d^1_0(B')=a11$, $d^0_1(B')=01c$, $d^0_0(B')=a10$ and $d^1_1(B')=11c$; $C'$ has boundaries $d^1_0(C')=1b1$, $d^1_1(C')=11c$, $d^0_0(C')=1b0$ and $d^0_0(C')=10c$. 

The unique 3-cell $S$ has as boundaries $A$, $B$, $C$, $A'$, $B'$ and $C'$ with $d^0_0(S)=C$, $d^0_1(S)=B$, $d^0_2(S)=A$, $d^1_0(S)=C'$, $d^1_1(S)=B'$, $d^1_2(S)=A'$. 

The 0-cube chains are all subpaths of the six following maximal 1-paths, $\zeta=(0b0,a10,11c)$, $\epsilon=(0b0,01c,a11)$, $\alpha=(a00,1b0,11c)$, $\beta=(a00,10c,1b1)$, $\gamma=(00c,a01,1b1)$, $\delta=(00c,0b1,a11)$, pictured below as bold dark lines, in the same order: 
\begin{center}
\begin{tabular}{cccccc}
  \begin{tikzpicture}[scale=1.2,tdplot_main_coords]
    \coordinate (O) at (0,0,0);
    \tdplotsetcoord{P}{1.414213}{54.68636}{45}

    \draw[->,thick,fill=gray!50,fill opacity=0.3] (O) -- (Px);
    \draw[->,thick,fill=gray!50,fill opacity=0.3] (Px) -- (Pxy);
    \draw[->,thick,fill=gray!50,fill opacity=0.3] (Pxy) -- (P);

    \draw[dashed,fill=gray!50,fill opacity=0.1] (O) -- (Py) -- (Pyz) -- (Pz) -- cycle;
    \draw[dashed,fill=yellow,fill opacity=0.1] (O) -- (Px) -- (Pxz) -- (Pz) -- cycle;
    \draw[dashed,fill=green,fill opacity=0.1] (Pz) -- (Pyz) -- (P) -- (Pxz) -- cycle;
    \draw[dashed,fill=red,fill opacity=0.1] (Px) -- (Pxy) -- (P) -- (Pxz) -- cycle;
    \draw[dashed,fill=magenta,fill opacity=0.1] (Py) -- (Pxy) -- (P) -- (Pyz) -- cycle;
\end{tikzpicture}
& 
  \begin{tikzpicture}[scale=1.2,tdplot_main_coords]
    \coordinate (O) at (0,0,0);
    \tdplotsetcoord{P}{1.414213}{54.68636}{45}

    \draw[->,thick,fill=gray!50,fill opacity=0.3] (O) -- (Px);
    \draw[->,thick,fill=gray!50,fill opacity=0.3] (Px) -- (Pxz);
    \draw[->,thick,fill=gray!50,fill opacity=0.3] (Pxz) -- (P);

    \draw[dashed,fill=gray!50,fill opacity=0.1] (O) -- (Py) -- (Pyz) -- (Pz) -- cycle;
    \draw[dashed,fill=yellow,fill opacity=0.1] (O) -- (Px) -- (Pxz) -- (Pz) -- cycle;
    \draw[dashed,fill=green,fill opacity=0.1] (Pz) -- (Pyz) -- (P) -- (Pxz) -- cycle;
    \draw[dashed,fill=red,fill opacity=0.1] (Px) -- (Pxy) -- (P) -- (Pxz) -- cycle;
    \draw[dashed,fill=magenta,fill opacity=0.1] (Py) -- (Pxy) -- (P) -- (Pyz) -- cycle;
\end{tikzpicture}
&
  \begin{tikzpicture}[scale=1.2,tdplot_main_coords]
    \coordinate (O) at (0,0,0);
    \tdplotsetcoord{P}{1.414213}{54.68636}{45}

    \draw[->,thick,fill=gray!50,fill opacity=0.3] (O) -- (Py);
    \draw[->,thick,fill=gray!50,fill opacity=0.3] (Py) -- (Pxy);
    \draw[->,thick,fill=gray!50,fill opacity=0.3] (Pxy) -- (P);

    \draw[dashed,fill=gray!50,fill opacity=0.1] (O) -- (Py) -- (Pyz) -- (Pz) -- cycle;
    \draw[dashed,fill=yellow,fill opacity=0.1] (O) -- (Px) -- (Pxz) -- (Pz) -- cycle;
    \draw[dashed,fill=green,fill opacity=0.1] (Pz) -- (Pyz) -- (P) -- (Pxz) -- cycle;
    \draw[dashed,fill=red,fill opacity=0.1] (Px) -- (Pxy) -- (P) -- (Pxz) -- cycle;
    \draw[dashed,fill=magenta,fill opacity=0.1] (Py) -- (Pxy) -- (P) -- (Pyz) -- cycle;
\end{tikzpicture}
&
  \begin{tikzpicture}[scale=1.2,tdplot_main_coords]
    \coordinate (O) at (0,0,0);
    \tdplotsetcoord{P}{1.414213}{54.68636}{45}

    \draw[->,thick,fill=gray!50,fill opacity=0.3] (O) -- (Py);
    \draw[->,thick,fill=gray!50,fill opacity=0.3] (Py) -- (Pyz);
    \draw[->,thick,fill=gray!50,fill opacity=0.3] (Pyz) -- (P);

    \draw[dashed,fill=gray!50,fill opacity=0.1] (O) -- (Py) -- (Pyz) -- (Pz) -- cycle;
    \draw[dashed,fill=yellow,fill opacity=0.1] (O) -- (Px) -- (Pxz) -- (Pz) -- cycle;
    \draw[dashed,fill=green,fill opacity=0.1] (Pz) -- (Pyz) -- (P) -- (Pxz) -- cycle;
    \draw[dashed,fill=red,fill opacity=0.1] (Px) -- (Pxy) -- (P) -- (Pxz) -- cycle;
    \draw[dashed,fill=magenta,fill opacity=0.1] (Py) -- (Pxy) -- (P) -- (Pyz) -- cycle;
\end{tikzpicture}
&
  \begin{tikzpicture}[scale=1.2,tdplot_main_coords]
    \coordinate (O) at (0,0,0);
    \tdplotsetcoord{P}{1.414213}{54.68636}{45}

    \draw[->,thick,fill=gray!50,fill opacity=0.3] (O) -- (Pz);
    \draw[->,thick,fill=gray!50,fill opacity=0.3] (Pz) -- (Pxz);
    \draw[->,thick,fill=gray!50,fill opacity=0.3] (Pxz) -- (P);

    \draw[dashed,fill=gray!50,fill opacity=0.1] (O) -- (Py) -- (Pyz) -- (Pz) -- cycle;
    \draw[dashed,fill=yellow,fill opacity=0.1] (O) -- (Px) -- (Pxz) -- (Pz) -- cycle;
    \draw[dashed,fill=green,fill opacity=0.1] (Pz) -- (Pyz) -- (P) -- (Pxz) -- cycle;
    \draw[dashed,fill=red,fill opacity=0.1] (Px) -- (Pxy) -- (P) -- (Pxz) -- cycle;
    \draw[dashed,fill=magenta,fill opacity=0.1] (Py) -- (Pxy) -- (P) -- (Pyz) -- cycle;
\end{tikzpicture}
&
  \begin{tikzpicture}[scale=1.2,tdplot_main_coords]
    \coordinate (O) at (0,0,0);
    \tdplotsetcoord{P}{1.414213}{54.68636}{45}

    \draw[->,thick,fill=gray!50,fill opacity=0.3] (O) -- (Pz);
    \draw[->,thick,fill=gray!50,fill opacity=0.3] (Pz) -- (Pyz);
    \draw[->,thick,fill=gray!50,fill opacity=0.3] (Pyz) -- (P);

    \draw[dashed,fill=gray!50,fill opacity=0.1] (O) -- (Py) -- (Pyz) -- (Pz) -- cycle;
    \draw[dashed,fill=yellow,fill opacity=0.1] (O) -- (Px) -- (Pxz) -- (Pz) -- cycle;
    \draw[dashed,fill=green,fill opacity=0.1] (Pz) -- (Pyz) -- (P) -- (Pxz) -- cycle;
    \draw[dashed,fill=red,fill opacity=0.1] (Px) -- (Pxy) -- (P) -- (Pxz) -- cycle;
    \draw[dashed,fill=magenta,fill opacity=0.1] (Py) -- (Pxy) -- (P) -- (Pyz) -- cycle;
\end{tikzpicture}
\\
$\zeta$ & $\epsilon$ & $\alpha$ & $\beta$ & $\delta$ & $\gamma$ 
\end{tabular}
\end{center}
And the 1 cube chains are depicted below, with bold dark lines and more opaque 2-cells: 
\begin{center}
\begin{tabular}{cccccc}
  \begin{tikzpicture}[scale=1.4,tdplot_main_coords]
    \coordinate (O) at (0,0,0);
    \tdplotsetcoord{P}{1.414213}{54.68636}{45}
    \draw[->,thick,fill=green,fill opacity=0.3] (Pz) -- (Pyz) -- (P) -- (Pxz) -- cycle;
\draw [->,thick] (O) -- (Pz);
    \draw[->,thick,fill=green,fill opacity=0.3] (Pz) -- (Pyz); 
    \draw[->,thick,fill=green,fill opacity=0.3] (Pyz) -- (P); 
    \draw[->,thick,fill=green,fill opacity=0.3] (Pz) -- (Pxz);
    \draw[->,thick,fill=green,fill opacity=0.3] (Pxz) -- (P);

    \draw[dashed,fill=gray!50,fill opacity=0.1] (O) -- (Py) -- (Pyz) -- (Pz) -- cycle;
    \draw[dashed,fill=yellow,fill opacity=0.1] (O) -- (Px) -- (Pxz) -- (Pz) -- cycle;
    \draw[dashed,fill=green,fill opacity=0.1] (Pz) -- (Pyz) -- (P) -- (Pxz) -- cycle;
    \draw[dashed,fill=red,fill opacity=0.1] (Px) -- (Pxy) -- (P) -- (Pxz) -- cycle;
    \draw[dashed,fill=magenta,fill opacity=0.1] (Py) -- (Pxy) -- (P) -- (Pyz) -- cycle;
  \end{tikzpicture}
&
  \begin{tikzpicture}[scale=1.4,tdplot_main_coords]
    \coordinate (O) at (0,0,0);
    \tdplotsetcoord{P}{1.414213}{54.68636}{45}

    \draw[fill=gray!50,fill opacity=0.3] (O) -- (Py) -- (Pyz) -- (Pz) -- cycle;
    \draw[->,thick,fill=gray!50,fill opacity=0.3] (O) -- (Py);
    \draw[->,thick,fill=gray!50,fill opacity=0.3] (Py) -- (Pyz);
    \draw[->,thick,fill=gray!50,fill opacity=0.3] (O) -- (Pz);
    \draw[->,thick,fill=gray!50,fill opacity=0.3] (Pz) -- (Pyz);
    \draw[->,thick,fill=gray!50,fill opacity=0.3] (Pyz) -- (P);

    \draw[dashed,fill=gray!50,fill opacity=0.1] (O) -- (Py) -- (Pyz) -- (Pz) -- cycle;
    \draw[dashed,fill=yellow,fill opacity=0.1] (O) -- (Px) -- (Pxz) -- (Pz) -- cycle;
    \draw[dashed,fill=green,fill opacity=0.1] (Pz) -- (Pyz) -- (P) -- (Pxz) -- cycle;
    \draw[dashed,fill=red,fill opacity=0.1] (Px) -- (Pxy) -- (P) -- (Pxz) -- cycle;
    \draw[dashed,fill=magenta,fill opacity=0.1] (Py) -- (Pxy) -- (P) -- (Pyz) -- cycle;
\end{tikzpicture}
&
  \begin{tikzpicture}[scale=1.4,tdplot_main_coords]
    \coordinate (O) at (0,0,0);
    \tdplotsetcoord{P}{1.414213}{54.68636}{45}

    \draw[fill=yellow,fill opacity=0.3] (O) -- (Px) -- (Pxz) -- (Pz) -- cycle;
    \draw[->,thick,fill=yellow,fill opacity=0.3] (O) -- (Px);
    \draw[->,thick,fill=yellow,fill opacity=0.3] (Px) -- (Pxz);
    \draw[->,thick,fill=yellow,fill opacity=0.3] (Pz) -- (Pxz);
    \draw[->,thick,fill=yellow,fill opacity=0.3] (O) -- (Pz);
    \draw[->,thick,fill=yellow,fill opacity=0.3] (Pxz) -- (P);

    \draw[dashed,fill=gray!50,fill opacity=0.1] (O) -- (Py) -- (Pyz) -- (Pz) -- cycle;
    \draw[dashed,fill=yellow,fill opacity=0.1] (O) -- (Px) -- (Pxz) -- (Pz) -- cycle;
    \draw[dashed,fill=green,fill opacity=0.1] (Pz) -- (Pyz) -- (P) -- (Pxz) -- cycle;
    \draw[dashed,fill=red,fill opacity=0.1] (Px) -- (Pxy) -- (P) -- (Pxz) -- cycle;
    \draw[dashed,fill=magenta,fill opacity=0.1] (Py) -- (Pxy) -- (P) -- (Pyz) -- cycle;
\end{tikzpicture}
& 
  \begin{tikzpicture}[scale=1.4,tdplot_main_coords]
    \coordinate (O) at (0,0,0);
    \tdplotsetcoord{P}{1.414213}{54.68636}{45}

    \draw[fill=red,fill opacity=0.3] (Px) -- (Pxy) -- (P) -- (Pxz) -- cycle;
    \draw[->,thick,fill=red,fill opacity=0.3] (O) -- (Px);
    \draw[->,thick,fill=red,fill opacity=0.3] (Px) -- (Pxy);
    \draw[->,thick,fill=red,fill opacity=0.3] (Pxy) -- (P);
    \draw[->,thick,fill=red,fill opacity=0.3] (Pxz) -- (P);
    \draw[->,thick,fill=red,fill opacity=0.3] (Px) -- (Pxz);

    \draw[dashed,fill=gray!50,fill opacity=0.1] (O) -- (Py) -- (Pyz) -- (Pz) -- cycle;
    \draw[dashed,fill=yellow,fill opacity=0.1] (O) -- (Px) -- (Pxz) -- (Pz) -- cycle;
    \draw[dashed,fill=green,fill opacity=0.1] (Pz) -- (Pyz) -- (P) -- (Pxz) -- cycle;
    \draw[dashed,fill=red,fill opacity=0.1] (Px) -- (Pxy) -- (P) -- (Pxz) -- cycle;
    \draw[dashed,fill=magenta,fill opacity=0.1] (Py) -- (Pxy) -- (P) -- (Pyz) -- cycle;
\end{tikzpicture}
&
  \begin{tikzpicture}[scale=1.4,tdplot_main_coords]
    \coordinate (O) at (0,0,0);
    \tdplotsetcoord{P}{1.414213}{54.68636}{45}

    \draw[fill=magenta,fill opacity=0.3] (Py) -- (Pxy) -- (P) -- (Pyz) -- cycle;
    \draw[->,thick,fill=magenta,fill opacity=0.3] (Py) -- (Pxy);
    \draw[->,thick,fill=magenta,fill opacity=0.3] (Pxy) -- (P);
    \draw[->,thick,fill=magenta,fill opacity=0.3] (Py) -- (Pyz);
    \draw[->,thick,fill=magenta,fill opacity=0.3] (Pyz) -- (P);
    \draw[->,thick,fill=magenta,fill opacity=0.3] (O) -- (Py);

    \draw[dashed,fill=gray!50,fill opacity=0.1] (O) -- (Py) -- (Pyz) -- (Pz) -- cycle;
    \draw[dashed,fill=yellow,fill opacity=0.1] (O) -- (Px) -- (Pxz) -- (Pz) -- cycle;
    \draw[dashed,fill=green,fill opacity=0.1] (Pz) -- (Pyz) -- (P) -- (Pxz) -- cycle;
    \draw[dashed,fill=red,fill opacity=0.1] (Px) -- (Pxy) -- (P) -- (Pxz) -- cycle;
    \draw[dashed,fill=magenta,fill opacity=0.1] (Py) -- (Pxy) -- (P) -- (Pyz) -- cycle;
\end{tikzpicture} 
&
  \begin{tikzpicture}[scale=1.4,tdplot_main_coords]
    \coordinate (O) at (0,0,0);
    \tdplotsetcoord{P}{1.414213}{54.68636}{45}

    \draw[fill=magenta,fill opacity=0.3] (O) -- (Py) -- (Pxy) -- (Px) -- cycle;
    \draw[->,thick,fill=magenta,fill opacity=0.3] (Pxy) -- (P);
    \draw[->,thick,fill=magenta,fill opacity=0.3] (Px) -- (Pxy);
    \draw[->,thick,fill=magenta,fill opacity=0.3] (Py) -- (Pxy);
    \draw[->,thick,fill=magenta,fill opacity=0.3] (O) -- (Px);
    \draw[->,thick,fill=magenta,fill opacity=0.3] (O) -- (Py);

    \draw[dashed,fill=gray!50,fill opacity=0.1] (O) -- (Py) -- (Pyz) -- (Pz) -- cycle;
    \draw[dashed,fill=yellow,fill opacity=0.1] (O) -- (Px) -- (Pxz) -- (Pz) -- cycle;
    \draw[dashed,fill=green,fill opacity=0.1] (Pz) -- (Pyz) -- (P) -- (Pxz) -- cycle;
    \draw[dashed,fill=red,fill opacity=0.1] (Px) -- (Pxy) -- (P) -- (Pxz) -- cycle;
    \draw[dashed,fill=magenta,fill opacity=0.1] (Py) -- (Pxy) -- (P) -- (Pyz) -- cycle;
    \draw[dashed,fill=purple,fill opacity=0.1] (O) -- (Py) -- (Pxy) -- (Px) -- cycle;
\end{tikzpicture} 
\\
(00c,A') & (B,1b1) & (C,a11) & (0b0,B') & (a00,C') & (A,11c) \\
\end{tabular}
\end{center}

As with Example \ref{ex:2holes}, we have the following calculations for $\partial$: 
$\partial(00c,A')=\gamma-\delta$, 
$\partial(B,1b1)=\beta-\gamma$, 
$\partial(C,a11)=\epsilon-\delta$, $\partial(0b0,B')=\zeta-\epsilon$, 
$\partial(a00,C')=\alpha-\beta$, 
$\partial(A,11c)=\alpha-\zeta$.

There is a unique 2 cube chain which is $(S)$. 
We now exemplify the computation of the boundary of this unique 2 cube chain below. 



$$\begin{array}{rcl}
\partial (S) & = & \sum\limits_{r=1}^{2} \sum\limits_{\mathcal{I} \subseteq\{0,1,2\} \mbox{ of card $r$}} 
(-1)^{r} sgn(\mathcal{I}) {d_{1,\mathcal{I}}((S))} \\
& = & sgn(\{0,1\}) {d_{1,\{0,1\}}((S))} + sgn(\{0,2\}) {d_{1,\{0,2\}}((S))} \\
&  & \ \ \ \ \ + sgn(\{1,2\}) {d_{1,\{1,2\}}((S))} 
- sgn(\{0\}) {d_{1,\{0\}}((S))} \\
& & \ \ \ \ \ - sgn(\{1\}) {d_{1,\{1\}}((S))}- sgn(\{2\}) {d_{1,\{2\}}((S))} \\
& = & {d_{1,\{0,1\}}((S))} -{d_{1,\{0,2\}}((S))} + {d_{1,\{1,2\}}((S))} 
- {d_{1,\{0\}}((S))} \\
& & \ \ \ \ \ \ \ \ \ \ \ \ \ \ \ + {d_{1,\{1\}}((S))}- {d_{1,\{2\}}((S))} \\
& = & (d^0_{\{2\}}(S),d^1_{\{0,1\}}(S))
- (d^0_{\{1\}}(S),d^1_{\{0,2\}}(S))
+ (d^0_{\{0\}}(S),d^1_{\{1,2\}}(S)) \\
& & \ \ \ \ \ 
- (d^0_{\{1,2\}}(S),d^1_{\{0\}}(S))
+ (d^0_{\{0,2\}}(S),d^1_{\{1\}}(S))
- (d^0_{\{0,1\}}(S),d^1_{\{2\}}(S)) \\
& = & (A,11c)
- (B,1b1)
+(C,a11)
-(a00,C')
+(0b0,B')
-(00c,A') \\
\end{array}$$
\end{example}


\section{Associative algebras and modules over algebras}

\label{sec:assoc}

We refer the reader to Appendix \ref{sec:A} for a recap of the definitions of algebras and modules over algebras. 


As is well known \cite{modabelian}, 
the categories of right and left $R$-modules, where $R$ is a commutative ring (not even a field), are abelian. 
$R$-bimodules are $R\otimes R^{op}$-modules, so they form an abelian category as well. The adaptation of the well-known results on $R$-modules to the case of $A$-modules, where $A$ is an algebra is immediate (see e.g. \cite{schiffler2014quiver}),  
in particular: 
\begin{itemize}
\item The \look{0 object} in the category of $A$-bimodules is 0 as an $R$-bimodule with the 0 action of elements of $A$. 
\item The \look{coproduct} $M\oplus N$ (isomorphic to the cartesian product) of $A$-bimodu\-les $M$ and $N$ has the coproduct of the underlying $R$-vector spaces as its underlying $R$-vector space, with action $a\bullet (m+n)\bullet b=a\bullet_M m\bullet_M b+a\bullet_N n \bullet_N b$, with $m \in M$ and $n\in N$. 
\item The \look{kernel} of $f: \ M \rightarrow N$, a morphism of $A$-bimodule is given by: 
$$
Ker \ f =\{ x \in M \ \mid \ f(x)=0\}
$$
\noindent with action $a\bullet_{Ker \ f} k { }_{Ker \ f} \bullet b=a \bullet_M k { }_M \bullet b$ (which is valid since $f(a\bullet_M k { }_M \bullet b)=a\bullet_M f(k) { }_M \bullet b=0$). 
\item The \look{cokernel} of $f: \ M \rightarrow N$, a morphism of $A$-bimodule, is given by:
$$
coKer \ f = N/f(M)
$$
\noindent with $a \bullet_{coKer \ f} [k] { }_{coKer \ f} \bullet b= [a \bullet_N k { }_N \bullet b]$. This is well defined since, first, as $f$ is a morphism of $A$-bimodules, $a \bullet_N f(m) { }_N \bullet b=f(a \bullet_M m { }_M \bullet b)$ and $f(M)$ is an $A$-submodule of $N$. Secondly, the definition of the action does not depend on the particular representative chosen for $[k]$: suppose $k'=k+f(m)$ then $a \bullet_{coKer \ f} [k'] { }_{coKer \ f} \bullet b= [a \bullet_N k' { }_N \bullet b]=[a \bullet_N k { }_N \bullet b]+[a \bullet_N f(m) { }_N \bullet b]=a \bullet_{coKer \ f} [k] { }_{coKer \ f} \bullet b+[f(a \bullet_M m { }_M \bullet b)]=a \bullet_{coKer \ f} [k] { }_{coKer \ f} \bullet b$.
\end{itemize}

\begin{ground}
In the rest of the article, although some results apply when $R$ is a commutative ring, for sake of simplicity, we suppose that $R$ is a field.
\end{ground}

Consider now the more general category of modules over possibly varying algebras:

\begin{definition}
\label{def:mod}
\look{The category of right-modules (resp. left) over algebras} has:
\begin{itemize}
\item As objects, pairs $(M,A)$ where $A$ is an algebra and $M$ is a right (resp. left) $A$-module
\item As morphisms from $(M,A)$ to $(M',A')$, pairs of morphisms $(f,g)$, where $f: \ M \rightarrow M'$ is a morphism of right (resp. left) $R$-vector spaces, $g: \ A \rightarrow A'$ is a morphism of algebras such that for all $m\in M$ and $a\in A$, $f(m\bullet a)=f(m')\bullet g(a)$
\end{itemize}
We denote this category by $Mod_R$ (resp. ${ }_R Mod$). 
\end{definition}

The definition of the category of bimodules is similar and is denoted by ${ }_R Mod_R$. 

\paragraph{Restriction/extension of scalars}


Definition \ref{def:mod} is linked to the notion of ``restriction of scalars". 
Restriction of scalars defines a functor \cite{modabelian} as follows. We spell this out in the case of bimodules, since this is the case of interest to this paper. First:  

\begin{definition}
\label{def:restr1}
Let $g: \ A \rightarrow A'$ be a morphism of algebras. $A'$ can be considered as an $A$-bimodule using the action $a_1 { }_A \bullet a'\bullet_A a_2=g(a_1)\times a'\times g(a_2)$, for all $a_1, a_2 \in A$, $a' \in A'$.    
\end{definition}

Then, we can define: 

\begin{definition}
\label{def:restrictionscalars}
The \look{restriction of scalars} functor (along $g$) is 
$$g^*: \ { }_R Mod_R \ {A'} \rightarrow { }_R Mod_{R} \ A$$ 
\noindent with 
\begin{itemize}
    \item $g^*(M)$ being $M$ as an $R$-vector space, and the action of $A$ on $m \in M$ is by definition $a_1 { }_A \bullet m \bullet_A a_2=g(a_1) { }_{A'} \bullet m\bullet_{A'} g(a_2)$. 
\item 
Let $f: \ M' \rightarrow N'$ be a morphism of $A'$-bimodules. Then $g^*(f)(m')=f(m')$. 
\end{itemize}
\end{definition}

Indeed, the definition of $g^*(f)$ is valid since $g^*(f)(a_1 { }_A \bullet m\bullet_A a_2)=f(a_1 { }_A \bullet m\bullet_A a_2)=f(g(a_1) { }_{A'}\bullet m \bullet_{A'} g(a_2)) = g(a_1) { }_{A'} \bullet f(m)\bullet_{A'} g(a_2)=a_1 { }_A \bullet f(m)\bullet_A a_2$.

Definition \ref{def:mod} of morphisms of ${ }_R Mod_{R}$ is equivalent to asking them to be $(f,g)$ with $f$ being a morphism of $A$-bimodules from $M$ to the restriction of scalars of $M'$ along $g$. 

\begin{lemma}
\label{lem:extensionscalars}
Restriction of scalars has a left adjoint \cite{modabelian}:  
$$g_!: \ { }_A Mod \rightarrow { }_{A'} Mod$$
\noindent called the \look{extension of scalars} functor along $g$ functor, which is:
\begin{itemize}
    \item $g_!(M)$ is, as an $A$-bimodule, $A' \otimes_{A} M\otimes_{A^{op}} A'$, with $A'$ considered as an $A$-bimodule as in Definition \ref{def:restr1}. 
    Hence as an $R$-vector space, it is generated by $a'_1\otimes m \otimes a'_2$ with $a'_1, a'_2 \in A'$ and $m \in M$ 
    such that, for all $a_1, a_2 \in A$: 
        \begin{multline}
        \label{eq:restrictionscalars}
    a'_1 \otimes (a_1 { }_A \bullet m \bullet_A a_2) \otimes a'_2=(a'_1 { }_A \bullet a_1) \otimes m \otimes (a_2 \bullet_A a'_2)\\=(a'_1\times g(a_1))\otimes m \otimes (g(a_2) \times a'_2)
    \end{multline}
    Finally, as a $A'$-bimodule, the action of $a'_3 \in A'$ on the left and $a'_4 \in A'$ on the right is defined as:
    \begin{equation}
    \label{eq:restrictionscalars2}
        a'_3 { }_{A'} \bullet (a'_1 \otimes m \otimes a'_2) \bullet_{A'} a'_4=(a'_3 \times a'_1) \otimes m \otimes (a'_2 \times a'_4)
        \end{equation}
\item 
Let $h: \ M \rightarrow N$ be a morphism of $A$-bimodules, then $g_!(h)(a'_1 \otimes m\otimes a'_2)=a'_1 \otimes h(m)\otimes a'_2$. 
\end{itemize}
\end{lemma}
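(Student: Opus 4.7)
The plan is to verify that $g_!$ is well-defined as a functor and then establish the adjunction $g_! \dashv g^*$ by the standard tensor-hom argument, adapted to bimodules. I would proceed in three steps.

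First, I would check well-definedness on objects. The space $A' \otimes_A M \otimes_{A^{op}} A'$ is, by definition, the quotient of the free $R$-vector space on triples $a'_1 \otimes m \otimes a'_2$ by the relations in equation (\ref{eq:restrictionscalars}), which encode compatibility with both the $A$-bimodule structure on $M$ and the $A$-bimodule structure on $A'$ induced by $g$ (Definition \ref{def:restr1}). The $A'$-bimodule action in (\ref{eq:restrictionscalars2}) descends to this quotient because associativity in $A'$ gives $a'_3\times(a'_1\times g(a_1))=(a'_3\times a'_1)\times g(a_1)$, so left- and right-multiplication by elements of $A'$ preserve the defining relations. The bimodule axioms on $g_!(M)$ then reduce to associativity in $A'$.

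Second, I would check functoriality. For $h: M \to N$ a morphism of $A$-bimodules, the formula $g_!(h)(a'_1\otimes m\otimes a'_2)=a'_1\otimes h(m)\otimes a'_2$ respects the relations (\ref{eq:restrictionscalars}) precisely because $h$ commutes with the $A$-action on both sides; it respects (\ref{eq:restrictionscalars2}) trivially. Composition and identity are preserved on generators, hence everywhere by $R$-linearity.

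The heart of the proof is constructing the adjunction natural isomorphism $\Theta_{M,N}$ between the hom-set of $A'$-bimodule maps $g_!(M)\to N$ and the hom-set of $A$-bimodule maps $M\to g^*(N)$. Given $\phi: g_!(M)\to N$, set $\Theta(\phi)(m)=\phi(1_{A'}\otimes m\otimes 1_{A'})$. This is $A$-bilinear because
\[
\Theta(\phi)(a_1 \cdot m\cdot a_2) = \phi\bigl(1\otimes (a_1\cdot m\cdot a_2)\otimes 1\bigr) = \phi\bigl(g(a_1)\otimes m\otimes g(a_2)\bigr) = g(a_1)\cdot \phi(1\otimes m\otimes 1)\cdot g(a_2),
\]
which is exactly the action of $A$ on $g^*(N)$ per Definition \ref{def:restrictionscalars}. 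Conversely, given $\psi: M\to g^*(N)$, set $\Theta^{-1}(\psi)(a'_1\otimes m\otimes a'_2)=a'_1\cdot \psi(m)\cdot a'_2$; well-definedness on the tensor quotient uses that $\psi$ is $A$-bilinear and that $A'$ acts on itself by $g$ via Definition \ref{def:restr1}, while $A'$-bilinearity is immediate from (\ref{eq:restrictionscalars2}). These two assignments are mutually inverse on generators and $R$-linear, hence mutually inverse. Naturality in $M$ and $N$ follows by direct substitution from the functoriality of $g_!$ and $g^*$.

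The main obstacle is purely bookkeeping: one must track the two tensor relations simultaneously while verifying compatibility with the $A'$-bimodule action, since $g_!(M)$ is a balanced tensor on both the left and the right. Conceptually, nothing new happens beyond the observation that restriction/extension for bimodules is the usual tensor-hom adjunction applied symmetrically on the two sides.
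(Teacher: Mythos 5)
Your proof is correct. The paper's own treatment of this lemma is considerably more terse: it invokes \cite{modabelian} for the existence of the left adjoint and only checks one thing explicitly, namely that the assignment $g_!(h)(a'_1\otimes m\otimes a'_2)=a'_1\otimes h(m)\otimes a'_2$ descends to the tensor quotient defined by Equation~(\ref{eq:restrictionscalars}), i.e.\ that $g_!$ is well-defined on morphisms. You go further and give a self-contained verification: well-definedness of the $A'$-bimodule structure on $g_!(M)$, functoriality, and, most importantly, an explicit construction of the natural bijection
\[
\Theta_{M,N}:\ \mathrm{Hom}_{A'\text{-bimod}}\bigl(g_!(M),N\bigr)\ \longrightarrow\ \mathrm{Hom}_{A\text{-bimod}}\bigl(M,g^*(N)\bigr)
\]
via $\Theta(\phi)(m)=\phi(1_{A'}\otimes m\otimes 1_{A'})$ and its inverse $\Theta^{-1}(\psi)(a'_1\otimes m\otimes a'_2)=a'_1\cdot\psi(m)\cdot a'_2$, with the checks that $\Theta(\phi)$ is $A$-bilinear (using Equation~(\ref{eq:restrictionscalars}) with $a'_1=a'_2=1_{A'}$), that $\Theta^{-1}(\psi)$ respects the balanced-tensor relations (using $A$-bilinearity of $\psi$), and that the two are mutually inverse. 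What the paper's citation-based route buys is brevity in an article that is already long; what your route buys is self-containment, which is arguably more in keeping with the paper's stated goal of being ``mostly self-contained.'' Both are legitimate; there is no gap in your argument.
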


Indeed, the action on morphisms is well defined since $g_!(h)(a'_1 \otimes (a_1 \bullet_M m { }_M \bullet a_2)\otimes a'_2)=a'_1 \otimes h(a_1 \bullet_M m{ }_M\bullet a_2)\otimes a'_2$ and $g_!(h)((a'_1 \times g(a_1)) \otimes m \otimes (g(a_2)\times a'_2))=(a'_1 \times g(a_1)) \otimes h(m)\otimes (g(a_2)\times  a'_2)=a'_1 \otimes (a_1 \bullet_N h(m){ }_N \bullet a_2) \otimes a'_2=h(a'_1 \otimes (a_1 \bullet_M m{ }_M\bullet a_2)\otimes a'_2=g_!(h)(a'_1 \otimes (a_1 \bullet_M m{ }_M \bullet a_2)\otimes a'_2)$.


\begin{remark}
\label{rem:rem4}
Suppose $g$ is an inclusion of algebras with $A \subseteq A'$, hence $g(a)=a$ for $a \in A$. 
We note that, for $a_1, a_2 \in A$ and $m \in M$, we can identify $a_1 \otimes m \otimes a_2$ in $g_!(M)$ with $a_1 { }_A \bullet m \bullet_A a_2$. This is done by identifying $1_{A'} \otimes n \otimes 1_{A'}$  with $n \in M$ and by noticing that, by Equation \ref{eq:restrictionscalars}, as $1_A=1_{A'}$:  

\begin{eqnarray*}
1_A \otimes (a_1\bullet m \bullet a_2) \otimes 1_A & = & (1_A\times g(a_1)) \otimes m \otimes (g(a_2)\times 1_A) \\
& = & (1_A \times a_1) \otimes m \otimes (a_2 \times 1_A) \\
& = & a_1 \otimes m \otimes a_2
\end{eqnarray*}
\end{remark}

\begin{remark}
Restriction of scalars also has a right adjoint called \look{co-extension of scalars}, constructed in a similar manner but with the $Hom$ functor between $A$-modules, instead of the tensor product. 
\end{remark}

\begin{remark}
An alternative definition of the category of modules would have morphisms $(f,g): \ (M,A) \rightarrow (M',A')$ with $g: \ A' \rightarrow A$ morphism of algebras and $f$ morphism of $A'$-bimodules from $g_!(M)$ to $M'$. As is well known \cite{twocatmod}, these two ways of defining categories of modules are equivalent in the case of almost finitely generated projective modules, which includes finitely generated modules, which will be the main case studied in this paper. 
\end{remark}

\section{Path algebras and modules over path algebras}

\label{sec:pathalgebras}

Let $\C$ be a small category. 

\begin{definition}
\label{def:categoryalgebra}
The \look{category algebra} or \look{convolution algebra} 
$R[\C]$ of $\C$ over $R$ is the $R$-algebra
whose underlying $R$-vector space is the free module 
$R[\C_1]$ over the set of morphisms of $\C$ and 
whose product operation is defined on basis-elements 
$f$, $g\in \C_1 \subseteq R[\C]$
to be their composition if they are composable and zero otherwise:
$$f\times g=
\left\{\begin{array}{ll}
g\circ f & \mbox{if composable} \\
0 & \mbox{otherwise} 
\end{array}
\right.$$
\end{definition}

Alternatively, the category algebra of $\C$ can be constructed from the linearization of $\C$ (its algebroid) by somehow forgetting its categorical structure, that is, by replacing composition by the algebra multiplication. 

Note that when $\C$ is a groupoid, $R[\C]$ is a star-algebra (an algebra equipped with an anti-involution). 
When $\C$ is the free category of a poset $P$, $R[\C]$ is the incidence algebra of the poset $P$. When $\C$ is the free category of a quiver $Q$, $R[\C]$ is known as the path algebra of $Q$. 
When $\C$ is a category, $R[\C]$ is a quotient of the path algebra of its underlying quiver by an ``obvious'' ideal (the one that has as relations $f\times g-g\circ f=0$). 



\begin{remark}
\label{rem:notfunctorial}
Finally, note that the category algebra is not a functorial construction. This is because there is no reason, given a functor $F$ from $\C$ to $\D$, its linearization (the obvious linear map induced by $F$ on the underlying $R$-vector space of $R[\C]$) preserves internal multiplication. 

In fact, it is easy to see that only injective-on-objects functors are mapped naturally onto algebra morphisms, see e.g. \cite{goubault2023semiabelian}. 
\end{remark}


\paragraph{Posets and incidence algebras} (see for instance \cite{assocalg})


Consider the case of a partial order $\leq$ over any $n$-element set $S$. We enumerate $S$ as $s_1,\ldots, s_n$, and in such a way that the enumeration is compatible with the order $\leq$ on $S$, that is, $s_i \leq s_j$ implies $j \leq i$, which is always possible.

Then the path algebra can be seen 
as the subalgebra $\TM_n(R)$ of the algebra of $n\times n$ matrices with coefficients in $R$, $\SM_n(R)$, together with addition, multiplication by scalars, and matrix multiplication, made of lower triangular matrices. 



Consider for instance the following partial order: 

\[
\begin{tikzcd}
n \arrow[r] & n-1 \arrow[r] & \cdots \arrow[r] & 1
\end{tikzcd}
\]

Its path algebra is exactly the algebra of lower triangular matrices, that we denote by: 
$$
\begin{pmatrix}
R & 0 & \ldots & 0 & 0 \\
R & R & \ldots & 0 & 0 \\
\ldots \\
R & R & \ldots & R & 0 \\
R & R & \ldots & R & R
\end{pmatrix}
$$

\paragraph{Idempotents, quivers and path algebras}

\label{sec:idempotents}




Let $A$ be an $R$-algebra. The following are classical notions, see e.g. \cite{assocalg}. An element $e \in A$ is called an idempotent if $e^2 = e$. The idempotent $e$ is said to be central if $a\times e = e\times a$ for all $a \in A$. The idempotents $e_1, e_2 \in A$ are called orthogonal if $e_1 \times e_2 = e_2\times e_1 = 0$. The idempotent $e$ is said to be primitive if $e$ cannot be written as a sum $e = e_1 + e_2$, where $e_1$ and $e_2$ are nonzero orthogonal idempotents of $A$.

 
 \begin{definition}[\cite{assocalg}]
 We say that $\{e_1,\ldots,e_n\}$ is a \look{complete set of primitive orthogonal idempotents} of $A$ if the $e_i$ are primitive idempotents of $A$ that are pairwise orthogonal and such that $e_1 +\ldots + e_n$ is the unit of $A$.
 \end{definition}
 

\begin{remark}
\label{rem:completeidempotents}
In the case of path algebras $R[Q]$ of some quiver $Q$, primitive idempotents are constants paths on each vertex $i$ of $Q$, $e_i$. They form a set of orthogonal primitive idempotents, as for $i\neq j$ two distinct vertices of $Q$, $e_i \times e_j=0$. They form a complete set of primitive orthogonal idempotents if $Q$ has finitely many vertices. 
\end{remark}


We refer to Appendix \ref{sec:B} for more notions on algebras that relate to path algebras. 
We now recap the following lemma, that will prove useful for practical calculations: 

\begin{lemma}[\cite{assocalg}]
\label{lem:acyclicquiver}
Let $Q$ be a connected, finite, and acyclic quiver with $Q_0 = \{1,2,\ldots,n\}$ such that, for each $i, j \in Q_0$, $j \leq i$ whenever there exists a path from $i$ to $j$ in $Q$. Then the path algebra $R[Q]$ is isomorphic to the
triangular matrix algebra
$$
\begin{pmatrix}
e_1(R[Q])e_1 & 0 & \ldots & 0 \\
e_(R[Q])e_1 & e_2(R[Q])e_2 & \ldots & 0 \\
\ldots \\
e_n(R[Q])e_1 & e_n(R[Q])e_2 & \ldots & e_n(R[Q])e_n 
\end{pmatrix}
$$
\end{lemma}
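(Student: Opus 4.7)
The plan is to use the Peirce decomposition induced by the complete set of primitive orthogonal idempotents $\{e_1,\ldots,e_n\}$ from Remark \ref{rem:completeidempotents}, and then exploit acyclicity to show the decomposition is triangular.

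\medskip

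First, since $1_{R[Q]} = e_1 + \cdots + e_n$ and the $e_i$ are pairwise orthogonal idempotents, for any $x \in R[Q]$ we have $x = \sum_{i,j} e_i x e_j$, and the summands lie in the pairwise intersecting-zero subspaces $e_i R[Q] e_j$. This yields the $R$-vector space decomposition
\[
R[Q] \;=\; \bigoplus_{i,j=1}^{n} e_i R[Q] e_j.
\]
I would then identify the component $e_i R[Q] e_j$: if $p$ is a path in $Q$ from vertex $j$ to vertex $i$, then $e_i \times p = p$ and $p \times e_j = p$ (since the constant path at the source/target composes identically), while for any path $p$ not from $j$ to $i$, the product $e_i \times p \times e_j$ vanishes. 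Therefore $e_i R[Q] e_j$ is the $R$-span of all paths from $j$ to $i$ in $Q$.

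\medskip

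Next, I would invoke the hypothesis on the numbering: if there exists a path from $i$ to $j$, then $j \leq i$. Contrapositively, if $j > i$ then there is no path from $j$ to $i$, so $e_i R[Q] e_j = 0$. Thus only the components with $j \leq i$ (the lower-triangular positions) are nonzero, matching the shape of the displayed matrix algebra.

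\medskip

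Finally, I would define the map
\[
\phi : R[Q] \longrightarrow \begin{pmatrix}
e_1 R[Q] e_1 & 0 & \cdots & 0 \\
e_2 R[Q] e_1 & e_2 R[Q] e_2 & \cdots & 0 \\
\vdots & & \ddots & \vdots \\
e_n R[Q] e_1 & e_n R[Q] e_2 & \cdots & e_n R[Q] e_n
\end{pmatrix}
\]
by $\phi(x) = \bigl( e_i x e_j \bigr)_{i,j}$. The decomposition above makes this an $R$-linear isomorphism of the underlying vector spaces. To check it is an algebra map, I would compute
\[
e_i (xy) e_j \;=\; e_i x \Bigl( \sum_{k} e_k \Bigr) y e_j \;=\; \sum_{k=1}^{n} (e_i x e_k)(e_k y e_j),
\]
which is precisely the $(i,j)$ entry of $\phi(x)\phi(y)$ under the algebra structure on the triangular matrix algebra (where multiplication of blocks $e_i R[Q] e_k$ and $e_k R[Q] e_j$ into $e_i R[Q] e_j$ is the restriction of the ambient multiplication in $R[Q]$). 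The unit is preserved since $\phi(1)$ is the diagonal matrix with entries $e_i$, which is the identity of the block-matrix algebra.

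\medskip

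There is essentially no obstacle here beyond bookkeeping; the only point requiring attention is ensuring that the ``triangular matrix algebra'' in the statement is interpreted with blocks $e_i R[Q] e_j$ (not as scalar entries in $R$), and that its multiplication is the natural block multiplication inherited from $R[Q]$ via $(e_i x e_k)(e_k y e_j) = e_i x e_k y e_j$. Once this identification is in place, the verification is a direct consequence of the Peirce decomposition together with the acyclicity-plus-numbering hypothesis.
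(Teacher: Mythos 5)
The paper states this lemma with only a citation to \cite{assocalg}; there is no in-text proof to compare against, so I evaluate your argument on its own. Your overall strategy --- decompose $R[Q]$ via the Peirce decomposition $R[Q] = \bigoplus_{i,j} e_i R[Q] e_j$, use the numbering hypothesis to kill the off-triangular blocks, and observe that the block map is multiplicative by inserting $1 = \sum_k e_k$ --- is the right one and is essentially the standard argument from \cite{assocalg}.

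However, there is a genuine directional error in the middle step. Under the paper's composition convention (Definition~\ref{def:categoryalgebra}, $f \times g = g \circ f$), the product $e_i \times p \times e_j$ is nonzero precisely when $p$ \emph{starts} at $i$ and \emph{ends} at $j$: indeed $e_i \times p = p \circ e_i$ requires $\mathrm{dom}(p) = i$, and $p \times e_j = e_j \circ p$ requires $\mathrm{cod}(p) = j$. So $e_i R[Q] e_j$ is the span of paths from $i$ to $j$, not from $j$ to $i$ as you claim. This is visible in the paper's worked examples: in Example~\ref{ex:emptysquare} the $(4,1)$ entry $R^2$ corresponds to the two paths \emph{from} $4$ \emph{to} $1$, and in Example~\ref{ex:2holesbis} the element $ihfg$, which runs from $9$ to $1$, lies in $e_9 \bullet R_1[X] \bullet e_1$. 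You then compound this with a second sign error in applying the hypothesis: you write ``if $j > i$ then there is no path from $j$ to $i$,'' but the hypothesis gives ``path from $a$ to $b$ implies $b \leq a$,'' whose contrapositive with $a=j$, $b=i$ is ``if $i > j$ then there is no path from $j$ to $i$'' (equivalently, with $a=i$, $b=j$: ``if $j > i$ then there is no path from $i$ to $j$''). The two flips cancel, so your final conclusion (lower triangularity) happens to be correct, but neither intermediate step is. To repair this, simply identify $e_i R[Q] e_j$ with paths from $i$ to $j$ and apply the contrapositive of the hypothesis directly: $j > i$ implies no path from $i$ to $j$, so $e_i R[Q] e_j = 0$. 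The rest of your argument (Peirce decomposition, block multiplicativity, unit preservation) is fine as written.
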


\begin{example}[\cite{assocalg})]
\label{ex:kronecker}
Consider: 
\[
\begin{tikzcd}
2 \arrow[r,bend left,"\alpha"] \arrow[r,bend right,swap,"\beta"] & 1
\end{tikzcd}
\]

In that case, the corresponding path algebra is, by Lemma \ref{lem:acyclicquiver} is the following matrix algebra: 
$$
\begin{pmatrix}
R & 0 \\
R^2 & R
\end{pmatrix}
$$
\noindent known as the Kronecker algebra. Elements of this Kronecker algebra are of the form: 
$$
\begin{pmatrix}
a & 0 \\
(b,c) & d
\end{pmatrix}
$$
\noindent with the ``obvious'' addition and external multiplication, and as internal (algebra) multiplication, the ``obvious one'' as well: 
$$
\begin{pmatrix}
a & 0 \\
(b,c) & d
\end{pmatrix}
\times
\begin{pmatrix}
a' & 0 \\
(b',c') & d'
\end{pmatrix}
=
\begin{pmatrix}
a a' & 0 \\
(ba'+db',ca'+dc') & dd'
\end{pmatrix}
$$
\end{example}

\begin{example}[Empty square]
\label{ex:emptysquare}
We apply Lemma \ref{lem:acyclicquiver} to: 
\[\begin{tikzcd}
  4 \arrow[r] \arrow[d]
    & 2 \arrow[d] \\
  3 \arrow[r]
& 1 \end{tikzcd}
\]
We get the following path algebra: 
$$
\begin{pmatrix}
R & 0 & 0 & 0 \\
R & R & 0 & 0 \\
R & 0 & R & 0 \\
R^2 & R & R & R 
\end{pmatrix}
$$
\end{example}

From a finite precubical set, we can also define a quiver and a corresponding path algebra, as follows: 

\begin{definition}
\label{def:quiveralgebra}
Let $X$ be a finite precubical set and $R$ be a commutative field. 
We define $\look{R[X]}$ to be the quiver algebra $R[X_{\leq 1}]$ of the underlying quiver $X_{\leq 1}$ of $M$. 

Generators of the quiver algebra $R[X_{\leq 1}]$, as an $R$-vector space, are 0-cube chains indexed by their start and end points $p=(p_1,\ldots,p_l)_{u',u}$, possibly empty ($l=0$), in that case $u'=u$. Empty 0-cube chains from $u$ to $u$ are also noted $e_u$ as they are going to be the idempotents of the quiver algebra. When the context makes it clear, we will drop the subscripts on generators of the path algebra. 

Let $p=(p_1,\ldots,p_l)_{u',u}$ from $u'$ to $u$ and $q=(q_1,\ldots,q_m)_{v,v'}$ from $v$ to $v'$ be 1-cube chains in $X$. 
These are the base elements of $R[X]$ as an $R$-vector space. The internal multiplication on these basis elements of $R[M]$ is: 
$$p\times q=\left\{\begin{array}{ll}
(p_1,\ldots,p_l,q_1,\ldots,q_m)_{u',v'} & \mbox{if $u=v$}\\
0 & \mbox{otherwise}
\end{array}\right.
$$
\end{definition}

The fact we are considering finite precubical sets when defining $R[X]$ allows us to make it a unital associative algebra, as is well known \cite{assocalg}. The unit of the algebra is the sum of all (finitely many) idempotents, i.e. constant paths on each vertex, as we have seen before already. 



\paragraph{Quivers and bimodules over path algebras}

Bimodules over path algebras enjoy a number of interesting properties. The first one that will be useful for representing the homology modules we are going to introduce in Section \ref{sec:homologymodules} is that bimodules over path algebras are naturally bigraded over pairs of vertices of the underlying graph, similarly to path algebras, Lemma \ref{lem:acyclicquiver}: 


\begin{lemma}
\label{lem:bimoddecomp}
Let $Q$ be a quiver with finitely many vertices and $M$ be a $R[Q]$-bimodule. Then $M$ is isomorphic, as an $R$-vector space, to the coproduct of all $R$-vector spaces $e_a \bullet M \bullet e_b$, for $a, b \in Q_0$.
\end{lemma}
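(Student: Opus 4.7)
The plan is to use the Peirce-type decomposition induced by the complete set of orthogonal idempotents $\{e_a : a \in Q_0\}$. By Remark \ref{rem:completeidempotents}, the constant paths $e_a$ form a complete set of primitive orthogonal idempotents of $R[Q]$, so $\sum_{a \in Q_0} e_a = 1_{R[Q]}$ (this is where finiteness of $Q_0$ is essential) and $e_a \times e_b = \delta_{a,b} e_a$.

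First, I would define a natural $R$-linear map
\[
\phi : \bigoplus_{a,b \in Q_0} (e_a \bullet M \bullet e_b) \longrightarrow M
\]
by sending a tuple $(m_{a,b})_{a,b}$ to $\sum_{a,b} m_{a,b}$, where each $m_{a,b} \in e_a \bullet M \bullet e_b \subseteq M$ is included via the underlying set inclusion. It is clear that $\phi$ is $R$-linear.

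To prove surjectivity, I would take any $m \in M$ and compute
\[
m = 1_{R[Q]} \bullet m \bullet 1_{R[Q]} = \Bigl(\sum_{a \in Q_0} e_a\Bigr) \bullet m \bullet \Bigl(\sum_{b \in Q_0} e_b\Bigr) = \sum_{a,b \in Q_0} e_a \bullet m \bullet e_b,
\]
using the bimodule axioms; each summand lies in $e_a \bullet M \bullet e_b$, so $m$ is in the image of $\phi$.

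For injectivity, suppose $(m_{a,b})_{a,b}$ is a tuple with $\sum_{a,b} m_{a,b} = 0$ in $M$. Fix $(a_0,b_0) \in Q_0 \times Q_0$ and act on the identity by $e_{a_0}$ on the left and $e_{b_0}$ on the right. Since each $m_{a,b}$ satisfies $m_{a,b} = e_a \bullet m_{a,b} \bullet e_b$, the orthogonality relations give
\[
e_{a_0} \bullet m_{a,b} \bullet e_{b_0} = (e_{a_0} \times e_a) \bullet m_{a,b} \bullet (e_b \times e_{b_0}) = \delta_{a_0,a}\,\delta_{b,b_0}\, m_{a,b}.
\]
Hence $0 = e_{a_0} \bullet 0 \bullet e_{b_0} = \sum_{a,b} e_{a_0} \bullet m_{a,b} \bullet e_{b_0} = m_{a_0,b_0}$. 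As $(a_0,b_0)$ was arbitrary, the tuple is zero, and $\phi$ is injective. This is essentially a Peirce decomposition argument; I do not anticipate any real obstacle beyond making sure the finiteness of $Q_0$ is invoked so that $\sum_a e_a$ makes sense as the unit of $R[Q]$.
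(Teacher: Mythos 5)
Your proof is correct and takes essentially the same route as the paper: it is the Peirce decomposition of $M$ with respect to the complete set of orthogonal idempotents $\{e_a\}$. The one place where you do better is the independence step. The paper argues only that distinct pieces $e_a \bullet M \bullet e_b$ have pairwise trivial intersection, which in general does not by itself yield a direct sum decomposition (three distinct lines through the origin in $R^2$ have pairwise trivial intersections but are not independent). Your argument, hitting a relation $\sum_{a,b} m_{a,b} = 0$ with $e_{a_0}(-)e_{b_0}$ to extract each coordinate $m_{a_0,b_0}$, establishes independence directly and cleanly. So the two proofs share the same idea, but yours closes a small gap in the paper's write-up of the injectivity/directness claim.
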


\begin{proof}
As noted in Remark \ref{rem:completeidempotents}, $\{e_a \ \mid \ a \in Q_0\}$ is a complete set of primitive orthogonal idempotents of $R[Q]$, hence for any $R[Q]$-bimodule $M$, $M = 1\bullet M \bullet 1$ with $1=\sum\limits_{a \in Q_0} e_a$, so any $m\in M$ decomposes into $m= \sum\limits_{a, b \in Q_0} e_a \bullet m \bullet e_b$. 

Furthermore, each $e_a \bullet M \bullet e_b$ is distinct from one another. Indeed, consider e.g. $a' \neq a \in Q_0$ and $m\in e_a \bullet M \bullet e_b \cap e_{a'} \bullet M \bullet e_b$. We can write $m=e_a \bullet m \bullet e_b$ and $m=e_{a'}\bullet m \bullet e_b$ so $m=e_a \bullet (e_{a'} \bullet m \bullet e_b) \bullet e_b$, which, by the axioms of bimodules is equal to $e_a \times e_{a'} \bullet m \bullet e_b \times e_b$. But $e_a$ and $e_{a'}$ are orthogonal, so $e_a \times e_{a'}=0$, hence $m=0$. 

We can similarly prove that for $b \neq b' \in Q_0$, $m=e_a \bullet m \bullet e_b=e_a \bullet m \bullet e_{b'}$ is necessarily equal to 0. 

Therefore $M$ is isomorphic, as an $R$-vector space, to the coproduct of all $R$-vector spaces $e_a \bullet M \bullet e_b$, for $a, b \in Q_0$.     
\end{proof}

This leads us to the natural notation for $R[Q]$-bimodules below: 

\begin{remark} 
\label{rem:notation}
We note that for $X$ a precubical set, for all $v \in X_0$ and $w \in X_0$, and any $R[X]$-bimodule $M$, $e_v \bullet M \bullet e_w$ is an $R$-vector space. We represent any $R[X]$-bimodule $M$ as a matrix of $R$-vector spaces which $(v,w)$ entry is $e_v \bullet M \bullet e_w$, similarly to what we have been doing with path algebras in the previous paragraph. We will see the interest of this notation later, e.g. in Example \ref{ex:2holesbis}.
\end{remark}

\begin{remark}
\label{rem:notationtensor}
We also note that representing a $R[X]$-bimodule $M$ by a matrix $(M_{i,j})$ whose columns and lines are indexed by the vertices $x \in X_0$, and similarly with a $R[Y]$-bimodule $N$, represented by a matrix $(N_{i,j})$ with columns indexed by the vertices $y\in Y_0$, the $R[X]\otimes R[Y]$-bimodule $M\otimes N$ defined in Section \ref{sec:assoc} is represented using the same matrix notation as in Remark \ref{rem:notation} as a $X_0\times Y_0$ indexed matrix, with entries $((i, i'),(j, j'))$ being $M_{i,j}\otimes N_{i',j'}$.
\end{remark}

\section{Homology in modules over the path algebra}


\subsection{Definition and first basic properties}



\label{sec:homologymodules}

\label{sec:precub}





\begin{lemma}
\label{lem:free}
Let $X$ be a finite precubical set. For all $i\geq 0$, the $R$-vector space $R_{i}[X]$ ($i\geq 1$) of $(i-1)$-cube chains of Definition \ref{def:modcub} can be given the structure of a $R[X]$-bimodule, which is free for $i\geq 2$: given 0-cube chains in $X$, $p=(p_1,\ldots,p_l)_{u',u}$ from $u'$ to $u$, $q=(q_1,\ldots,q_m)_{v,v'}$ from $v$ to $v'$, the following is the $R[X]$-bimodule action of $\langle p,q\rangle$ on $r=(r_1,\ldots,r_n)$, a $(i-1)$-dimensional cube chain in $R_i[X]$: 
$$p\bullet r \bullet q=\left\{\begin{array}{ll}
(p_1,\ldots,p_l,r_1,\ldots,r_n,q_1,\ldots,q_m)_{u',v'} & \mbox{if $d^0(r_1)=d^1(p_l)$}\\
& \mbox{ \ \ and $d^1(r_n)=d^0(q_1)$} \\
0 & \mbox{otherwise}
\end{array}\right.
$$
\noindent the full action of $R[X]$ being defined by bilinearity. 
\end{lemma}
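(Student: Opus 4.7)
My plan is first to verify that the prescribed formula defines a well-posed bimodule action, and second to exhibit a canonical free basis when $i \geq 2$. For well-definedness on basis elements, suppose $r = (r_1, \ldots, r_n)$ is an $(i-1)$-cube chain of type $(n_1, \ldots, n_k)$, and the endpoint conditions $d^1(p_l) = d^0(r_1)$ and $d^1(r_n) = d^0(q_1)$ hold. Then $p \bullet r \bullet q$ is a cube chain of type $(1,\ldots,1,n_1,\ldots,n_k,1,\ldots,1)$, and by Remark \ref{rem:dimension} its dimension equals $\dim p + \dim r + \dim q = 0 + (i-1) + 0 = i-1$, so it lies in $R_i[X]$. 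The formula is then extended to all of $R[X] \otimes R_i[X] \otimes R[X]$ by $R$-bilinearity, and the remark that incompatible endpoints yield zero makes this extension unambiguous.

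Next I would verify the bimodule axioms. Left associativity $(p_1 \times p_2) \bullet r = p_1 \bullet (p_2 \bullet r)$ follows because concatenation of 0-cube chains is associative and agrees with the multiplication of $R[X]$ from Definition \ref{def:quiveralgebra}; crucially, both sides vanish under exactly the same endpoint-mismatch condition ($d^1(p_{2,\text{last}}) \neq d^0(r_1)$ or the ends of $p_1, p_2$ fail to match). Right associativity is symmetric, and middle compatibility $(p \bullet r) \bullet q = p \bullet (r \bullet q)$ is immediate since both sides produce the same concatenated sequence. The unit axiom reduces to $e_{d^0(r_1)} \bullet r \bullet e_{d^1(r_n)} = r$, which holds by the convention that empty 0-cube chains are identities for concatenation, together with $1 = \sum_{u \in X_0} e_u$.

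For freeness when $i \geq 2$, I would take as basis the set
\[
\mathcal{B}_i = \{\, r' = (r'_1, \ldots, r'_m) \in R_i[X] \text{ cube chain} : \dim r'_1 \geq 2 \text{ and } \dim r'_m \geq 2 \,\}.
\]
Any cube chain $r = (r_1, \ldots, r_n)$ of dimension $i - 1 \geq 1$ must contain at least one cube of dimension $\geq 2$, since $1$-cubes contribute $0$ to the dimension. Let $j_{\min}$ and $j_{\max}$ be the least and greatest indices with $\dim r_j \geq 2$. Setting $p = (r_1, \ldots, r_{j_{\min}-1})$, $r' = (r_{j_{\min}}, \ldots, r_{j_{\max}})$, and $q = (r_{j_{\max}+1}, \ldots, r_n)$ (allowing $p$ or $q$ to degenerate to idempotents $e_u$), we get $r = p \bullet r' \bullet q$ with $r' \in \mathcal{B}_i$ and $p, q \in R[X]$.

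The main subtlety will be establishing that $\mathcal{B}_i$ is genuinely free as a bimodule basis, i.e., that the natural map $\bigoplus_{r' \in \mathcal{B}_i} R[X]\, e_{d^0(r'_1)} \otimes_R e_{d^1(r'_m)} R[X] \to R_i[X]$, sending $p \otimes q$ in the $r'$-summand to $p \bullet r' \bullet q$, is an isomorphism. Surjectivity is furnished by the decomposition above. For injectivity, the key point is that the indices $j_{\min}$ and $j_{\max}$ are intrinsic invariants of the underlying cube chain, so the triple $(p, r', q)$ can be recovered uniquely from its concatenation; consequently two distinct $R$-basis elements of the free bimodule cannot map to the same element of $R_i[X]$, and linear collapse is ruled out. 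This uniqueness of decomposition is the only nontrivial part of the argument.
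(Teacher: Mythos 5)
Your proof is correct and follows essentially the same strategy as the paper: the same generating set $\mathcal{B}_i = G_i[X]$ (cube chains beginning and ending with a cube of dimension $\geq 2$), the same decomposition of an arbitrary $(i-1)$-cube chain by peeling off the maximal $1$-cube prefix and suffix, and the same underlying observation that the cut points are determined by the positions of the first and last cube of dimension $\geq 2$. The paper phrases this uniqueness via a dimension-additivity contradiction while you phrase it as the cut indices being intrinsic invariants, but these are the same argument; you also spell out the bimodule axioms and the precise form of the free bimodule in somewhat more detail than the paper does, which is fine.
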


%
 
\begin{proof}
Note that for $i=1$ the bimodule operation is just the left concatenation by $p$ composed with the right concatenation by $q$. Indeed, as an $R$-vector space, $R_1[X]$ is isomorphic to $R[X]$ and 
any algebra is indeed a bimodule over itself. The unit of the algebra $1=\sum\limits_{a\in X_0} e_a$ is the unique generator of $R_1[X]$ as a bimodule over $R[X]$. Indeed, we can generate all of $R_1[X]$ as $R[X]\bullet 1 \bullet R[X]$. This is in general not a free bimodule, and this is why we will need to distinguish the cases $i=1$ from $i\geq 2$ in the sequel of the article (see e.g. Lemma \ref{lem:normalform}). 



For $i\geq 2$, obviously, 
$(p_1,\ldots,p_l,r_1,\ldots,r_n,q_1,\ldots,q_m)_{u',v'}$ is an $(i-1)$-dimensional cube chain when $p$, $q$ are 0-dimensional cube chains and $r$ is an $(i-1)$-dimensional cube chain, see Remark \ref{rem:dimension}.


Finally, we prove that it is free over $R[X]$,  generated by $(i-1)$-cube chains $(p_1,\ldots,p_n)$ with $p_1 \in X_{\geq 2}$ and $p_n \in X_{\geq 2}$ ($n$ is possibly equal to 1), forming a set of generators we denote by $G_i[X]$. 

We first prove that $G_i[X]$ forms a set of generators for $R_{i}[X]$. As $R_i[X]$ is the $R$-vector space generated by $(i-1)$-cube chains, it is generated by $(i-1)$-cube chains of the form $p=(p_1,\ldots,p_m)_{u,v}$ where $p_j \in X$, $1\leq j\leq m$. As $i\geq 2$, not all of the $p_j$ can be in $X_1$, there must be at least one of the $p_j$ belonging to $X_{\geq 2}$. Let $l$ be such that $p_{l+1}$ is the first one of these $p_j\in X_{\geq 2}$ and $n+l$ be the last one of these $p_j \in X_{\geq 2}$. Then obviously, $r=(p_1,\ldots,p_l)\in R_1[X]$, $s=(p_{n+l+1},\ldots,p_m)\in R_1[X]$, $c=(p_{l+1},\ldots,p_{l+n})\in G_i[X]$ and $p=r\bullet c\bullet s$ by definition. 


We now prove freeness of the elements in $G_i[X]$. 
Suppose that we have:
\begin{equation} 
\label{eq:free}
\sum\limits_{i\in I, j\in J} \lambda_{i,j} (p_{i,j}\bullet g_j \bullet q_{i,j})=0
\end{equation}
\noindent with $\lambda_{i,j} \in R$ and $p_{i,j}, q_{i,j} \in R[X]$. We can always suppose that \begin{equation} 
\label{eq:condfree}
p_{i',j}=p_{i,j} \mbox{ and } q_{i',j}=q_{i,j} \mbox{ implies } i=i'
\end{equation}
\noindent if not, we only need to change the coefficients $\lambda_{i,j}$ and sets $I$ and $J$ so that this property is satisfied. Now we want to prove that Equation (\ref{eq:free}) (under the condition of Equation (\ref{eq:condfree})) implies that all $\lambda_{i,j}$ are equal to 0. 
As $R_i[X]$ is the $R$-vector space generated by elements of precisely the form $p_{i,j}\bullet g_j \bullet q_{i,j}$ we need to prove that no $p_{i,j}\bullet g_j \bullet q_{i,j}$ can be equal to a $p_{i',j'}\bullet g_{j'} \bullet q_{i',j'}$ with $j'\neq j$. This amounts to proving that $R[X]\bullet g \bullet R[X] \cap R[X]\bullet g' \bullet R[X]= \{0\}$ for $g \neq g'$ two elements of $R_i[X]$. 

Suppose $R[X]\bullet g \bullet R[X] \cap R[X]\bullet g' \bullet R[X]\neq \{0\}$, with $g$ and $g'$ in $G_{i}[X]$, and consider some non null element $c$ in the intersection. Thus $c$ is an $(i-1)$-cube chain $(c_1,\ldots,c_m)$, with $(c_i,\ldots,c_j)=g$ for some indices $i$ and $j$ and $(c_k,\ldots,c_l)=g'$ for some indices $k$ and $l$. If $i\neq k$ then suppose $i<k$ for instance, and as $c_i$ is in $X_{\geq 2}$ by hypothesis, the dimension of $(c_1,\ldots,c_m)$ is at least 1 (because the cell $c_i\in X_u$ accounts for $u-1$ in the calculus of the dimension of the cube chain $c$) plus the dimension of $g'$ (by additivity of dimension, Remark \ref{rem:dimension}), which is $i-1$, so the dimension of $c$ cannot be $i-1$. Hence $i$ must be equal to $k$. Similarly, $j$ must be equal to $l$ and $g=g'$. Hence $R_i[X]$ is freely generated by $G_i[X]$ as a $R[X]$-bimodule. 

\end{proof}

\begin{lemma}
\label{lem:precubboundary}
The boundary operator $\partial: \ R_{i+1}[X] \rightarrow R_i[X]$ ($i\geq 1$) of Definition \ref{def:boundaryoperator} 
is a morphism of $R[X]$-modules, that we write $\partial_{\mid R_{i+1}[X]}$ when we need to make precise on which cube chains it applies to. 
We also set $\partial_{\mid R_1[X]}: \ R_1[X] \rightarrow 0$ to be the 0 linear map. 

Boundary operators restrict to $R$-vector space morphisms from $R_{i+1}[X](a,b)$ to $R_i[X](a,b)$ where $R_i[X](a,b)$ is the sub $R$-vector space $R^{b}_{a,i}[X]=e_a \bullet R_i[X] \bullet e_b$. 

Hence $(R_i[X],\partial)$ is a chain of bimodules on the algebra $R[X]$ and for all $a$, $b$ in $X_0$, $(e_a\bullet R_i[X] \bullet e_b,\partial)$ is a chain of $R$-vector spaces.
\end{lemma}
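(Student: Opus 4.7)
The statement packages three claims: that $\partial$ is $R[X]$-bilinear (a morphism of bimodules), that it restricts to each endpoint-component $e_a \bullet R_{i+1}[X] \bullet e_b$, and that it squares to zero so that chain complexes are obtained. The last of these is already granted by Definition \ref{def:boundaryoperator}, so the plan is to dispatch the endpoint restriction first, and then use it to organize a clean check of bilinearity.

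For the endpoint preservation, I would note that $d_{k,\mathcal{I}}(c)$ replaces only the cell $c_k$ by the adjacent pair $(d^0_{\overline{\mathcal{I}}}(c_k), d^1_{\mathcal{I}}(c_k))$, leaving all other cells intact. Applying the initial $d^0$ on the left and final $d^1$ on the right and invoking the precubical commutation relations, one finds $d^0 \circ d^0_{\overline{\mathcal{I}}}(c_k) = d^0(c_k)$ and $d^1 \circ d^1_{\mathcal{I}}(c_k) = d^1(c_k)$, so neither the starting vertex of the first cell nor the ending vertex of the last cell is altered by $d_{k,\mathcal{I}}$. Consequently every term of $\partial c$ shares the endpoints of $c$, and $\partial$ restricts to each $R^b_{a,*}[X] = e_a \bullet R_*[X] \bullet e_b$.

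For bilinearity, by $R$-bilinearity it suffices to prove $\partial(p \bullet c \bullet q) = p \bullet \partial c \bullet q$ when $p, q$ are basis elements of $R[X]$ (0-cube chains, including the idempotents $e_a$) and $c$ is a generating cube chain. If the concatenation conditions of Lemma \ref{lem:free} fail then both sides vanish, the right side because each term in $\partial c$ has the same endpoints as $c$ by the previous paragraph. Otherwise, writing $p = (p_1, \ldots, p_{l'})$ with all $p_j \in X_1$ and similarly for $q = (q_1, \ldots, q_{m'})$, the chain $p \bullet c \bullet q$ has type $(1, \ldots, 1, n_1, \ldots, n_l, 1, \ldots, 1)$. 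The inner sum $\sum_{r=1}^{n_{k'}-1}$ in the boundary formula is empty whenever $n_{k'} = 1$, so only the indices $k' = l' + k$ corresponding to cells of the original $c$ contribute. For those indices, the sign exponent $l' + (n_1 + \cdots + n_{k-1}) + (l' + k) + r + 1$ differs from the original $n_1 + \cdots + n_{k-1} + k + r + 1$ by $2l'$, hence matches modulo $2$, while the face itself satisfies $d_{l'+k,\mathcal{I}}(p \bullet c \bullet q) = p \bullet d_{k,\mathcal{I}}(c) \bullet q$ because the modification is strictly interior to the $c$-portion. Summing term by term yields the identity; the idempotent cases $p = e_a$ or $q = e_b$ correspond to $l' = 0$ or $m' = 0$ and are handled identically.

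The main technical obstacle is the sign bookkeeping in the step above: one must track simultaneously how the insertion of $l'$ unit-length cells shifts the index $k$ by $l'$ and contributes $l'$ extra ones to the prefix sum $n_1 + \cdots + n_{k-1}$, and then verify that these two contributions cancel parity. Once bilinearity and endpoint restriction are in hand, combining them with the already-established $\partial^2 = 0$ gives both the chain complex of $R[X]$-bimodules and, by restriction, the sub-chain complexes of $R$-vector spaces indexed by endpoint pairs, as claimed.
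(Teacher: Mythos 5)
Your proof is correct and follows essentially the same route as the paper's: verify endpoint preservation from the structure of $d_{k,\mathcal{I}}$, then reduce bilinearity to basis elements $p, q$ and check term-by-term that the inserted length-one cells shift the index by $l'$ and the prefix sum by $l'$, cancelling modulo~$2$. Your additions—spelling out $d^0 \circ d^0_{\overline{\mathcal{I}}} = d^0$ explicitly and noting that both sides vanish when concatenation fails—are small clarifications rather than a different argument.
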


\begin{proof}
For a cube chain $c = (c_1,\ldots,c_l) \in Ch_w^{v}(X)$ of type $(n_1,\ldots,n_l)$ and dimension $i+1$, an integer $k \in \{1,\ldots,l\}$ and a subset $\mathcal{I} \subseteq \{0,\ldots,n_k-1\}$ having $r$ elements, where $0 < r < n_k$, we have, from Section \ref{sec:precub}: 
$$
d_{k,\mathcal{I}}(c) = (c_1,\ldots,c_{k-1},d^0_{\overline{\mathcal{I}}}(c_k),d^1_\mathcal{I}(c_k),c_{k+1},\ldots,c_l) \in Ch(X)^w_v$$ 
\noindent where $\overline{\mathcal{I}} = \{0,\ldots,n_k-1\}\backslash \mathcal{I}$. 

We first note that $c \in Ch_w^v(X)$, $d_{k,\mathcal{I}}(c) \in Ch_w^v(X)$. Because $\partial c$ is a linear combination of the $d_{k,\mathcal{I}}(c)$, see Definition \ref{def:boundaryoperator}, this implies that $\partial c$ is in $R_i[X](w,v)$. Hence, by linearity of $\partial$, $\partial$ maps $R_{i+1}[X](w,v)$ to $R_{i}[X](w,v)$. 

Now, consider $p=(p_1,\ldots,p_m) \in Ch(X)_{v'}^v$ and $q=(q_1,\ldots,q_n) \in Ch(X)^{w'}_{w}$. Therefore, $p \bullet c \bullet q=(p_1,\ldots,p_m,c_1,\ldots,c_l,q_1,\ldots,q_n)$ is of type $(s_1,\ldots,s_{m+l+n})=(1,\ldots,1,n_1,\ldots,n_l,1,\cdots,1)$. The only indices $k$ such that there exists $r$ with $0 < r < s_k$ are $k=m+1,\ldots,m+l$, and for such $k$ and $\mathcal{I} \subseteq \{0,\ldots,s_k-1\}$ having $r$ elements with $0 < r < s_k$: 
$$\begin{array}{rcl}
d_{k,\mathcal{I}}(p\bullet c \bullet q) & = &(p_1,\ldots,p_m,c_1,\ldots,c_{k-1-m},d^0_{\overline{\mathcal{I}}}(c_{k-m}), \\
& & \ \ \ \ \ d^1_\mathcal{I}(c_{k-m}),c_{k+1-m},\ldots,c_l,q_1,\ldots,q_n) \\
& = & p \bullet d_{k-m}(c) \bullet q
\end{array}
$$ 

Now, $\partial (p\bullet c \bullet q)$ is equal to: 

\begin{align*}
& \scriptstyle \sum\limits_{k=m+1}^{m+l} \sum\limits_{r=1}^{n_{k-m}-1} \sum\limits_{\mathcal{I} \subseteq\{0,\ldots,n_{k-m}-1\}: \ \mid \mathcal{I} \mid=r}
(-1)^{m+n_1+\ldots+n_{k-m-1}+k+r+1} sgn(\mathcal{I}) \ {d_{k,\mathcal{I}}(p\bullet c \bullet q)}\\
= & \scriptstyle \sum\limits_{k=m+1}^{m+l} \sum\limits_{r=1}^{n_{k-m}-1} \sum\limits_{\mathcal{I} \subseteq\{0,\ldots,n_{k-m}-1\}: \ \mid \mathcal{I} \mid=r}
(-1)^{m+n_1+\ldots+n_{k-m-1}+k+r+1} sgn(\mathcal{I}) \ {p\bullet d_{k-m,\mathcal{I}}(c)\bullet q}\\
= & \scriptstyle p \bullet \left( \sum\limits_{k=m+1}^{m+l} \sum\limits_{r=1}^{n_{k-m}-1} \sum\limits_{\mathcal{I} \subseteq\{0,\ldots,n_{k-m}-1\}: \ \mid\mathcal{I}\mid=r}
(-1)^{m+n_1+\ldots+n_{k-m-1}+k+r+1} sgn(\mathcal{I}) \ {d_{k-m,\mathcal{I}}(c)} \right) \bullet q \\
= & \scriptstyle p \bullet \left( \sum\limits_{k=1}^{l} \sum\limits_{r=1}^{n_{k-m}-1} \sum\limits_{\mathcal{I} \subseteq\{0,\ldots,n_{k}-1\}: \ \mid\mathcal{I}\mid=r}
(-1)^{2m+n_1+\ldots+n_{k-m-1}+k+r+1} sgn(\mathcal{I}) \ {d_{k-m,\mathcal{I}}(c)} \right) \bullet q \\
= & \scriptstyle p \bullet \left( \sum\limits_{k=1}^{l} \sum\limits_{r=1}^{n_{k-m}-1} \sum\limits_{\mathcal{I} \subseteq\{0,\ldots,n_{k}-1\}: \ \mid \mathcal{I}\mid=r}
(-1)^{n_1+\ldots+n_{k-m-1}+k+r+1} sgn(\mathcal{I}) \ {d_{k,\mathcal{I}}(c)} \right) \bullet q \\
= & p \bullet \partial c \bullet q 
\end{align*}
\noindent and $\partial$ is a $R[X]$-bimodule morphism.
\end{proof}


\begin{example}[2 holes on diagonal/antidiagonal]
\label{ex:2holesbis}
We consider the two precubical sets of Example \ref{ex:2holes}.
For both precubical sets, we have the following path algebra $R[X]$ (lines and columns are indexed by vertices 1 to 9 as depicted in Example \ref{ex:2holes}), as a direct application of Lemma \ref{lem:acyclicquiver}: 
$$
\begin{pmatrix}
R & 0 & 0 & 0 & 0 & 0 & 0 & 0 & 0 \\
R & R & 0 & 0 & 0 & 0 & 0 & 0 & 0 \\
R & R & R & 0 & 0 & 0 & 0 & 0 & 0 \\
R & 0 & 0 & R & 0 & 0 & 0 & 0 & 0 \\
R^2 & R & 0 & R & R & 0 & 0 & 0 & 0 \\
R^3 & R^2 & R & R & R & R & 0 & 0 & 0 \\
R & 0 & 0 & R & 0 & 0 & R & 0 & 0 \\
R^3 & R & 0 & R^2 & R & 0 & R & R & 0 \\
R^6 & R^3 & R & R^3 & R^2 & R & R & R & R 
\end{pmatrix}
$$
We note that $R_1[X]$ seen as a $R[X]$-bimodule has $\bullet$ coinciding with the algebra operation of $R[X]$. Hence, using the notation for $R[X]$-bimodules that we introduced earlier on, the $\bullet$ operation coincides with the matrix multiplication of the matrix algebra above. 
\end{example}







\begin{definition}
\label{def:hommodule}
The \look{homology modules} of a finite precubical set $X$ is defined as the homology $(HM_{i+1})_{i\geq 0}[X]$ in the abelian category of $R[X]$-bimodules, of the complex of $R[X]$-bimodules defined in Lemma \ref{lem:precubboundary}, shifted by one, i.e. 
$$
HM_{i+1}[X]=Ker \ \partial_{\mid R_{i+1}[X]}/Im \ \partial_{\mid R_{i+2}[X]}
$$
\end{definition}

\begin{remark}
The reader may rightfully wonder what $HM_0[X]$ should be. Our best guess is that it should be linked to components \cite{components}. This is left for future work. This is indeed linked to the consideration of the bimodule $R_0[X]$ of ``reachable'' pairs of states in the precubical set $X$ that we discuss in the paragraph ``digression on resolutions of associative algebras and monoids" at the end of this section. 
\end{remark}

\begin{remark}
An obvious goal, which we leave for future work, is the existence and characterization of the analogue of Moore spaces for our module homology. Indeed, given the well known results that we recapped as Theorem \ref{thm:algfromquiver}, Appendix \ref{sec:B}, given any basic and connected finite dimensional $R$-algebra $A$, there exists an admissible ideal $I$ and a quiver $Q_A$ such that $A$ is isomorphic to $R[Q_A]/I$. Given a particular class of bimodules $M$ over $A$, can we hope to complete this graph into a precubical set $X^M_A$, by adding suitable cells, so that, for some $i\geq 1$, $HM_i[X^M_A]$ is isomorphic to $M$, as an $A$-bimodule? It is shown in \cite{ZIEMIANSKI201687} that we can find a precubical set (generated by the semantics of a PV program \cite{thebook}) for which the space of dipaths between beginning and end point has a given homology. This would pave the way for at least, realizing a certain class of $A$-bimodules as the homology modules of some precubical set. 
\end{remark}

\paragraph{Structure of the homology modules of precubical sets}

In order to make the first example calculations, we describe a little more how such quotients are computed in $R[X]$-bimodules: 

\begin{lemma}
\label{lem:decompquotient}
Let $X$ be a finite precubical set. The $R[X]$-bimodule $\look{HM_{i}[X]}$ ($i\geq 1$) is, as underlying $R$-vector space, the coproduct 
$\coprod\limits_{a,b \in M_0} N_{a,b}$ where the $R$-vector spaces $N_{a,b}$ are: 
$$ N_{a,b}=Ker \ \partial_{\mid e_a \bullet R_{i}[X] \bullet e_b}/Im \ \partial_{\mid e_a \bullet R_{i+1}[X] \bullet e_b}$$
The $R[X]$-bimodule operation is defined as follows, for $u$ a path from $a'$ to $a$ in $X_0$, and $v$ a path from $b$ to $b'$ in $X_0$, and writing $[n]_{a,b}$ for the class of $n \in Ker \ \partial_{\mid e_a \bullet R_{i}[X] \bullet e_b}$ modulo $Im \ \partial_{\mid e_a \bullet R_{i+1}[X] \bullet e_b}$:
$$
u \bullet [n]_{c,d} \bullet v = \left\{\begin{array}{ll} 
[u \bullet n \bullet v]_{a',b'} & \mbox{if $c=a$ and $d=b$}\\
0 & \mbox{otherwise}
\end{array}
\right.
$$
\end{lemma}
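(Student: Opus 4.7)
The plan is to assemble the decomposition directly from Lemma \ref{lem:bimoddecomp} and the grading property of $\partial$ from Lemma \ref{lem:precubboundary}. First, I would apply Lemma \ref{lem:bimoddecomp} to both $R_i[X]$ and $R_{i+1}[X]$, yielding the $R$-vector space decompositions
$$R_i[X] \cong \coprod_{a,b \in X_0} e_a \bullet R_i[X] \bullet e_b, \qquad R_{i+1}[X] \cong \coprod_{a,b \in X_0} e_a \bullet R_{i+1}[X] \bullet e_b.$$
By Lemma \ref{lem:precubboundary}, the boundary $\partial$ is a morphism of $R[X]$-bimodules, and in particular it restricts, for each pair $(a,b)$, to an $R$-linear map from $e_a \bullet R_{i+1}[X] \bullet e_b$ to $e_a \bullet R_i[X] \bullet e_b$. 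This means $\partial$ is the coproduct of these restricted maps in the category of $R$-vector spaces.

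Next, since kernels and images commute with coproducts of $R$-vector spaces (equivalently, a direct sum of maps has kernel the direct sum of kernels and image the direct sum of images), I would deduce
$$Ker \ \partial_{\mid R_i[X]} = \coprod_{a,b} Ker \ \partial_{\mid e_a \bullet R_i[X] \bullet e_b}, \quad Im \ \partial_{\mid R_{i+1}[X]} = \coprod_{a,b} Im \ \partial_{\mid e_a \bullet R_{i+1}[X] \bullet e_b}.$$
Since the right-hand side image sits termwise inside the corresponding kernel (again by the bimodule decomposition), the quotient commutes with the coproduct, giving
$$HM_i[X] = \coprod_{a,b \in X_0} \bigl(Ker \ \partial_{\mid e_a \bullet R_i[X] \bullet e_b}/Im \ \partial_{\mid e_a \bullet R_{i+1}[X] \bullet e_b}\bigr) = \coprod_{a,b} N_{a,b}$$
as $R$-vector spaces.

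It remains to recover the $R[X]$-bimodule structure. For $u \in e_{a'} \bullet R[X] \bullet e_c$ and $v \in e_d \bullet R[X] \bullet e_{b'}$, one has $u \bullet e_a = \delta_{c,a}\, u$ and $e_b \bullet v = \delta_{d,b}\, v$ by orthogonality of the idempotents, so $u \bullet n \bullet v = 0$ whenever $n \in e_c \bullet R_i[X] \bullet e_d$ with $c \neq a$ or $d \neq b$; and when $c = a$, $d = b$, the product lands in $e_{a'} \bullet R_i[X] \bullet e_{b'}$. This gives exactly the formula in the statement at the representative level. The formula passes to the quotient because $\partial$ is a morphism of $R[X]$-bimodules: if $n = \partial m$ with $m \in e_a \bullet R_{i+1}[X] \bullet e_b$, then $u \bullet n \bullet v = u \bullet \partial m \bullet v = \partial(u \bullet m \bullet v) \in Im \ \partial_{\mid e_{a'} \bullet R_{i+1}[X] \bullet e_{b'}}$, so the class $[u \bullet n \bullet v]_{a',b'}$ is independent of the chosen representative of $[n]_{a,b}$.

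The argument has no real obstacle; the only point that requires care is checking that the $R$-vector space coproduct decomposition is compatible with the bimodule operation in the form stated, i.e. that the off-diagonal mismatch vanishes via idempotent orthogonality rather than by some ad hoc choice. Once this is noted, every step reduces either to Lemma \ref{lem:bimoddecomp} (decomposition), Lemma \ref{lem:precubboundary} (graded behaviour of $\partial$), or the basic exactness property of taking kernels, images and quotients in a coproduct of $R$-vector spaces.
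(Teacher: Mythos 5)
Your proof is correct and takes essentially the same approach as the paper: both rely on Lemma \ref{lem:bimoddecomp} to decompose the chain bimodules over pairs of idempotents, on Lemma \ref{lem:precubboundary} to see that $\partial$ respects this decomposition, and on idempotent orthogonality plus the bimodule-morphism property of $\partial$ to recover the action on the quotient. The paper phrases the kernel/image decomposition element-wise (taking $c \in Ker\ \partial$ and splitting $c = \sum e_a \bullet c \bullet e_b$) where you invoke compatibility of kernels, images and quotients with coproducts of $R$-vector spaces, but these are the same argument in different dress.
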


\begin{proof}
Consider $c \in Ker \ \partial$, sub-$R[X]$ bimodule of $R_{i}[X]$. From Lemma \ref{lem:bimoddecomp}, $c$ decomposes on the coproduct $\coprod\limits_{a,b \in X_0} e_a \bullet Ker \ \partial \bullet e_b$ as $\sum\limits_{a,b \in X_0} e_a \bullet c \bullet e_b$, and $\partial c = \sum\limits_{a,b \in X_0} \partial(e_a \bullet c \bullet e_b)$. As seen in Lemma \ref{lem:precubboundary}, 
$\partial$ is a $R[X]$-bimodule morphism, so $\partial(e_a\bullet c \bullet e_b)=e_a \bullet \partial(c) \bullet e_b$ and therefore, is equal to zero, since $\partial(c)=0$. This means that $c$ can be seen as belonging to the coproduct of the $R$-vector spaces 
$Ker \ \partial_{\mid e_a \bullet R_{i}[X] \bullet e_b}$, 
for all $a, b \in X_0$. 

Similarly, as $\partial$ is a $R[X]$-bimodule morphism, $Im \ \partial_{\mid R_{i+1}[X]}$ can be identified, when it comes to its $R$-vector space structure, with the coproduct of all $R$-vector spaces 
$Im \ \partial_{\mid e_a \bullet R_{i+1}[X] \bullet e_b}$. 

Hence the quotient 
$HM_i[X]=Ker \ \partial_{\mid R_{i}[M]}/Im \ \partial_{\mid R_{i+1}[M]}$ can be identified, as an $R$-vector space, with the quotient of the underlying $R$-vector spaces (see Section \ref{sec:assoc}), which by the above can be identified with the coproduct of all $R$-vector spaces 
$Ker \ \partial_{\mid e_a \bullet R_{i}[M] \bullet e_b}/Im \ \partial_{\mid e_a \bullet R_{i+1}[M] \bullet e_b}$. Therefore $e_a \bullet HM_i(M) \bullet e_b$ is the $R$-vector space $Ker \ \partial_{\mid e_a \bullet R_{i}[M] \bullet e_b}/Im \ \partial_{\mid e_a \bullet R_{i+1}[M] \bullet e_b}$.   

Finally, as already noted in Section \ref{sec:assoc}, the $R[M]$-bimodule operation is making the projection map
$\pi: \ Ker \ \partial_{\mid R_{i}[M]} \rightarrow Ker \ \partial_{\mid R_{i}[M]}/Im \ \partial_{\mid R_{i+1}[M]}$ a $R[M]$-bimodule homomorphism, hence we have necessarily the formula of the lemma for the $R[M]$-bimodule actions. 
\end{proof}

This will allow us to use the matrix notation of Remark \ref{rem:notation} for computing and presenting the homology bimodules. 

\paragraph{Functoriality and non-functoriality}


As we have remarked already, the construction of the quiver algebra is in general non-functorial, except for precubical maps $f$ from $X$ to $Y$ that are injective on vertices (i.e. which restrict to injective maps from $X_0$ to $Y_0$), see e.g. \cite{goubault2023semiabelian}. 

In what follows, ${Cub}_I$ stands for the full subcategory of precubical sets with proper non-looping length covering, with maps which are injective on vertices. 

Let $c=(c_1,\ldots,c_l)$ be an $i$-cube chain in $X$: then, for $f: \ X \rightarrow Y$ a morphism of precubical sets, $f(c)$ is the $i$-cube chain of the same type and length (hence of the same dimension) $(f(c_1)\ldots,f(c_l))$. 
The 1-cube chains in $X$ (resp. $Y$) are identified with basis elements of $R[X]$. 
Suppose that $f$ is injective, then this transform is functorial from $R[X]$ to $R[Y]$. 

Furthermore, as $f$ is a morphism of precubical sets, it commutes with the boundary operators. Consider 
a cube chain $c = (c_1,\ldots,c_l) \in Ch(X)^w_v$ of type $(n_1,\ldots,n_l)$, an integer $k \in \{1,\ldots,l\}$ and a subset $\mathcal{I} \subseteq \{0,\ldots,n_k-1\}$ having $r$ elements, where $0 < r < n_k$, we recall that: 
$$
d_{k,\mathcal{I}}(c) = (c_1,\ldots,c_{k-1},d^0_{\overline{\mathcal{I}}}(c_k),d^1_\mathcal{I}(c_k),c_{k+1},\ldots,c_l) \in Ch(X)^w_v$$ 
Now, 
$$
\begin{array}{rcl}
d_{k,\mathcal{I}}(f(c)) & = & (f(c_1),\ldots,f(c_{k-1}),d^0_{\overline{\mathcal{I}}}(f(c_k)),d^1_\mathcal{I}(f(c_k)),f(c_{k+1}),\ldots,f(c_l)) \\
& = & (f(c_1),\ldots,f(c_{k-1}),f(d^0_{\overline{\mathcal{I}}}(c_k)),f(d^1_\mathcal{I}(c_k)),f(c_{k+1}),\ldots,f(c_l)) \\
& = & f(c_1,\ldots,c_{k-1},d^0_{\overline{\mathcal{I}}}(c_k),d^1_\mathcal{I}(c_k),c_{k+1},\ldots,c_l)
\end{array}
$$ 
Therefore $\partial(f(c))$ is equal to: 

\begin{align*}
& \sum\limits_{k=1}^l \sum\limits_{r=1}^{n_k-1} \sum\limits_{\mathcal{I} \subseteq\{1,\ldots,n_k\}: \ \mid \mathcal{I}\mid =r}
(-1)^{n_1+\ldots+n_{k-1}+k+r+1} sgn(\mathcal{I}) \ {d_{k,\mathcal{I}}(f(c))} \\
= & \sum\limits_{k=1}^l \sum\limits_{r=1}^{n_k-1} \sum\limits_{\mathcal{I} \subseteq\{1,\ldots,n_k\}: \ \mid \mathcal{I}\mid =r}
(-1)^{n_1+\ldots+n_{k-1}+k+r+1} sgn(A) \ {f(d_{k,\mathcal{I}}(c))} \\
= & f(\partial c) \\
\end{align*}
\noindent and $f$ induces a map of chain complexes from $R_{i}[X]$ to $R_{i}[Y]$ seen as $R$-vector spaces. 

Finally, as we have seen, when $f$ is injective on objects, $f$ induces an algebra map from $R[X]$ to $R[Y]$ and $f$ induces a map of chain complexes from $R_{i}[X]$ to $R_{i}[Y]$, the first one seen as $R[X]$-bimodule, and the second one, as a $R[Y]$-bimodule.
This carries over the homology construction of Section \ref{sec:precub}.  We thus have proved: 


\begin{lemma}
\label{lem:diralgfunct}
The directed homology module construction defines a functor for each $n \in \N$, $HM_n: \ Cub_I \rightarrow { }_R Mod_R$. 
\end{lemma}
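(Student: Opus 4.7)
The plan is to bundle the ingredients that have already been assembled in the paragraphs immediately preceding the lemma into a verification of the two functoriality axioms. Those ingredients are: (i) for $f\colon X\to Y$ in $Cub_I$, an induced algebra morphism $R[f]\colon R[X]\to R[Y]$ (this is the content of Remark \ref{rem:notfunctorial}, applied to injective-on-vertices maps); (ii) the $R$-linear extension $R_i[f]\colon R_i[X]\to R_i[Y]$ sending a cube chain $(c_1,\dots,c_l)$ to $(f(c_1),\dots,f(c_l))$, which preserves both type and dimension; and (iii) the commutation relation $\partial\circ R_i[f]=R_{i-1}[f]\circ\partial$, which was derived just above the lemma by a direct computation on generators.

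First I would observe that the pair $(R_i[f],R[f])$ is a morphism in ${}_R Mod_R$ in the sense of Definition \ref{def:mod} adapted to bimodules. The required compatibility
\[
R_i[f]\bigl(p\bullet c\bullet q\bigr)=R[f](p)\bullet R_i[f](c)\bullet R[f](q)
\]
for 0-cube chains $p,q$ and an $(i-1)$-cube chain $c$ is immediate from the concatenation formula of Lemma \ref{lem:free}, together with the fact that $f$, being a precubical map, preserves $d^0_0,d^1_0$ (so that concatenability in $X$ is mapped to concatenability in $Y$) and, being injective on vertices, is not allowed to glue paths that did not share endpoints.

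Next I would push $(R_i[f],R[f])$ through the homology construction. Because $R_i[f]$ commutes with $\partial$, it sends $Ker\,\partial_{\mid R_{i+1}[X]}$ into $Ker\,\partial_{\mid R_{i+1}[Y]}$ and $Im\,\partial_{\mid R_{i+2}[X]}$ into $Im\,\partial_{\mid R_{i+2}[Y]}$, so it descends to a well-defined $R$-linear map on the quotients of Definition \ref{def:hommodule}, which together with $R[f]$ yields a morphism $HM_n(f)\colon HM_n[X]\to HM_n[Y]$ in ${}_R Mod_R$.

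Finally I would verify the two axioms of a functor. For identities, $id_X$ fixes every cube chain, so $R_i[id_X]=id_{R_i[X]}$ and $R[id_X]=id_{R[X]}$, whence $HM_n(id_X)=id_{HM_n[X]}$. For composition with $g\colon Y\to Z$ also in $Cub_I$, the equality $(g\circ f)(c_1,\dots,c_l)=(g(f(c_1)),\dots,g(f(c_l)))$ gives $R_i[g\circ f]=R_i[g]\circ R_i[f]$ at the chain level, and $R[g\circ f]=R[g]\circ R[f]$ at the algebra level (this is exactly where injectivity on vertices for both $f$ and $g$, hence for $g\circ f$, is needed, see Remark \ref{rem:notfunctorial}); passing to quotients yields $HM_n(g\circ f)=HM_n(g)\circ HM_n(f)$. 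There is no serious obstacle here — the only point that requires a little attention is keeping straight that morphisms in ${}_R Mod_R$ are pairs, not merely linear maps, so the verification is genuinely two-fold at each step; but both components have been produced explicitly.
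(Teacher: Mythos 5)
Your proof is correct and follows essentially the same route as the paper: you invoke the identical ingredients established in the preceding paragraphs (the algebra map $R[f]$ for injective-on-vertices $f$, the chain map $R_i[f]$ on cube chains, and the commutation $\partial\circ R_i[f]=R_{i-1}[f]\circ\partial$) and then pass to homology. You are somewhat more explicit than the paper in spelling out the bimodule-compatibility of the pair $(R_i[f],R[f])$ and the identity and composition axioms, which the paper compresses into the single phrase \emph{this carries over the homology construction}.
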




\subsection{Relative homology}

Given the particular interest in injective on vertices maps, it is reasonable to ask ourselves how to reason about ``relative spaces'': 


\begin{definition}
Let $Y$ be a sub-precubical set of a finite precubical set $X$ and $M$ be a $R[Y]$-bimodule. Indeed, this inclusion of precubical sets induces an inclusion of algebras $g: \ R[Y] \rightarrow R[X]$ since inclusions are injective on objects. We define $\look{{ }^X M}$ to be 
equal to $g_!(M)$, the extension of coefficients of $M$ along $g$.
\end{definition}

We note that the elements of ${ }^X M$, for such inclusions of precubical sets $Y \subseteq X$ have simple normal forms: 

\begin{lemma}
\label{lem:normalform}
Let $M$ be a $R[Y]$-bimodule, where $Y$ is a sub precubical set of $X$. Then, $p\otimes c \otimes q\in { }^X M$ with $p, q \in Ch^{=0}(X)$, $p$ from $u$ to $v$ and $q$ from $w$ to $z$, and $c\in M$, is a normal form under Equation (\ref{eq:restrictionscalars}) if:
\begin{itemize}
    \item $c=e_v\bullet c \bullet e_w$,
    \item suppose $p$ can be decomposed as 
$p=r \times s$. Then, $s$ not a constant path implies that $s\not \in Ch^{=0}(Y)$ and,
\item suppose $q$ can be decomposed as $q=z \times t$. Then, $z$ not a constant path implies that $z \not \in Ch^{=0}(Y)$. 
\end{itemize}

This implies that if $p'\otimes c' \otimes q'$ is another element of ${ }^X M$, with $p', q' \in Ch^{=0}(X)$ and $c' \in M$, then: 
$$p\otimes c \otimes q = p'\otimes c' \otimes q'$$ in ${ }^X M$ if and only if there exists $p_1, q_1\in Ch^{=0}(X)$ and $p_2, p'_2, q_2, q'_2\in Ch^{=0}(Y)$ such that: 
\begin{eqnarray}
\label{eq:equalityextension0}
p & = & p_1 \times p_2 \\
p' & = & p_1 \times p'_2 \\
q & = & q_2 \times q_1 \\
q' & = & q'_2 \times q_1 \\
p_2 \bullet c \bullet q_2 & = & p'_2 \bullet c' \bullet q'_2
\label{eq:equalityextension}
\end{eqnarray}
\end{lemma}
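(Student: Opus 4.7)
The plan is to prove the lemma in three pieces: existence of a normal form (implicit in the first part of the statement), the ``if'' direction of the equality characterization, and the ``only if'' direction. The first two rest on direct applications of the defining relation Equation (\ref{eq:restrictionscalars}), while the third requires a freeness property of $R[X]$ as a one-sided $R[Y]$-module via the inclusion $g$.

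For existence, starting from an arbitrary $p\otimes c\otimes q$, I would first apply Lemma \ref{lem:bimoddecomp} to $M$ viewed as an $R[Y]$-bimodule to decompose $c=\sum_{v,w\in Y_0} e_v\bullet c\bullet e_w$, reducing to the case where $c=e_v\bullet c\bullet e_w$ for specific vertices $v,w$. Then, whenever $p$ admits a factorization $p=r\times s$ with $s$ a non-constant 0-cube chain in $Y$, Equation (\ref{eq:restrictionscalars}) gives $p\otimes c\otimes q=r\otimes(s\bullet c)\otimes q$, absorbing $s$ into the middle factor; the symmetric move absorbs any non-constant $Y$-prefix of $q$. Since each absorption strictly decreases the length of $p$ or of $q$, the procedure terminates in a normal form. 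The ``if'' direction of the equality characterization is then a direct computation via two applications of (\ref{eq:restrictionscalars}):
\[
p\otimes c\otimes q = p_1\otimes(p_2\bullet c\bullet q_2)\otimes q_1 = p_1\otimes(p'_2\bullet c'\bullet q'_2)\otimes q_1 = p'\otimes c'\otimes q',
\]
the middle equality being Equation (\ref{eq:equalityextension}).

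The ``only if'' direction is the real obstacle and where I would invest the bulk of the proof. The plan is to exhibit $R[X]$ as a free right $R[Y]$-module (via $g$), with basis the set $\mathcal{B}$ of paths in $X$ ending at a vertex of $Y_0$ whose maximal $Y$-suffix is the trivial constant path; an analogous statement on the left yields a basis $\mathcal{B}'$. This gives a direct sum decomposition of $g_!(M)=R[X]\otimes_{R[Y]} M\otimes_{R[Y]} R[X]$ indexed by pairs $(p_1,q_1)\in\mathcal{B}\times\mathcal{B}'$, with the summand at $(p_1,q_1)$ identified with the piece of $M$ determined by the target of $p_1$ and the source of $q_1$. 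Given an equality $p\otimes c\otimes q=p'\otimes c'\otimes q'$, reducing both sides to normal form produces triples $(p_1,\, p_2\bullet c\bullet q_2,\, q_1)$ and $(p'_1,\, p'_2\bullet c'\bullet q'_2,\, q'_1)$; the direct sum decomposition then forces $p_1=p'_1$, $q_1=q'_1$, and $p_2\bullet c\bullet q_2=p'_2\bullet c'\bullet q'_2$, which are precisely conditions (\ref{eq:equalityextension0})--(\ref{eq:equalityextension}).

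The technical heart is verifying the freeness claim, which reduces to the unique factorization of any path $p$ in $X$ ending in $Y_0$ as $p_0\times p_s$ with $p_0$ having no non-trivial $Y$-suffix and $p_s$ a (possibly constant) path of $Y$: reading the edges of $p$ from right to left, the first edge not lying in $Y_1$ (if any) marks the unambiguous boundary between $p_0$ and $p_s$. Combined with the standard fact that $R[X]$ has an $R$-basis given by its paths, this yields the required direct sum decomposition and closes the argument.
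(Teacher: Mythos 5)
Your proposal is correct, and the existence and ``if'' parts follow the same elementary rewriting argument as the paper. The genuinely different contribution is in the ``only if'' direction. The paper argues termination of the rewrite rule by a length measure and then simply asserts that elements equal in ${}^X M$ have syntactically equal normal forms; no confluence statement is established, and the gap between rewriting equivalence and equality in the linear quotient $R[X]\otimes_{R[Y]}M\otimes_{R[Y]}R[X]$ is not addressed. Your plan fills precisely this gap: the unique factorization of a path ending in $Y_0$ into a ``no $Y$-suffix'' prefix $p_0$ times a $Y$-suffix $p_s$ gives a free right $R[Y]$-module structure, the tensor product decomposes as $\bigoplus_{(p_1,q_1)\in\mathcal{B}\times\mathcal{B}'} e_{v(p_1)}\bullet M\bullet e_{w(q_1)}$, and the direct sum structure is what forces $p_1=p'_1$, $q_1=q'_1$, and equality of the middle factors. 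This is a more robust justification of the key step; the trade-off is that one has to set up the module-theoretic bookkeeping instead of taking normal-form uniqueness on faith.

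One small imprecision to flag: $R[X]$ itself is not in general free as a right $R[Y]$-module, since the span of paths terminating outside $Y_0$ is a nonzero submodule on which $R[Y]$ (via $g$) acts by zero. What is free with basis $\mathcal{B}$ is the submodule $R[X]\cdot 1_Y$, and this suffices because $p\otimes m = p\cdot 1_Y\otimes m$ in the tensor product for every $m\in M=1_Y\bullet M\bullet 1_Y$, so the offending summand is annihilated. Worth stating this explicitly; the conclusion is unaffected.
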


\begin{proof}
Equation (\ref{eq:restrictionscalars}) can be oriented from right to left, when applied on elements of the form $p \otimes c \otimes q$, in case the algebra $A$ is $R[X]$, for some precubical set $X$. This is because the length of $p$ plus the length of $q$ is decreasing, when we go from the right hand side to the left hand side of Equation (\ref{eq:restrictionscalars}), hence a finite number of applications of this rewrite rule will converge to a normal form. This implies that $p \otimes c \otimes q$ is a normal form when $p=r \times s$ and $s$ is a path in $X$ with at least one 1-cell, then $s\not \in Ch^{=0}(Y)$ and if $q=z \times t$ with $z$ a path in $X$ with at least one 1-cell, then $z \not \in Ch^{=0}(Y)$, otherwise, indeed, one could apply Equation (\ref{eq:restrictionscalars}) once more from right to left. 

Now, suppose that $p\otimes c \otimes q=p' \otimes c' \otimes q'$ in ${ }^X M$. This means that their normal forms should be equal, syntactically. Suppose the normal form of $p\otimes c \otimes q$ is $p_1 \otimes (p_2\bullet c \bullet q_2) \otimes q_1$ after a certain number of applications of Equation (\ref{eq:restrictionscalars}) from right to left, with $p_2, q_2 \in Ch^{=0}(Y)$, and that the normal form of $p' \times c' \times q'$ is $p'_1 \otimes (p'_2 \bullet c \bullet q'_2) \otimes q'_1$ with $p'_2, q'_2 \in Ch^{=0}(Y)$. Therefore, $p_2\bullet c \bullet q_2=p'_2 \bullet c \bullet q'_2$ and $p_1=p'_1$, $q_2=q'_2$, therefore, as $p=p_1\times p_2$, $p'=p'_1\times p'_2$, $q=q_1\times q_2$ and $q'=q'_1\times q'_2$, we get the equations of the lemma.  

Conversely, suppose that $p\otimes c \otimes q$ and $p' \otimes c' \otimes q'$ obey Equations (\ref{eq:equalityextension0}--\ref{eq:equalityextension}). Then: 
$$\begin{array}{rcl}
p \otimes c \otimes q & = & (p_1 \times p_2) \otimes c \otimes (q_2 \times q_1) \\
& = & p_1 \otimes (p_2 \bullet c \bullet q_2) \otimes q_1 \\
& = & p_1 \otimes (p'_2 \bullet c' \bullet q'_2) \otimes q_1 \\
& = & (p_1 \times p'_2) \otimes c' \otimes (q'_2 \times q_1) \\
& = & p' \otimes c' \otimes q'
\end{array}
$$
\end{proof}

We can refine this characterization of the equality of the generating elements of ${ }^X M$ as follows: 

\begin{lemma}
\label{lem:refinedeq}
Let $Y$ be a sub precubical set of $X$.
Suppose $M$ is a free $R[Y]$ bimodule with basis $(m_i)_{i\in I}$. Then, for $p, q, p', q' \in Ch^{=0}(X)$, $p \otimes m_i \otimes q=p' \otimes m_j \otimes q'$ in ${ }^X M$ if and only if $p=p'$, $i=j$ and $q=q'$.
\end{lemma}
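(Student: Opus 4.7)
The forward direction is trivial, so the real content is the ``only if'' direction. My plan is to reduce to Lemma \ref{lem:normalform} and then invoke freeness of $M$ to pin down the decomposition uniquely.

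First I would apply Lemma \ref{lem:normalform} to the hypothesis $p\otimes m_i\otimes q = p'\otimes m_j\otimes q'$ in ${}^X M$. This yields elements $p_1, q_1\in Ch^{=0}(X)$ and $p_2, p'_2, q_2, q'_2\in Ch^{=0}(Y)$ satisfying Equations (\ref{eq:equalityextension0}--\ref{eq:equalityextension}), i.e.\ a common outer factorization $p = p_1\times p_2$, $p' = p_1\times p'_2$, $q = q_2\times q_1$, $q' = q'_2\times q_1$, together with an inner identity $p_2\bullet m_i\bullet q_2 = p'_2\bullet m_j\bullet q'_2$ that now lives in $M$ itself rather than in ${}^X M$.

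The crux is then to exploit freeness: since $(m_i)_{i\in I}$ is a basis of $M$ as an $R[Y]$-bimodule, elements of the form $a\bullet m_k\bullet b$ (with $k\in I$ and $a,b$ ranging over basis 0-cube chains of $R[Y]$ compatible with source and target of $m_k$) are $R$-linearly independent in $M$, in the same sense as the freeness argument for $R_i[X]$ in the proof of Lemma \ref{lem:free}. Applied to the single-term equation above, this forces the triples $(p_2, i, q_2)$ and $(p'_2, j, q'_2)$ to coincide, i.e.\ $i=j$, $p_2=p'_2$, and $q_2=q'_2$. Substituting back into the factorizations gives $p = p_1\times p_2 = p_1\times p'_2 = p'$ and $q = q_2\times q_1 = q'_2\times q_1 = q'$, closing the argument.

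I do not expect any genuine obstacle: the hard combinatorial work about when tensor expressions collapse under restriction of scalars is already packaged in Lemma \ref{lem:normalform}, and the present statement is really just a sharpening that is available once we are allowed to assume $M$ is free. The only point to be explicit about is the meaning of ``free $R[Y]$-bimodule with basis $(m_i)$'' as $R$-linear independence of the triples $(a, k, b)$, which is directly parallel to how freeness was used in Lemma \ref{lem:free}.
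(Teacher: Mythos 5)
Your proposal is correct and follows exactly the same route as the paper: reduce via Lemma \ref{lem:normalform} to the inner identity $p_2\bullet m_i\bullet q_2 = p'_2\bullet m_j\bullet q'_2$ in $M$, use freeness of $M$ as an $R[Y]$-bimodule (in the sense established in Lemma \ref{lem:free}, namely linear independence of the elements $a\bullet m_k\bullet b$) to deduce $p_2=p'_2$, $q_2=q'_2$, $i=j$, and substitute back into the outer factorizations. Your added gloss on what freeness buys is a fair amplification of the paper's slightly terser wording, but the argument is the same.
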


\begin{proof}
Suppose $p\otimes m_i \otimes q=p' \otimes m_{j} \otimes q'$ modulo Equation (\ref{eq:restrictionscalars}). By Lemma \ref{lem:normalform}, this would mean that $p_2 \bullet m_i \bullet q_2=p'_2\bullet m_{j}\bullet q'_2$ for some $p_2, q_2, p'_2, q'_2 \in Ch^{=0}(Y)$ and $p=p_1 \times p_2$, $q=q_2\times q_1$, $p'=p_1 \times p'_2$, $q'=q'_2\times q_1$. But as $m_j$ and $m_{j'}$ are part of a free family in a $R[Y]$-bimodule, this means that $p_2=p'_2$, $q_2=q'_2$ and $i=j$ (that is, $m_i=m_j$).
Therefore, $p=p_1 \times p_2=p_1\times p'_2=p'$, $q=q_2\times q_1=q'_2\times q_1=q'$ and $i=j$. 
\end{proof}





We can now spell out the following characterization of ${ }^X M$ for free $R[Y]$-bimodules $M$: 

\begin{proposition}
\label{rem:relativerestriction}
Let $X$ be a finite precubical set and $Y$ a sub-precubical set of $X$.  
Then, for any free $R[Y]$-bimodule $M$, freely generated by $(m_j)_{j\in J}$ as a $R[Y]$-bimodule, ${ }^X M$ is the free $R[X]$-bimodule, freely generated by $(m_j)_{j\in J}$ as a $R[X]$-bimodule. 
\end{proposition}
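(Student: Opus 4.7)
The plan is to exhibit an explicit $R[X]$-bimodule isomorphism $\phi : F \to {}^X M$ between ${}^X M$ and the free $R[X]$-bimodule $F$ on the family $(m_j)_{j \in J}$. By the universal property of $F$, such a map will be determined by the choice of images of the generators, for which I would take $\phi(m_j) := 1 \otimes m_j \otimes 1 \in {}^X M$ (the natural embedding of $M$ into ${}^X M$ described in Remark \ref{rem:rem4}). I would then show $\phi$ is both surjective and injective.

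For surjectivity, I would start from the fact that any element of ${}^X M = R[X] \otimes_{R[Y]} M \otimes_{R[Y]} R[X]$ is a finite sum of elementary tensors $a \otimes x \otimes b$ with $a, b \in R[X]$ and $x \in M$. Since $M$ is freely generated by $(m_j)$ as an $R[Y]$-bimodule, each $x$ expands as $x = \sum_k r_k \bullet m_{j_k} \bullet s_k$ with $r_k, s_k \in R[Y]$, and sliding the $R[Y]$-coefficients through the tensor symbols via Equation (\ref{eq:restrictionscalars}) yields
$$ a \otimes x \otimes b = \sum_k (a \times r_k) \otimes m_{j_k} \otimes (s_k \times b), $$
which manifestly lies in the image of $\phi$.

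For injectivity, my plan is to reduce to the rank-one case by a direct-sum decomposition. By Lemma \ref{lem:bimoddecomp} and the complete orthogonal idempotent system $(e_v)_{v \in Y_0}$ of $R[Y]$ (Remark \ref{rem:completeidempotents}), one may assume that each generator $m_j$ is \emph{homogeneous}: $m_j = e_{s_j} \bullet m_j \bullet e_{t_j}$ for some $(s_j, t_j) \in Y_0 \times Y_0$. Under this convention, freeness of $(m_j)$ provides a decomposition $M \cong \bigoplus_{j} R[Y] e_{s_j} \otimes_R e_{t_j} R[Y]$ of $R[Y]$-bimodules. Because extension of scalars is a left adjoint (Lemma \ref{lem:extensionscalars}), it commutes with direct sums, and it suffices to check the single-generator case. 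There, associativity of the tensor product together with the unit isomorphism $R[X] \otimes_{R[Y]} R[Y] e_s \cong R[X] e_s$ (and its right-handed analogue) would give
$$ R[X] \otimes_{R[Y]} \bigl(R[Y] e_s \otimes_R e_t R[Y]\bigr) \otimes_{R[Y]} R[X] \cong R[X] e_s \otimes_R e_t R[X], $$
which is exactly the free $R[X]$-bimodule on one homogeneous generator of source $s$ and target $t$.

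The hardest step is the reduction to homogeneous generators and the matching of source/target idempotent structure across the inclusion $R[Y] \subseteq R[X]$; the crucial (but minor) observation is that the idempotents $(e_v)_{v \in Y_0}$ of $R[Y]$ remain orthogonal idempotents in $R[X]$, since $Y_0 \subseteq X_0$. A more hands-on alternative would be to establish injectivity elementwise by invoking Lemma \ref{lem:refinedeq}, which gives distinctness of the symbols $p \otimes m_j \otimes q$; promoting distinctness to genuine $R$-linear independence, however, effectively amounts to the same tensor-product bookkeeping, which the argument above packages cleanly.
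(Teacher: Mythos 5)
Your proof is correct but takes a genuinely different, more structural route than the paper's. The paper argues directly: it expands the generators in canonical form and invokes Lemma \ref{lem:refinedeq} to show that the symbols $p\otimes m_j\otimes q$ are pairwise distinct, hence a basis. You instead set up a comparison map $\phi$ from the free object and check bijectivity; your surjectivity argument essentially coincides with the paper's generation step, but your injectivity argument is abstract, resting on the decomposition of $M$ into rank-one homogeneous pieces $R[Y]e_s\otimes_R e_t R[Y]$, commutation of $g_!$ with direct sums, and the idempotent computation $R[X]\otimes_{R[Y]}R[Y]e_s\cong R[X]e_s$. Two points you gloss over deserve a sentence each: (i) that the homogeneous components of a free generating family again form a free generating family (true, because the nonzero $p\bullet m_j^{(s,t)}\bullet q$ are precisely the nonzero $p\bullet m_j\bullet q$ partitioned by the boundary of $p,q$, so the partition preserves linear independence); and (ii) that $R[X]\otimes_{R[Y]}R[Y]e_s\cong R[X]e_s$ is not literally the unit isomorphism (the inclusion $R[Y]\hookrightarrow R[X]$ is not unital when $Y_0\subsetneq X_0$, and $R[X]\otimes_{R[Y]}R[Y]\cong R[X]1_Y\neq R[X]$) but rather the idempotent summand of this, valid because $e_s\in R[Y]$, as you note. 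Your approach is cleaner and more modular, pushing the bookkeeping into standard adjoint/tensor facts; the paper's more computational route has the advantage of producing the explicit canonical forms for elements of ${}^X M$ that it reuses shortly afterwards in Lemma \ref{lem:relsub} and Remark \ref{rem:newcanonical}.
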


\begin{proof}
Let $(m_j)_{j\in J}$ be a free family of generators for $M$ as a $R[Y]$-bimodule. As $Ch^{=0}(Y)$ generates the algebra $R[Y]$, we have for each $j\in J$, a family $(p'_{i,j})_{i\in I}$ and a family $(q'_{i,j})_{i\in I}$ with $p'_{i,j}, q'_{i,j} \in Ch^{=0}(Y)$, with $p'_{i,j}=p'_{i',j}$ and $q'_{i,j}=q'_{i',j}$ implies $i=i'$, such that 
$p'_{i,j}\bullet m_j \bullet q'_{i,j}$, where $j \in J$, $i\in I$ freely generates $M$ as an $R$-vector space. 


Also, 0-cube chains of $X$ are generators of the algebra $R[X]$ as an $R$-vector space, 
we know that elements of the form 
$p_{k,i,j}\otimes (p'_{i,j} \bullet m_j \bullet q'_{i,j}) \otimes q_{k,i,j}$, $p_{k,i,j}, q_{k,i,j} \in Ch^{=0}(X)$, $k \in K$, $i\in I$ and $j \in J$ generate $g_!(M)$ as an $R$-vector space, with $p_{k',i,j}=p_{k,i,j}$ and $q_{k',i,j}=q_{k,i,j}$ implies $k=k'$. Still, they do not form a free family in $g_!(M)$ because of the relations of Lemma \ref{lem:extensionscalars}. 

Still, we argue that the elements of the form $r_{l,j}\otimes m_j \otimes s_{l,j}$ with $r_{l,j}, s_{l,j} \in Ch^{=0}(X)$ and $r_{l',j}=r_{l,j}$ and $s_{l',j}=s_{l,j}$ implies $l=l'$, generate, freely, ${ }^X M$ as an $R$-vector space, and that this implies that ${ }^X M$ is generated by the $(m_j)_{j\in J}$, as a $R[X]$-bimodule. 

First, we note that by Equation \ref{eq:restrictionscalars}, 
$$
p_{k,i,j}\otimes (p'_{i,j} \bullet m_j \bullet q'_{i,j}) \otimes q_{k,i,j} = (p_{k,i,j}\times p'_{i,j})\otimes m_j \otimes (q'_{i,j}\times q_{k,i,j}) 
$$
\noindent So writing $r_{l,j}$ to be $p_{k,i,j}\times p'_{i,j}$ (resp. $s_{l,j}$ to be $q'_{i,j}\times q_{k,i,j}$) for $l \in L$ ($L$ in bijection with $K\times I$), we get that the $r_{l,j}\otimes m_j \otimes s_{l,j}$ generate ${ }^X M$ as an $R$-vector space. 

Now we prove that the $(m_j)_{j\in J}$ form a free family of ${ }^X M$ as an $R$-vector space. 
Suppose that: 
\begin{equation} 
\label{eq:free2}
\sum\limits_{i\in I, j\in J} \lambda_{i,j} (r_{i,j}\otimes m_j \otimes s_{i,j})=0
\end{equation}
\noindent with $\lambda_{i,j} \in R$ and $r_{i,j}, s_{i,j} \in R[X]$. We can always suppose that \begin{equation} 
\label{eq:condfree2}
r_{i',j}=r_{i,j} \mbox{ and } s_{i',j}=s_{i,j} \mbox{ implies } i=i'
\end{equation}
\noindent if not, we only need to change the coefficients $\lambda_{i,j}$ and sets $I$ and $J$ so that this property is satisfied. Now we want to prove that Equation (\ref{eq:free2}) (under the condition of Equation (\ref{eq:condfree2})) implies that all $\lambda_{i,j}$ are equal to 
0. This is the case if we can prove that the $r_{i,j}\otimes m_j \otimes s_{i,j}$ are all distinct for distinct $i$ and $j$. 

${ }^X M$ is generated by $r_{i,j}\otimes m_j \otimes s_{i,j}$ modulo Equations (\ref{eq:restrictionscalars}) and (\ref{eq:restrictionscalars2}). 
Suppose $r_{i,j}\otimes m_j \otimes s_{i,j}=r_{i',j'} \otimes m_{j'} \otimes s_{i',j'}$ modulo these equations. By Lemma \ref{lem:refinedeq}, this would mean that 
$r_{i,j}=r_{i',j'}$, $j=j'$ and $s_{i,j}=s_{i',j'}$, hence by Equation (\ref{eq:condfree2}), $i=i'$ as well. 
Thus ${ }^X M$ is freely generated as an $R$-vector space by $(r_{i,j}\otimes m_j \otimes s_{i,j})_{i \in I, j\in J}$, with $r_{i',j}=r_{i,j}$ and $s_{i',j}=s_{i,j}$ implies $i=i'$, meaning that ${ }^X M$ is generated by the $(m_j)_{j\in J}$ as a $R[X]$-bimodule. 

\end{proof}

In the case of $i$-cube chain bimodules, we get even more: 

\begin{corollary}
\label{lem:relsub0}
Let $X \in Cub$ and $Y$ a sub-precubical set of $X$. 
Then ${ }^X R_i[Y]$ is a sub-$R[X]$-bimodule of $R_i[X]$, for $i\geq 2$.  
\end{corollary}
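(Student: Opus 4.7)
The plan is to exhibit an explicit $R[X]$-bimodule embedding $\phi: {}^X R_i[Y] \hookrightarrow R_i[X]$ by stringing together the two freeness results already in hand. By Lemma \ref{lem:free} (applied to $Y$), $R_i[Y]$ is the free $R[Y]$-bimodule on the set $G_i[Y]$ of $(i-1)$-cube chains in $Y$ whose first and last cells have dimension $\geq 2$ (this uses $i \geq 2$). Proposition \ref{rem:relativerestriction} then identifies ${}^X R_i[Y]$ with the free $R[X]$-bimodule on the same set $G_i[Y]$.

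Next, since $Y$ is a sub-precubical set of $X$, we have $Y_n \subseteq X_n$ with boundary operators that restrict. Consequently every cube chain $(c_1,\ldots,c_l)$ in $Y$ is also a cube chain in $X$ with the same type, length and dimension, and the condition ``first and last cell of dimension $\geq 2$'' is preserved. Hence $G_i[Y] \subseteq G_i[X]$, and by Lemma \ref{lem:free} applied to $X$, the larger set $G_i[X]$ freely generates $R_i[X]$ as an $R[X]$-bimodule.

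Using the universal property of freeness, the set-theoretic inclusion $G_i[Y] \hookrightarrow G_i[X] \subseteq R_i[X]$ extends uniquely to a morphism $\phi: {}^X R_i[Y] \to R_i[X]$ of $R[X]$-bimodules. Concretely, $\phi(p \otimes g \otimes q) = p \bullet g \bullet q$ for $p,q \in R[X]$ and $g \in G_i[Y]$. To conclude, I would verify that $\phi$ is injective: any nontrivial linear dependence among the $p \bullet g \bullet q$ inside $R_i[X]$ would, because $G_i[Y] \subseteq G_i[X]$, descend to a nontrivial linear dependence among the $p \bullet g' \bullet q$ with $g' \in G_i[X]$, contradicting the freeness of $R_i[X]$ on $G_i[X]$. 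Thus $\phi$ identifies ${}^X R_i[Y]$ with its image, which is a sub-$R[X]$-bimodule of $R_i[X]$.

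The only delicate point is really the set-level inclusion $G_i[Y] \subseteq G_i[X]$ — that the dimension of a cube chain is computed from the types of its cells (Remark \ref{rem:dimension}) rather than from the ambient precubical set, so passing from $Y$ to $X$ neither changes the dimension nor destroys the ``boundary cells of dimension $\geq 2$'' condition. Once that is observed, everything else is a formal consequence of the universal property and the fact that a sub-family of a free family generates a free sub-bimodule.
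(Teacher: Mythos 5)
Your proof is correct and follows essentially the same route as the paper's: apply Lemma \ref{lem:free} and Proposition \ref{rem:relativerestriction} to identify ${ }^X R_i[Y]$ as the free $R[X]$-bimodule on $G_i[Y]$, observe $G_i[Y] \subseteq G_i[X]$, and conclude by freeness of $R_i[X]$ on $G_i[X]$. You merely spell out the injectivity argument that the paper compresses into ``Hence the result.''
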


\begin{proof}
Proposition \ref{rem:relativerestriction} applies to $X$, $Y$ and $M=R_i[Y]$ with generators $G_i(Y)$ (by Lemma \ref{lem:free}). 
Thus $R_i[X]$ is the free $R[X]$-bimodule generated by $G_i(X)$. But the elements of $G_i(Y)$ which are certain $i$-cube chains of $Y$ are in particular $i$-cube chains of $X$, that are in fact elements of $G_i(X)$. Hence the result.
\end{proof}

\begin{remark}
The proof could have been made a little more pedantic. Indeed, applying $g_!$ to an $A$-bimodule $M$ is tensoring it with some algebra $A'$ (seen as an $A$-bimodule, using $g$). But $M$ being a free $A$-bimodule implies in particular that $M$ is flat, hence that the tensor product by $M$ is exact. Since the inclusion of algebras of $R[Y]$ in $R[X]$ is indeed a monomorphism, its tensor by $M$ over $A$ is a monomorphism as well. This means that, at least as an $A$-bimodule, ${ }^X M$ is a sub-bimodule of ${ }^X M$, which is $M$. It is easy to see this holds as well for the $A'$-bimodule structures. 
\end{remark}

\begin{remark}
\label{rem:moreexplicitly}
More explicitly, ${ }^X R_i[Y]$ (for $i\geq 2$) is freely generated, as an $R$-vector space, by those, among the $i$-cube chains in $X$, that are concatenations of 0-cube chains in $X$ with generating $(i-1)$-cube chains in $Y$, with 0-cube chains in $X$. Hence all $y \in { }^X R_i[Y]$ can be uniquely written as:
$$
y = \sum\limits_{i\in I, j\in J} \lambda_{i,j} p_{i,j} \otimes g_j \otimes q_{i,j}
$$
\noindent with $\lambda_{i,j} \in R$, $p_{i,j}=(c^i_1,\ldots,c^i_{m_i})$ in $Ch^{=0}(X)$, $g_{j}=(d^j_1,\ldots,d^j_{n_j})$ in $G_i(Y)$, $q_{i,j}=(e^i_1,\ldots,e^i_{l_i})$ in $Ch^{=0}(X)$ with $d^1(c^i_m)=d^0(d_1)$ and $d^1(d_n)=d^0(e^i_1)$ and the condition that $p_{i',j}=p_{i,j}$ and $q_{i',j}=q_{i,j}$ implies $i=i'$. 

These elements $y$ are indeed identified with the following element in $R[X]$:
$$ 
\sum\limits_{i\in I, j\in J} \lambda_j (c^i_1,\ldots,c^i_{m_i},d^j_1,\ldots, d^j_{n_j},e^i_1,\ldots,e^i_{l_i})
$$
\end{remark}

For ${ }^X R_1[Y]$ we need to be a bit more cautious, since $R_1[Y]$ is not free in general and the unique generator of $R_1[Y]$ is $1_Y=\sum\limits_{a \in Y_0} e_a$ which may be different than the unique generator of $R_1[X]$, at least when $Y_0 \neq X_0$, which is $1_X=\sum\limits_{b \in X_0} e_b$, so the argument of Lemma \ref{lem:relsub0} does not apply directly at least. And indeed, we need extra requirements for the lemma to apply also for the case $i=1$: 

\begin{example}
Consider the precubical set $X$ of Example \ref{ex:2holes} (the left one, or ``two holes on the diagonal''). Take $Y$ the subprecubical set of $X$ generated by edges $i$ and $f$. Then $i\otimes (h,f,g)$ is a generator (as an $R$-vector space) of ${ }^X R_1[Y]$, as well as $(i,h)\otimes (f) \otimes (g)$. These are distinct elements in ${ }^X R_1[Y]$ whereas they should represent the unique 0-cube chain $(i,h,f,g)$ in $R_1[X]$. This discrepancy forbids the identification of ${ }^X R_i[Y]$ as a sub-$R[X]$-bimodule of $R_i[X]$ in general. 
\end{example}

We thus need an extra condition on the inclusion of $Y$ into $X$ to make sense of a well behaved ${ }^X R_1[Y]$ with respect to $R_1[X]$: 


\begin{definition}
\label{def:relpair}
Let $X\in Cub$ and $Y$ a sub-precubical set of $X$. We say that $(X,Y)$ is a \look{relative pair} (of precubical sets) if for all $c=(c_1,\ldots,c_m) \in Ch^{=0}(X)$ such that $d^0(c_1) \in Y_0$ (resp. $d^1(c_m) \in Y_0$): 
   there exists an index $k$ in $\{1,\ldots,m\}$ such that for all $l> k$, $c_l \not \in Y$, and for all $l < k$, $c_l \in Y$
   (resp. there exists an index $k$ in $\{1,\ldots,m\}$ such that for all $l >k$, $c_l \in Y$, and for all $l < k$, $c_l \not \in Y$) 
    \end{definition}
    Said in a different manner, $(X,Y)$ is a relative pair if all directed paths of $X$ can only enter once, and exit once $Y$: when $d^0(c_1) \in Y_0$ (resp. $d^1(c_m) \in Y_0$), $c=c^1 c^2$ with $c^1$ possibly empty or equal to $(c_1,\ldots,c_j)$ with $c_k\in Y_1$ (resp. $c_k\in X_1 \backslash Y_1$), $k=1,\ldots,j$, and $c^2$ possibly empty or equal to $(c_{j+1},\ldots,c_m)$ with $c_{k}\in X_1\backslash Y_1$ (resp. $c_k \in Y_1$), $k=1,\ldots,m$. 
    
Now, we show that for $(X,Y)$ a relative pair of precubical sets, ${ }^X R_1[Y]$ is the sub-$R[X]$-bimodule of $R_1[X]$ of dipaths of $X$ that ``go through Y'':

\begin{lemma}
\label{lem:relsub}
Let $(X,Y)$ be a relative pair of precubical sets as in Definition \ref{def:relpair}. Then ${ }^X R_1[Y]$ is a sub-$R[X]$-bimodule of $R_1[X]$.  
More precisely, 
all $y \in { }^X R_1[Y]$ can be written in a unique manner as: 
$$
y = \sum\limits_{i\in I,j \in J} \lambda_{i,j} p_{i,j }\otimes c_j \otimes q_{i,j}
$$
\noindent with $\lambda_{i,j} \in R$, $c_j=(e_1,\ldots,e_{k_j})$ $0$-cube chains in $Y$ (possibly empty, i.e. with $k_j=0$) and $p_{i,j}=(p^{i,j}_1,\ldots,$ $p^{i,j}_{l_{i,j}})$, $q_{i,j}=(q^{i,j}_{1},\ldots,q^{i,j}_{n_{i,j}})$, 0-cube chains in $X$ with no cell in $Y$, with $d^1(p^{i,j}_{l_{i,j}})=d^0(e_1)$ and $d^1(e_{{k_j}})=d^0(q^{i,j}_{1})$, and with $p_{i',j}=p_{i,j}$ and $q_{i',j}=q_{i,j}$ implies $i=i'$.  

Furthermore, under the inclusion ${ }^X R_1[Y] \hookrightarrow R_1[X]$, $p_{i,j}\otimes c_j \otimes q_{i,j}$ is identified to $(p_1^{i,j},\ldots,p_{l_{i,j}}^{i,j},e^j_1,\ldots,e^j_{k_j},q_1^{i,j},\ldots,q_{n_{i,j}}^{i,j})$. 
\end{lemma}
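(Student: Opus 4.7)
The plan is to construct an explicit $R[X]$-bimodule morphism $\phi \colon {}^X R_1[Y] \to R_1[X]$ by $p \otimes c \otimes q \mapsto p \cdot c \cdot q$ (the algebra product in $R[X]$, or equivalently the concatenation of $0$-cube chains, which vanishes when endpoints do not match) and to show that it is injective, so that ${}^X R_1[Y]$ is identified with its image, a sub-$R[X]$-bimodule of $R_1[X]$. Since $R_1[Y] = R[Y]$ as an $R[Y]$-bimodule with generator $1_Y = \sum_{a \in Y_0} e_a$, associativity of the $R[X]$-product shows that $\phi$ respects the defining relation of Equation (\ref{eq:restrictionscalars}), and Equation (\ref{eq:restrictionscalars2}) gives $R[X]$-bilinearity of $\phi$ for free.

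First, I would establish existence of the normal form. Any nonzero elementary tensor in ${}^X R_1[Y]$ can be taken to be $p \otimes c \otimes q$ with $p, q \in Ch^{=0}(X)$, $c \in Ch^{=0}(Y)$, and composable endpoints (otherwise $p \cdot e_v = 0$ already collapses the tensor to zero), so that in particular $d^1(p) \in Y_0$. Applying Definition \ref{def:relpair} to the path $p$, which ends in $Y_0$, yields an index $k$ with $p_l \notin Y$ for $l < k$ and $p_l \in Y$ for $l > k$, splitting $p = p' \cdot p''$ with $p' \in Ch^{=0}(X)$ carrying no cell in $Y$ and $p'' \in Ch^{=0}(Y) \subset R[Y]$. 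Equation (\ref{eq:restrictionscalars}) then rewrites $p \otimes c \otimes q$ as $p' \otimes (p'' \bullet c) \otimes q$, and the dual argument on $q$ (whose path starts in $Y_0$) absorbs its $Y$-prefix into the middle factor, producing the desired normal form.

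For uniqueness, I would show that $\phi$ separates distinct normal-form triples. The key observation is that, by Definition \ref{def:relpair}, every $0$-cube chain $r \in Ch^{=0}(X)$ that meets $Y$ admits a canonical factorisation $r = p' \cdot c' \cdot q'$ with $p', q' \in Ch^{=0}(X)$ having no cell in $Y$ and $c' \in Ch^{=0}(Y)$: applying the relative-pair condition to the suffix of $r$ starting at the first $Y$-edge and to the prefix ending at the last $Y$-edge forces the $Y$-edges of $r$ to form a unique contiguous block bracketed by two cell-free outer pieces. Distinct normal-form triples therefore map under $\phi$ to distinct $0$-cube chains of $X$, hence to linearly independent basis vectors of $R_1[X]$. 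Injectivity of $\phi$ on the $R$-span of normal forms, together with existence from the previous step, yields the unique presentation claimed in the lemma up to the bookkeeping condition $p_{i', j} = p_{i, j}$ and $q_{i', j} = q_{i, j}$ implying $i = i'$; moreover, the image of $\phi$, being the span of such concatenations, is closed under the $R[X]$-bimodule action on $R_1[X]$.

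The principal obstacle is the uniqueness step: one must verify that the relative-pair condition really pins down the decomposition at the boundary between outer and middle factors, including delicate cases where the middle factor contains very few $Y$-cells, so that the unique-contiguous-block argument requires careful use of both halves of Definition \ref{def:relpair}. By contrast, the existence step reduces to a routine rewriting via Equation (\ref{eq:restrictionscalars}), and the bimodule-morphism properties of $\phi$ are immediate from the definitions of restriction and extension of scalars recalled in Section \ref{sec:assoc}.
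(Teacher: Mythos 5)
The existence half of your argument matches the paper's, but your uniqueness argument takes a genuinely different route, and that route has a real gap. The paper establishes uniqueness of the normal form \emph{intrinsically} inside ${}^X R_1[Y]$, by applying the equality criterion of Lemma \ref{lem:normalform}: if $p\otimes c\otimes q = p'\otimes c'\otimes q'$ with $p,p',q,q'$ entirely composed of $X\setminus Y$-edges, then the connecting $Y$-paths $p_2,p'_2,q_2,q'_2$ supplied by that lemma must be idempotents, whence $p=p_1=p'$, $q=q_1=q'$ and $c=c'$. You instead try to derive uniqueness from the claim that the concatenation map $\phi\colon {}^X R_1[Y]\to R_1[X]$ sends distinct normal-form triples to distinct basis elements of $R_1[X]$. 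That claim is false, and it is exactly the ``delicate case where the middle factor contains very few $Y$-cells'' that you flag but do not actually resolve.

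Concretely: your ``unique contiguous block'' argument applies the relative-pair condition to the suffix of $r$ starting at the first $Y$-\emph{edge} and to the prefix ending at the last $Y$-\emph{edge}. When $r$ has no $Y$-edges at all but passes through two or more vertices of $Y_0$, there is no such edge, and the factorisation is not unique. Take $X = a \to v \to w \to b$ (a single length-$3$ path, no higher cells) and $Y = \{v,w\}$ with $Y_1 = \emptyset$. Then $(X,Y)$ is a relative pair. Writing $\alpha = (a\to v)$, $\beta = (v\to w)$, $\gamma = (w\to b)$, both $\alpha \otimes e_v \otimes (\beta,\gamma)$ and $(\alpha,\beta) \otimes e_w \otimes \gamma$ are in your normal form and both concatenate to the same chain $(\alpha,\beta,\gamma)$; yet by the very Lemma \ref{lem:normalform} criterion they are distinct in ${}^X R_1[Y]$ (the second factorisation would require $\beta\in Ch^{=0}(Y)$, which it is not). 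So $\phi$ is not injective on normal forms, and the step ``distinct normal-form triples map to linearly independent basis vectors of $R_1[X]$'' fails. Since your derivation of both the unique-presentation statement and the sub-bimodule statement funnels through this injectivity claim, the proof does not go through as written; the paper's route via Lemma \ref{lem:normalform} sidesteps the problem for the uniqueness conclusion by never invoking $\phi$.
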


\begin{proof}
As $R_{1}[Y]$ is the $R[Y]$-bimodule generated by $1_Y$, ${ }^X R_{1}[Y]$ is generated, as an $R$-vector space, 
by elements of the form $p'\otimes 1_Y \otimes q'$ with $p'=(c_1,\ldots,c_m)\in Ch^{=0}(X)$ and $q'=(d_1,\ldots,d_n)\in Ch^{=0}(X)$ since $R[X]$ is generated, as an $R$-vector space, by the elements of $Ch^{=0}(X)$. 

We first show that, necessarily, $d^1(c_m)\in Y_0$ (resp. $d^0(d_1)\in Y_0$). Suppose otherwise, suppose e.g. $p'$ has $t=d^1(p')\in X_0 \backslash Y_0$. Then $p'\otimes 1_Y \otimes q'=(p'\times e_{t})\otimes 1_Y \otimes q'=p'\otimes (e_t \bullet 1_Y) \otimes q'$, by Equation (\ref{eq:restrictionscalars}). Now, $e_t \bullet 1_Y=e_t \times 1_Y$ since $R_1[X]$ is the algebra $R[X]$ considered as a module over itself, and $e_t \times 1_Y=\sum\limits_{y \in Y_0} e_t \times e_y=0$ since the $(e_x)_{x\in X_0}$ forms an orthogonal family of idempotents in $R[X]$, hence $p'\otimes 1_Y \otimes q'=0$. 

Because $(X,Y)$ is a relative pair, and as $d^1(c_m)\in Y_0$, $d^0(d_1)\in Y_0$, $p'$ should decompose as the concatenation of $p=(c_1,\ldots,c_{j-1})$ ($c_1,\ldots,c_{j-1}$ all in $X_1\backslash Y_1$) with $(c_j,\ldots,c_m)$ ($c_j,\ldots,c_m \in Y_1$), and similarly, $q'$ should decompose as the concatenation of $q=(d_1,\ldots,d_k)$ ($d_1,\ldots,d_k \in Y_1$) with $(d_{k+1},\ldots,d_n)$ ($d_{k+1},\ldots,d_n \in X_1\backslash Y_1$).  


By Equation (\ref{eq:restrictionscalars}) of Lemma \ref{lem:extensionscalars}, $p\otimes 1_Y\otimes q$ is in that case equal to $p\otimes c \otimes q$ with $p=(c_1,\ldots,c_{j-1})$, $c=(c_j,\ldots,c_m,d_1,\ldots,d_k)$ and $q=(d_{k+1},\ldots,d_n)$. Hence the decomposition of the elements of ${ }^X R_1[Y]$ of the lemma. 

Finally, this is a canonical form for $y\in { }^X R_1[Y]$, as the generating elements $p\otimes c \otimes q$ are all distinct. Indeed, suppose $p\otimes c \otimes q=p'\otimes c'\otimes q'$. By Lemma \ref{lem:normalform}, there exists $p_1, q_1\in Ch^{=0}(X)$ and $p_2, p'_2, q_2, q'_2\in Ch^{=0}(Y)$ such that: 
\begin{eqnarray*}
p & = & p_1 \times p_2 \\
p' & = & p_1 \times p'_2 \\
q & = & q_2 \times q_1 \\
q' & = & q'_2 \times q_1 \\
p_2 \bullet c \bullet q_2 & = & p'_2 \bullet c' \bullet q'_2
\end{eqnarray*} 
But since $p$, $p'$, $q$ and $q'$ consist entirely of edges in $X\backslash Y$, necessarily, $p_2$, $p'_2$, $q_2$ and $q'_2$ are idempotents, neutral elements for the multiplication on the right for $p_1$, $p'_1$, and on the left for $q_1$, $q'_1$ respectively. These then correspond to $e_u$ and $e_v$ respectively, with $u$ being the end point of $p$ and $p'$, and $u$ being the start point of $q$ and $q'$. Hence $p=p_1=p'$ and $q=q_1=q'$ and $p_2\bullet c \bullet q_2=c$, $p'_2\bullet c' \bullet q'_2$ and $c=c'$. 
\end{proof}

\begin{remark}
\label{rem:newcanonical}
By Remark \ref{rem:moreexplicitly}, all $y \in { }^X R_i[Y]$, for $i\geq 2$ can be uniquely written as:
$$
y = \sum\limits_{i\in I, j\in J} \lambda_{i,j} p_{i,j} \otimes g_j \otimes q_{i,j}
$$
\noindent with $\lambda_{i,j} \in R$, $p_{i,j}=(c^i_1,\ldots,c^i_{m_i})$ in $Ch^{=0}(X)$, $g_{j}=(d^j_1,\ldots,d^j_{n_j})$ in $G_i(Y)$, $q_{i,j}=(e^i_1,\ldots,e^i_{l_i})$ in $Ch^{=0}(X)$ with $d^1(c^i_m)=d^0(d_1)$ and $d^1(d_n)=d^0(e^i_1)$ and the condition that $p_{i',j}=p_{i,j}$ and $q_{i',j}=q_{i,j}$ implies $i=i'$. 

By a suitable application of Equation \ref{eq:restrictionscalars}, we can write for $x\in { }^X R_i[Y]$ a similar canonical form as in Lemma \ref{lem:relsub}: 
$$
y = \sum\limits_{i\in I, j\in J} \lambda_{i,j} p'_{i,j} \otimes c_j \otimes q'_{i,j}
$$
with $c_j\in Ch^{=i-1}(Y)$ and the $p'_{i,j}$ and $q'_{i,j}$ entirely composed of 1-cells in $X\backslash Y$ and the condition that $p'_{i',j}=p'_{i,j}$ and $q'_{i',j}=q'_{i,j}$ implies $i=i'$. This is done similarly as in the proof of Lemma \ref{lem:relsub}, using the fact that $(X,Y)$ is a relative pair. 
\end{remark}


We can now define the relative homology modules for relative pairs of precubical spaces: 

\begin{definition}
\label{def:relative}
Let $(X,Y)$ be a relative pair of precubical sets. We define 
the \look{relative $i$th-homology $R[X]$-bimodule} $HM_{i}[X,Y]$ as the homology of the quotient of $R_i[X]$ with the sub-$R[X]$-bimodule ${ }^X R_i[Y]$ within the category of $R[X]$-bimodules, with boundary operator defined in the quotient as $\partial [x] = [\partial x]$, $[x]$ denoting a class with representative $x$ in $R_i[X]/{ }^X R_i[Y]$.
\end{definition}

The definition is valid as we are going to see. Consider an element $[x]$ of $R_i[X]/{ }^X R_i[Y]$. Suppose $[y]=[x]$ in ${ }^X R_i[Y]$, thus there exists $z \in { }^X R_i[Y]$ with $y=x+z$. Thus $\partial [y]=[\partial y]=[\partial x + \partial z]=[\partial x]+[\partial z]=\partial [x]$, and the class of $\partial([x])$ in $R_i[X]/{ }^X R_i[Y]$ does not depend on the particular representative chosen for $[x]$. 






\subsection{Examples}

Let us now examplify the homology modules we have been defining on a few classical examples coming from concurrency theory \cite{thebook}. We postpone examples on relative homology to Section \ref{sec:exact}, Example \ref{ex:relativehomology}. 

\begin{example}
For the ``2 holes on the anti-diagonal" precubical set, Example \ref{ex:2holes} (considered also in Example \ref{ex:2holesbis}), we quotient the $R[X]$-bimodule $R_1[X]$ by $Im \ \partial$ which is the sub-$R[X]$-bimodule of $R_1[M]$ generated by $ih-kd$ and $fg-eb$, i.e. this is the $R$-vector space generated by
$(ih-kd)fg\in e_9 \bullet R_1[M]\bullet e_1$, 
$(ih-kd)f \in e_9 \bullet R_1[M]\bullet e_4$, 
$(ih-kd)e \in e_9 \bullet R_1[M]\bullet e_2$, 
$(ih-kd)eb \in e_9 \bullet R_1[M]\bullet e_1$, 
$ih-kd \in e_9 \bullet R_1[M]\bullet e_5$, 
$fg-eb \in e_5 \bullet R_1[M]\bullet e_1$, 
$h(fg-eb) \in e_8 \bullet R_1[M]\bullet e_1$, 
$ih(fg-eb) \in e_9 \bullet R_1[M]\bullet e_1$,
$d(fg-eb) \in e_6 \bullet R_1[M]\bullet e_1$, 
$kd(fg-eb) \in e_9 \bullet R_1[M]\bullet e_1$. 

Indeed, within $e_9 \bullet R_1[M]\bullet e_1$, 
$x_1=(ih-kd)fg$, $x_2=ih(fg-eb)$, $x_3=kd(fg-eb)$, and $x_4=(ih-kd)eb$ are $R$-linearly dependent: $x_1-x_4=x_2-x_3$ and there are no other (independent) dependencies. This means that we have to quotient (using our notations, see Remark \ref{rem:notation}) entry $(9,1)$ of the matrix of $R$-vector spaces of Example \ref{ex:2holes} by $R^3$, leading to entry $R^3$ in position $(9,1)$ below, which represents the $R[X]$-bimodule, and the other modified entries are $(9,4)$, $(9,2)$, $(9,5)$, $(5,1)$, $(8,1)$, and $(6,1)$:

$$
\begin{pmatrix}
R & 0 & 0 & 0 & 0 & 0 & 0 & 0 & 0 \\
R & R & 0 & 0 & 0 & 0 & 0 & 0 & 0 \\
R & R & R & 0 & 0 & 0 & 0 & 0 & 0 \\
R & 0 & 0 & R & 0 & 0 & 0 & 0 & 0 \\
R & R & 0 & R & R & 0 & 0 & 0 & 0 \\
R^2 & R^2 & R & R & R & R & 0 & 0 & 0 \\
R & 0 & 0 & R & 0 & 0 & R & 0 & 0 \\
R^2 & R & 0 & R^2 & R & 0 & R & R & 0 \\
R^3 & R^2 & R & R^2 & R & R & R & R & R 
\end{pmatrix}
$$
Let us exemplify the $R[X]$-bimodule action now . The entry $(9,1)$ is $R^3$, generated, as an $R$-vector space, by $[ihfg]_{9,1}$, $[ijlg]_{9,1}$ and $[kcab]_{9,1}$. Consider entry $(9,5)$, which is $R$. It is generated as an $R$-vector space by $[kd]_{9,5}$. The right action of $fg$ is $[kdfg]_{9,1}=[ihfg]_{9,1}$. 
\end{example}

\begin{example}
    For the ``two holes on the diagonal''  cubical set of Example \ref{ex:2holes}, we quotient the matrix of $R$-vector spaces by $Im \ \partial$ which is the $R[X]$-bimodule generated by $jl-hf$ and $de-ca$, giving respectively the matrix algebras, by the same argument as above: 
$$
\begin{pmatrix}
R & 0 & 0 & 0 & 0 & 0 & 0 & 0 & 0 \\
R & R & 0 & 0 & 0 & 0 & 0 & 0 & 0 \\
R & R & R & 0 & 0 & 0 & 0 & 0 & 0 \\
R & 0 & 0 & R & 0 & 0 & 0 & 0 & 0 \\
R^2 & R & 0 & R & R & 0 & 0 & 0 & 0 \\
R^2 & R & R & R & R & R & 0 & 0 & 0 \\
R & 0 & 0 & R & 0 & 0 & R & 0 & 0 \\
R^2 & R & 0 & R & R & 0 & R & R & 0 \\
R^4 & R^2 & R & R^2 & R^2 & R & R & R & R 
\end{pmatrix}
$$
which is indeed a non-isomorphic $R[X]$-bimodule to the one obtained for $HM_1$ for Example \ref{ex:2holes}. 
\end{example}

\begin{example}[Empty cube]
\label{ex:emptycube}
We consider here the boundary of the 3-cube of Example \ref{ex:3cube}: 


\begin{center}
\begin{tikzpicture}[scale=3.5,tdplot_main_coords]
    \coordinate (O) at (0,0,0);
    \tdplotsetcoord{P}{1.414213}{54.68636}{45}
    \draw[fill=green,fill opacity=0.1] (Pz) -- (Pyz) -- (P) -- (Pxz) -- cycle;
\draw [-stealth] (O) -- (Pz);
    \draw[-stealth] (Pz) edge node {$a01$} (Pyz); 
    \draw[-stealth] (Pyz) edge node {$1b1$} (P); 
    \draw[-stealth] (Pz) edge node {$0b1$} (Pxz);
    \draw[-stealth] (Pxz) edge node[above,right] {$a11$} (P);

   \draw[fill=red,fill opacity=0.1] (Px) -- (Pxy) -- (P) -- (Pxz) -- cycle;
    \draw[-stealth] (Px) edge node {$a10$} (Pxy); 
    \draw[-stealth] (Pxy) edge node {$11c$} (P); 
    \draw[-stealth] (Px) edge node[below] {$01c$} (Pxz);
    
    \draw[fill=magenta,fill opacity=0.1] (Py) -- (Pxy) -- (P) -- (Pyz) -- cycle;
    \draw[-stealth] (Py) edge node {$1b0$} (Pxy); 
    \draw[-stealth] (O) edge node {$0b0$} (Px);
    \draw[-stealth] (Py) edge node[right] {$10c$} (Pyz);
    
    \draw[fill=gray!50,fill opacity=0.1] (O) -- (Py) -- (Pyz) -- (Pz) -- cycle;
    \draw[-stealth] (O) edge node[left] {$00c$} (Pz); 
        \draw[-stealth] (O) edge node[left] {$a00$} (Py); 

    \draw[fill=yellow,fill opacity=0.1] (O) -- (Px) -- (Pxz) -- (Pz) -- cycle;

    \draw[fill=green,fill opacity=0.1] (Pz) -- (Pyz) -- (P) -- (Pxz) -- cycle;

    \draw[fill=red,fill opacity=0.1] (Px) -- (Pxy) -- (P) -- (Pxz) -- cycle;

    \draw[fill=magenta,fill opacity=0.1] (Py) -- (Pxy) -- (P) -- (Pyz) -- cycle;
  \end{tikzpicture}
\end{center}

 By Lemma \ref{lem:acyclicquiver}, $R[X]$ is the matrix algebra: 
 $$
\left(\begin{array}{l|cccccccc}
& 111 & 110 & 101 & 100 & 011 & 010 & 001 & 000 \\
\hline
111 & R & 0 & 0 & 0 & 0 & 0 & 0 & 0 \\
110 & R & R & 0 & 0 & 0 & 0 & 0 & 0  \\
101 & R & 0 & R & 0 & 0 & 0 & 0 & 0  \\
100 & R^2 & R & R & R & 0 & 0 & 0 & 0  \\
011 & R & 0 & 0 & 0 & R & 0 & 0 & 0 \\
010 & R^2 & R & 0 & 0 & R & R & 0 & 0 \\
001 & R^2 & 0 & R & 0 & R & 0 & R & 0 \\
000 & R^6 & R^2 & R^2 & R & R^2 & R & R & R \\
\end{array}\right)
$$
\noindent and has the same matrix representation as a $R[X]$-bimodule as of Remark \ref{rem:notation}. 
We have seen in Example \ref{ex:3cube} that the 1-cube chains are generated by the following elements, plus sub-1-cube chains (made of just one 2-cell): 
\begin{center}
\begin{tabular}{cccccc}
  \begin{tikzpicture}[scale=1.4,tdplot_main_coords]
    \coordinate (O) at (0,0,0);
    \tdplotsetcoord{P}{1.414213}{54.68636}{45}
    \draw[->,thick,fill=green,fill opacity=0.3] (Pz) -- (Pyz) -- (P) -- (Pxz) -- cycle;
\draw [->,thick] (O) -- (Pz);
    \draw[->,thick,fill=green,fill opacity=0.3] (Pz) -- (Pyz); 
    \draw[->,thick,fill=green,fill opacity=0.3] (Pyz) -- (P); 
    \draw[->,thick,fill=green,fill opacity=0.3] (Pz) -- (Pxz);
    \draw[->,thick,fill=green,fill opacity=0.3] (Pxz) -- (P);

    \draw[dashed,fill=gray!50,fill opacity=0.1] (O) -- (Py) -- (Pyz) -- (Pz) -- cycle;
    \draw[dashed,fill=yellow,fill opacity=0.1] (O) -- (Px) -- (Pxz) -- (Pz) -- cycle;
    \draw[dashed,fill=green,fill opacity=0.1] (Pz) -- (Pyz) -- (P) -- (Pxz) -- cycle;
    \draw[dashed,fill=red,fill opacity=0.1] (Px) -- (Pxy) -- (P) -- (Pxz) -- cycle;
    \draw[dashed,fill=magenta,fill opacity=0.1] (Py) -- (Pxy) -- (P) -- (Pyz) -- cycle;
  \end{tikzpicture}
&
  \begin{tikzpicture}[scale=1.4,tdplot_main_coords]
    \coordinate (O) at (0,0,0);
    \tdplotsetcoord{P}{1.414213}{54.68636}{45}

    \draw[fill=gray!50,fill opacity=0.3] (O) -- (Py) -- (Pyz) -- (Pz) -- cycle;
    \draw[->,thick,fill=gray!50,fill opacity=0.3] (O) -- (Py);
    \draw[->,thick,fill=gray!50,fill opacity=0.3] (Py) -- (Pyz);
    \draw[->,thick,fill=gray!50,fill opacity=0.3] (O) -- (Pz);
    \draw[->,thick,fill=gray!50,fill opacity=0.3] (Pz) -- (Pyz);
    \draw[->,thick,fill=gray!50,fill opacity=0.3] (Pyz) -- (P);

    \draw[dashed,fill=gray!50,fill opacity=0.1] (O) -- (Py) -- (Pyz) -- (Pz) -- cycle;
    \draw[dashed,fill=yellow,fill opacity=0.1] (O) -- (Px) -- (Pxz) -- (Pz) -- cycle;
    \draw[dashed,fill=green,fill opacity=0.1] (Pz) -- (Pyz) -- (P) -- (Pxz) -- cycle;
    \draw[dashed,fill=red,fill opacity=0.1] (Px) -- (Pxy) -- (P) -- (Pxz) -- cycle;
    \draw[dashed,fill=magenta,fill opacity=0.1] (Py) -- (Pxy) -- (P) -- (Pyz) -- cycle;
\end{tikzpicture}
&
  \begin{tikzpicture}[scale=1.4,tdplot_main_coords]
    \coordinate (O) at (0,0,0);
    \tdplotsetcoord{P}{1.414213}{54.68636}{45}

    \draw[fill=yellow,fill opacity=0.3] (O) -- (Px) -- (Pxz) -- (Pz) -- cycle;
    \draw[->,thick,fill=yellow,fill opacity=0.3] (O) -- (Px);
    \draw[->,thick,fill=yellow,fill opacity=0.3] (Px) -- (Pxz);
    \draw[->,thick,fill=yellow,fill opacity=0.3] (Pz) -- (Pxz);
    \draw[->,thick,fill=yellow,fill opacity=0.3] (O) -- (Pz);
    \draw[->,thick,fill=yellow,fill opacity=0.3] (Pxz) -- (P);

    \draw[dashed,fill=gray!50,fill opacity=0.1] (O) -- (Py) -- (Pyz) -- (Pz) -- cycle;
    \draw[dashed,fill=yellow,fill opacity=0.1] (O) -- (Px) -- (Pxz) -- (Pz) -- cycle;
    \draw[dashed,fill=green,fill opacity=0.1] (Pz) -- (Pyz) -- (P) -- (Pxz) -- cycle;
    \draw[dashed,fill=red,fill opacity=0.1] (Px) -- (Pxy) -- (P) -- (Pxz) -- cycle;
    \draw[dashed,fill=magenta,fill opacity=0.1] (Py) -- (Pxy) -- (P) -- (Pyz) -- cycle;
\end{tikzpicture}
& 
  \begin{tikzpicture}[scale=1.4,tdplot_main_coords]
    \coordinate (O) at (0,0,0);
    \tdplotsetcoord{P}{1.414213}{54.68636}{45}

    \draw[fill=red,fill opacity=0.3] (Px) -- (Pxy) -- (P) -- (Pxz) -- cycle;
    \draw[->,thick,fill=red,fill opacity=0.3] (O) -- (Px);
    \draw[->,thick,fill=red,fill opacity=0.3] (Px) -- (Pxy);
    \draw[->,thick,fill=red,fill opacity=0.3] (Pxy) -- (P);
    \draw[->,thick,fill=red,fill opacity=0.3] (Pxz) -- (P);
    \draw[->,thick,fill=red,fill opacity=0.3] (Px) -- (Pxz);

    \draw[dashed,fill=gray!50,fill opacity=0.1] (O) -- (Py) -- (Pyz) -- (Pz) -- cycle;
    \draw[dashed,fill=yellow,fill opacity=0.1] (O) -- (Px) -- (Pxz) -- (Pz) -- cycle;
    \draw[dashed,fill=green,fill opacity=0.1] (Pz) -- (Pyz) -- (P) -- (Pxz) -- cycle;
    \draw[dashed,fill=red,fill opacity=0.1] (Px) -- (Pxy) -- (P) -- (Pxz) -- cycle;
    \draw[dashed,fill=magenta,fill opacity=0.1] (Py) -- (Pxy) -- (P) -- (Pyz) -- cycle;
\end{tikzpicture}
&
  \begin{tikzpicture}[scale=1.4,tdplot_main_coords]
    \coordinate (O) at (0,0,0);
    \tdplotsetcoord{P}{1.414213}{54.68636}{45}

    \draw[fill=magenta,fill opacity=0.3] (Py) -- (Pxy) -- (P) -- (Pyz) -- cycle;
    \draw[->,thick,fill=magenta,fill opacity=0.3] (Py) -- (Pxy);
    \draw[->,thick,fill=magenta,fill opacity=0.3] (Pxy) -- (P);
    \draw[->,thick,fill=magenta,fill opacity=0.3] (Py) -- (Pyz);
    \draw[->,thick,fill=magenta,fill opacity=0.3] (Pyz) -- (P);
    \draw[->,thick,fill=magenta,fill opacity=0.3] (O) -- (Py);

    \draw[dashed,fill=gray!50,fill opacity=0.1] (O) -- (Py) -- (Pyz) -- (Pz) -- cycle;
    \draw[dashed,fill=yellow,fill opacity=0.1] (O) -- (Px) -- (Pxz) -- (Pz) -- cycle;
    \draw[dashed,fill=green,fill opacity=0.1] (Pz) -- (Pyz) -- (P) -- (Pxz) -- cycle;
    \draw[dashed,fill=red,fill opacity=0.1] (Px) -- (Pxy) -- (P) -- (Pxz) -- cycle;
    \draw[dashed,fill=magenta,fill opacity=0.1] (Py) -- (Pxy) -- (P) -- (Pyz) -- cycle;
\end{tikzpicture} 
&
  \begin{tikzpicture}[scale=1.4,tdplot_main_coords]
    \coordinate (O) at (0,0,0);
    \tdplotsetcoord{P}{1.414213}{54.68636}{45}

    \draw[fill=magenta,fill opacity=0.3] (O) -- (Py) -- (Pxy) -- (Px) -- cycle;
    \draw[->,thick,fill=magenta,fill opacity=0.3] (Pxy) -- (P);
    \draw[->,thick,fill=magenta,fill opacity=0.3] (Px) -- (Pxy);
    \draw[->,thick,fill=magenta,fill opacity=0.3] (Py) -- (Pxy);
    \draw[->,thick,fill=magenta,fill opacity=0.3] (O) -- (Px);
    \draw[->,thick,fill=magenta,fill opacity=0.3] (O) -- (Py);

    \draw[dashed,fill=gray!50,fill opacity=0.1] (O) -- (Py) -- (Pyz) -- (Pz) -- cycle;
    \draw[dashed,fill=yellow,fill opacity=0.1] (O) -- (Px) -- (Pxz) -- (Pz) -- cycle;
    \draw[dashed,fill=green,fill opacity=0.1] (Pz) -- (Pyz) -- (P) -- (Pxz) -- cycle;
    \draw[dashed,fill=red,fill opacity=0.1] (Px) -- (Pxy) -- (P) -- (Pxz) -- cycle;
    \draw[dashed,fill=magenta,fill opacity=0.1] (Py) -- (Pxy) -- (P) -- (Pyz) -- cycle;
    \draw[dashed,fill=purple,fill opacity=0.1] (O) -- (Py) -- (Pxy) -- (Px) -- cycle;
\end{tikzpicture} 
\\
(00c,A') & (B,1b1) & (C,a11) & (0b0,B') & (a00,C') & (A,11c) \\
\end{tabular}
\end{center}

Therefore, $R_2[X]$ is: 
$$
\left(\begin{array}{l|cccccccc}
& 111 & 110 & 101 & 100 & 011 & 010 & 001 & 000 \\
\hline
111 & 0 & 0 & 0 & 0 & 0 & 0 & 0 & 0 \\
110 & 0 & 0 & 0 & 0 & 0 & 0 & 0 & 0  \\
101 & 0 & 0 & 0 & 0 & 0 & 0 & 0 & 0  \\
100 & R & 0 & 0 & 0 & 0 & 0 & 0 & 0  \\
011 & 0 & 0 & 0 & 0 & 0 & 0 & 0 & 0 \\
010 & R & 0 & 0 & 0 & 0 & 0 & 0 & 0 \\
001 & R & 0 & 0 & 0 & 0 & 0 & 0 & 0 \\
000 & R^6 & R & R & 0 & R & 0 & 0 & 0 \\
\end{array}\right)
$$

Finally, $H_1[X]=R_1[X]/Im \ \partial_{R_2[X] \rightarrow R_1[X]}$ hence: 
 $$
\left(\begin{array}{lcccccccc}
& 111 & 110 & 101 & 100 & 011 & 010 & 001 & 000 \\
\hline
111 & R & 0 & 0 & 0 & 0 & 0 & 0 & 0 \\
110 & R & R & 0 & 0 & 0 & 0 & 0 & 0  \\
101 & R & 0 & R & 0 & 0 & 0 & 0 & 0  \\
100 & R & R & R & R & 0 & 0 & 0 & 0  \\
011 & R & 0 & 0 & 0 & R & 0 & 0 & 0 \\
010 & R & R & 0 & 0 & R & R & 0 & 0 \\
001 & R & 0 & R & 0 & R & 0 & R & 0 \\
000 & R & R & R & R & R & R & R & R \\
\end{array}\right)
$$
Now $Ker \ \partial_{\mid R_2[X]\rightarrow R_1[X]}$ is easily seen to be generated by $(00c,A')-(C,a11)-(0b0,B')-(A,11c)+(a00,C')+(B,1b1)$ which is the generator of the $R$-vector space $e_{000}\bullet R_2[X] \bullet e_{111}$, 
thanks to the calculation of $\partial$ on $R_2[X]$ in Example \ref{ex:3cube}. 
As $R_3[X]=0$, $HM_2[X]$ is:
$$
\left(\begin{array}{lcccccccc}
& 111 & 110 & 101 & 100 & 011 & 010 & 001 & 000 \\
\hline
111 & 0 & 0 & 0 & 0 & 0 & 0 & 0 & 0 \\
110 & 0 & 0 & 0 & 0 & 0 & 0 & 0 & 0  \\
101 & 0 & 0 & 0 & 0 & 0 & 0 & 0 & 0  \\
100 & 0 & 0 & 0 & 0 & 0 & 0 & 0 & 0  \\
011 & 0 & 0 & 0 & 0 & 0 & 0 & 0 & 0 \\
010 & 0 & 0 & 0 & 0 & 0 & 0 & 0 & 0 \\
001 & 0 & 0 & 0 & 0 & 0 & 0 & 0 & 0 \\
000 & R & 0 & 0 & 0 & 0 & 0 & 0 & 0 \\
\end{array}\right)
$$

In the case of the full $3$-cube, $R_3[X]$ is the following $R[X]$-bimodule: 
$$
\left(\begin{array}{lcccccccc}
& 111 & 110 & 101 & 100 & 011 & 010 & 001 & 000 \\
\hline
111 & 0 & 0 & 0 & 0 & 0 & 0 & 0 & 0 \\
110 & 0 & 0 & 0 & 0 & 0 & 0 & 0 & 0  \\
101 & 0 & 0 & 0 & 0 & 0 & 0 & 0 & 0  \\
100 & 0 & 0 & 0 & 0 & 0 & 0 & 0 & 0  \\
011 & 0 & 0 & 0 & 0 & 0 & 0 & 0 & 0 \\
010 & 0 & 0 & 0 & 0 & 0 & 0 & 0 & 0 \\
001 & 0 & 0 & 0 & 0 & 0 & 0 & 0 & 0 \\
000 & R & 0 & 0 & 0 & 0 & 0 & 0 & 0 \\
\end{array}\right)
$$
\noindent with, as only generator (in entry ($000$,$111$)) the 3-cell $S$, with boundary 
$(A,11c)
- (B,1b1)
+(C,a11)
-(a00,C')
+(0b0,B')
-(00c,A')$ as computed in Example \ref{ex:3cube} and $HM_2[X]=0$.
\end{example}

\begin{example}[Matchbox example]
Let us now consider Fahrenberg's matchbox 
example \cite{fahdihom} which is the empty cube of Example \ref{ex:emptycube} minus the lower face $A$. 
The path algebra $R[X]$ is the same as for the empty cube, but $R_2[X]$ is slightly different, in that it does not include, as generators (as an $R$-vector space) the 1-cube path $(A,11c)$ from $000$ to $111$ and the 1-cube path $(A)$ from 
$000$ to $110$. Hence $R_2[X]$ is the $R[X]$-bimodule: 
$$
\left(\begin{array}{l|cccccccc}
& 111 & 110 & 101 & 100 & 011 & 010 & 001 & 000 \\
\hline
111 & 0 & 0 & 0 & 0 & 0 & 0 & 0 & 0 \\
110 & 0 & 0 & 0 & 0 & 0 & 0 & 0 & 0  \\
101 & 0 & 0 & 0 & 0 & 0 & 0 & 0 & 0  \\
100 & R & 0 & 0 & 0 & 0 & 0 & 0 & 0  \\
011 & 0 & 0 & 0 & 0 & 0 & 0 & 0 & 0 \\
010 & R & 0 & 0 & 0 & 0 & 0 & 0 & 0 \\
001 & R & 0 & 0 & 0 & 0 & 0 & 0 & 0 \\
000 & R^5 & 0 & R & 0 & R & 0 & 0 & 0 \\
\end{array}\right)
$$

Finally, $H_1[X]=R_1[X]/Im \ \partial_{R_2[X] \rightarrow R_1[X]}$ hence: 
 $$
\left(\begin{array}{lcccccccc}
& 111 & 110 & 101 & 100 & 011 & 010 & 001 & 000 \\
\hline
111 & R & 0 & 0 & 0 & 0 & 0 & 0 & 0 \\
110 & R & R & 0 & 0 & 0 & 0 & 0 & 0  \\
101 & R & 0 & R & 0 & 0 & 0 & 0 & 0  \\
100 & R & R & R & R & 0 & 0 & 0 & 0  \\
011 & R & 0 & 0 & 0 & R & 0 & 0 & 0 \\
010 & R & R & 0 & 0 & R & R & 0 & 0 \\
001 & R & 0 & R & 0 & R & 0 & R & 0 \\
000 & R & R^2 & R & R & R & R & R & R \\
\end{array}\right)
$$

Finally, $R_3[X]=0$ and $Ker \ \partial_{\mid  R_2[X] \rightarrow R_1[X]}=0$ so $HM_2[X]=0$.
\end{example}

\section{Homology modules and persistence modules}

\label{sec:persistencemod}
\label{sec:representations}

The objective of this section is to understand and represent effectively the previous homological constructions in the category of bimodules over the associative algebra of directed paths. Due to classical Morita equivalence between quivers and path algebras, our homology modules $HM$ can be considered as representations of a particular quiver, 
showing a link between our homology algebras and persistence over the underlying quiver of a directed space (the ``space of parameters''). We review below the elements of this equivalence.





\paragraph{Quiver representations}

\begin{definition}[\cite{assocalg}]
Let $Q$ be a finite quiver. An $R$-linear representation or, more briefly, a \look{representation} $M$ of 
$Q$ is defined by the following data:
\begin{itemize}
\item To each point $a$ in $Q_0$ is associated an $R$-vector space $M_a$.
\item To each arrow $\alpha: \ a \rightarrow b \in Q_1$ is associated an $R$-linear map $\phi_\alpha : \ M_a \rightarrow M_b$.
\end{itemize}
Such a representation is denoted as $M = (M_a , \phi_\alpha)_{a \in Q_0, \alpha \in Q_1}$ , or simply $M = (M_a, \phi_\alpha)$. It is called finite dimensional if each vector space $M_a$ is finite dimensional.
\end{definition}

\begin{definition}[\cite{assocalg}]
Let $M = (M_a,\phi_\alpha)$ and $M' = (M_a',\phi_\alpha')$ be two representations of $Q$. A \look{morphism of representations} $f : \ M \rightarrow M'$ is a family $f = (f_a)_{a \in Q_0}$ of $R$-linear maps $(f_a : M_a \rightarrow M_a')_{a\in Q_0}$ that are compatible with the structure maps $\phi_\alpha$, that is, for each arrow $\alpha : \ a \rightarrow b$, we have $\phi_\alpha' f_a = f_b \phi_\alpha$ or, equivalently, the following square is commutative:
$$
\begin{tikzcd}
  M_a \arrow[r,"\phi_\alpha"] \arrow[d,"f_a"]
    & M_b \arrow[d,"f_b"] \\
  {M'}_a \arrow[r,"{\phi_\alpha}'"]
& {M'}_b \\
\end{tikzcd}
$$
\end{definition}

We have thus defined a category $Rep(Q)$ of $R$-linear representations of $Q$. We denote by $rep(Q)$ the full subcategory of $Rep(Q)$ consisting of the finite dimensional representations.

Let $(M,(\phi)_\alpha)$ be a representation of some quiver $Q$. Consider a path $p=(p_1,\ldots,p_l)$ in $Q$, from $a$ to $b$. The evaluation, see \cite{assocalg}, $eval_{(M,(\phi_\alpha))}(p)$ of $p$ in representation $(M,(\phi)_\alpha)$ is the $R$-linear map from $M_a$ to $M_b$ which is $\phi_{p_l}\circ \ldots \phi_{p_1}$. 

\begin{definition} (\cite{assocalg})
\look{A representation of a bound quiver} $(Q,I)$ (where $I$ is an admissible ideal of $Q$, see Definition \ref{def:boundquiver})
is a representation $(M,(\phi)_\alpha)$ of $Q$ for which, given any element $\sum\limits_{i=1}^k \alpha_i p_i \in I$, $\alpha_i\in R$, and $p_i$ being paths in $Q$, 
$\sum\limits_{i=1}^k \alpha_i eval_{(M,(\phi_\alpha))}(p_i)=0$.
\end{definition}


As is well known \cite{assocalg}, when the algebra $A$ is $R[Q]/I$ for some finite and connected quiver $Q$, and $I$ an admissible ideal of $R[Q]$,  
the category $mod \ A$ of (left) $A$-modules is equivalent to the category $rep_R (Q, I)$. 


This can easily be generalized to $A$-bimodules as follows: 


\begin{definition}
\label{lem:frombimodtorep}
Let $A=R[Q]/I$ be an algebra, with $Q$ a finite connected quiver and $I$ an admissible ideal of $R[Q]$. Construct the {graph $\look{FQ}$} as follows: 
\begin{itemize}
    \item vertices of $FQ$ are pairs of vertices $(x,y)$ of $Q$ such that there exists a path from $x$ to $y$ in $Q$
    \item arrows from $(x,y)$ to $(x',y')$ in $FQ$ are pairs of arrows $(u,v)$ in $Q$ where $u$ goes from $x'$ to $x$ and $v$ goes from $y$ to $y'$
\end{itemize}

Then given $M$ an $A$-bimodule, we construct a representation $F(M)$ of $FQ$ bound by $I$ as follows: 
\begin{itemize}
    \item to each vertex $(a,b) \in FQ$, define the $R$-vector space $M_{a,b}$ to be $(e_a+I) \bullet M\bullet (e_b+I)$,
\item and define $\phi^M_{u,v}: \ M_{a,b} \rightarrow M_{a',b'}$ for $u$ an arrow from $a'$ to $a$ and $v$ an arrow from $b$ to $b'$ in $Q$, to be the map which associates to each $x \in M_{a,b}$, $u \bullet x \bullet v=e_{a'} u \bullet x \bullet v e_{b'}=e_{a'}\bullet (u\bullet x \bullet v) \bullet e_{b'} \in M_b \in M_{a',b'}$.
\end{itemize}
\end{definition}



This 
implies the following equivalence of categories: 


\begin{lemma}
\label{lem:equivrep}
Let $A=R[Q]/I$, where $Q$ is a finite connected quiver and $I$ an admissible ideal of $R[Q]$. Then there exists an equivalence of categories $F: \ {}_R mod_R \ A \rightarrow rep_R(FQ,I)$
\end{lemma}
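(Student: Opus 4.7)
The plan is to leverage the classical Morita-style equivalence between (left) modules over a bound quiver algebra $R[Q]/I$ and representations of the bound quiver $(Q,I)$, and to promote it to the bimodule setting via the standard identification of $A$-bimodules with $(A \otimes_R A^{op})$-modules, already invoked at the start of Section \ref{sec:assoc}. The key observation is that $A \otimes_R A^{op}$ is itself (up to isomorphism) a bound path algebra whose underlying quiver is exactly the $FQ$ of Definition \ref{lem:frombimodtorep}: vertices are pairs $(a,b) \in Q_0 \times Q_0$, and the generating arrows come from arrows of $Q$ acting on the left tensor factor together with arrows of $Q^{op}$ acting on the right factor---the direction reversal built into $Q^{op}$ is exactly what forces $u : a' \to a$ in the paper's definition. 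Under this identification the admissible two-sided ideal governing $A \otimes_R A^{op}$ is generated by the images of $I$ from either tensor factor, and this matches the notion of a representation of $FQ$ being bound by $I$.

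First I would check that $F$ from Definition \ref{lem:frombimodtorep} is well-defined. By the straightforward bimodule extension of Lemma \ref{lem:bimoddecomp}, applied to the complete set of primitive orthogonal idempotents $\{e_a+I\}_{a\in Q_0}$ of $A$, every $A$-bimodule $M$ decomposes canonically as $\bigoplus_{a,b} M_{a,b}$ with $M_{a,b} = (e_a+I)\bullet M \bullet (e_b+I)$. For each arrow $(u,v)$ of $FQ$, the assignment $m \mapsto u \bullet m \bullet v$ is $R$-linear and lands in $M_{a',b'}$. That the resulting representation is bound by $I$ is immediate, as any $\sum \lambda_i p_i \in I$ annihilates $M$ from both sides. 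Functoriality of $F$ on bimodule morphisms is automatic, as bimodule morphisms restrict compatibly on each $M_{a,b}$ and commute with the $\phi^M_{u,v}$ by definition.

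Next I would construct an explicit quasi-inverse $G$: given a representation $(V_{a,b}, \phi_{u,v})$ of $(FQ,I)$, form $G(V) = \bigoplus_{(a,b)} V_{a,b}$ and equip it with the bimodule structure where $(e_a+I)$ acts on the left as the projection onto summands with first index $a$, an arrow $u: a' \to a$ of $Q$ acts on the $(a,b)$-component via $\phi_{u, e_b}$, and symmetrically for the right action via arrows of $Q$ on the second index. One extends these to $A$ by the algebra structure, checks that the two actions commute---which reduces to the equality of the two natural composites $V_{a,b} \to V_{a',b'}$ obtained by factoring an arrow $(u,v)$ of $FQ$ either through $(a',b)$ or through $(a,b')$, built into the quiver $FQ$---and verifies well-definedness modulo $I$ using exactly that the representation is bound by $I$ on both sides. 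The two natural isomorphisms $FG \cong \mathrm{id}$ and $GF \cong \mathrm{id}$ then drop out: for a bimodule $M$, Lemma \ref{lem:bimoddecomp} produces the canonical identification $M \cong \bigoplus M_{a,b}$ underlying $GF(M)$; for a representation $V$, the subspaces $(e_a+I)\bullet G(V)\bullet(e_b+I)$ recover $V_{a,b}$ tautologically.

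The hard part will be the bookkeeping in the construction of $G$: one must carefully match the direction reversal of $A^{op}$ with the convention $u : a' \to a$ in Definition \ref{lem:frombimodtorep}, and confirm that the relations coming from $I$ acting simultaneously on both sides descend exactly to the relations defining $rep_R(FQ,I)$, no more and no less. A cleaner and more conceptual alternative, which I would keep in reserve in case the direct construction becomes notationally cumbersome, is to first exhibit an isomorphism of algebras $A \otimes_R A^{op} \cong R[FQ]/\tilde I$ for an admissible ideal $\tilde I$ built from $I$, and then invoke the classical equivalence between modules over a bound quiver algebra and representations of its bound quiver verbatim on $A \otimes_R A^{op}$.
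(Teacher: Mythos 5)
Your proposal is correct and follows essentially the same strategy as the paper: both reduce to the classical equivalence between modules over a bound quiver algebra and representations of the bound quiver, via the identification of $A$-bimodules with $A\otimes_R A^{op}$-modules, and then spell out the explicit functor $F$ from Definition \ref{lem:frombimodtorep} together with its quasi-inverse $G$ and the natural isomorphisms. Your ``cleaner alternative'' (first showing $A\otimes_R A^{op}\cong R[FQ]/\tilde I$ and then citing the classical result verbatim) is precisely the content packed into the paper's opening sentence invoking Theorem 1.6 of \cite{assocalg}.
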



This is a direct consequence of Theorem 1.6 in Chapter 3 of \cite{assocalg} and of the fact that $A$-bimodules are  $A\otimes A^{op}$-left modules. We are a bit more explicit how the equivalence works on objects below: 

For $M$ an $A$-bimodule, define $F(M)$ as in Definition \ref{lem:frombimodtorep}. Now, consider $f: \ M \rightarrow N$ a morphism of $A$-bimodules and $(a,b)$ a vertex in $FQ$. Let $x \in M_{a,b}$, which therefore has the form $(e_a+i)\bullet m \bullet (e_b+j)$ where $m\in M$, $i \in I$ and $j \in I$. Define $F(f)_{a,b}(x)$ to be $(e_a+i)\bullet f(m) \bullet (e_b+j) \in N_{a,b}$. We have to check now that for all arrows $u$ from $a'$ to $a$ and $v$, arrow from $b$ to $b'$ in $Q$, $\phi^N_{u,v} F(f)_{a,b}=F(f)_{a',b'} \phi^M_{u,v}$. We compute, for $x \in M_{a,b}$ of the form $x=(e_a+i)\bullet m \bullet (e_b+j)$ for some $m\in M$, $i\in I$ and $j\in I$: 
$$
\begin{array}{rcl}
\phi^N_{u,v} F(f)_{a,b}(x) & = & \phi^N_{u,v}((e_a+i)\bullet f(m) \bullet (e_b+j))\\
& = & u\bullet ((e_a+i) \bullet f(m) \bullet (e_b+j)) \bullet v \\
& = & u(e_a+i) \bullet f(m) \bullet (e_b+j) v \\
& = & u \bullet f(m) \bullet v 
\end{array}
$$
\noindent since $u$ (resp. $v$) is an arrow from $a'$ to $a$ (resp. from $b$ to $b'$) in $Q$, making $u(e_a+i)=u$ (resp. $(e_b+j)v=v$) in the quiver algebra $R[Q]/I$ by $I$, whereas: 
$$
\begin{array}{rcl}
F(f)_{a',b'} \phi^M_{u,v}(x) & = & F(f)_{a',b'}(u\bullet x \bullet v)\\
& = & F(f)_{a',b'}(e_{a'}u\bullet m \bullet v e_{b'}) \\
& = & e_{a'} f(u\bullet m \bullet v) e_{b'}\\
& = & e_{a'} u \bullet f(m) \bullet v e_{b'} \\
& = & u \bullet f(m) \bullet v \\
& = & \phi^N_{u,v} F(f)_{a,b}(x)
\end{array}
$$
Now, define the following transform $G: \ rep_R(FQ,I) \rightarrow {}_R mod_R \ A$. 

Let $(M_{a,b},\phi_{u,v})$ be a representation of $FQ$. Define $G(M_{a,b},\phi_{u,v})$ to be the $A$-bimodule which is, as an $R$-vector space, $\coprod\limits_{(a,b)\in FQ} M_{a,b}$, with the following left and right action of elements $a$ and $b$ in $A$. 

As $A=R[Q]/I$, $a$ (resp. $b$) is of the form $u_1 u_2\ldots u_m+i$ (resp. $v_1 v_2\ldots v_n+j$) where $i \in I$ (resp. $j \in I$) and $u_k$ (resp. $v_l$) is an arrow in $Q$ from $x_k$ to $x_{k+1}$ (resp. from $y_l$ to $y_{l+1}$), with $x_1=a'$ and $x_{m+1}=a$ (resp. $y_1=b$ and $y_{n+1}=b'$).

Define the action $\bullet$ of $u$ and $v$ on $m \in M_{a,b}$
$$\begin{array}{rcl}
u\bullet m & = & \phi_{u_1,e_b} \phi_{u_2,e_b} \ldots \phi_{u_m,e_b} (m) \\
m \bullet v & = & \phi_{e_a,v_1} \phi_{e_a,v_2} \ldots
\phi_{e_a,v_n}(m)
\end{array}
$$
\noindent hence $u \bullet m \bullet v = eval_{(M_{a,b},(\phi_{u,v}))}(u,v)$. 

Now we see that, for any $A$-bimodule $M$, $G(F(M))$ is the $R$-vector space $$\coprod\limits_{(a,b)\in FQ} (e_a+I)\bullet M \bullet (e_b+I)$$ 
\noindent which is isomorphic to $M$, as $e_a+I$, $e_b+I$, for $a \in Q$ and $b\in Q$ form a complete set of idempotents of $A$.  

Similarly, for any representation $(M_{a,b},\phi^M_{u,v})$ of $FQ$ bound by $I$, $F(G(M_{a,b},$ $\phi_{u,v}))$ is the representation $(N,\phi^N_{u,v})$ with $N_{a,b}$ being $(e_a+I)\bullet \coprod\limits_{(a,b)\in FQ} M_{a,b} \bullet (e_b+I)$ which is isomorphic to $M_{a,b}$.


This is of course akin to the construction of the factorization category (on the category of traces) and of natural systems on this factorization category, see next paragraph. Also, Lemma \ref{lem:equivrep} portrays our homology module over the path algebra, as persistent homology of the total path space $\mid X\mid ^{\I}$ ($X$ is a precubical set) along the decomposition into path spaces from $a$ to $b$, $a$ and $b$ varying in $X_0$. 




\begin{example}
\label{ex:repres}
Consider again the following precubical set $X$ of Example \ref{ex:emptysquare}, on the left below (an empty square):

\begin{center}
\begin{minipage}{5cm}
\[\begin{tikzcd}
  4 \arrow[r] \arrow[d]
    & 2 \arrow[d] \\
  3 \arrow[r]
& 1 \end{tikzcd}\]
\end{minipage}
    \begin{minipage}{5cm}
    \[\begin{tikzcd}
  (4,4) \arrow[r] \arrow[d] & (4,2) \arrow[d]  & (2,2) \arrow[l]\arrow[d]\\
  (4,3) \arrow[r] & (4,1) & (2,1) \arrow[l] \\
  (3,3) \arrow[r]\arrow[u] & (3,1)\arrow[u] & (1,1)\arrow[l]\arrow[u]
\end{tikzcd}
\]
\end{minipage}
\end{center}
On the right hand side, we represented the quiver $FX_{\leq 1}$ (which is $FX$ since $X$ is actually a quiver). The representation of $FX$ which corresponds to $HM_1[X]$ is: 
\[\begin{tikzcd}
  R \arrow[r,"Id"] \arrow[d,"Id"] & R \arrow[d,"i_1"]  & R \arrow[l,"Id"]\arrow[d,"Id"]\\
  R \arrow[r,"i_2"] & R^2 & R \arrow[l,"i_1"] \\
  R \arrow[r,"Id"]\arrow[u,"Id"] & R\arrow[u,"i_2"] & R\arrow[l,"Id"]\arrow[u,"Id"]
\end{tikzcd}
\]
\noindent where $i_1$ is the inclusion of $R$ into the first component of $R^2$ and $i_2$ is the inclusion of $R$ into the second component of $R^2$. 
\end{example}

In general, there are infinitely many indecomposable $FQ$-modules, when $FQ$ is not very particular (e.g. of the form of Dynkin diagram $A_n$, as for one-parameter persistence), and in order to give tractable invariants, we need to resort to simpler characterizations, such as rank invariants, that we briefly detail in the next paragraph. Still, we will claim in Appendix \ref{sec:tameness} that our homology modules, for an interesting class of precubical sets, gives rise to tame bimodules of some sort. 


\paragraph{Rank invariants}

Let $M$ be an $A$-bimodule where $A$ is a basic, connected finite dimensional algebra. By Theorem \ref{thm:algfromquiver}, Appendix \ref{sec:B}, $A$ is isomorphic to $R[Q]/I$ where $Q$ is some finite connected quiver (the one of Definition \ref{def:ordquiver}), and by Definition \ref{lem:frombimodtorep}, we get a representation of $FQ$ bound by $I$ $(M_\alpha,\phi_\alpha)$ ($\alpha \in FQ$). We define now a categorical view on representations and modules, that will be useful in this section and in next section, Section \ref{sec:naturalhomology}: 

\begin{definition}
\label{def:categoricalrepresentation} 
The functor $\mathcal{F}_M$ from 
$FQ$, seen as the free category on $FQ$, to the category of $R$-vector spaces, which has 
$\mathcal{F}_M(x,y)=M_{x,y}$ and $\mathcal{F}_M(u,v)=\phi_{u,v}$ where $(x,y)$ is a vertex of $FQ$ and $(u,v)$ is an arrow in $FQ$, is called the \look{categorical presentation} of $M$. 
\end{definition}

This categorical presentation of modules is well-known in category theory and used also in persistence, e.g. in the framework \cite{miller2020modules}. 

Rank invariants or generalized rank invariants have been defined as computable invariants for multi-dimensional, poset or even DAG persistence modules as in e.g. \cite{Kim_2021}. 

Here we will consider only simple rank invariants. Consider all intervals $\mathcal{I}$ within $FQ$. Now, the categorical presentation of $M$, $\mathcal{F}_M$, can be restricted on any of these intervals, to give the functor ${\mathcal{F}_M}_{\mid \mathcal{I}}$, which can be viewed as a diagram in the complete and co-complete category of $R$-vector spaces. 

\begin{remark}
Note that any interval $\mathcal{I}$ in $FQ$ corresponds to the maximal chains of the trace poset of a partially-order space, as defined in \cite{calk2023persistent}. These are the ones that define one-parameter persistence modules within natural homology (that we are going to recap in next Section). 
%
\end{remark}

We can thus consider the canonical map $\Phi_{\mathcal{I}}: \ \lim\limits_{\leftarrow} {\mathcal{F}_M}_{\mid \mathcal{I}} \rightarrow \lim\limits_{\rightarrow} {\mathcal{F}_M}_{\mid \mathcal{I}}$ from the limit of the diagram ${\mathcal{F}_M}_{\mid \mathcal{I}}$ to the colimit of the same diagram. The dimension of $Im \ \Phi_{\mathcal{I}}$, for all $\mathcal{I}$ intervals in $FQ$ defines the rank invariant of $M$. 

\begin{example}
Intervals in $FQ$ for Example \ref{ex:repres} are: $(4,4)$, $(3,3)$, $(2,2)$, $(1,1)$, $(4,4)\leq (4,2)$, $(4,4) \leq (4,3)$, $\ldots$, $(4,4)\leq (4,2),(4,1)$ and correspond to a sequence of extensions within some given maximal path. 

Indeed the rank invariant corresponding to singletons $q=(x,y) \in (FQ)_0$ are just the ranks of the corresponding modules $M_q=e_x \bullet M \bullet y$. The rank of $R \mathop{\rightarrow}\limits^{i_1} R^2$ is indeed the rank of the image of $i_1$, which is 1, and similarly for $i_2: R \rightarrow R^2$. The only interesting case is for maximal intervals (of length 3), e.g. $(4,4)\leq (4,2) \leq (4,1)$. In that case, $\lim\limits_{\leftarrow} {F_M}_{\mid \mathcal{I}}$ is 0 (as this is also the pullback of $i_1$ and $i_2$ from $R$ to $R^2$) and $\lim\limits_{\rightarrow} {F_M}_{\mid \mathcal{I}}$ is $R$ (and the universal co-cone has all identity maps and map $R^2\rightarrow R$ which to $(a,b)$ associates $a+b$). And the rank of the corresponding $\Phi_\mathcal{I}$ is 0. 
\end{example}

\begin{remark}
This should allow us also to define barcodes (a map from the set of intervals of $Q$ to (relative) integers) by a M\"obius inversion formula as in \cite{Kim_2021}, for precubical sets whose geometric realization is a partially-ordered space. In this case, $FQ$ is actually a poset, see \cite{calk2023persistent}. More generally speaking, this should have relations with the approach of \cite{calk2023persistent}. This will be investigated and developed elsewhere, for  practical applications. 
\end{remark}

\section{Homology modules and natural homology} 

\label{sec:naturalhomology}

We begin by showing that the homology module framework we have been setting up measures, at least locally, the homology of the path space of precubical spaces, between two endpoints: 

\begin{lemma}
\label{lem:homtrace}
Let $X$ be a finite precubical set
that is such that is has proper non-looping length covering, $a, b \in X_0$, $R$ a field. Then
the $R$-vector space $e_a \bullet HM_n[X] \bullet e_b$, $n \geq 1$, is the standard $n$th homology of the trace space $\diP{\mid X \mid}^b_a$ from $a$ to $b$. 
\end{lemma}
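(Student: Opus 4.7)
The proof is essentially a direct chaining of three results already established in the paper. The plan is to exploit the fact that $HM_n[X]$ decomposes as a coproduct indexed by pairs of vertices, so that its $(a,b)$-block is itself the homology of an explicit subcomplex of $R$-vector spaces, and then to identify that subcomplex with a chain complex whose homology Ziemia\'nski has computed.

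First, I would invoke Lemma \ref{lem:decompquotient} applied to $HM_n[X]$: it gives the identification, as $R$-vector spaces,
\[
e_a \bullet HM_n[X] \bullet e_b \;=\; \Kernel{\partial_{\mid e_a \bullet R_{n}[X] \bullet e_b}}\big/\Imag{\partial_{\mid e_a \bullet R_{n+1}[X] \bullet e_b}},
\]
where the right-hand side makes sense precisely because $\partial$ is an $R[X]$-bimodule morphism and therefore restricts to each of the blocks $e_a \bullet R_i[X] \bullet e_b$, as recorded in Lemma \ref{lem:precubboundary}. Thus $e_a \bullet HM_n[X] \bullet e_b$ is the $n$th homology (in the shifted convention of Definition \ref{def:hommodule}) of the chain complex $(e_a \bullet R_{*}[X] \bullet e_b,\partial)$ of $R$-vector spaces.

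Next, I would identify this chain complex with the one that Ziemia\'nski considers. By construction, $e_a \bullet R_i[X] \bullet e_b$ is spanned by those cube chains starting at $a$ and ending at $b$ — that is exactly $R^{b}_{a,i}[X]$ in the notation of Definition \ref{def:boundaryoperator}. The restricted boundary agrees with Ziemia\'nski's boundary since, in the formula for $\partial$, each summand $d_{k,\mathcal{I}}(c)$ of a cube chain from $a$ to $b$ is again a cube chain from $a$ to $b$. Hence $(e_a\bullet R_*[X]\bullet e_b,\partial) = R^b_{a,*}[X]$ as chain complexes.

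Finally, Lemma \ref{lem:Kris} — which is a direct consequence of Theorem 1.7 of \cite{Ziemianski} — asserts that, since $X$ has proper non-looping length covering, the chain homology of $R^b_{a,*}[X]$ is isomorphic to the singular homology of $\diP{\mid X \mid}^b_a$. Combining this with the previous two steps yields the stated isomorphism. The only subtle point, which I would make explicit, is that the degree shift in Definition \ref{def:hommodule} (where $HM_{i+1}$ is built from $R_{i+1}$) matches the degree shift already present in the definition of $R_i[X]$ as generated by cube chains of dimension $i-1$, so that $e_a \bullet HM_n[X] \bullet e_b$ does land in the $n$th singular homology of $\diP{\mid X \mid}^b_a$ rather than some other degree. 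This is the only place where a routine but necessary bookkeeping check is required; aside from it, the lemma is formal.
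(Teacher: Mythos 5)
Your argument follows the same path as the paper's own proof: decompose via Lemma \ref{lem:decompquotient}, identify the $(a,b)$-block of the complex with Ziemia\'nski's restricted complex $R^b_{a,*}[X]$, and conclude via Lemma \ref{lem:Kris}. Structurally this is exactly what the paper does.

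The problem is the closing paragraph, where you call the degree shift a ``routine but necessary bookkeeping check'' and assert that the shifts cancel so as to land in $H_n$. If you actually do the count, they do not cancel. By Definition \ref{def:modcub}, $R_n[X]$ is spanned by cube chains of dimension $n-1$, so in the chain complex $R_*[X]=(R_{i+1}[X],\partial)_{i\geq 0}$ it sits in chain degree $n-1$; by Definition \ref{def:hommodule}, $HM_n[X]$ is the homology taken at $R_n[X]$, hence in chain degree $n-1$; and Lemma \ref{lem:Kris} identifies degree-$k$ chain homology of $R^b_{a,*}[X]$ with $H_k$ of the trace space. Chaining these gives $e_a\bullet HM_n[X]\bullet e_b \cong H_{n-1}(\diP{\mid X \mid}^b_a)$, not $H_n$. (As a sanity check, $HM_1[X](a,b)$ is edge paths from $a$ to $b$ modulo boundaries of $1$-cube chains, i.e.\ the free $R$-module on dihomotopy classes of paths, which is $H_0$ of the trace space.) This is also how the paper itself uses the lemma: in the proof of Theorem \ref{thm:bisimequivnathom} it reads off the ``standard $(i-1)$th homology of the trace space.'' So the index in the lemma's statement appears to be off by one, and the honest response would have been to flag that discrepancy. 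Asserting that the shifts match while explicitly declining to do the check is exactly where your attempt goes wrong.
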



\begin{proof}
By Lemma \ref{lem:decompquotient}, 
the quotient 
$HM_i[X]=Ker \ \partial_{\mid R_{i}[X]}/Im \ \partial_{\mid R_{i+1}[X]}$ can be identified, as an $R$-vector space, with the coproduct of all $R$-vector spaces 
$$Ker \ \partial_{\mid e_a \bullet R_{i}[X] \bullet e_b}/Im \ \partial_{\mid e_a \bullet R_{i+1}[X] \bullet e_b}$$ 
\noindent with bimodule action being defined by $u\bullet [n]_{c,d} \bullet v=[u\bullet n\bullet v]_{a',b'}$ if $c=a$ and $d=b$, 0 otherwise. Hence
$e_a \bullet HM_i[X] \bullet e_b$ is $Ker \ \partial_{\mid e_a \bullet R_{i}[X] \bullet e_b}/Im \ \partial_{\mid e_a \bullet R_{i+1}[X] \bullet e_b}$ as an $R$-vector space. 


This last quotient is the homology of the chain complex $Ch(X)^b_a$, and by Lemma \ref{lem:Kris}, this is isomorphic to the (singular) homology of the trace space $\diP{\mid X \mid}^b_a$.
\end{proof}

We are now going to make the link between our homology modules and natural homology, at least for a certain class of precubical sets, that has been used in 
\cite{Dubut}:
\begin{definition}[\cite{Dubut}]
\label{def:cubicalcomplex}
A
($d$-dimensional) \look{{cubical complex}} $X$ is a finite set of
\emph{cubes} $(D, \vec x)$, where $D \subseteq \{1, 2, \cdots, d\}$
and $\vec x \in \Int^d$, which is closed under taking past and future
faces (to be defined below). The cardinality of $D$ is the
\emph{dimension} of the cube $(D, \vec x)$.  Let $\vec 1_k$ be the
$d$-tuple whose $k$th component is $1$, all others being $0$.  Each
cube $(D, \vec x)$ is {realized} as the geometric cube $\iota (D,
\vec x) = I_1 \times I_2 \times \cdots \times I_d$ where $I_k = [x_k,
x_k+1]$ if $k \in D$, $I_k = [x_k, x_k]$ otherwise.  

When $card \ D=n$, we write $D [i]$ for the $i$th element of $D$.  For
example, if $D = \{3, 4, 7\}$, then $D[1]=3$, $D[2]=4$, $D[3]=7$.  We
also write $\partial_i D$ for $D$ minus $D[i]$.  Every $n$-dimensional
cube $(D, \vec x)$ has $n$ {past faces} $\partial_i^0 (D, \vec
x)$, defined as $(\partial_i D, x)$, and $n$ {future faces}
$\partial_i^1 (D, \vec x)$, defined as $(\partial_i D, x + \vec
1_{D[i]})$, $1\leq i \leq n$.
\end{definition}

Note that a cubical complex is in particular a precubical set such that it has proper non-looping length covering. 



The comparison between natural homology and homology modules will be made through bisimulation between categorical diagrams \cite{Dubut}, that we particularize to diagrams in the category of $R$-vector spaces $Vect$, and that we recall below:

\begin{definition}[\cite{Dubut}]
\label{def:alternatedefbisim}
A \look{bisimulation} $\mathcal B$ between two diagrams $\map{F}{\C}{Vect}$ and $\map{G}{\D}{Vect}$ is a set of triples $(c,f,d)$ where $c$ is an object of $\C$, $d$ is an object of $\D$ and $\map{f}{F(c)}{G(d)}$ is an isomorphism of $\M$ such that for all $(c,f,d)$ in $R$:
\begin{itemize}
	\item if there exists $\map{i}{c}{c'} \in \C$ then there exists $\map{j}{d}{d'} \in \D$ and $\map{g}{F(c')}{G(d')} \in Vect$ such that $g\circ F(i) = G(j) \circ f$ and $(c',g,d') \in \mathcal{B}$
	\item if there exists $\map{j}{d}{d'}\in \D$ then there exists $\map{i}{c}{c'} \in \C$ and $\map{g}{F(c')}{G(d')} \in Vect$ such that $g\circ F(i) = G(j) \circ f$ and $(c',g,d') \in \mathcal{B}$
\end{itemize}
\begin{center}
    \begin{tikzpicture}[scale=.7]
    \node (x') at (0,0) {$c'$};
    \node (x) at (0,1.5) {$c$};
    \node (Fx') at (1,0) {$Fc'$};
    \node (Fx) at (1,1.5) {$Fc$};
    \node (Gy') at (4,0) {$Gd'$};
    \node (Gy) at (4,1.5) {$Gd$};
    \node (y') at (5,0) {$d'$};
    \node (y) at (5,1.5) {$d$};
    \draw[->] (x) -- (x');
    \draw[->] (y) -- (y');
    \draw[->] (Fx) -- (Fx');
    \draw[->] (Fx) -- (Gy);
    \draw[->] (Fx') -- (Gy');
    \draw[->] (Gy) -- (Gy');
    \node (i) at (-0.2,0.75) {$i$};
    \node (j) at (5.2,0.75) {$j$};
    \node (Fi) at (0.6,0.75) {$Fi$};
    \node (Gj) at (4.4,0.75) {$Gj$};
    \node (eta) at (2.5,1.7) {$f$};
    \node (eta') at (2.5,-0.3) {$g$};
  \end{tikzpicture}
  \end{center}
and such that:
\begin{itemize}
	\item for all $c\in\C$, there exists $d$ and $f$ such that $(c,f,d)\in \mathcal{B}$
	\item for all $d\in\D$, there exists $c$ and $f$ such that $(c,f,s)\in \mathcal{B}$
\end{itemize}
We say that two diagrams $\map{F}{\C}{Vect}$ and $\map{G}{\D}{Vect}$ are \look{bisimulation equivalent} if there exists a bisimulation between them. 
\end{definition}

 
Now we can prove, identifying $HM_i[X]$ with its categorical presentation $\mathcal{F}_{HM_i[X]}$: 

\begin{theorem}
\label{thm:bisimequivnathom}
 Let $X$ be a cubical complex and $Subd(X)$ be its barycentric subdivision. Then $HM_i[Subd(X)]$ is bisimulation equivalent to the natural homology $HN_i[\mid X\mid ]$, for all $i\geq 1$.    
\end{theorem}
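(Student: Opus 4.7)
The plan is to construct an explicit bisimulation $\mathcal{B}$, in the sense of Definition \ref{def:alternatedefbisim}, between the categorical presentation $\mathcal{F}_{HM_i[Subd(X)]}$ of the homology module (Definition \ref{def:categoricalrepresentation}) and the natural homology functor $HN_i(\mid X\mid)$. The bridging ingredient is Lemma \ref{lem:homtrace}: for every pair $a,b\in Subd(X)_0$ connected by some path in the underlying quiver, there is a canonical isomorphism
$$
\eta_{a,b}: e_a\bullet HM_i[Subd(X)]\bullet e_b \;\xrightarrow{\ \cong\ }\; H_i\bigl(\diP{\mid Subd(X)\mid}^b_a\bigr)\;=\;H_i\bigl(\diP{\mid X\mid}^b_a\bigr),
$$
where we use $\mid Subd(X)\mid = \mid X\mid$ as d-spaces. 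We set
$$
\mathcal{B}=\bigl\{\,\bigl((a,b),\,\eta_{a,b},\,p\bigr)\;\big|\;(a,b)\in F(Subd(X)_{\leq 1})_0,\ p\in\diP{\mid X\mid}^b_a\,\bigr\}.
$$

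Next I would verify the forward bisimulation clause. An arrow $(u,v):(a,b)\to(a',b')$ in the factorization category $F(Subd(X)_{\leq 1})$ is a pair of 0-cube chains $u$ from $a'$ to $a$ and $v$ from $b$ to $b'$ in $Subd(X)$. Each such 0-cube chain realizes as a trace $\tilde u\in\diP{\mid X\mid}^a_{a'}$, $\tilde v\in\diP{\mid X\mid}^{b'}_b$, so $\langle\tilde u,\tilde v\rangle$ is a morphism $p\to \tilde v\ast p\ast\tilde u$ in $F\Trace{\mid X\mid}$. The commutation $\eta_{a',b'}\circ(u\bullet(-)\bullet v)=HN_i(\langle\tilde u,\tilde v\rangle)\circ\eta_{a,b}$ reduces to the fact that, under the chain isomorphism underlying Lemma \ref{lem:Kris}, the bimodule action of a 0-cube chain on $R_*[Subd(X)]$ corresponds exactly to pre/post-concatenation of trace representatives, which is how $HN_i$ acts on arrows of the factorization category. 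This is the naturality statement implicitly contained in Ziemia\'nski's theorem together with the functoriality established for $Cub_I$ in Lemma \ref{lem:diralgfunct}.

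For the reverse clause I would use the subdivision crucially. Given a morphism $\langle\alpha,\beta\rangle:p\to q$ in $F\Trace{\mid X\mid}$, with $p:x\to y$ and $q:x'\to y'$, points $x,y,x',y'$ may lie in the interior of cubes of $X$. However, every point $z\in\mid X\mid$ sits in a unique minimal cube whose barycenter is a vertex of $Subd(X)$; call this vertex $\rho(z)$. Using the fact that in a cubical complex the trace space is invariant (up to directed homotopy equivalence) under replacing an endpoint by another point in the same open cube, one shows that for any trace $p:x\to y$ in $\mid X\mid$ there is a trace $\bar p:\rho(x)\to\rho(y)$ such that $(x,y)\mapsto(\rho(x),\rho(y))$ and $p\mapsto\bar p$ induces an isomorphism of $H_i$'s compatible with the identifications $\eta$; and moreover $\alpha,\beta$ can be replaced, up to the same isomorphism, by 0-cube chains in $Subd(X)_{\leq 1}$. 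The coverage conditions are then straightforward: every $(a,b)\in F(Subd(X)_{\leq 1})_0$ is matched by any trace realizing a generating 0-cube chain from $a$ to $b$, and every trace $p$ in $\mid X\mid$ is matched with $(\rho(s(p)),\rho(t(p)))$.

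The main obstacle I anticipate is the reverse clause, specifically the claim that endpoint perturbations within open cubes preserve the homotopy type of trace spaces in a way that is coherent with the $F\Trace{\mid X\mid}$-action. This is exactly the property that requires barycentric subdivision rather than $X$ itself, and it is likely to require either a direct construction of a directed deformation retract of $\diP{\mid X\mid}^y_x$ onto $\diP{\mid X\mid}^{\rho(y)}_{\rho(x)}$ for cubical complexes, or an appeal to the colimit-of-persistence-modules description of natural homology from \cite{calk2023persistent}. Everything else reduces to diagram chasing and the naturality of Ziemia\'nski's chain-level identification.
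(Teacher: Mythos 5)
Your proposal attempts a direct bisimulation between $\mathcal{F}_{HM_i[Subd(X)]}$ and the continuous natural homology $HN_i(\mid X\mid)$, and you correctly flag the hard point yourself: the reverse bisimulation clause has to handle morphisms $\langle\alpha,\beta\rangle : p\to q$ where the endpoints $x,y,x',y'$ may lie in the interiors of cubes, and your retraction $\rho$ needs to be accompanied by a proof that pushing an endpoint to the barycenter of its carrier cube gives an isomorphism on $H_i$ of trace spaces, coherently with the factorization-category action. That claim is genuine content and is not established anywhere in the paper's toolkit; as stated, your proof has a real gap there.

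The paper avoids this entirely by introducing an intermediate object: the \emph{discrete} natural homology $\syshd{i}{X}$, defined on the discrete trace category $\Trace^d_X$ whose objects are sequences of cubes $(c_0,\ldots,c_n)$ related by $\preceq$. Its bisimulation equivalence with $HN_i(\mid X\mid)$ is a cited result of \cite{Dubut}, so it does not need to be reproved, and in particular the delicate ``endpoint inside a cube'' analysis is absorbed into that reference. The bisimulation the paper actually constructs is between $HM_i[Subd(X)]$ and $\syshd{i}{X}$, and here the matching is nearly tautological: vertices of $Subd(X)$ are in bijection with cubes of $X$, so a discrete trace $(c_0,\ldots,c_n)$ from cube $x$ to cube $y$ pairs with the vertex pair $(x,y)$ in $F(Subd(X))_{\leq 1}$, the object isomorphism coming from Lemma \ref{lem:homtrace}, and a morphism $((e_*),(f_*))$ of $\Trace^d_X$ pairs with any 0-cube chain whiskering $(u,v)$ realizing boundary paths of the extended discrete trace, because those realizations are dihomotopic and hence induce the same map on homology.

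So the honest comparison is: your direct route is conceptually cleaner (one comparison instead of two) but shifts the real work into a perturbation lemma you have not proved and which the paper never needs. The paper's route is more modular: it offloads the continuous-to-discrete comparison to \cite{Dubut} and only has to do a combinatorial matching between $Subd(X)$ vertices and cubes of $X$. If you want to salvage your direct version, the right move is precisely the one you hint at: either re-derive the discrete-vs-continuous bisimulation from \cite{Dubut} (which is what your perturbation claim amounts to in disguise), or cite it and factor through $\syshd{i}{X}$ as the paper does. As written, though, the reverse clause is not complete and would not survive without that additional input.
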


%

\begin{proof}
The proof relies on the fact that the combinatorial natural homology of $X$ is bisimulation equivalent to the natural homology of its geometric realization $\mid X\mid $ when $X$ is a cubical complex. And that the homology modules of this paper, $HM$, are bisimulation equivalent to combinatorial natural homology under the same hypotheses, up to barycentric subdivision, as we will see. We first need to recap some definitions from \cite{Dubut}. 

Let $x=(D_x,\vec x)$ and $y=(D_y,\vec y) \in X$. We say that $x$ is a {future boundary} (resp. a {past boundary}) of $y$ if there exist $k \geq 0$ and $i_0$, ..., $i_k$ such that $x = \partial_{i_k}^1\circ\cdots\circ\partial_{i_0}^1(y)$ (resp. $x = \partial_{i_k}^0\circ\cdots\circ\partial_{i_0}^0(y)$). We write $x \preceq y$ when:
\begin{itemize}
	\item either $x$ is a past boundary of $y$
	\item either $y$ is a future boundary of $x$
\end{itemize}

A \emph{discrete trace} from $x$ to $y$ in $X$ is a sequence $c_0$, ..., $c_n$ of cubes in $X$ (with $n\geq 0$) such that $c_0 = x$, $c_n = y$ and for all $i\in\{1, ..., n\}$ $c_{i-1} \preceq c_i$. An extension of a discrete trace $c_0$, ..., $c_n$ is a pair $((d_0, ..., d_m), (e_0, ..., e_p))$ of discrete traces such that $d_m = c_0$ and $c_n = e_0$.
Define the category $\Trace^d_X$ to be:
\begin{itemize}
	\item objects are discrete traces of $X$
	\item morphisms from $c_0$, ..., $c_n$ (discrete trace from $x$ to $y$) to $d_0$, ..., $d_m$ (discrete trace from $x'$ to $y'$) are pairs of discrete traces $((e_0, ..., e_p),(f_0,...,f_k))$ from $(x',y)$ to $(x,y')$ (so $c_0 = e_{p} = x$ and $c_n = f_0 = y$) such that $d_0,...,d_m = e_0,...,e_{p-1},c_0,...,c_n,f_1,...,f_k$.
\end{itemize}

When $X$ is a cubical complex, a discrete trace can be realized as a trace in the geometric realization $\mid X\mid $. 

For $z=(D_z,\vec z)$ in $X$, write $\hat z$ for the point $[z,\star]$ of $\mid X\mid$ where $\star = (\frac{1}{2},\ldots,\frac{1}{2})\in \Real^m$ ($m=card \ D_z$).

For $x$ and $y$ two cubes in $X$, when $x$ is a past boundary of $y$, i.e. $x = \partial_{i_k}^0\circ\cdots\circ\partial_{i_0}^0(y)$, write $\widehat{xy}$ for the path in $\mid X\mid $ from $\hat x$ to $\hat y$ defined by $$\widehat{xy}(t) = [y, t\star_y + (1-t)\rho_{i_0}^0\circ\ldots\circ\rho_{i_k}^0(\star_x)]$$
\noindent where $\rho^0_{i}$ is the map which associates to each $a=(a_1,a_2,\ldots,a_l)\in \Real^l$, 
the element 
$\rho^0_i(a)=(a_1,a_2,\ldots ,a_{i-1}, 0, a_i, \ldots, a_l)$. 

When $y=(D_y,\vec y)$ is a future boundary of $x$, i.e. $y = \partial_{i_k}^1\circ\cdots\circ\partial_{i_0}^1(x)$, write $\widehat{xy}$ for the path in $\mid X\mid $ from $\hat x$ to $\hat y$ defined by $$\widehat{xy}(t) = [x, (1-t)\star_x + t\rho_{i_0}^1\circ\ldots\circ\rho_{i_k}^1(\star_y)]$$
\noindent where $\rho^1_{i}$ is the map which associates to each $a = (a_1,a_2,\ldots,a_l)\in \Real^l$, the element 
$\rho^1_i(a)=(a_1,a_2,\ldots , a_{i-1},1, a_i,\ldots, a_l)$. 

Note that $\widehat{xy}$ is uniquely defined when $X$ is a non-self linked pre-cubical set \cite{algtopandconcur} (which is the case for cubical complexes of Definition \ref{def:cubicalcomplex})  
and is a dipath also in that case. 

Then, a discrete trace $c_0$, ..., $c_n$ can be realized as the concatenation of the traces $\widehat{c_0c_1},\ldots$ with $\widehat{c_{n-1}c_n}$ in $\mid X\mid $, noted $\widehat{c_0, ..., c_n}$ and $\widehat{~~~}$ extends to a functor from $\Trace^d_X$ to $\Trace_{\mid {X}\mid }$. 


For $n\geq 1$, the $i$-th natural homology of $X$ or discrete natural homology $\map{\syshd{i}{X}}{{\cal T}^d_X}{Mod_R}$ as $$\syshd{i}{X} = HN_i({\mid X\mid }) \circ \widehat{~~~}$$

Now, we are going to construct a set $\mathcal{B}$ of triples  $((x,y),f,(c_0,\ldots,c_n))$ where $(c_0,\ldots,c_n)$ is a discrete trace in $\Trace^d_X$ from $x$ to $y$, and $(x,y)\in F(Subd(X))_{\leq 1}$ (see Definition \ref{lem:frombimodtorep}). Indeed, vertices of $Subd(X)$ are in bijection with cubes in $X$, see e.g. \cite{Jardine}. This will form a bisimulation equivalence between $HM_i[Subd(X)]$, seen as its categorical presentation (Definition \ref{def:categoricalrepresentation}), and $\syshd{i}{X}$ as we are going to see.  

First, as a consequence of Lemma \ref{lem:homtrace}, 
$\mathcal{F}_{HM_i[Subd(X)]}(x,y)=e_x \bullet HM_i[Subd(X)] \bullet e_y$, $i \geq 1$, is isomorphic to the standard $(i-1)$th homology of the trace space $\diP{\mid Subd(X) \mid}^y_x$ from $x$ to $y$, which is isomorphic to 
$\diP{\mid X\mid}^y_x$.
But $\syshd{i}{X}(x,y)$ is also isomorphic to the $(i-1)$th homology of $\diP{X}^y_x$, and we set $f$ to be the composition of the two isomorphisms. 

Now, for each morphism from $(c_0,\ldots, c_n)$ (discrete trace from $x$ to $y$) to $(d_0, \cdots, d_m)$ (discrete trace from $x'$ to $y'$), which is a pair of discrete traces $((e_0, ..., e_p),(f_0,...,f_k))$ from $(x',y)$ to $(x,y')$ (so $c_0 = e_{p} = x$ and $c_n = f_0 = y$) such that $(d_0,...,d_m) = (e_0,...,e_{p-1},c_0,...,c_n,f_1,...,f_k)$, consider any morphim 
$(u,v)$ from $(x,y)$ to $(x',y')$ in $F(Subd(X))_{\leq 1}$, which is made of any 0-cube chain $u$ at the boundary of $(c_0,\ldots, c_n)$ (resp. $v$ at the boundary of $(d_0, \cdots, d_m)$). As the geometric realization of $(c_0,\ldots, c_n)$ is dihomotopic to $u$ (resp. of $(d_0, \cdots, d_m)$, to $v$), the pre and postcomposition by $u$ and $v$ is, in homology, equal to the pre and post composition by $(c_0,\ldots, c_n)$ and $(d_0, \cdots, d_m)$, hence generate the same map. 

Conversely, given any edge $(u,v)$ in $F(Subd(X))_{\leq 1}$, this is also a morphism in $\Trace^X_d$ and they generate the same map in homology. Hence $\mathcal{B}$ is a bisimulation between $HM_i[Subd(X)]$ and $\syshd{i}{X}$
\end{proof}

\begin{remark}
This, and what we saw in the previous section, should be linked to \cite{calk2023persistent}, which constructs natural homology as a certain colimit of persistence over directed paths of a directed space. Indeed, for $X$ a finite precubical set with underlying graph which is a DAG, $\mid X\mid $ is a partially-ordered space, for which the construction of \cite{calk2023persistent} applies. It is shown there that the natural homology of $\mid X\mid $ is the colimit of the one-parameter persistent homologies based on the restricted categorical representations  ${\mathcal{F}_{HM_i[X]}}_{\mid \mathcal{I}}$, over all interval inclusions of $\mathcal{I}$ within $FX_{\leq 1}$. 
\end{remark}

An obvious goal now is whether we have a counterpart of module homologies that we defined on combinatorial structures (precubical sets), for continuous structures (a form of directed space), and whether we have a form of tameness in that case, as we would like in any persistence oriented theory. The result of first investigations are shown in Appendix \ref{sec:homdir}. 


\section{Exact sequences and isomorphisms}

\label{sec:exact}

\paragraph{Invariance under dihomeomorphism}

Let $X$ and $Y$ be two cubical complexes. Then we have:

\begin{theorem}
\label{thm:invariance}
Suppose $\mid X\mid $ and $\mid Y\mid $ are dihomeomorphic, i.e. isomorphic in the category of directed spaces. Then for all $i\geq 1$, $HM_i[Subd(X)]$ is bisimulation equivalent to $HM_i[Subd(Y)]$.
\end{theorem}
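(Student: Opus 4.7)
The plan is to chain bisimulation equivalences, using Theorem \ref{thm:bisimequivnathom} as a bridge. By that theorem, $HM_i[Subd(X)]$ is bisimulation equivalent to $HN_i[\mid X\mid]$, and symmetrically $HM_i[Subd(Y)]$ is bisimulation equivalent to $HN_i[\mid Y\mid]$. It therefore suffices to establish two intermediate facts: first, that a dihomeomorphism $\phi: \mid X\mid \to \mid Y\mid$ induces a bisimulation equivalence between $HN_i[\mid X\mid]$ and $HN_i[\mid Y\mid]$; and second, that bisimulation equivalence of diagrams into $Vect$ (Definition \ref{def:alternatedefbisim}) is transitive and symmetric, so that the three equivalences can be composed.

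For the first fact, I would argue that a dihomeomorphism $\phi$, being invertible in $\D$, has both $\phi$ and $\phi^{-1}$ directed. Post-composition $p \mapsto \phi\circ p$ therefore induces an isomorphism of trace categories $T\phi: \T{\mid X\mid}\to\T{\mid Y\mid}$, and hence an isomorphism $FT\phi: F\T{\mid X\mid}\to F\T{\mid Y\mid}$ on factorization categories. For each $(a,b)\in \mid X\mid^2$, $\phi$ also restricts to a homeomorphism $\diP{\mid X\mid}^b_a \to \diP{\mid Y\mid}^{\phi(b)}_{\phi(a)}$, inducing an isomorphism $\alpha_p: HN_i[\mid X\mid](p)\to HN_i[\mid Y\mid](FT\phi(p))$ on singular homology, for each trace $p$ from $a$ to $b$. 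Naturality of $(\alpha_p)_p$ with respect to morphisms $\langle u,v\rangle$ of $F\T{\mid X\mid}$ follows from the identity $\phi\circ(v\circ p\circ u)=(\phi\circ v)\circ(\phi\circ p)\circ(\phi\circ u)$ and the functoriality of singular homology.

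Set $\mathcal{B} = \{(p,\alpha_p,FT\phi(p)) \mid p\in F\T{\mid X\mid}\}$. The forward zig-zag condition follows from naturality of $\alpha$: given $\langle u,v\rangle: p\to p'$ in $F\T{\mid X\mid}$, take $FT\phi\langle u,v\rangle$ in $F\T{\mid Y\mid}$ and $\alpha_{p'}$ as the new component; commutativity is naturality. The backward direction uses $\phi^{-1}$ symmetrically, every morphism of $F\T{\mid Y\mid}$ coming from a unique morphism of $F\T{\mid X\mid}$ via the isomorphism $FT\phi^{-1}$. Totality on both sides is immediate since $FT\phi$ is a bijection on objects.

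For the second fact, bisimulations compose: given $\mathcal{B}_1$ between $F$ and $G$, and $\mathcal{B}_2$ between $G$ and $H$, the relation $\{(c,g\circ f,e) \mid (c,f,d)\in\mathcal{B}_1,\ (d,g,e)\in\mathcal{B}_2\}$ is a bisimulation between $F$ and $H$ (zig-zag conditions compose), and $\{(d,f^{-1},c) \mid (c,f,d)\in\mathcal{B}\}$ reverses a bisimulation. Combining the chain $HM_i[Subd(X)]\sim HN_i[\mid X\mid]\sim HN_i[\mid Y\mid]\sim HM_i[Subd(Y)]$ produces the desired bisimulation equivalence. The only real content is verifying the bisimulation in the middle step; everything else is essentially formal, and the middle step reduces to the standard invariance of singular homology of trace spaces under the restriction of $\phi$, which is the main obstacle to spell out carefully but is otherwise routine.
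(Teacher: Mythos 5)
Your proof takes essentially the same route as the paper's: chain the bisimulation from Theorem \ref{thm:bisimequivnathom} on each side with an equivalence between the two natural homologies induced by the dihomeomorphism, then compose. The only difference is cosmetic — the paper appeals to dihomeomorphism-invariance of natural homology as a cited fact and treats composition of bisimulations as ``straightforward'', whereas you inline both arguments; the content is the same.
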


%

\begin{proof}
By Theorem \ref{thm:bisimequivnathom}, for all $i \geq 1$, $HM_i[Subd(X)]$ (resp. $HM_i[Subd(Y)]$) is bisimulation equivalent to the natural homology $HN_i$ of $\mid X\mid $ (resp. of $\mid Y\mid $). As $\mid X\mid $ and $\mid Y\mid $ are dihomeomorphic, and natural homology is an invariant under dihomeomorphism \cite{goubault2023semiabelian}, $HN_i(\mid X\mid )$ and $HN_i(\mid Y\mid )$ are isomorphic, as natural systems of $R$-vector spaces. 

From this and the bisimulation above, it is straightforward to construct a bisimulation directly between $HM_i[Subd(X)]$ and $HM_i[Subd(Y)]$.
\end{proof}

\paragraph{Dicontractibility}

A po-space is called dicontractible, as in \cite{dirtop} (Theorem 1), if the dipath space map $\chi: \ \diP{X} \rightarrow X \times X$, which associates to each path (or trace) $p$ its pair of endpoints: $\chi(p)=(p(0),p(1))$, has a global section, from the subspace in $X\times X$ of reachable pair of points $\Gamma_X=\{ (x,y)\in X\times X \ \mid  \ \exists p \in dX, \ p(0)=x, \ p(1)=y \ \}$.

Then we have:

\begin{lemma}
A po-space, which is a geometric realization of some $X \in Cub$, is dicontractible implies $HM_1[X]=R$, with the trivial $R[X]$-bimodule structure, and $HM_i[X]=0$, for all $i\geq 2$.
\end{lemma}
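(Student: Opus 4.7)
The plan is to apply Lemma \ref{lem:homtrace} entry-by-entry and then invoke dicontractibility to show that each relevant trace space of $\mid X\mid$ is either empty or contractible, from which both vanishing and the bimodule structure are read off directly.

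First I would note that, by Lemma \ref{lem:decompquotient}, the $R[X]$-bimodule $HM_i[X]$ decomposes as an $R$-vector space into pieces $e_a \bullet HM_i[X] \bullet e_b$ indexed by $(a,b)\in X_0\times X_0$, and by Lemma \ref{lem:homtrace} each such piece is isomorphic to the homology of the trace space $\diP{\mid X \mid}^b_a$ in the appropriate degree. When $(a,b)\notin\Gamma_X$, $\diP{\mid X\mid}^b_a$ is empty and the corresponding piece vanishes in every degree.

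Second, I would argue that dicontractibility forces every non-empty trace space $\diP{\mid X\mid}^b_a$, for $(a,b)\in\Gamma_X$, to be contractible: the global section $s:\Gamma_X\to\diP{\mid X \mid}$ of $\chi$ supplies a canonical dipath from $a$ to $b$, and a standard parametric argument in the style of the proof of Theorem 1 of \cite{dirtop} uses $s$ to construct an explicit deformation retract of the whole trace space onto $s(a,b)$. Intuitively, any dipath $p$ from $a$ to $b$ is continuously slid along the concatenation $s(a,p(t)) * s(p(t),b)$ as $t$ varies, until it coincides with $s(a,b)$.

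The conclusion would then follow by transport of homology. For $i\geq 2$ every piece $e_a \bullet HM_i[X] \bullet e_b$ is the higher homology of a contractible or empty space, hence zero, so $HM_i[X]=0$. For $i=1$ each reachable entry is $H_0(\diP{\mid X\mid}^b_a)=R$ and all other entries vanish; since contractibility forces every dipath between $a$ and $b$ to be dihomotopic to the canonical one, the bimodule action by whiskering with dipaths of $X$ acts as the identity between copies of $R$ at neighboring reachable pairs and as zero everywhere else, yielding precisely the trivial $R[X]$-bimodule structure on $R$ claimed in the statement. The main obstacle is the passage from the bare existence of a section $s$ of $\chi$ to contractibility of each fiber: while intuitively clear, this requires a careful parametric use of $s$ to build the deformation retract, as carried out in Theorem 1 of \cite{dirtop}.
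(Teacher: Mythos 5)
Your proposal follows essentially the same route as the paper: both reduce the statement to Lemma \ref{lem:homtrace} applied entry-by-entry (after the decomposition of Lemma \ref{lem:decompquotient}), and both obtain contractibility of each non-empty trace space $\diP{\mid X\mid}^b_a$ by invoking Theorem 1 of \cite{dirtop}. The paper's proof is considerably terser and simply cites that theorem; your version is more explicit about the empty/non-empty dichotomy and the resulting bimodule action, which is a useful unpacking of the paper's closing claim that ``the homology bimodules are all constant, either with value $R$ or with value $0$.''

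One small but genuine slip in the ``intuitive'' aside: the proposed deformation $H(p,t)=s(a,p(t))*s(p(t),b)$ does not depend on $p$ except through the single point $p(t)$, and at both $t=0$ and $t=1$ it equals $s(a,b)$ (up to reparametrization), so it is not a homotopy from the identity of $\diP{\mid X\mid}^b_a$ to the constant map at $s(a,b)$. The correct contraction, as in Theorem 1 of \cite{dirtop}, keeps the initial segment of $p$: $H(p,t)=p\vert_{[0,t]}*s(p(t),b)$, where $p\vert_{[0,t]}$ is $p$ reparametrized as a trace from $a$ to $p(t)$. Then $H(p,1)=p$ and $H(p,0)=s(a,b)$, giving the deformation retract. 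Since you ultimately defer to the cited theorem (as does the paper), this does not invalidate the proof, but the sketched formula should be corrected.
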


\begin{proof}
By Theorem 1 of \cite{dirtop}, $\mid X\mid $ is such that 
all dipath spaces $\diP{X}^b_a$ are contractible, for all
$(a,b) \in \Gamma_X$, hence, by Lemma \ref{lem:homtrace}, 
$e_a\bullet HM_1 \bullet e_b=R$ and $e_a \bullet HM_i \bullet e_b=0$, for $i\geq 2$. Therefore the 
homology bimodules are all constant, either with value $R$ or with value 0. 
\end{proof}

\begin{remark}
It
is a simple exercise to see that the relation between the categorification (Definition \ref{def:categoricalrepresentation}) of $HM_1[X]$ (resp. $HM_i[X]$, $i\geq 2$) and the categorification of $R$ seen as an $A$-bimodule with $A$ the ring algebra of $R$ with trivial (identity) action, 
which 
relates all its objects to the only object 1 of $R$ is a bisimulation equivalence. 
\end{remark}


\paragraph{Change of coefficients}

Restricting the algebra, which acts on the homology bimodules (i.e. considering the restriction of scalars functor of Definition \ref{def:restrictionscalars}) can be seen as a well behaved filtering of the homological information present originally, in two important ways. In what follows, $X$ is a finite precubical set with proper length covering. 

\begin{lemma}
\label{lem:rel1}
Given $Y$ a sub-precubical set of $X \in Cub$, inducing, as inclusions are injective on objects, a map of algebras $g: \ R[Y]\rightarrow R[X]$. Let ${ }_Y R_*[X]$ be the chain of $R[Y]$-bimodules which is, in dimension $i$, equal to $g^*(R_i[X])$. Its homology, $H_i({ }_Y R_*[X])$ is isomorphic to the the $R[Y]$-bimodule ${ }_Y HM_i[X]=g^*(HM_i[X])$.
\end{lemma}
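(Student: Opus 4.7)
The plan is to exploit the fact that restriction of scalars is an exact functor, so it commutes with the formation of homology. More precisely, given the algebra map $g: R[Y] \to R[X]$, the functor $g^*: { }_{R[X]}Mod_{R[X]} \to { }_{R[Y]}Mod_{R[Y]}$ does not modify the underlying $R$-vector space of a bimodule, nor does it modify morphisms at the level of $R$-linear maps (by Definition \ref{def:restrictionscalars}); it only restricts the bimodule action along $g$. Consequently, kernels, images, and quotients of morphisms in ${ }_{R[X]}Mod_{R[X]}$ are sent to the same underlying kernels, images, and quotients in ${ }_{R[Y]}Mod_{R[Y]}$, endowed with the restricted action.

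The first step is to verify that $({ }_Y R_*[X], \partial)$ is indeed a chain complex of $R[Y]$-bimodules. This follows because, by Lemma \ref{lem:precubboundary}, $\partial: R_{i+1}[X] \to R_i[X]$ is a morphism of $R[X]$-bimodules, and by the definition of $g^*$ on morphisms (Definition \ref{def:restrictionscalars}), $g^*(\partial)$ is the same $R$-linear map, now regarded as a morphism of $R[Y]$-bimodules. The relation $\partial \circ \partial = 0$ is preserved since it is a statement about $R$-linear maps.

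Next, I would argue step by step that $g^*$ commutes with kernels, images, and cokernels. Given any $R[X]$-bimodule morphism $f: M \to N$, the $R$-vector space $Ker \ f = \{x \in M \mid f(x) = 0\}$ is the same whether viewed inside $M$ or inside $g^*(M)$, with the action of $R[Y]$ on $Ker \ f$ being exactly the restriction along $g$ of the action of $R[X]$ on $Ker \ f$; hence $g^*(Ker \ f) = Ker \ g^*(f)$ as $R[Y]$-bimodules. The same reasoning applies to $Im \ f$ and to the cokernel $N/f(M)$, using the explicit descriptions recalled in Section \ref{sec:assoc}.

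Combining these observations, we obtain
\[
H_i({ }_Y R_*[X]) = \frac{Ker \ g^*(\partial_{\mid R_i[X]})}{Im \ g^*(\partial_{\mid R_{i+1}[X]})} = \frac{g^*(Ker \ \partial_{\mid R_i[X]})}{g^*(Im \ \partial_{\mid R_{i+1}[X]})} = g^*\!\left(\frac{Ker \ \partial_{\mid R_i[X]}}{Im \ \partial_{\mid R_{i+1}[X]}}\right) = g^*(HM_i[X]) = { }_Y HM_i[X],
\]
which is the desired isomorphism of $R[Y]$-bimodules. The main (and only) conceptual point is the exactness of $g^*$, which is essentially tautological from the definition of restriction of scalars; no harder obstacle arises.
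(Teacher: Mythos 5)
Your proof is correct and takes essentially the same approach as the paper: both arguments reduce to the exactness of the restriction-of-scalars functor $g^*$, from which commutation with homology follows immediately. The only difference is in how exactness is justified: you verify it directly by observing that $g^*$ is the identity on underlying $R$-vector spaces (and hence manifestly preserves kernels, images, and cokernels), whereas the paper invokes the fact that $g^*$ has both a left adjoint (extension of scalars) and a right adjoint (coextension), so it commutes with limits and colimits and, since $g^*(0)=0$, with kernels and cokernels. Your route is more elementary and self-contained; the paper's is slightly more abstract but leverages machinery already set up in Section \ref{sec:assoc}. Both are sound.
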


\begin{proof}
The  $i$th homology of the chain of $R[Y]$-bimodules ${ }_Y R_*[X]$ is $Ker \ g^* \partial^i/$ $Im \ g^* \partial^i$, which is the co-kernel of $g^* f$ where $f$ is the map ${ }_Y R_{i+1}[X] \rightarrow Ker \ \partial^i$ induced canonically by $g^* \partial^{i+1}: \ { }_Y R_{i+1}[X] \rightarrow { }_Y R_i[X]$ in the kernel diagram. 

As the restriction of scalars functor $g^*$ has a left and a right adjoint, it commutes with limits and colimits. Furthermore, as $g^*(0)=0$, it commutes with kernels and co-kernels. 

Hence $Ker \ g^* \partial^i/Im \ g^* \partial^{i+1}$ is isomorphic to $g^*(Ker \ \partial^i/Im \ \partial^{i+1})$, which is ${ }_Y HM_i[X]$.
\end{proof}

Indeed, the left (resp. right) action of $y \in R[Y]$ on ${ }_Y M$, for $M$ any $R[X]$-bimodule, is the action of $y$ seen as an element of $R[X]$. 

\begin{remark}
\label{rem:extrem}
Consider the extension of scalar $g_!$ induced by $g$ of Lemma \ref{lem:rel1}. By definition, ${ }^X R_i[Y]=g_!(R_i[Y])$ and ${ }^X HM_i[Y]=g_!(HM_i[Y])$. 
By Lemma \ref{lem:relsub}, 
As $g_!$ is a left adjoint, it preserves co-limits, and as $g_!(0)=0$, it preserves co-kernels, but not necessarily all kernels, a priori.  

Still, in the case of $\partial$, from the free $R[Y]$-bimodule $R_{i+1}[Y]$ to the free $R[Y]$-bimodule $R_i[Y]$, we have enough properties to get a similar result as Lemma \ref{lem:rel1}. 
\end{remark}



\begin{proposition}
\label{lem:lem18}
Given $X$ and $Y$ in $Cub$ such that $(X,Y)$ a relative pair of precubical sets, 
${ }^X HM_i[Y]$ is isomorphic to $H_i({ }^X R_*[Y])$ for all $i\in \N$, $i\geq 1$.    
\end{proposition}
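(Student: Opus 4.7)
The plan is to compute both ${ }^X HM_i[Y]$ and $H_i({ }^X R_*[Y])$ using the explicit canonical form provided by Remark \ref{rem:newcanonical} (for $i\geq 2$) and Lemma \ref{lem:relsub} (for $i=1$), exhibiting both as the same direct sum of $R$-vector spaces with matching $R[X]$-bimodule structures. This bypasses the general issue flagged in Remark \ref{rem:extrem} that $g_!$, being only a left adjoint, need not preserve kernels in general.

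First I would fix $a,b\in X_0$ and work within the $(a,b)$-component via Lemma \ref{lem:bimoddecomp}. Writing $P^{X\setminus Y}(x,y)$ for the set of 0-cube chains in $X$ from $x$ to $y$ with no cells in $Y$, the canonical form identifies $e_a\bullet { }^X R_i[Y]\bullet e_b$, as an $R$-vector space, with $\bigoplus_{(p,q)} R\langle p\rangle\otimes_R (e_v\bullet R_i[Y]\bullet e_w)\otimes_R R\langle q\rangle$, the sum ranging over pairs $(p,q)$ with $p\in P^{X\setminus Y}(a,v)$ and $q\in P^{X\setminus Y}(w,b)$ for some $v,w\in Y_0$. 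The relative pair hypothesis is precisely what is needed to guarantee uniqueness of this normal form when $i=1$ (Lemma \ref{lem:relsub}), while for $i\geq 2$ it is furnished by Remark \ref{rem:newcanonical} together with Corollary \ref{lem:relsub0}.

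Next I would exploit that $\partial$ on $R_*[X]$ is an $R[X]$-bimodule morphism (Lemma \ref{lem:precubboundary}), giving $g_!(\partial)(p\otimes c\otimes q) = p\otimes \partial c\otimes q$. Hence $g_!(\partial)$ respects the above direct-sum decomposition and acts on each summand $R\langle p\rangle\otimes_R (e_v\bullet R_*[Y]\bullet e_w)\otimes_R R\langle q\rangle$ as the identity tensored with the restriction of $\partial$ to $e_v\bullet R_*[Y]\bullet e_w$. Since homology commutes both with direct sums of chain complexes of $R$-vector spaces and with tensor products over $R$ with the one-dimensional spaces $R\langle p\rangle$ and $R\langle q\rangle$, one obtains $e_a\bullet H_i({ }^X R_*[Y])\bullet e_b \cong \bigoplus_{(p,q)} R\langle p\rangle\otimes_R H_i(e_v\bullet R_*[Y]\bullet e_w)\otimes_R R\langle q\rangle$, which by Lemma \ref{lem:decompquotient} equals $\bigoplus_{(p,q)} R\langle p\rangle\otimes_R (e_v\bullet HM_i[Y]\bullet e_w)\otimes_R R\langle q\rangle$. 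Applying the canonical form argument a second time---now to compute ${ }^X HM_i[Y]=g_!(HM_i[Y])$ directly---yields exactly the same right-hand side, since the canonical-form decomposition only uses the underlying $R[Y]$-bimodule structure and not the chain structure. Summing over $(a,b)$ and checking that the $R[X]$-bimodule actions agree (both are inherited from the same $g_!$ functor, acting by left concatenation on the $p$-factor, right concatenation on the $q$-factor, and through $g$ on the middle) yields the required isomorphism ${ }^X HM_i[Y]\cong H_i({ }^X R_*[Y])$.

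The main obstacle I expect is the case $i=1$: here $R_1[Y]$ is not free as an $R[Y]$-bimodule (it is the algebra itself), so the canonical-form decomposition is genuinely delicate, and it is the relative pair hypothesis that ensures uniqueness of the normal form in Lemma \ref{lem:relsub}. A subsidiary care point is justifying that the canonical-form decomposition $g_!(M)\cong\bigoplus_{(p,q)} R\langle p\rangle\otimes_R (e_v\bullet M\bullet e_w)\otimes_R R\langle q\rangle$ transfers from the free bimodules $R_i[Y]$ to the general bimodule $M=HM_i[Y]$; the cleanest route is to exhibit $HM_i[Y]$ as a cokernel of a map between free $R[Y]$-bimodules (using the chain complex itself) and apply right-exactness of $g_!$ to reduce it back to the free case established above.
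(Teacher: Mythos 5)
Your overall strategy is genuinely different from the paper's: the paper directly verifies $\mathrm{Ker}(g_!\,\partial)=g_!(\mathrm{Ker}\,\partial)$ using the canonical form and the uniqueness of its coefficients, and handles $i=1$ separately by observing that $HM_1[Y]$ is already a pure cokernel (so that right-exactness of $g_!$ suffices, with no kernel computation at all). You instead decompose the whole chain complex ${ }^X R_*[Y]$ as a direct sum of subcomplexes indexed by pairs $(p,q)$ of whiskers in $X\setminus Y$, each summand of the form $R\langle p\rangle\otimes_R (e_v\bullet R_*[Y]\bullet e_w)\otimes_R R\langle q\rangle$, and then pass to homology. That first part is correct and is a nice structural repackaging of the same canonical-form input; it also unifies the $i=1$ and $i\geq 2$ cases rather than treating them separately.

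The gap is in your second part. You claim that the canonical-form decomposition of $g_!(M)$ as $\bigoplus_{(p,q)} R\langle p\rangle\otimes_R(e_v\bullet M\bullet e_w)\otimes_R R\langle q\rangle$ transfers to $M=HM_i[Y]$ by "exhibiting $HM_i[Y]$ as a cokernel of a map between free $R[Y]$-bimodules (using the chain complex itself) and applying right-exactness of $g_!$." But the chain complex does \emph{not} present $HM_i[Y]$ as a cokernel of a map of free bimodules: $HM_i[Y]=\mathrm{Ker}\,\partial_i/\mathrm{Im}\,\partial_{i+1}$ is the cokernel of $R_{i+1}[Y]\to\mathrm{Ker}\,\partial_i$, and $\mathrm{Ker}\,\partial_i$ is generically not a free $R[Y]$-bimodule. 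Alternatively, $HM_i[Y]$ is a \emph{kernel} sitting inside the cokernel $\mathrm{Coker}\,\partial_{i+1}$, and kernels are exactly what $g_!$ is not known to preserve here --- so invoking right-exactness at this point is circular with respect to the content of the Proposition.

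The fix is small but has to be stated: take an \emph{arbitrary} free presentation $F_1\xrightarrow{\phi}F_0\to HM_i[Y]\to 0$ of $R[Y]$-bimodules (not one coming from the chain complex), apply the canonical-form decomposition of Proposition~\ref{rem:relativerestriction} to the free bimodules $F_0$ and $F_1$, and observe that $g_!(\phi)$ respects the $(p,q)$-decomposition because $\phi$ is a bimodule morphism and so restricts to maps $e_v\bullet F_1\bullet e_w\to e_v\bullet F_0\bullet e_w$. Then right-exactness of $g_!$ plus the fact that cokernels commute with direct sums gives the desired decomposition of $g_!(HM_i[Y])$. With this replacement your argument closes; as written, the reliance on ``the chain complex itself'' is a genuine gap. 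You may also want to note, as the paper does, that the $i=1$ case is trivial for a different reason (there $\partial_{\mid R_1[Y]}=0$, so $HM_1[Y]$ really is $\mathrm{Coker}\,\partial_2$ and no kernel computation is required).
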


\begin{proof}
Because of Remark \ref{rem:extrem}, we only have to check that the extension of scalar $g_!$ preserves the kernel of $\partial$ (we know that that it preserves all co-kernels). 

When $i=1$, $HM_i[Y]$ is the co-kernel of $\partial: \ R_2[Y] \rightarrow R_1[Y]$ and we do not have to compute any kernel; the result of the lemma holds, trivially. 

Now, suppose $i\geq 2$. It is straightforward to show that ${ }^X Ker \ \partial_{\mid R_{i+1}[Y]\rightarrow R_i[Y]}$ is a submodule of $Ker \ { }^X \partial_{\mid R_{i+1}[Y]\rightarrow R_i[Y]}$. Indeed, the $p\otimes c\otimes q$ with $p, q \in Ch^{=0}(X)$ and $c \in Ch^{=i}(Y)$ with $\partial(c)=0$, generate ${ }^X Ker \ \partial$ and are also such that $\partial(p \otimes c \otimes q)=p\otimes \partial(c) \otimes q=0$, hence, are in $Ker { }^X \partial$. 



We now prove that all elements of $Ker \ { }^X \partial_{\mid  R_{i+1}[Y] \rightarrow R_i[Y]}$ are elements of ${ }^X Ker \ \partial_{\mid  R_{i+1}[Y] \rightarrow R_i[Y]}$. 

Consider $x \in Ker { }^X \partial \subseteq { }^X R_i[Y]$, with $i\geq 2$. 
Then by Remark \ref{rem:newcanonical}, $x$ can be written canonically as: 
$$
x = \sum\limits_{i\in I, j\in J} \lambda_{i,j} p'_{i,j} \otimes c_j \otimes q'_{i,j}
$$
with $c_j\in Ch^{=i-1}(Y)$ and the $p'_{i,j}$ and $q'_{i,j}$ entirely composed of 1-cells in $X\backslash Y$ 
and the condition that $p_{i',j}=p_{i,j}$ and $q_{i',j}=q_{i,j}$ implies $i=i'$. 

Let us now write $\partial(c_j)$ as an $R$-linear combination of $(i-2)$-cube chains in $Y$: $\partial(c_j)=\sum\limits_{k\in K} \mu_{k,j} d_{k,j}$ with $d_{k,j} \in Ch^{=i-2}(Y)$ and $\mu_{k,j}\in R$. 
Then 
$\partial(x)=\sum\limits_{i\in I, j\in J,k\in K} \lambda_{i,j} \mu_{k,j} p'_{i,j} \otimes d_{k,j} \otimes q'_{i,j}$. We now organize all the $d_{k,j}$, which are $(i-2)$-cube chains in $Y$, on a basis of the $R$-vector space of $(i-2)$-cube chains in $Y$, $(d_u)_{u\in U}$: $d_{k,j}=d_{u(k,j)}$. Now, 
$$\begin{array}{rcl}
\partial(x) & = & \sum\limits_{i\in I, j\in J,k\in K} \lambda_{i,j} \mu_{k,j} p'_{i,j} \otimes d_{u(k,j)} \otimes q'_{i,j}\\
& =& \sum\limits_{i\in I, j\in J, u \in U} \sum\limits_{k \in K, u=u(k,j)} \lambda_{i,j} \mu_{k,j} p'_{i,j} \otimes d_{u} \otimes q'_{i,j}\\
& =& \sum\limits_{i\in I, j\in J, u \in U} \sum\limits_{k \in K, u=u(k,j)} \lambda_{i,j} \mu_{k,j} p'_{i,j} \otimes d_{u} \otimes q'_{i,j}
\end{array}$$
As this is the canonical decomposition of elements of ${ }^X R_{i-1}[Y]$, see Lemma \ref{lem:relsub} and Remark \ref{rem:newcanonical}, this implies that $\sum\limits_{k\in K, u=u(k,j)} \mu_{k,j}=0$. But,
$$\begin{array}{rcl}
\partial(c_j) & = & \sum\limits_{k\in K} \mu_{k,j} d_{k,j} \\
& = & \sum\limits_{u \in U} (\sum\limits_{k\in K, u=u(k,j)} \mu_{k,j}) d_u \\
& = & 0
\end{array}$$
Hence $x \in { }^X Ker \ \partial$. 
    \label{eq:flat}
\end{proof}

\begin{remark}
Indeed, the proof above is typical of the flat nature of the (free) bimodules of $i$-cube chains we consider, for $i\geq 1$. 
\end{remark}


There are other change of coefficients of interest in our case, for which we only briefly describe one, since it will not be of use in the rest of the paper: 

\begin{lemma}
\label{lem:rel2}
Given $Y$ a sub-precubical set of $X$. We have:
\begin{itemize}
\item $R[Y]$ is isomorphic to the algebra $R[X]$ quotiented by the two-sided ideal $I^X_Y=R[X](X_{\leq 1}\backslash Y_{\leq 1} \cup X_{0}\backslash Y_0)R[X]$ (where $X_0\backslash Y_0$ denotes the set of idempotents $e_{x}$ with $x\in X_0\backslash Y_0$ in $R[X]$)
\item The canonical map of algebras $h: \ R[X]\rightarrow R[X]/I^X_Y$ induces, by restriction of scalars, a functor $h^*: { }_{R[Y]} Mod_{R[Y]} \rightarrow { }_{R[X]} Mod_{R[X]}$ 
\item Calling $R^X_*[Y]$ the chain of $R[X]$-bimodules which is, in dimension $i$, equal to $h^*(R_i[Y])$, its homology $H_i(R^X_*[Y])$ is isomorphic to the $R[X]$-bimodule $HM^X_i[Y]=h^*(HM_i[Y])$. 
\end{itemize}    
\end{lemma}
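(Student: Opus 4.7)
The plan is to establish the three parts in order, with the first part carrying the only genuinely new combinatorial content and the other two following essentially formally.

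For the first part, I will exhibit the isomorphism by comparing $R$-bases. The algebra $R[X]$ has a canonical $R$-basis $B$ consisting of all $0$-cube chains in $X$ (the paths in $X_{\leq 1}$, including empty paths $e_v$ for $v \in X_0$). Split $B = B_Y \sqcup B_{X\setminus Y}$, where $B_Y$ consists of those paths whose edges all lie in $Y_1$ and whose starting vertex lies in $Y_0$ (equivalently, whose vertices all lie in $Y_0$, since $Y$ being a sub-precubical set forces any edge in $Y_1$ to have endpoints in $Y_0$), and $B_{X\setminus Y}$ consists of the remaining paths. I claim $I^X_Y$ is exactly the $R$-span of $B_{X\setminus Y}$. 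The inclusion $R\text{-span}(B_{X\setminus Y}) \subseteq I^X_Y$ is immediate: any element of $B_{X\setminus Y}$ either contains some edge $\alpha \in X_1\setminus Y_1$ (hence factors as $p\times \alpha\times q$ with $p,q\in R[X]$) or passes only through $Y_0$ vertices but contains an edge in $X_1\setminus Y_1$, or is an idempotent $e_v$ with $v\in X_0\setminus Y_0$. The reverse inclusion is clear since each generator $p\cdot g\cdot q$ of $I^X_Y$, with $g\in X_1\setminus Y_1$ or $g=e_v$ with $v\in X_0\setminus Y_0$, is either $0$ or a basis element of $B_{X\setminus Y}$. Therefore the images of $B_Y$ form an $R$-basis of $R[X]/I^X_Y$, and the $R$-linear extension $\phi: R[Y]\to R[X]/I^X_Y$ of the basis bijection $B_Y \to [B_Y]$ is an $R$-linear isomorphism. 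To see $\phi$ is an algebra homomorphism, note that if $p,q\in B_Y$ are composable in $Y$ then their composition in $R[Y]$ coincides with their composition in $R[X]$ (which also lies in $B_Y$), and if they are not composable both products vanish.

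For the second part, the map $h$ obtained in part~1 as the composition $R[X] \twoheadrightarrow R[X]/I^X_Y \xrightarrow{\phi^{-1}} R[Y]$ is an algebra morphism (composition of algebra morphisms). Definition~\ref{def:restrictionscalars} then produces the restriction of scalars functor $h^*: {}_{R[Y]}\mathrm{Mod}_{R[Y]} \to {}_{R[X]}\mathrm{Mod}_{R[X]}$; this is immediate from the cited definition and requires no further verification.

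For the third part, I repeat verbatim the argument of Lemma~\ref{lem:rel1}. Restriction of scalars along any algebra morphism admits both a left adjoint (extension of scalars, Lemma~\ref{lem:extensionscalars}) and a right adjoint (co-extension of scalars, as recorded in the remark following that lemma), so $h^*$ preserves all limits and all colimits; in particular it preserves kernels and cokernels of morphisms of bimodules. Since the chain complex $R^X_*[Y]$ is by definition $h^*(R_*[Y])$ with differential $h^*(\partial)$, the $i$-th homology $H_i(R^X_*[Y])=\ker h^*(\partial^i)/\mathrm{im}\, h^*(\partial^{i+1})$ is canonically isomorphic to $h^*(\ker \partial^i/\mathrm{im}\, \partial^{i+1}) = h^*(HM_i[Y]) = HM^X_i[Y]$, as required.

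The only step that merits real care is the identification $I^X_Y = R\text{-span}(B_{X\setminus Y})$ in part~1, since it is the sole place where the combinatorics of sub-precubical sets enters; the other two parts are formal consequences, the third being a direct transcription of the reasoning already used in Lemma~\ref{lem:rel1}.
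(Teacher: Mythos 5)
Your proof is correct and follows essentially the same approach as the paper's: the paper establishes the isomorphism $R[Y]\cong R[X]/I^X_Y$ by explicitly constructing the two mutually inverse maps, whereas you reach the same conclusion by first identifying $I^X_Y$ as the $R$-span of those basis paths containing an edge outside $Y_1$ or reducing to an idempotent outside $Y_0$, then matching bases; the underlying combinatorial observation is identical. For parts two and three you, like the paper, appeal directly to Definition~\ref{def:restrictionscalars} and to the argument of Lemma~\ref{lem:rel1} (restriction of scalars has both adjoints, hence preserves kernels and cokernels and thus homology).
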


\begin{proof}
Consider the following map $R[Y] \rightarrow R[X]/I^X_Y$ which sends paths $q$ in $Y_{\leq 1}$ to the class [q] of $q$ in $R[X]/I^X_Y$. This is well defined since a path in $Y_{\leq_1}$ is in particular a path in $X_{\leq 1}$. Now consider the map from $R[X]/I^X_Y$ to $R[Y]$ which sends paths classes $[p]$ of paths $p$ in $X_{\leq 1}$ modulo $I^X_Y$ to 0 if $p$ is a path in $X_{\leq 1}$ that is not a path in $Y_{\leq 1}$, or to $p$ otherwise. In the first case, this means that it is either a constant path $p=e_a$ with $a \in X_0\backslash Y_0$ or a path $p=(p_1,\ldots,p_m)$ with at least one $p_i \in X_1 \backslash Y_1$: in both case $p$ is zero modulo $I^X_Y$. These two maps are inverse to one another, which proves the isomorphism between $R[Y]$ and $R[X]/I^X_Y$. 

The second statement of the lemma comes from the definition of the restriction of scalars functor, Section \ref{sec:assoc}. The last statement is proven in the same way as for Lemma \ref{lem:rel1}. 

Indeed, the action of paths of $X_{\leq 1}$ on $c \in HM ^X_i[Y]$ which are not paths in $Y_{\leq 1}$ is 0. 
\end{proof}

\begin{remark}
For intervals $\mathcal{I}$ in $F_{X_{\leq 1}}$ as in Section \ref{sec:persistencemod}, the categorical presentation of $HM^X_i[\mathcal{I}]$ is the restriction $\mathcal{F}_{HM_i[\mathcal{I}]_{\mid \mathcal{I}}}$. 


As $X_{\leq 1}$ is the direct limit of the diagram made of the inclusion of its (not necessarily maximal) paths (or intervals), $R[X]$ is the direct limit of $R[\mathcal{I}]$ over the diagram of path algebra inclusions of paths into the quiver $X_{\leq 1}$, by Lemma 2.5 of \cite{leavitt}.
We believe that $HM_i[X]$ is also the colimit of all $HM^X_i[\mathcal{I}]$, the direct limit of the one-parameter homologies on all paths of $X_{\leq 1}$, as proved, for natural homology, in \cite{calk2023persistent}. 
\end{remark}

\begin{remark}
In general, we do not have inclusions between $R^X_i[Y]$ and $R_i[X]$. Indeed, if $Y_{\leq 1}$ is stricly included in $X_{\leq 1}$ then, for e.g. $x \in X_1\backslash Y_1$, we will have for all $m \in R^X_i[Y]$, $x \bullet m=0$, whereas $x \bullet m$ may not be zero if we see $m$ as an element of $R_i[X]$ through the inclusion of $Y$ in $X$. 
\end{remark}



\paragraph{K\"unneth formula}


K\"unneth formula is an important tool in algebraic topology \cite{Massey}, and is logical to look at now since we examined in previous section, the effect of simple change of coefficients. 

Such formulas have only relatively recently been considered in the context of persistent homology, see e.g. 
\cite{bubkunneth,gakhar2019kunneth}. In these papers, the K\"unneth theorems are for the tensor product between persistence modules on the same underlying category. In our work, we are considering a K\"unneth theorem for the tensor product between cubical complexes, which involves computing persistence modules over different underlying categories. More related to our work is \cite{CARLSSON2020106244}, where tensor products of persistence modules consider different underlying categories (or filtrations), but these filtrations are induced by a metric, and are very different from the filtrations induced by the tensor product of cubical complexes (corresponding to the cartesian product of their directed geometric realizations) that we are considering here. 


Here we consider instead the problem of linking the homology modules of a product space, with the tensor product of the respective homology modules. This is a deeply multiparameter persistence type of result, since for this, we need to consider the product of the path algebras, i.e. some form of tensor product of the underlying quivers, as a multiparameter persistence parameter space.


\begin{definition}[\look{Tensor product of precubical sets}]
\label{def:tensorcub}
Let $X$ and $Y$ be two precubical sets. Their tensor product is the precubical set $X \otimes Y$ with: 
\begin{itemize}
\item $(X \otimes Y)_n=\coprod\limits_{i+j=n} X_i \times Y_j$
\item for $(c,d)\in X_i \times Y_j$ with $i+j=n$, $d^{\epsilon}_k(c,d)=\left\{\begin{array}{ll}
(d^{\epsilon}_k(c),d) & \mbox{if $0\leq k \leq i-1$} \\
(c,d^{\epsilon}_{k-i}(d)) & \mbox{if $i \leq k \leq n-1$}
\end{array}\right.$
\end{itemize}
\end{definition}

This definition actually extends to the tensor product of CW-complexes, see e.g. Chapter VI of \cite{Massey}, and in particular Theorem 2.1. 
Note that the chain complex generated by the tensor product of two cell complexes is the (chain) tensor product of the underlying corresponding chain complexes, as defined in Definition 2.1 of \cite{Massey} and recapped below, for chain complexes of $R$-vector spaces and then extended to chain complexes of $A$-bimodules ($A$ being an algebra) in Definition \ref{def:tensoralgmod}: 

\begin{definition}[\look{Tensor product of chain complexes}]
\label{def:tensoralgmod}
Let $C=(C_*,\partial_C)$ and $(D_*,\partial_D)$ be two chain complexes of $R$-vector spaces. Their tensor product $C \otimes D$ is defined as the chain complex with:
\begin{itemize}
\item $(C\otimes D)_{n}=\mathop{\oplus}\limits_{i+j=n} C_i \times D_j$
\item $\partial^n_{C \otimes D}: \ (C\otimes D)_n \rightarrow (C\otimes D)_{n-1}$ is such that $\partial^n_{C\otimes D}(c\otimes d)=\partial^i_C(c)\otimes d +(-1)^i c \otimes \partial^j_D(d)$ where $c \in C_i$ and $d \in D_j$ with $i+j=n$
\end{itemize}
\end{definition}

For chain complexes of bimodules over algebras, each $R$-vector space $(C\otimes D)_n$ is given the structure of a $A\otimes B$-bimodule with (as seen in Section \ref{sec:assoc}): 

$$
(a\otimes b)\bullet c \otimes d \bullet (a' \otimes b') = (a\bullet c \bullet a')\otimes (b \bullet d \bullet b')
$$
\noindent for $a$, $a'$ in $A$ and $b$, $b'$ in $B$. 

Consider now two precubical sets $X\in Cub$ and $Y\in Cub$, and their directed geometric realization $\mid X\mid $ and $\mid Y\mid $. 
Let $x$ and $x'$ (resp. $y$ and $y'$) be two points in $\mid X\mid $ (resp. in $\mid Y\mid $) and $p$ (resp. $q$) be a dipath from $x$ to $x'$ (resp. from $y$ to $y'$) in $\mid X\mid $ (resp. in $\mid Y\mid $). As is well known \cite{Massey}, the geometric realization $\mid X \otimes Y\mid $ of $X\otimes Y$ is isomorphic to the cartesian product of the geometric realizations. This easily extends to the directed geometric realization $\mid X\mid \times \mid Y\mid $. 


Consider the $i$th natural homology $HN_i(\mid X\otimes Y\mid )(p,q)$ of $\mid X \otimes Y\mid =\mid X\mid \times \mid Y\mid $ at $(p,q)$. It is equal to the $(i-1)$th homology of the path space (or equivalently the trace space) from $(x,y)$ to $(x',y')$ in $\mid X\mid  \times \mid Y\mid $. 

Dipaths in $\mid X\mid \times \mid Y\mid $ are directed maps from $\I$ to $\mid X\mid \times \mid Y\mid $, i.e. are pairs of directed maps from $\I$ to $\mid X\mid $ and from $\I$ to $\mid Y\mid $. This means dipaths in $\mid X \otimes Y\mid $ can be identified with pairs of dipaths in $d\mid X\mid $ and in $d\mid Y\mid $ and dipaths from $(x,y)$ to $(x',y')$ in $\mid X\otimes Y\mid $, $d\mid X \otimes Y\mid $ are pairs of dipaths, from $x$ to $x'$ and from $y$ to $y'$, i.e. are the elements of $d\mid X\mid (x,x') \times d\mid Y\mid (y,y')$. Thus $H_{i-1}(d\mid X\otimes Y\mid ((x,y),(x',y')))$ is isomorphic to 
$H_{i-1}(d\mid X\mid (x,x')\times d\mid Y\mid (y,y'))$, which, by K\"unneth formula \cite{Massey}, for field coefficients, is isomorphic to $(H_*(d\mid X\mid (x,x'))\otimes H_*(d\mid Y\mid (y,y')))_{i-1}$. 

Therefore we proved that: 
$$
HN_*(\mid X\times Y\mid )(p,q)=HN_*(\mid X\mid )(p)\boxtimes HN_*(\mid Y\mid )(q)
$$
\noindent where $\boxtimes$ is a shifted version of the tensor product of chain complexes of Definition \ref{def:tensoralgmod} as follows: $(C_*\boxtimes D_*)_{n+1}=s(C_*)\otimes s(D_*)$ where $s(C_*)_{i+1}=C_i$ for $i \in \N$. This is due to the fact that our homology bimodules have shifted indices with respect to the (singular) homology of the path spaces, as defined in Definition \ref{def:hommodule}. 

Suppose now that $X$ and $Y$ are cubical complexes, as in Definition 
\ref{def:cubicalcomplex}. It is straightforward to see that $X\otimes Y$ is a cubical complex as well. 

By Lemma \ref{lem:homtrace}, we know that $e_{(x,y)} \bullet HM_i[X\otimes Y] \bullet e_{(x',y')}$ is isomorphic to $HN_i(\mid X\otimes Y\mid )(p,q)$, for any $p$ (resp. $q$) dipath from $x$ to $x'$ in $\mid X\mid $ (resp. from $y$ to $y'$ in $\mid Y\mid $), which is isomorphic to $HN_*(\mid X\mid )(p)\otimes HN_*(\mid Y\mid )$ by the above. 

By Lemma \ref{lem:homtrace} again, we know that $HN_i(\mid X\mid )(p)$ (resp. $HN_i(\mid Y\mid )(q)$) is isomorphic to $e_x \bullet HM_i[X]\bullet e_y$ (resp. $e_{x'}\bullet HM_i[Y] \bullet e_{y'}$). As this is true for all $x$, $x'$, $y$, $y'$, and as, as an $R$-vector space, $HM_i[X]$ (resp. $HM_i[Y]$) is the coproduct of all the $e_x \bullet HM_i[X] \bullet e_{x'}$ (resp. of all the $e_{y} \bullet HM_i[Y] \bullet e_{y'}$), we get that, as a chain of $R$-vector spaces, $HM_*[X\otimes Y]$ is isomorphic to $HM_*[X]\boxtimes HM_*[Y]$. The tensor product in the latter formula is the chain complex tensor product of Definition \ref{def:tensoralgmod}, where $HM_*[X]$ and $HM_*[Y]$ are considered as chain complexes with zero differential. 

Now this isomorphism is actually an isomorphism of $R[X\otimes Y]$-bimodules. First, $HM_*[X]\boxtimes HM_*[Y]$ defines not only a chain of $R$-vector spaces but a chain of $R[X]\otimes R[Y]$-bimodules, since $HM_*[X]$ (resp. $HM_*[Y]$) is a chain of $R[X]$-bimodules (resp. $R[Y]$-bimodules), using Definition \ref{def:tensoralgmod}. 

The algebra $R[X]\otimes R[Y]$ is not isomorphic to $R[X\otimes Y]$ in general, but is rather a (generally strict) sub-algebra of $R[X\otimes Y]$. Indeed, generators of $R[X]\otimes R[Y]$ are $p\otimes q$, with $p=(p_1,\ldots,p_k)_{x,x'}$ path from $x$ to $x'$ and $q=(q_1,\ldots,q_l)_{y,y'}$ from $y$ to $y'$, and generators of $R[X\otimes Y]$ are \look{shuffles} of such paths, i.e. are of the form $(r_1,\ldots, r_{k+l})_{(x,y),(x',y')}$ with $r_i$ equal to some $p_{\alpha(i)}\otimes e_{b}$ or $e_a \otimes q_{\beta(i)}$ where $\alpha$ (resp. $\beta$) is an increasing map from $\{1,\ldots,k+l\}$ to $\{1,\ldots,k\}$ (resp. $\{1,\ldots,l\}$) and $b$ is the boundary of some $q_j$, $a$, of some $p_i$. This is made more explicit in the lemma below: 

\begin{lemma}
\label{lem:tensorpathalg}
Let $X$ and $Y$ be two precubical sets. The path algebra $R[X \otimes Y]$ is generated by elements of the form $(u,e_b)$ and $(e_a,v)$ where $u \in X_1$, $b\in Y_0$, $a \in X_0$ and $v \in Y_1$, with the following multiplication:
$$
\begin{array}{rcl}
(u,e_b)(e_a,v) & = & 
0 \mbox{, if $d^1(u)\neq a$ or $d^0(v)\neq b$} \\
(e_a,v)(u,e_b) & = & 
0 \mbox{, if $d^0(u)\neq a$ or $d^1(v)\neq b$} \\
(e_a,v)(e_{c},z) & = & \left\{\begin{array}{ll}
(e_a,vz) & \mbox{if $a=c$} \\
0 & \mbox{otherwise}
\end{array}\right. \\
(u,e_b)(w,e_d) & = & \left\{\begin{array}{ll}
(uw,e_b) & \mbox{if $b=d$} \\
0 & \mbox{otherwise}
\end{array}\right.
\end{array}
$$
\end{lemma}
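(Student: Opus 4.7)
The plan is to simply unfold Definition \ref{def:tensorcub} at dimensions 0 and 1 to identify the underlying quiver $(X\otimes Y)_{\leq 1}$, then read off the algebra structure from Definition \ref{def:quiveralgebra}. Specifically, I would first observe that the vertices of $(X\otimes Y)_{\leq 1}$ are the pairs $(a,b) \in X_0 \times Y_0$ and its edges form the disjoint union $(X_1 \times Y_0) \sqcup (X_0 \times Y_1)$. The boundary formulas of Definition \ref{def:tensorcub} give $d^\epsilon_0(u,b) = (d^\epsilon_0(u), b)$ for $u\in X_1$, $b\in Y_0$, and $d^\epsilon_0(a,v) = (a, d^\epsilon_0(v))$ for $a\in X_0$, $v\in Y_1$. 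Under the notation of the lemma, $(u,e_b)$ denotes the edge $(u,b)$ and $(e_a,v)$ denotes the edge $(a,v)$, viewed as degree-1 elements of $R[X\otimes Y]$; together with the constant paths $e_{(a,b)}$ on vertices, these edges generate every 0-cube chain of $X\otimes Y$ by concatenation, hence generate $R[X\otimes Y]$ as an $R$-algebra.

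Next, I would verify each of the four multiplication rules by direct computation in the path algebra (where the product of two paths is their concatenation if composable, and $0$ otherwise). For $(u,e_b)(w,e_d)$ with $u,w\in X_1$, the concatenation is the length-2 path $(u,b),(w,d)$, which requires the endpoint $(d^1(u),b)$ of the first edge to coincide with the start $(d^0(w),d)$ of the second; this forces $b=d$, and when $b=d$ the resulting path corresponds exactly to $(uw,e_b)$, where $uw$ is the product in $R[X]$ and already vanishes if $d^1(u)\neq d^0(w)$. The symmetric case $(e_a,v)(e_c,z)$ is handled identically in the $Y$-coordinate. For the mixed product $(u,e_b)(e_a,v)$, composability of $(u,b)$ with $(a,v)$ requires $(d^1(u),b) = (a, d^0(v))$, i.e.\ $d^1(u)=a$ and $d^0(v)=b$, and failure of either condition yields $0$; the case $(e_a,v)(u,e_b)$ is symmetric, requiring $a=d^0(u)$ and $d^1(v)=b$.

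The whole argument is essentially bookkeeping, so there is no real mathematical obstacle; the only delicate point is keeping careful track of which coordinate each boundary operator acts on, since in $X\otimes Y$ the index of the face map shifts by $\dim c$ when switching from the $X$-factor to the $Y$-factor (cf.\ Definition \ref{def:tensorcub}). Once this is handled, the four cases are immediate.
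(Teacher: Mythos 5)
Your proof is correct and follows essentially the same route as the paper's (terse) argument: unfold Definition \ref{def:tensorcub} at dimensions $0$ and $1$ to see that vertices are $X_0\times Y_0$ and edges are $(X_1\times Y_0)\sqcup(X_0\times Y_1)$, then read off the four multiplication rules from composability of endpoints. You merely spell out the bookkeeping the paper leaves implicit, and you correctly observe that in the two same-type products the residual condition $d^1(u)=d^0(w)$ (resp.\ $d^1(v)=d^0(z)$) is already absorbed into $uw$ (resp.\ $vz$) being zero in $R[X]$ (resp.\ $R[Y]$).
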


\begin{proof}
This comes directly from the definition of the product of two precubical sets, Definition \ref{def:tensorcub}. Elements of dimension one in $X\otimes Y$ are or the form $(a,d)$ or $(c,b)$ with $a \in X_0$, $b\in Y_0$, $c\in X_1$ and $d \in Y_1$.
\end{proof}

Overall, this proves that ${ }_{R[X]\otimes R[Y]} HM_*[X\otimes Y]=HM_*[X]\boxtimes HM_*[Y]$ (the change of coefficient being applied on each element of the chain complex of bimodules), but we can in fact prove that there is a natural action of $R[X\otimes Y]$ on $HM[X]\boxtimes HM[Y]$ which makes it isomorphic, as a $R[X\otimes Y]$-bimodule to $HM_*[X \otimes Y]$. 

Indeed, as we saw, all shuffles of $p$ and $q$ go from $(x,y)$ to $(x',y')$, as does $p\otimes q$. But the action of any of these shuffles (on the left, as well as on the right) on any element of $HM_i[X\otimes Y]$ depends only only on $(x,y)$ and $(x',y')$ since all shuffles from $(x,y)$ and $(x',y')$ are dihomotopic in $\mid X\otimes Y\mid $. Hence we can make $HM_*[X]\boxtimes HM_*[Y]$ into a $R[X\otimes Y]$-bimodule, isomorphic to $HM_*[X\otimes Y]$, by setting the left (resp. right) action of $(r_1,\ldots,r_{k+l})$ on $x\otimes y$ to be equal to $(p\bullet x)\otimes (q \bullet y)$ (resp. $(x\bullet p)\otimes (y \bullet q)$). 


In summary, we proved a version of K\"unneth's formula in the field case, and for cubical complexes: 

\begin{theorem}
\label{thm:kunneth}
Let $X$ and $Y$ be cubical complexes and $R$ a field. Then 
$$
HM_*[X\otimes Y]=HM_*[X]\boxtimes HM_*[Y]
$$
\noindent with the action of shuffles of paths $p$ of $X$ with $q$ of $Y$ on $HM_*[X]\boxtimes HM_*[Y]$ to be the action of $p$ on the first component, and action of $q$ on the second component.
\end{theorem}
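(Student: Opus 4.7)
The plan is to reduce the combinatorial statement to the classical Künneth formula applied to trace spaces, then lift back to the category of bimodules over $R[X\otimes Y]$. The intermediary is natural homology as connected to $HM_*$ by Lemma \ref{lem:homtrace}.

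First, I would check that the tensor product of cubical complexes behaves well geometrically: the directed geometric realization $|X\otimes Y|$ is dihomeomorphic to the product $|X|\times|Y|$ of d-spaces, which is a standard fact about geometric realization of tensor products of CW-like complexes (Theorem 2.1 of \cite{Massey}), and this identification preserves the directed structure because the order on $\I^{n+m}$ is the product order on $\I^n\times\I^m$. Given this, for any pair of vertices $(x,y),(x',y')\in (X\otimes Y)_0$, I identify the trace space $\overrightarrow{P}(|X\otimes Y|)^{(x',y')}_{(x,y)}$ with $\overrightarrow{P}(|X|)^{x'}_x \times \overrightarrow{P}(|Y|)^{y'}_y$, since a directed path into a product is a pair of directed paths.

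The second step is the computation at the level of $R$-vector spaces indexed by a vertex pair. Applying Lemma \ref{lem:homtrace}, the fibre $e_{(x,y)}\bullet HM_{i+1}[X\otimes Y]\bullet e_{(x',y')}$ is isomorphic to the $i$th singular homology of the product trace space above. Since $R$ is a field, the classical Künneth theorem (Theorem 3.1 of \cite{Massey}) gives an isomorphism
$$H_i\bigl(\overrightarrow{P}(|X|)^{x'}_x\times\overrightarrow{P}(|Y|)^{y'}_y\bigr)\ \cong\ \bigoplus_{j+k=i} H_j(\overrightarrow{P}(|X|)^{x'}_x)\otimes_R H_k(\overrightarrow{P}(|Y|)^{y'}_y),$$
and using Lemma \ref{lem:homtrace} again on each factor, the right hand side is exactly the $((x,y),(x',y'))$-entry of $(HM_*[X]\boxtimes HM_*[Y])_{i+1}$ under the matrix-of-$R$-vector-spaces presentation of Remark \ref{rem:notationtensor}. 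Summing over all pairs of vertices gives the isomorphism of underlying $R$-vector spaces, and the fact that the Künneth isomorphism commutes with the simplicial boundary makes this an isomorphism of chain complexes of $R$-vector spaces (the shift by one is precisely the $\boxtimes$ of the statement).

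The third step, which I expect to be the main delicate point, is upgrading this isomorphism to one of $R[X\otimes Y]$-bimodules, rather than merely of $R[X]\otimes R[Y]$-bimodules. By Lemma \ref{lem:tensorpathalg}, $R[X\otimes Y]$ is generated by the elementary shuffles $(u,e_b)$ and $(e_a,v)$, and a general path in $X\otimes Y$ from $(x,y)$ to $(x',y')$ is a shuffle of a path $p$ in $X$ from $x$ to $x'$ with a path $q$ in $Y$ from $y$ to $y'$. The key observation is that any two such shuffles, viewed in the directed geometric realization, are dihomotopic relative to endpoints: they all factor through the product cell, and $\overrightarrow{P}(\I^n\times\I^m)^{(1,\ldots,1)}_{(0,\ldots,0)}$ is contractible. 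Consequently, under the functoriality of $HN_*$ and hence of $HM_*$ (via Theorem \ref{thm:bisimequivnathom}), they induce the same map on the entries. I can therefore \emph{define} the left action of a shuffle $(r_1,\ldots,r_{k+l})$ on a tensor $\alpha\otimes\beta\in HM_*[X]\boxtimes HM_*[Y]$ to be $(p\bullet \alpha)\otimes(q\bullet\beta)$, where $p,q$ are the projections of the shuffle onto the two factors; symmetrically for the right action. Well-definedness uses the dihomotopy argument, compatibility with the existing $R[X]\otimes R[Y]$ action is immediate, and the resulting isomorphism matches $HM_*[X\otimes Y]$ on generators, which finishes the proof.
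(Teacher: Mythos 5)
Your proof follows the same route as the paper: identify $|X\otimes Y|$ with $|X|\times|Y|$, apply the classical K\"unneth theorem on the product of trace spaces fibrewise via Lemma~\ref{lem:homtrace}, and then upgrade the resulting $R[X]\otimes R[Y]$-bimodule isomorphism to a $R[X\otimes Y]$-bimodule one using the fact that all shuffles of a given pair $(p,q)$ act the same way on homology. One small imprecision in your justification of that last point: a shuffle of two multi-cube paths $p$ and $q$ does not in general stay inside a single product cell $\vec{I}^n\times\vec{I}^m$, so the contractibility of $\overrightarrow{P}(\vec{I}^n\times\vec{I}^m)$ is not quite the right argument; the cleaner statement (implicit in the paper) is that under the identification $\overrightarrow{P}(|X|\times|Y|)\cong\overrightarrow{P}(|X|)\times\overrightarrow{P}(|Y|)$, every shuffle of $(p,q)$ projects to the trace of $p$ in the first factor and the trace of $q$ in the second, so all such shuffles are already equal as traces, hence trivially dihomotopic.
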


\begin{remark}
Indeed, the $R[X\otimes Y]$-bimodule structure that we defined on the $R[X]\otimes R[Y]$-bimodule $(HM_*[X]\boxtimes HM_*[Y])_n$ is derived from the algebra map: 
$$
\begin{array}{llcl}
h: & R[X \otimes Y] & \rightarrow & R[X]\otimes R[Y] \\
\end{array}
$$
\noindent which to any generator $(u,v)$ of $R[X\otimes Y]$ (see Lemma \ref{lem:tensorpathalg}) associates $(u,v)$, this time ruled by the algebra multiplication of $R[X]\otimes R[Y]$, which is $(u,v)(u',v')=(uu',vv')$. 

It is easy to prove that the structure of $R[X\otimes Y]$-bimodule we put on $(HM_*[X]\boxtimes HM_*[Y])_n$ in Theorem \ref{thm:kunneth} is the restriction of scalars $h^*(HM_*[X]\boxtimes HM_*[Y])_n$.
\end{remark}

\begin{remark}
Although in order to prove the theorem, we went through the geometric realization functor, the isomorphism is simple to describe, directly in terms of cubical complexes. 

Let $i$, $j$ and $n$ be integers such that $n=i+j-1$. There is a natural map: 
$$
\begin{array}{llcl}
& HM_i[X]\times HM_j[Y] & \rightarrow & HM_n[X \otimes Y] \\
f: \ & ([(c_1,\ldots,c_k)_{x,x'}],[(d_1,\ldots,d_l)_{y,y'}]) & \rightarrow & [(c_1\otimes e_y,\ldots, c_k\otimes e_y,\\
& & & e_{x'}\otimes d_1,\ldots,e_{x'}\otimes d_l)_{(x,y),(x',y')}]
\end{array}
$$

The map is well defined since, obviously, as $(c_1\otimes e_y,\ldots,c_k\otimes e_y)$ is an $(i-1)$-chain and $(e_{x'}\otimes d_1,\ldots,e_{x'}\otimes d_l)$ is an $(j-1)$-chain, $(c_1\otimes e_y,\ldots,c_k\otimes e_y,e_{x'}\otimes d_1,\ldots,e_{x'}\otimes d_l)$ is an $(i+j-2)$-cube chain by Remark \ref{rem:dimension}, i.e. an $(n-1)$-cube chain. 

We have to check that the definition does not depend on the class chosen for $c$ and $d$. Indeed, if $c'=c+\partial \gamma$ and $d'=d+\partial \zeta$, where $\gamma=(\gamma_1,\ldots,\gamma_k)$ and $\zeta=(\zeta_1,\ldots,\zeta_l)$, then $(c'_1\otimes e_y,\ldots,c'_k\otimes e_y,e_{x'}\otimes d'_1,\ldots,e_{x'}\otimes d'_l)=(c_1\otimes e_y,\ldots,c_k\otimes e_y,e_{x'}\otimes d_1,\ldots,e_{x'}\otimes d_l)+\partial(\gamma_1\otimes e_y,\ldots,\gamma_k\otimes e_y,e_{x'}\otimes \zeta_1,\ldots,e_{x'}\otimes \zeta_l)$ since, by Definition \ref{def:tensorcub}, all boundaries of $e_a \otimes u$ or $u\otimes e_a$ are $e_a$ tensor (on the left or on the right) $u$ (the boundaries of any 0-cube chain, including the empty one, are zero). 


Showing map $f$ is injective is simple. $f([c],[d])=f([c'],[d'])$ implies $(c'_1\otimes e_y,\ldots,$ $c'_k\otimes e_y,e_{x'}\otimes d'_1,\ldots,e_{x'}\otimes d'_l)=(c_1\otimes e_y,\ldots,c_k\otimes e_y,e_{x'}\otimes d_1,\ldots,e_{x'}\otimes d_l)+\partial(\gamma_1\otimes e_y,\ldots,\gamma_k\otimes e_y,\zeta_1,\ldots,\zeta_l)$, which in turn implies
$c'=c+\partial \gamma$ and $d'=d+\partial \zeta$, where $\gamma=(\gamma_1,\ldots,\gamma_k)$ and $\zeta=(\zeta_1,\ldots,\zeta_l)$. Hence $[c']=[c]$ and $[d']=[d]$. 

Surjectivity of $f$ is more complicated to prove though, without resorting to the geometric realization and the classical K\"unneth theorem as we did. 
\end{remark}





    

We proved the K\"unneth formula for cubical complexes, but we believe this should be true for all precubical sets that we are considering here, that is, finite precubical sets with proper length covering, at least. 

\begin{example}
Consider again 
Example \ref{ex:emptysquare}, $\diCub$: 
\[\begin{tikzcd}
  4 \arrow[r] \arrow[d]
    & 2 \arrow[d] \\
  3 \arrow[r]
& 1 \end{tikzcd}
\]
We get the following path algebra: 
$$
\begin{pmatrix}
R & 0 & 0 & 0 \\
R & R & 0 & 0 \\
R & 0 & R & 0 \\
R^2 & R & R & R 
\end{pmatrix}
$$

\noindent and indeed $HM_1[\diCub]$ the bimodule $R[\diCub]$ with the same matrix representation as the one for $R[\diCub]$ above, whereas $HM_i[\diCub]$ is 0 for all $i\geq 2$. 

The tensor product $\diCub\otimes \diCub$ is, up to classical homotopy, a torus. 

Now, $HM_1[\diCub\otimes \diCub]$ is $(HM_*[\diCub]\boxtimes HM_*[\diCub])_1$, that is, $HM_1[\diCub]\otimes HM_1[\diCub]$, using the notation, by block first, of Remark \ref{rem:notationtensor}:
$$
\begin{pmatrix}
\scriptstyle R \otimes \begin{pmatrix}
\scriptstyle R & \scriptstyle 0 & \scriptstyle 0 & \scriptstyle 0 \\
\scriptstyle R & \scriptstyle R & \scriptstyle 0 & \scriptstyle 0 \\
\scriptstyle R & \scriptstyle 0 & \scriptstyle R & \scriptstyle 0 \\
\scriptstyle R^2 & \scriptstyle R & \scriptstyle R & \scriptstyle R 
\end{pmatrix} & \scriptstyle 0 & \scriptstyle 0 & \scriptstyle 0 \\
\scriptstyle R \otimes \begin{pmatrix}
\scriptstyle R & \scriptstyle 0 & \scriptstyle 0 & \scriptstyle 0 \\
\scriptstyle R & \scriptstyle R & \scriptstyle 0 & \scriptstyle 0 \\
\scriptstyle R & \scriptstyle 0 & \scriptstyle  R & \scriptstyle 0 \\
\scriptstyle R^2 & \scriptstyle R & \scriptstyle R & \scriptstyle R 
\end{pmatrix} & \scriptstyle R \otimes \begin{pmatrix}
\scriptstyle R & \scriptstyle 0 & \scriptstyle 0 & \scriptstyle 0 \\
\scriptstyle R & \scriptstyle R & \scriptstyle 0 & \scriptstyle 0 \\
\scriptstyle R & \scriptstyle 0 & \scriptstyle R & \scriptstyle 0 \\
\scriptstyle R^2 & \scriptstyle R & \scriptstyle R & \scriptstyle R 
\end{pmatrix} & \scriptstyle 0 & \scriptstyle 0 \\
\scriptstyle R \otimes \begin{pmatrix}
\scriptstyle R & \scriptstyle 0 & \scriptstyle 0 & \scriptstyle 0 \\
\scriptstyle R & \scriptstyle R & \scriptstyle 0 & \scriptstyle 0 \\
\scriptstyle R & \scriptstyle 0 & \scriptstyle R & \scriptstyle 0 \\
\scriptstyle R^2 & \scriptstyle R & \scriptstyle R & \scriptstyle R 
\end{pmatrix} & \scriptstyle 0 & \scriptstyle R \otimes \begin{pmatrix}
\scriptstyle R & \scriptstyle 0 & \scriptstyle 0 & \scriptstyle 0 \\
\scriptstyle R & \scriptstyle R & \scriptstyle 0 & \scriptstyle 0 \\
\scriptstyle R & \scriptstyle 0 & \scriptstyle R & \scriptstyle 0 \\
\scriptstyle R^2 & \scriptstyle R & \scriptstyle R & \scriptstyle R 
\end{pmatrix} & \scriptstyle 0 \\
\scriptstyle R^2 \otimes \begin{pmatrix}
\scriptstyle R & \scriptstyle 0 & \scriptstyle 0 & \scriptstyle 0 \\
\scriptstyle R & \scriptstyle R & \scriptstyle 0 & \scriptstyle 0 \\
\scriptstyle R & \scriptstyle 0 & \scriptstyle R & \scriptstyle 0 \\
\scriptstyle R^2 & \scriptstyle R & \scriptstyle R & \scriptstyle R 
\end{pmatrix} & \scriptstyle R \otimes \begin{pmatrix}
\scriptstyle R & \scriptstyle 0 & \scriptstyle 0 & \scriptstyle 0 \\
\scriptstyle R & \scriptstyle R & \scriptstyle 0 & \scriptstyle  0 \\
\scriptstyle R & \scriptstyle 0 & \scriptstyle R & \scriptstyle 0 \\
\scriptstyle R^2 & \scriptstyle R & \scriptstyle R & \scriptstyle R 
\end{pmatrix} & \scriptstyle R \otimes \begin{pmatrix}
\scriptstyle R & \scriptstyle 0 & \scriptstyle 0 & \scriptstyle 0 \\
\scriptstyle R & \scriptstyle R & \scriptstyle 0 & \scriptstyle 0 \\
\scriptstyle R & \scriptstyle 0 & \scriptstyle R & \scriptstyle 0 \\
\scriptstyle R^2 & \scriptstyle R & \scriptstyle R & \scriptstyle R 
\end{pmatrix} & \scriptstyle R \otimes \begin{pmatrix}
\scriptstyle R & \scriptstyle 0 & \scriptstyle 0 & \scriptstyle 0 \\
\scriptstyle R & \scriptstyle R & \scriptstyle 0 & \scriptstyle 0 \\
\scriptstyle R & \scriptstyle 0 & \scriptstyle R & \scriptstyle 0 \\
\scriptstyle R^2 & \scriptstyle R & \scriptstyle R & \scriptstyle R 
\end{pmatrix}
\end{pmatrix}
$$
\noindent which, on the base $4\otimes 4'$, $4\otimes 3'$, $4\otimes 2'$, $4\otimes 1'$, 
$3\otimes 4'$, $3\otimes 3'$, $3\otimes 2'$, $3\otimes 1'$, 
$2\otimes 4'$, $2\otimes 3'$, $2\otimes 2'$, $2\otimes 1'$, 
$1\otimes 4'$, $1\otimes 3'$, $1\otimes 2'$, $1\otimes 1'$ (where the ' are given to the vertex names of the second copy of $\diCub$) in that order, is:
$$
\left(\begin{array}{cccccccccccccccc}
\scriptstyle R & \scriptstyle 0 & \scriptstyle 0 & \scriptstyle 0 & \scriptstyle 0 & \scriptstyle 0 & \scriptstyle 0 & \scriptstyle 0 & \scriptstyle 0 & \scriptstyle 0 & \scriptstyle 0 & \scriptstyle 0 & \scriptstyle 0 & \scriptstyle 0 & \scriptstyle 0 & \scriptstyle 0\\
\scriptstyle R & \scriptstyle R & \scriptstyle 0 & \scriptstyle 0 & \scriptstyle 0 & \scriptstyle 0 & \scriptstyle 0 & \scriptstyle 0 & \scriptstyle 0 & \scriptstyle 0 & \scriptstyle 0 & \scriptstyle 0 & \scriptstyle 0 & \scriptstyle 0 & \scriptstyle 0 & \scriptstyle 0\\
\scriptstyle R & \scriptstyle 0 & \scriptstyle R & \scriptstyle 0 & \scriptstyle 0 & \scriptstyle 0 & \scriptstyle 0 & \scriptstyle 0 & \scriptstyle 0 & \scriptstyle 0 & \scriptstyle 0 & \scriptstyle 0 & \scriptstyle 0 & \scriptstyle 0 & \scriptstyle 0 & \scriptstyle 0\\
\scriptstyle R^2 & \scriptstyle R & \scriptstyle R & \scriptstyle R & \scriptstyle 0 & \scriptstyle 0 & \scriptstyle 0 & \scriptstyle 0 & \scriptstyle 0 & \scriptstyle 0& \scriptstyle 0 & \scriptstyle 0 & \scriptstyle 0& \scriptstyle 0 & \scriptstyle 0 & \scriptstyle 0 \\
\scriptstyle R & \scriptstyle 0 & \scriptstyle 0 & \scriptstyle 0&  
\scriptstyle R & \scriptstyle 0 & \scriptstyle 0 & \scriptstyle 0 & \scriptstyle 0 & \scriptstyle 0& \scriptstyle 0 & \scriptstyle 0
& \scriptstyle 0 & \scriptstyle 0& \scriptstyle 0 & \scriptstyle 0 \\
\scriptstyle R & \scriptstyle R & \scriptstyle 0 & \scriptstyle 0 &
\scriptstyle R & \scriptstyle R & \scriptstyle 0 & \scriptstyle 0& \scriptstyle 0 & \scriptstyle 0& \scriptstyle 0 & \scriptstyle 0
& \scriptstyle 0 & \scriptstyle 0& \scriptstyle 0 & \scriptstyle 0 \\
\scriptstyle R & \scriptstyle 0 & \scriptstyle R & \scriptstyle 0 &
\scriptstyle R & \scriptstyle 0 & \scriptstyle  R & \scriptstyle 0 & \scriptstyle 0 & \scriptstyle 0& \scriptstyle 0 & \scriptstyle 0
& \scriptstyle 0 & \scriptstyle 0& \scriptstyle 0 & \scriptstyle 0 \\
\scriptstyle R^2 & \scriptstyle R & \scriptstyle R & \scriptstyle R & 
\scriptstyle R^2 & \scriptstyle R & \scriptstyle R & \scriptstyle R & \scriptstyle 0 & \scriptstyle 0& \scriptstyle 0 & \scriptstyle 0
& \scriptstyle 0 & \scriptstyle 0& \scriptstyle 0 & \scriptstyle 0 \\
\scriptstyle R & \scriptstyle 0 & \scriptstyle 0 & \scriptstyle 0  & \scriptstyle 0  & \scriptstyle 0  & \scriptstyle 0  & \scriptstyle 0 & \scriptstyle R & \scriptstyle 0 & \scriptstyle 0 & \scriptstyle 0 & \scriptstyle 0 & \scriptstyle 0& \scriptstyle 0 & \scriptstyle 0 \\
\scriptstyle R & \scriptstyle R & \scriptstyle 0 & \scriptstyle 0 & \scriptstyle 0  & \scriptstyle 0  & \scriptstyle 0  & \scriptstyle 0 & \scriptstyle R & \scriptstyle R & \scriptstyle 0 & \scriptstyle 0 & \scriptstyle 0 & \scriptstyle 0& \scriptstyle 0 & \scriptstyle 0 \\
\scriptstyle R & \scriptstyle 0 & \scriptstyle R & \scriptstyle 0 & \scriptstyle 0  & \scriptstyle 0  & \scriptstyle 0  & \scriptstyle 0 & \scriptstyle R & \scriptstyle 0 & \scriptstyle R & \scriptstyle 0 & \scriptstyle 0 & \scriptstyle 0& \scriptstyle 0 & \scriptstyle 0 \\
\scriptstyle R^2 & \scriptstyle R & \scriptstyle R & \scriptstyle R & \scriptstyle 0  & \scriptstyle 0  & \scriptstyle 0  & \scriptstyle 0 & \scriptstyle R^2 & \scriptstyle R & \scriptstyle R & \scriptstyle R & \scriptstyle 0 & \scriptstyle 0& \scriptstyle 0 & \scriptstyle 0 \\
\scriptstyle R^2 & \scriptstyle 0 & \scriptstyle 0 & \scriptstyle 0 &\scriptstyle R & \scriptstyle 0 & \scriptstyle 0 & \scriptstyle 0 &\scriptstyle R & \scriptstyle 0 & \scriptstyle 0 & \scriptstyle 0& \scriptstyle R & \scriptstyle 0 & \scriptstyle 0 & \scriptstyle 0 \\
\scriptstyle R^2 & \scriptstyle R^2 & \scriptstyle 0 & \scriptstyle 0 &\scriptstyle R & \scriptstyle R & \scriptstyle 0 & \scriptstyle  0& \scriptstyle R & \scriptstyle R & \scriptstyle 0 & \scriptstyle 0&\scriptstyle R & \scriptstyle R & \scriptstyle 0 & \scriptstyle 0 \\
\scriptstyle R^2 & \scriptstyle 0 & \scriptstyle R^2 & \scriptstyle 0 &\scriptstyle R & \scriptstyle 0 & \scriptstyle R & \scriptstyle 0& \scriptstyle R & \scriptstyle 0 & \scriptstyle R & \scriptstyle 0&\scriptstyle R & \scriptstyle 0 & \scriptstyle R & \scriptstyle 0 \\
\scriptstyle R^4 & \scriptstyle R^2 & \scriptstyle R^2 & \scriptstyle R^2 & \scriptstyle R^2 & \scriptstyle R & \scriptstyle R & \scriptstyle R& \scriptstyle R^2 & \scriptstyle R & \scriptstyle R & \scriptstyle R &\scriptstyle R^2 & \scriptstyle R & \scriptstyle R & \scriptstyle R \\
\end{array}
\right)
$$

In particular, there are four dipaths modulo homology from $4\otimes 4'$ to $1\otimes 1'$. Indeed, there are two connected components for the dipath space from $4$ to $1$ in $\diCub$ and two connected components for the dipath space from $4'$ to $1'$ in the second copy of $\diCub$, hence $2\times 2=4$ components for the product dipath space.

Similarly, by K\"unneth theorem, $HM_2[\diCub\times \diCub]=HM_2[\diCub]\otimes HM_1[\diCub]\oplus HM_1[\diCub]\otimes HM_2[\diCub]=0$.

One may wonder why we do not have a non trivial second homology module, since classically, $H_2$ of a torus is $R$. This is due to the fact that the directed homology theory we have been constructing is highly non-abelian. We exemplify this on the product of the two directed graphs of Example \ref{ex:kronecker}, $\diS \otimes \diS$. 
We recall that $\diS$ is: 
\[
\begin{tikzcd}
1 \arrow[r,bend left,"\alpha"] \arrow[r,bend right,swap,"\beta"] & 2
\end{tikzcd}
\]

The tensor product $\diS\otimes \diS$ is: 
\[
\begin{tikzcd}[column sep=1.5cm,row sep=1.5cm]
1\otimes 2' \arrow[r,bend left,"\alpha\otimes 2'"] \arrow[r,bend right,swap,"\beta\otimes 2'"] & 2\otimes 2' \\
1\otimes 1' \arrow[u,bend left,"1 \otimes \alpha'"] \arrow[u,bend right, swap,"1 \otimes \beta'"] \arrow[r,bend left,"\alpha\otimes 1'"] \arrow[r,bend right,swap,"\beta\otimes 1'"] & 2\otimes 1' \arrow[u,bend left,"2 \otimes \alpha'"] \arrow[u,bend right, swap,"2 \otimes \beta'"]
\end{tikzcd}
\]
\noindent plus the four 2-cells $\alpha\otimes \alpha'$, $\alpha \otimes \beta'$, $\beta\otimes \alpha'$ and $\beta\otimes \beta'$, which generate the $R$-vector space of 1-cube paths from $1\otimes 1'$ to $2\otimes 2'$. As $\diS$ is not a cubical complex we have to compute the homology modules by hand, without resorting to our K\"unneth formula, which we only proved so far for cubical complexes. 

We compute their boundaries: 
\begin{itemize}
\item $\partial ((\alpha\otimes \alpha'))=(1\otimes \alpha',\alpha\otimes 2')-(\alpha\otimes 1',2\otimes \alpha')$
\item $\partial ((\alpha\otimes \beta'))=(1\otimes \beta',\alpha\otimes 2')-(\alpha\otimes 1',2\otimes \beta')$
\item $\partial ((\beta\otimes \alpha'))=(1\otimes \alpha',\beta\otimes 2')-(\beta\otimes 1',2\otimes \alpha')$
\item $\partial ((\beta\otimes \beta'))=(1\otimes \beta',\beta\otimes 2')-(\beta\otimes 1',2\otimes \beta')$
\end{itemize}
And there is no linear combination of these four 2-cells whose boundary sums up to zero, hence $Ker \ \partial_1=0$ and $HM_2[\diS\otimes \diS]=0$. Abelianizing the cube paths to interpret them in the $R$-vector space generated by 1-cells, as we would do in ordinary homology, we would get: 
\begin{itemize}
\item $\partial (\alpha\otimes \alpha')=1\otimes \alpha'+\alpha\otimes 2'-\alpha\otimes 1'-2\otimes \alpha'$
\item $\partial (\alpha\otimes \beta')=1\otimes \beta'+\alpha\otimes 2'-\alpha\otimes 1'-2\otimes \beta'$
\item $\partial (\beta\otimes \alpha')=1\otimes \alpha'+\beta\otimes 2'-\beta\otimes 1'-2\otimes \alpha'$
\item $\partial (\beta\otimes \beta')=1\otimes \beta'+\beta\otimes 2'-\beta\otimes 1'-2\otimes \beta'$
\end{itemize}
And indeed, $\partial(\alpha\otimes \alpha'-\alpha\otimes \beta'-\beta\otimes \alpha'+\beta\otimes \beta')=0$ and $\alpha\otimes \alpha'-\alpha\otimes \beta'-\beta\otimes \alpha'+\beta\otimes \beta'$
is the generator of $H_2(\diS\otimes \diS)$.
\end{example}






\paragraph{Exact sequence of relative homology}



The category ${ }_R Mod_{R}$ is not abelian but we have the following exact sequence, which is close to classical relative homology sequences for simplicial or singular homology theories: 








\begin{theorem}
\label{prop:relhomology}
Suppose $(X,Y)$ is a relative pair of precubical sets. We have the following relative homology sequence: 

\begin{center}
    \begin{tikzcd}[arrow style=math font,cells={nodes={text height=2ex,text depth=0.75ex}}]
    & & 0 \arrow[draw=none]{d}[name=X,shape=coordinate]{} \\
       HM_1[X,Y] \arrow[curarrow=X]{urr}{} 
       & HM_{1}[X] \arrow[l] \arrow[draw=none]{d}[name=Y, shape=coordinate]{} & \arrow[l] \cdots \\
       HM_{i}[X,Y] \arrow[curarrow=Y]{urr}{} & HM_{i}[X] \arrow[l] \arrow[draw=none]{d}[name=Z,shape=coordinate]{} & { }^X HM_i[Y] \arrow[l] \\
       HM_{i+1}[X,Y] \arrow[curarrow=Z]{urr}{} & HM_{i+1}[X] \arrow[l] & \cdots \arrow[l]
   \end{tikzcd}
\end{center}
\end{theorem}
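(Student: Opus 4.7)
The plan is to follow the standard strategy of deriving a relative long exact sequence from a short exact sequence of chain complexes, working entirely inside the abelian category of $R[X]$-bimodules.

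First I would establish the short exact sequence of chain complexes of $R[X]$-bimodules
\begin{equation*}
0 \longrightarrow { }^X R_*[Y] \longrightarrow R_*[X] \longrightarrow R_*[X]/{ }^X R_*[Y] \longrightarrow 0.
\end{equation*}
For each $i \geq 1$, Lemma \ref{lem:relsub} (case $i=1$) and Corollary \ref{lem:relsub0} (case $i \geq 2$) identify ${ }^X R_i[Y]$ with a sub-$R[X]$-bimodule of $R_i[X]$, so the inclusion and quotient maps are legitimate morphisms in the abelian category of $R[X]$-bimodules. I would next verify that ${ }^X R_*[Y]$ is an honest subcomplex, i.e. that $\partial$ sends ${ }^X R_i[Y]$ into ${ }^X R_{i-1}[Y]$. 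Any element of ${ }^X R_i[Y]$ admits the canonical decomposition of Remark \ref{rem:newcanonical} as a sum $\sum \lambda_{i,j}\, p'_{i,j} \bullet c_j \bullet q'_{i,j}$ with $c_j \in Ch^{=i-1}(Y)$ and $p'_{i,j}, q'_{i,j}$ consisting of edges of $X\setminus Y$; then Lemma \ref{lem:precubboundary} gives $\partial(p'_{i,j}\bullet c_j \bullet q'_{i,j}) = p'_{i,j}\bullet \partial c_j \bullet q'_{i,j}$, and since $\partial c_j \in R_{i-1}[Y]$, this remains in ${ }^X R_{i-1}[Y]$. Consequently the differential on the quotient $R_*[X]/{ }^X R_*[Y]$ is well defined, matching the boundary in Definition \ref{def:relative}.

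Next I would apply the standard zigzag lemma (which is valid in any abelian category, and in particular in ${ }_R Mod_R\ R[X]$): the short exact sequence of complexes above yields a long exact sequence in homology
\begin{equation*}
\cdots \to H_{i+1}(R_*[X]/{ }^X R_*[Y]) \xrightarrow{\partial_*} H_i({ }^X R_*[Y]) \to H_i(R_*[X]) \to H_i(R_*[X]/{ }^X R_*[Y]) \to \cdots
\end{equation*}
with the usual connecting morphism $\partial_*$ constructed by the snake-lemma recipe. Identification of the three terms is then a bookkeeping step: by Definition \ref{def:hommodule} $H_i(R_*[X]) = HM_i[X]$, by Definition \ref{def:relative} $H_i(R_*[X]/{ }^X R_*[Y]) = HM_i[X,Y]$, and crucially by Proposition \ref{lem:lem18} (which ensures that the extension of scalars commutes with the homology of the chain complex $R_*[Y]$ under the hypothesis that $(X,Y)$ is a relative pair) $H_i({ }^X R_*[Y]) = { }^X HM_i[Y]$.

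Finally I would handle the low-degree end of the sequence. Because the chain complex starts at degree $1$ with $\partial_{|R_1[X]} = 0$ (so $R_0[X] = 0$ effectively, and similarly $R_0[X]/{ }^X R_0[Y] = 0$), the long exact sequence terminates with $HM_1[X] \to HM_1[X,Y] \to 0$; surjectivity of this last map follows directly from the fact that $R_1[X] \to R_1[X]/{ }^X R_1[Y]$ is surjective and that $HM_1$ in both cases is simply the cokernel of $\partial$ out of degree $2$. The main conceptual obstacle is really packed into the earlier Proposition \ref{lem:lem18}: without the extension-of-scalars functor $g_!$ commuting with the kernel of $\partial$ on the free bimodules $R_i[Y]$, one could not identify $H_i({ }^X R_*[Y])$ with ${ }^X HM_i[Y]$, and it is precisely the relative-pair hypothesis in Definition \ref{def:relpair} that makes this identification hold. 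Once that is in hand, the proof is purely formal homological algebra.
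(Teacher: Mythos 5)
Your proof is correct and follows essentially the same route as the paper: you build the short exact sequence of chain complexes of $R[X]$-bimodules $0 \to { }^X R_*[Y] \to R_*[X] \to R_*[X]/{ }^X R_*[Y] \to 0$, invoke the zigzag lemma in the abelian category ${ }_R Mod_R\ R[X]$, and then apply Proposition \ref{lem:lem18} to replace $H_i({ }^X R_*[Y])$ with ${ }^X HM_i[Y]$. The only cosmetic divergence is in how the subcomplex property is verified: you unfold elements of ${ }^X R_i[Y]$ via the canonical decomposition of Remark \ref{rem:newcanonical} and apply the $R[X]$-bimodule linearity of $\partial$ from Lemma \ref{lem:precubboundary}, whereas the paper phrases the same fact more abstractly by observing that $g_!(\partial_{\mid R_*[Y]})$ equals the restriction of $\partial_{\mid R_*[X]}$ to the sub-bimodule ${ }^X R_*[Y]$; both arguments say the same thing.
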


\begin{proof}
As we have seen already, for all $i \in \N$, ${ }^X R_i[Y]$ is a sub-$R[X]$-bimodule of $R_i[X]$,  the inclusion map $f_i: \ { }^X R_i[Y] \rightarrow R_i[X]$ being indeed injective, hence a monomorphism in the category of $R[X]$-bimodules.


We also have the projection map $\pi_i$ from the $R[X]$-bimodule $R_i[X]$ onto $R_i[X,Y]=R_i[X]/{ }^X R_i[Y]$, which is surjective by construction. 
Finally, 
the kernel of $\pi_i$ is the image of $f_i$ by construction as well, giving us the following short exact sequence of $R[X]$-bimodules: 

\begin{center}
\begin{tikzcd}
0 \arrow[r] & { }^X R_i[Y] \arrow[r,"f_i"] & R_i[X] \arrow[r,"\pi_i"] & R_i[X,Y] \arrow[r] & 0
\end{tikzcd}
\end{center}

Consider $\partial$, which we have seen already is a morphism of $R[X]$-bimodules from $R_i[X]$ to $R_{i-1}[X]$. As $Y$ is a sub-precubical set of $X$,
the boundary operator $\partial$ induces a morphism of $R[Y]$-bimodules from $R_i[Y]$ to $R_{i-1}[Y]$ since 
all maps $d_{k,B}: Ch^{=i-1}(X)^w_v \rightarrow Ch^{=i-2}(X)^w_v$ restrict to 
$d_{k,B}: Ch^{=i-1}(Y)^w_v \rightarrow Ch^{=i-2}(Y)^w_v$. 
Therefore, the extension of scalar functor $g_!$ where $g$ is the inclusion of algebras $R[Y]$ in $R[X]$, applied to morphism $\partial$ seen as a morphism of $R[Y]$-bimodules is a morphism of $R[X]$-bimodules from ${ }^X R_{i}[Y]$ to ${ }^X R_{i-1}[Y]$, which is the restriction of $\partial$ defined on $R_i[X]$, on its sub-$R[X]$-bimodule ${ }^X R_i[Y]$. Therefore $\partial$ 
induces also a map from $R_i[X,Y]$ to $R_{i-1}[X,Y]$. 

Hence the short exact sequence of $R[X]$-bimodules above is actually a short exact sequence of chains of $R[X]$-bimodules. The category of $R[X]$-bimodules being an abelian category, this induces the following long exact sequence of $R[X]$-bimodules:
\begin{center}
    \begin{tikzcd}[arrow style=math font,cells={nodes={text height=2ex,text depth=0.75ex}}]
    & & 0 \arrow[draw=none]{d}[name=X,shape=coordinate]{} \\
       HM_1[X,Y] \arrow[curarrow=X]{urr}{} 
       & HM_{1}[X] \arrow[l] \arrow[draw=none]{d}[name=Y, shape=coordinate]{} & \arrow[l] \cdots \\
       HM_{i}[X,Y] \arrow[curarrow=Y]{urr}{} & HM_{i}[X] \arrow[l] \arrow[draw=none]{d}[name=Z,shape=coordinate]{} & H_i({ }^X R_*[Y]) \arrow[l] \\
       HM_{i+1}[X,Y] \arrow[curarrow=Z]{urr}{} & HM_{i+1}[X] \arrow[l] & \cdots \arrow[l]
   \end{tikzcd}
\end{center}
But by Lemma \ref{lem:lem18} we know that $H_i({ }^X R_*[Y])={ }^X HM_i[Y]$ hence the exact sequence 
of the proposition.

\end{proof}




Let us now look more in detail at elements of $HM_{i}[X,Y]$. 
First, a cycle $[c]_{{ }^X R_i[Y]} \in Ker \ \partial_{\mid R_i[X]/{ }^X R_i[Y]\rightarrow R_{i-1}[X]/{ }^X R_{i-1}[Y]}$ is a class modulo ${ }^X R_i[Y]$ of $c \in R_i[X]$ such that $\partial c \in { }^X R_{i-1}[Y]$, and in that sense, it is called, as in the classical case, a $Y$-relative cycle. 

Such a $Y$-relative cycle is trivial in $HM_i[X,Y]$ if it is a $Y$-relative boundary, i.e. is such that $c=\partial b+a$ with $b \in R_{i+1}[X]$ and $a \in { }^X R_{i}[Y]$. 

As in the classical case of singular or simplicial homology, the connecting homomorphism 
$\partial^*$ 
sends an element $[c] \in HM_{i+1}[X,Y]$
represented by an 
$Y$-relative cycle 
$c\in R_{i+1}[X]$, to the class represented by the boundary 
$\partial c \in 
{ }^X R_i[Y] \subseteq  
R_i[X]$.

\begin{example}
\label{ex:relativehomology}
Consider $\overrightarrow{D}^n=\K^n$ the tensor product of $n$ copies of the precubical set $\K$ that has two vertices $0$ and $1$, and a unique 1-cell $a$ with $d^0(a)=0$ and $d^1(a)$. $\K$ ``represents'' a directed segment, and $\overrightarrow{D}^n$ is a precubical version of a directed $n$-disc. We write $\overrightarrow{S}^{n-1}$ for the boundary of $\overrightarrow{D}^n$, which is a version of a directed $(n-1)$-sphere, or ``hollow $n$-hypercube".  
As for the cube example, the vertices of $\overrightarrow{D}^n$ are numbered using a binary word of length $n$. 

Let us illlustrate the relative homology sequence for $(\overrightarrow{D}^n,\overrightarrow{S}^{n-1})$, when $n=2$. 
Consider first $X=\overrightarrow{D}^2$:
\[\begin{tikzcd}
  00 \arrow[r,"a0"] \arrow[d,left,"0b"] \arrow[dr,phantom,"C"]
    & 01 \arrow[d,"a1"] \\
  10 \arrow[r,"1b"]
& 11 \end{tikzcd}
\]

The algebra $R[X]$ is the matrix algebra: 
$$
\left(\begin{array}{l|cccc}
& 11 & 10 & 01 & 00 \\
\hline
11 & R & 0 & 0 & 0 \\
10 & R & R & 0 & 0 \\
01 & R & 0 & R & 0 \\
00 & R^2 & R & R & R
\end{array}\right)
$$

Now, $HM_1[\overrightarrow{D}^2]$ can be represented by the matrix: 
$$
\left(\begin{array}{l|cccc}
& 11 & 10 & 01 & 00 \\
\hline
11 & R & 0 & 0 & 0 \\
10 & R & R & 0 & 0 \\
01 & R & 0 & R & 0 \\
00 & R & R & R & R
\end{array}\right)
$$
\noindent and $HM_2[\overrightarrow{D^2}]$ can be represented by: 
$$
\left(\begin{array}{l|cccc}
& 11 & 10 & 01 & 00 \\
\hline
11 & 0 & 0 & 0 & 0 \\
10 & 0 & 0 & 0 & 0 \\
01 & 0 & 0 & 0 & 0 \\
00 & R & 0 & 0 & 0
\end{array}\right)
$$
\noindent all other $HM_i[\overrightarrow{D}^2]$, $i\geq 3$ being zero. 

We now compute the homology modules of $\overrightarrow{S}^1$. First, 
${ }^{\overrightarrow{D}^2} HM_1({\overrightarrow{S}^1})$ is:
$$
\left(\begin{array}{l|cccc}
& 11 & 10 & 01 & 00 \\
\hline
11 & R & 0 & 0 & 0 \\
10 & R & R & 0 & 0 \\
01 & R & 0 & R & 0 \\
00 & R^2 & R & R & R
\end{array}\right)
$$
\noindent since $R[\overrightarrow{D}^2]$ is actually the same as $R[\overrightarrow{S}^1]$. It is easy to see as well, that $HM_i({ }^{\overrightarrow{D}^2}  R_i({\overrightarrow{S}^1}))$ is zero for $i\geq 2$.

So, the long exact sequence in relative homology implies that all ${ }^{\overrightarrow{D}^2} HM_i[X,Y]=0$ for $i\geq 3$ and boils down to: 
\begin{center}
\begin{tikzcd}[column sep=small]
\scriptstyle
0 \arrow[r] & \scriptstyle HM_2[\overrightarrow{D}^2] \arrow[r] & \scriptstyle HM_{2}[\overrightarrow{D}^2,\overrightarrow{S}^1] \arrow[r] & \scriptstyle HM_1[\overrightarrow{S}^1] \arrow[r] & \scriptstyle HM_1[\overrightarrow{D}^2]\arrow[r] & \scriptstyle HM_{1}[\overrightarrow{D}^2,\overrightarrow{S}^1] \arrow[r] & 0
\end{tikzcd}
\end{center}


We can easily check now that 
$HM_2[\overrightarrow{D}^2,\overrightarrow{S}^1]$ to be: 
$$
\left(\begin{array}{l|cccc}
& 11 & 10 & 01 & 00 \\
\hline
11 & 0 & 0 & 0 & 0 \\
10 & 0 & 0 & 0 & 0 \\
01 & 0 & 0 & 0 & 0 \\
00 & R & 0 & 0 & 0
\end{array}\right)
$$
\noindent and $HM_1[\overrightarrow{D}^2,\overrightarrow{S}^1]=0$
(which can be computed directly as well). The interesting entry of this exact sequence of matrices of modules is entry corresponding to line $00$ and column $11$: 
\begin{center}
\begin{tikzcd}
0 \arrow[r] & R \arrow[r] & R \arrow[r] & R^2 \arrow[r] & R\arrow[r] & 0 \arrow[r] & 0
\end{tikzcd}
\end{center}
\end{example}

\begin{example}
Consider the {$X$ empty cube (``$\overrightarrow{S}^2$"), and the relative pair made of $(X,Y)$ with $Y$ the square in red (``$\overrightarrow{S}^1$") as in the figure below:}
\begin{center}
\begin{tikzpicture}[scale=2,tdplot_main_coords]
    \coordinate (O) at (0,0,0);
    \tdplotsetcoord{P}{1.414213}{54.68636}{45}
    \draw[-,fill=green,fill opacity=0.1] (Pz) -- (Pyz) -- (P) -- (Pxz) -- cycle;
\draw [->] (O) -- (Pz);
    \draw[->] (Pz) -- (Pyz); 
    \draw[->] (Pyz) -- (P); 
    \draw[->,thick,red] (Pz) -- (Pxz);
    \draw[->] (Pxz) -- (P);
\node at (O) {$000$};
\node at (Px) {$100$};
\node at (Py) {$010$};
\node at (Pz) {$001$};
\node at (Pxy) {$110$};
\node at (Pxz) {$101$};
\node at (Pyz) {$011$};
\node at (P) {$111$};

   \draw[-,fill=red,fill opacity=0.1] (Px) -- (Pxy) -- (P) -- (Pxz) -- cycle;
    \draw[->] (Px) -- (Pxy); 
    \draw[->] (Pxy) -- (P); 
    \draw[->,thick,red] (Px) -- (Pxz);
    
    \draw[-,fill=magenta,fill opacity=0.1] (Py) -- (Pxy) -- (P) -- (Pyz) -- cycle;
    \draw[->] (Py) -- (Pxy); 
    \draw[->,thick,red] (O) -- (Px);
    \draw[->] (Py) -- (Pyz);
    
    \draw[->,fill=gray!50,fill opacity=0.1] (O) -- (Py) -- (Pyz) -- (Pz) -- cycle;
    \draw[->,thick,red] (O) -- (Pz); 
        \draw[->] (O) -- (Py); 

    \draw[->,fill=yellow,fill opacity=0.1] (O) -- (Px) -- (Pxz) -- (Pz) -- cycle;

    \draw[->,fill=green,fill opacity=0.1] (Pz) -- (Pyz) -- (P) -- (Pxz) -- cycle;

    \draw[->,fill=red,fill opacity=0.1] (Px) -- (Pxy) -- (P) -- (Pxz) -- cycle;

    \draw[->,fill=magenta,fill opacity=0.1] (Py) -- (Pxy) -- (P) -- (Pyz) -- cycle;
  \end{tikzpicture}
\end{center}

The 
{relative homology sequence} reads: 
\[
\begin{tikzcd}[column sep=small]
0 \arrow[r] & HM_2[X] \arrow[r] & HM_{2}[X,Y] \arrow[r] & { }^X HM_1[Y] \arrow[r] &  HM_1[X]\arrow[r] &  HM_{1}[X,Y] \arrow[r] &  0
\end{tikzcd}
\]
\noindent Since there are no 2-cube chains in $X$ (no 3-cell, no concatenation of at least 2 2-cells) in $X$, and there are no 1-cube chains in $Y$ (only 0-cube chains). 

Now we compute: 
\begin{itemize}
\item 
{$HM_1[X,Y](000,111)=0$: }
${ }^X R_1[Y]$ is 
the sub-$R$-bimodule of $R_1[X]$ of dipaths going through $Y$. 
All dipaths from $000$ to $111$ go through $Y$ so $R_1[X]/{ }^X R_1[Y](000,111)=0$ and $HM_1[X,Y]=0$. 
\item 
{$HM_1[X](000,111)=R$: }
as already seen (all dipaths from start to end, are homologous)
\item 
{${ }^X HM_1[Y](000,111)=R^{6}$:}
$HM_1[Y]$ is $R_1[Y]$ so ${ }^X HM_1[Y](000,111)={ }^X R_1[Y]=R[X](000,111)=R^{6}$
\item 
{$HM_2[X,Y](000,111)=R^{6}$:}
$R_2[Y]=0$ so ${ }^X R_2[Y]=0$ and $R_2[X]/{ }^X R_2[Y]=R_2[X]$. As $R_1[X]/{ }^X R_1[Y](000,111)=0$, $Ker \ \partial=R_2[X](000,111)$; now $R_3[X](000,111)=0$, $R_3[Y](000,111)=0$, so $HM_2[X,Y](000,111)=R_2[X](000,111)=R^6$. 
\end{itemize}

Therefore, the 
{relative homology sequence between start and end} is: 
\[
\begin{tikzcd}[column sep=small]
0 \arrow[r] & HM_2[X](000,111) \arrow[r] & R^{6} \arrow[r] & R^{6} \arrow[r] & R \arrow[r] & 0 \arrow[r] & 0
\end{tikzcd}
\]
\noindent 
So we find $HM_2[X](000,111)=R$ as already seen (a sequence of $R$-vector spaces is split so we just count dimensions).

More generally, 
by induction on $n$, we can prove that the empty $n$-cube $X_n$ (``$\overrightarrow{S}^{n-1}"$) has:
\begin{itemize}
\item $HM_{n-1}[X_n](0^n,1^n)=R$ and 
\item $HM_i[X_n](0^n,1^n)=R$, $1 < i< n-1$ 
(using the relative pair $(X_n,Y_n)$, with $Y_n$ the sub ``initial" $(n-1)$-cube, with first coordinate $0$)
\end{itemize}
Similarly to the classical relative homology sequence, we can in fact characterize all of $HM_*[X_n]$ using a similar calculation. 
\end{example}

\paragraph{Mayer-Vietoris sequence}

Consider a precubical set $X$ and sub-precubical sets $X^1$ and $X^2$ such that $X=X^1\cup X^2$, and such that $(X,X^1)$, $(X,X^2)$ are relative pairs. 
Furthermore, we ask that $(X^1,X^2)$ is a {good cover} of $X$ in the following sense: 

\begin{definition}
Let $X^1$ and $X^2$ be two sub precubical sets of $X$ that cover $X$, i.e. $X=X^1\cup X^2$. This pair of precubical sets $(X^1,X^2)$ is called a \look{good cover} if all $i$-cube chains of $X$, $c=(c_1,\ldots,c_k)$ are such that there exists indexes $l$ and $m$, $l < m$ such that
$c_1,\ldots, c_l \in X_1$, $c_m,\ldots,c_k \in X_1$ and for all $j$ with $l < j < m$, $c_j \in X^1$ or for all $j$ with $l < j < m$, $c_j \in X^2$. 
\end{definition}

In short, $(X^1,X^2)$ is a good cover of $X$ if $i$-cube chains of $X$ are whiskerings (by 0-cube chains) of either a $(i-1)$-cube chain of $X^1$ or a $(i-1)$-cube chain of $X^2$. Note that this is somehow a form of ``relative pair" condition, but for all $i$-cube chains, not only the 0-cube chains. What the condition of a good cover implies is that precisely, every $i$-cube chain in $X$ enters $X_1$ and $X_2$ once, and exits it once, as an $i$-cube chain in $X_1$, and in $X_2$. 

We first prove:

\begin{lemma}
\label{lem:shortMayer}
For all $i\geq 1$, we have a short exact sequence of $R[X]$-bimodules, when $(X^1,X^2)$ is a good cover of $X$: 

\begin{equation}
    \label{eq:shortMayer}
\begin{tikzcd}[column sep=small]
0 \arrow[r] & { }^X R_i[X^1\cap X^2] \arrow[r,"\alpha"] & { }^X R_i[X^1]\oplus { }^X R_i[X^2] \arrow[r,"\beta"] & R_i[X] \arrow[r] & 0
\end{tikzcd}
\end{equation}
\noindent with $\alpha(x)=(x,-x)$ and $\beta(x,y)=x+y$. 
\end{lemma}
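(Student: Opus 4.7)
The plan is to verify the standard five steps for proving a short exact sequence, viewing each ${}^X R_i[X^j]$ as a sub-$R[X]$-bimodule of $R_i[X]$ via the inclusions provided by Corollary \ref{lem:relsub0} (for $i\geq 2$) and Lemma \ref{lem:relsub} (for $i=1$, which applies since both $(X,X^1)$ and $(X,X^2)$ are relative pairs). Similarly, $X^1\cap X^2$ sits as a sub-precubical set in $X^1$, in $X^2$ and in $X$, giving natural inclusions of sub-bimodules ${}^X R_i[X^1\cap X^2]\hookrightarrow {}^X R_i[X^j]\hookrightarrow R_i[X]$. This also ensures that the maps $\alpha(x)=(x,-x)$ and $\beta(x,y)=x+y$ are well-defined $R[X]$-bimodule homomorphisms.

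Two of the three exactness checks are then essentially formal. \emph{Injectivity of $\alpha$}: if $\alpha(x)=(x,-x)=0$, then in particular the first component vanishes in ${}^X R_i[X^1]$, and since ${}^X R_i[X^1\cap X^2]\hookrightarrow {}^X R_i[X^1]$ is injective, $x=0$. \emph{Composition $\beta\circ\alpha=0$}: on the nose, $\beta(x,-x)=x-x=0$ in $R_i[X]$, using that both inclusions ${}^X R_i[X^1\cap X^2]\hookrightarrow {}^X R_i[X^j]\hookrightarrow R_i[X]$ send $x$ to the same element.

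\emph{Surjectivity of $\beta$}: here I would invoke the good cover property directly. For $i\geq 2$, by Lemma \ref{lem:free} the bimodule $R_i[X]$ is generated by those $i$-cube chains $c=(c_1,\ldots,c_k)$ whose first and last cells are of dimension $\geq 2$; good cover forces the decomposition indices to degenerate to $l=0$ and $m=k+1$, so the entire chain is either in $X^1$ or in $X^2$, and thus lives in ${}^X R_i[X^1]$ or ${}^X R_i[X^2]$ as a sub-bimodule of $R_i[X]$. The case $i=1$ is handled by observing that $R_1[X]=R[X]$ is spanned by $0$-cube chains of $X$, and every such chain passes through at least one vertex of $X^1_0\cup X^2_0=X_0$, hence lies in ${}^X R_1[X^1]+{}^X R_1[X^2]$ by the canonical form of Lemma \ref{lem:relsub}.

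The main obstacle, and the only substantive step, is \emph{$\ker\beta\subseteq\operatorname{im}\alpha$}. Given $(x,y)$ with $x+y=0$ in $R_i[X]$, the goal is to show $x$ lies in the sub-bimodule ${}^X R_i[X^1\cap X^2]$, for then $(x,y)=(x,-x)=\alpha(x)$. This reduces to proving the module-theoretic identity
\[
{}^X R_i[X^1]\cap {}^X R_i[X^2]\;=\;{}^X R_i[X^1\cap X^2]
\]
inside $R_i[X]$. The inclusion $\supseteq$ is immediate. For $\subseteq$, I would exploit the uniqueness of the canonical decomposition of elements established in Remark \ref{rem:moreexplicitly}, Lemma \ref{lem:relsub} and Remark \ref{rem:newcanonical}: every element of ${}^X R_i[X^j]$ has a \emph{unique} canonical expression $\sum_{i,j}\lambda_{i,j}\,p_{i,j}\otimes c_j\otimes q_{i,j}$ with $c_j$ a cube chain in $X^j$ and $p_{i,j},q_{i,j}$ consisting of 1-cells in $X\setminus X^j$. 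Because the identification ${}^X R_i[X^j]\hookrightarrow R_i[X]$ sends each $p\otimes c\otimes q$ to the concrete concatenated cube chain in $X$, and because distinct generators map to distinct basis cube chains of $R_i[X]$, an element lying in both ${}^X R_i[X^1]$ and ${}^X R_i[X^2]$ is supported, as an $R$-linear combination of cube chains in $X$, on chains whose ``core'' cube chain is simultaneously in $X^1$ and in $X^2$, hence in $X^1\cap X^2$; such an element therefore belongs to ${}^X R_i[X^1\cap X^2]$. The subtlety is to carry out this comparison of canonical forms uniformly across the three relative pairs $(X,X^1)$, $(X,X^2)$, $(X,X^1\cap X^2)$, which is where the good-cover hypothesis is used to guarantee that the canonical forms are compatible (the ``prefix/suffix'' parts outside $X^j$ can be chosen to lie outside $X^1\cup X^2=X$ in a coherent way, forcing alignment of decompositions).
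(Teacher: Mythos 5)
Your overall strategy matches the paper's: identify the various $R_i[\,\cdot\,]$ as sub-$R[X]$-bimodules of $R_i[X]$ using the relative-pair and free-bimodule results, define $\alpha,\beta$ as stated, and verify exactness term by term, with surjectivity of $\beta$ coming from the good-cover hypothesis. Injectivity of $\alpha$, $\beta\circ\alpha=0$, and surjectivity of $\beta$ are all handled as in the paper, and your reductions for $i\geq 2$ versus $i=1$ are the right case split.

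Where you depart from the paper — and in fact improve on it — is in exactness at the middle term. The paper's proof dispatches $\ker\beta\subseteq\operatorname{im}\alpha$ in a single sentence (``$x+y=0$, hence $x=-y$ and $(x,y)\in\operatorname{im}\alpha$''), which silently assumes precisely the identity you isolate:
\[
{}^X R_i[X^1]\,\cap\,{}^X R_i[X^2]\;=\;{}^X R_i[X^1\cap X^2]\quad\text{inside }R_i[X].
\]
You are correct that this is the only substantive content of the middle-exactness step, and that the canonical-form results of Lemma \ref{lem:relsub} and Remark \ref{rem:newcanonical} are the right tools for it. For $i\geq 2$ the argument is cleaner than your sketch suggests and does not really need the good-cover hypothesis: for a basis cube chain $c$ of $R_i[X]$, the canonical core $g$ in the decomposition $c=p\bullet g\bullet q$ with $g\in G_i[X^j]$ is determined by $c$ alone — it is the subchain stretching from the first to the last cell of dimension $\geq 2$ — so the cores obtained from the $X^1$- and $X^2$-decompositions coincide, forcing $g\in G_i[X^1\cap X^2]$. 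For $i=1$ the claim is genuinely more delicate (a 0-cube chain could in principle meet $X^1_0$ and meet $X^2_0$ while avoiding $X^1_0\cap X^2_0$); the relative-pair hypotheses on $(X,X^1)$ and $(X,X^2)$ together with $X=X^1\cup X^2$ are what exclude this, and that is where the real work lies.

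The one phrase that does not hold up is your claim that the prefix and suffix ``can be chosen to lie outside $X^1\cup X^2=X$''. Since $X^1\cup X^2=X$, that set is empty, so the sentence as written cannot be what you mean. What you want is that the prefix/suffix in the canonical decomposition for $X^j$ consist of $1$-cells lying outside $X^j$ (not outside $X$), and then to compare the two decompositions as above. Apart from that slip, your reading of the structure of the proof — and in particular your recognition that the middle-exactness step needs the intersection identity — is correct, and arguably more complete than the proof in the paper itself.
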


For proving this, we first observe: 

\begin{lemma}
\label{lem:relpairintersect}
Let $(X,X^1)$ and $(X,X^2)$ be two relative pairs. Then $(X^1,X^1\cap X^2)$ and $(X^2,X^1\cap X^2)$ are relative pairs.     
\end{lemma}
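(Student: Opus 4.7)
The plan is to reduce the relative-pair condition for $(X^1, X^1\cap X^2)$ to the already-known relative-pair condition for $(X, X^2)$, exploiting the fact that any $0$-cube chain in $X^1$ is, a fortiori, a $0$-cube chain in $X$. The symmetric statement $(X^2, X^1\cap X^2)$ then follows by swapping the roles of $X^1$ and $X^2$ and using the relative pair $(X,X^1)$.

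Concretely, I would fix a $0$-cube chain $c=(c_1,\ldots,c_m)\in Ch^{=0}(X^1)$ and split on which endpoint lies in the intersection. In the case $d^0(c_1)\in (X^1\cap X^2)_0$, I would regard $c$ as lying in $Ch^{=0}(X)$ via the inclusion $X^1\hookrightarrow X$; since $(X^1\cap X^2)_0\subseteq X^2_0$, the hypothesis $d^0(c_1)\in X^2_0$ is satisfied, so Definition \ref{def:relpair} applied to the relative pair $(X,X^2)$ furnishes an index $k\in\{1,\ldots,m\}$ with $c_l\in X^2$ for $l\le k$ and $c_l\notin X^2$ for $l>k$. Since \emph{every} $c_l$ already lies in $X^1$, intersecting gives $c_l\in X^1\cap X^2$ for $l\le k$ and $c_l\in X^1\setminus (X^1\cap X^2)$ for $l>k$, which is exactly the relative-pair condition for $(X^1,X^1\cap X^2)$ at this chain. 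The symmetric case $d^1(c_m)\in (X^1\cap X^2)_0$ is handled identically, using instead the ``right end'' clause of Definition \ref{def:relpair} for $(X,X^2)$.

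The argument is straightforward because the intersection $X^1\cap X^2$ is simultaneously ``small inside $X^1$'' (for the conclusion we want) and ``contained in $X^2$'' (for the hypothesis we can invoke on $X$), so no new combinatorial input is needed beyond the two given relative-pair hypotheses. The only real bookkeeping is to remember to check both the $d^0$ and $d^1$ clauses of Definition \ref{def:relpair}, and to note that the statement for $(X^2,X^1\cap X^2)$ is obtained from the argument above by interchanging $X^1$ and $X^2$ (invoking $(X,X^1)$ instead of $(X,X^2)$). I do not anticipate a genuine obstacle; the main thing to avoid is an erroneous attempt to prove the statement from scratch by analysing chains inside $X^1$ alone, since the restriction of the chain to $X^1\cap X^2$ can only be controlled by using the ambient constraint coming from $X$.
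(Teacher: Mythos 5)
Your proposal is correct and follows the same basic strategy as the paper's proof: regard the chain $c\in Ch^{=0}(X^1)$ as a chain in $X$ and invoke the relative-pair hypothesis on the ambient precubical set to locate the splitting index. The paper's own argument is slightly more roundabout: it applies \emph{both} relative pairs $(X,X^1)$ and $(X,X^2)$ to $c$, obtaining two indices $k^1$ and $k^2$, and then takes $j=\min(k^1,k^2)$ (resp.\ $\max$). You observe that this is unnecessary: since $c$ lies entirely in $X^1$, the condition $c_l\in X^1$ holds for all $l$ (equivalently, the index $k^1$ furnished by $(X,X^1)$ can always be taken to be $m$), so the single index supplied by $(X,X^2)$ already does the job, and intersecting with the trivially-satisfied $X^1$-membership gives the desired $X^1\cap X^2$ dichotomy. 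This is a genuine, if small, simplification; both arguments are valid, and both use the symmetry $X^1\leftrightarrow X^2$ to get the second statement.

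One small bookkeeping point: you write ``$c_l\in X^2$ for $l\le k$'' whereas Definition \ref{def:relpair} only constrains $c_l$ for $l<k$ and $l>k$, leaving $c_k$ free. This does not affect the substance of the argument, but you should match the paper's convention when invoking the definition.
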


\begin{proof}
We prove that $(X^1,X^1 \cap X^2)$ is a relative pair, the other case being symmetric. 

For this, consider $c=(c_1,\ldots,c_m) \in Ch^{=0}(X^1)$ such that $d^0(c_1) \in X^1_0\cap X^2_0$ (resp. $d^1(c_m)\in X^1_0\cap X^2_0$), then, in particular, $c$ belongs as well to $Ch^{=0}(X)$ such that $d^0(c_1) \in X^u_0$ (resp. $d^1(c_m)\in X^u_0$) for $u=1$ and $u=2$. As $(X,X^u)$ for $u=1$ and $u=2$ is a relative pair: 

\begin{itemize}
    \item there exists an index $k^u \in \{1,\ldots,m\}$ such that, 
    \item $c_l \in X^u$ for $l<k^u$ (resp. $c_l \in X^u$ for $l>k^u$) and 
    \item $c_l \not \in X^u$ for $l>k^u$ (resp. $c_l \not \in X^u$ for $l<k^u$).
    \end{itemize}
Let us write $j=min(k^1,k^2)$ (resp. $j=max(k^1,k^2)$, then:
\begin{itemize}
    \item $c_l \in X^1\cap X^2$ for $l<k^u$ (resp. $c_l \in X^1\cap X^2$ for $l>k^u$) and 
    \item $c_l \not \in X^1\cap X^2$ for $l>k^u$ (resp. $c_l \not \in X^1 \cap X^2$ for $l<k^u$).
    \end{itemize}
    This proves that $(X^1,X^1\cap X^2)$ is a relative pair. 
\end{proof}

\begin{lemma}
\label{lem:reltrans}
Let $(X,Y)$ and $(Y,Z)$ be relative pairs and $M$ be a free $R[Z]$-bimodule. Then ${ }^X ({ }^Y M)$ is isomorphic to ${ }^X M$ as a $R[X]$-bimodule. 
\end{lemma}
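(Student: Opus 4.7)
The plan is to apply Proposition \ref{rem:relativerestriction} twice, once for each algebra inclusion in the chain $R[Z]\hookrightarrow R[Y]\hookrightarrow R[X]$ arising from $Z\subseteq Y\subseteq X$. Write $g: R[Z]\to R[Y]$ and $h: R[Y]\to R[X]$ for the algebra morphisms induced by these sub-precubical inclusions (each inclusion is injective on vertices, hence produces an algebra morphism, as discussed before Lemma \ref{lem:diralgfunct}). By definition, ${}^Y M = g_!(M)$, ${}^X({}^Y M) = h_!(g_!(M))$, and ${}^X M = (h\circ g)_!(M)$.

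Since $M$ is a free $R[Z]$-bimodule, choose a set of free generators $(m_j)_{j\in J}$. The first application of Proposition \ref{rem:relativerestriction} (to the inclusion $Z\subseteq Y$) gives that ${}^Y M$ is the free $R[Y]$-bimodule on $(m_j)_{j\in J}$. Because ${}^Y M$ is then itself free over $R[Y]$ on that same generating family, a second application of Proposition \ref{rem:relativerestriction} (now to $Y\subseteq X$) yields that ${}^X({}^Y M)$ is the free $R[X]$-bimodule on $(m_j)_{j\in J}$. On the other hand, applying Proposition \ref{rem:relativerestriction} directly to the inclusion $Z\subseteq X$ gives that ${}^X M$ is likewise the free $R[X]$-bimodule on the same family $(m_j)_{j\in J}$.

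Both ${}^X M$ and ${}^X({}^Y M)$ being free $R[X]$-bimodules over the same generating set, there is a unique $R[X]$-bimodule isomorphism sending $m_j\mapsto m_j$ in each direction, and these are mutually inverse. In terms of the canonical forms described in Lemma \ref{lem:normalform} and Remark \ref{rem:moreexplicitly}, this isomorphism reads
$$
p\otimes m_j\otimes q \;\longmapsto\; p\otimes(1_Y\otimes m_j\otimes 1_Y)\otimes q
$$
for $p,q\in Ch^{=0}(X)$. Conceptually, this is simply the instantiation of the general adjoint-composition isomorphism $(h\circ g)_!\cong h_!\circ g_!$, which holds abstractly because extension of scalars is left adjoint to restriction of scalars; the benefit of invoking Proposition \ref{rem:relativerestriction} is that it gives the identification in the concrete normal-form presentation used throughout the paper.

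The main technical obstacle is not the construction of the map itself but checking that the free-generation data carries through the two extensions with no unexpected identifications among the generators; this is already handled inside the proof of Proposition \ref{rem:relativerestriction}. The two relative pair hypotheses for $(X,Y)$ and $(Y,Z)$ ensure that the canonical forms of Lemma \ref{lem:relsub} for the successive extensions are coherent, so that the map defined on generators really is an $R[X]$-bimodule morphism on the nose (rather than only up to further quotienting).
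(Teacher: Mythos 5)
Your proof is correct and follows essentially the same route as the paper: three applications of Proposition \ref{rem:relativerestriction} to conclude that ${}^Y M$, ${}^X({}^Y M)$ and ${}^X M$ are all free on the original generating family of $M$, hence isomorphic. The additional remarks on $(h\circ g)_!\cong h_!\circ g_!$ and on canonical forms are correct observations but not part of the paper's argument.
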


\begin{proof}
As $M$ is a free $R[Z]$-bimodule, it is generated by a set of generators that we call $G$, as a $R[Z]$-bimodule. Then, we know by Lemma \ref{rem:relativerestriction} that ${ }^Y M$ is generated by $G$ as well as a $R[Y]$-bimodule, and ${ }^X ({ }^Y M)$ is, by the same Lemma, generated by $G$ as a $R[X]$-bimodule. Finally, ${ }^X M$ is generated by $G$ as a $R[X]$-bimodule, once again by Lemma \ref{rem:relativerestriction}, so is isomorphic to ${ }^X({ }^Y M)$.   
\end{proof}


We can now prove Lemma \ref{lem:shortMayer}. 

We note that as $(X^1,X^1\cap X^2)$ is a relative pair by Lemma \ref{lem:relpairintersect}, ${ }^{X^1} R_i[X^1\cap X^2]$ is a sub-$R[X^1]$ bimodule of $R_i[X^1]$ and as $(X^2,X^1\cap X^2)$ is a relative pair by the same Lemma, ${ }^{X^2} R[X^1\cap X^2]$ is a sub-$R[X^2]$ bimodule of $R_i[X^2]$. 

By Lemma \ref{lem:reltrans}, this implies that ${ }^X({ }^{X^1} R_i[X^1\cap X^2])\sim { }^{X} R_i[X^1\cap X^2]$ is a sub-$R[X]$ bimodule of ${ }^X R_i[X^1]$ with corresponding inclusion map $j_1$. Similarly, ${ }^X({ }^{X^2} R_i[X^1\cap X^2])\sim { }^{X} R_i[X^1\cap X^2]$ is a sub-$R[X]$ bimodule of ${ }^X R_i[X^2]$, with corresponding inclusion map $j_2$.

Now, consider the canonical monomorphisms of $R[X]$-bimodules 
$$i_1: \ { }^X R_i[X^1] \rightarrow { }^X R_i[X^1]\oplus { }^X R_i[X^2]$$ 
$$i_2: \ { }^X R_i[X^2] \rightarrow { }^X R_i[X^1]\oplus { }^X R_i[X^2]$$ 

Consider now map $\alpha: \ { }^X R_i[X^1\cap X^2] \rightarrow { }^X R_i[X^1]\oplus { }^X R_i[X^2]$ with $\alpha(x)=(x,-x)$. This map $\alpha$ is obviously a monomorphism ($Ker \ \alpha=0$). 

As $(X,X^1)$ is a relative pair, ${ }^X R_i[X^1]$ is a sub-$R[X]$ bimodule of $R_i[X]$, similarly for ${ }^X R_i[X^2]$. We can then define 
the map $\beta: \ { }^X R_i[X^1] \oplus { }^X R_i[X^2] \rightarrow R_i[X]$ with $\beta(x,y)=x+y$. This map is obviously surjective when $(X^1,X^2)$ is a good cover. Indeed, in that case, all $(i-1)$-cube chains $c=(c_1,\ldots,c_k)$ of $X$ are such that there exists indexes $l$ and $m$ such that
$c_1,\ldots, c_l \in X_1$, $c_m,\ldots,c_k \in X_1$ and for all $j$ with $l < j < m$, $c_j \in X^1$ or for all $j$ with $l < j < m$, $c_j \in X^2$. Therefore, $c'=(c_{l+1},\ldots,c_{m-1})$ is a $(i-1)$-cube chain of either $X^1$ or $X^2$ and $c$ can be identified with the whiskering $(c_1,\ldots,c_l)\otimes c' \otimes (c_m,\ldots,c_k)$ in ${ }^X R_i[X^1]$ or ${ }^X R_i[X^2]$. 

Finally, $\beta \circ \alpha=0$, so $Im \ \alpha \subseteq Ker \ \beta$. Conversely, an element of $Ker \ \beta$ is a pair $(x,y)$ such that $x+y=0$, hence $x=-y$ and $(x,y)\in Im \ \alpha$. 

All this proves that the short sequence, Equation (\ref{eq:shortMayer}), is exact. 

Furthermore, the boundary operator obviously commutes with maps $\alpha$ and $\beta$, hence this short sequence is actually a short sequence of chain complexes of $R[X]$-bimodules. As the category of bimodules over the (fixed) algebra $R[X]$ is abelian, we get the following Mayer-Vietoris exact sequence: 

\begin{theorem}
\label{thm:MayerVietoris}
Let $X$ a precubical set, $X_1$ and $X_2$ be two sub precubical sets that form a good cover of $X$, and such that $(X,X_1)$ and $(X,X_2)$ are relative pairs. 
Then we have the following long exact sequence: 
\begin{center}
    \begin{tikzcd}[arrow style=math font,cells={nodes={text height=2ex,text depth=0.75ex}}]
    & & 0 \arrow[draw=none]{d}[name=X,shape=coordinate]{} \\
       HM_{1}[X] \arrow[curarrow=X]{urr}{} 
       &  { }^X HM_1[X_1]\oplus { }^X HM_1[X_2]    \arrow[l] \arrow[draw=none]{d}[name=Y, shape=coordinate]{} & \arrow[l] \cdots \\
       HM_{i}[X] \arrow[curarrow=Y]{urr}{} & { }^X HM_{i}[X_1]\oplus { }^X HM_i[X_2] \arrow[l] \arrow[draw=none]{d}[name=Z,shape=coordinate]{} & { }^X HM_i[X_1\cap X_2]) \arrow[l] \\
       HM_{i+1}[X] \arrow[curarrow=Z]{urr}{} & { }^X HM_{i+1}[X_1]\oplus { }^X HM_{i+1}[X_2] \arrow[l] & \cdots \arrow[l]
   \end{tikzcd}
\end{center}
\end{theorem}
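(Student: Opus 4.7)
The plan is to derive the long exact sequence as the standard homology long exact sequence associated to the short exact sequence of chain complexes of $R[X]$-bimodules established in Lemma \ref{lem:shortMayer}. More precisely, I would first promote the sequence
$$0 \to { }^X R_*[X_1 \cap X_2] \xrightarrow{\alpha} { }^X R_*[X_1] \oplus { }^X R_*[X_2] \xrightarrow{\beta} R_*[X] \to 0$$
to a short exact sequence of chain complexes, by checking that $\alpha$ and $\beta$ commute with the boundary operator $\partial$. This is straightforward: $\partial$ was shown in Lemma \ref{lem:precubboundary} to be an $R[X]$-bimodule morphism; it restricts to the sub-bimodules ${ }^X R_i[X_1]$, ${ }^X R_i[X_2]$, and ${ }^X R_i[X_1 \cap X_2]$ exactly as in Proposition \ref{lem:lem18}, and on generators both $\alpha(x)=(x,-x)$ and $\beta(x,y)=x+y$ obviously intertwine these restrictions with $\partial$ on $R_*[X]$.

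Once the SES of chain complexes is in hand, since the category of $R[X]$-bimodules is abelian (as recalled in Section \ref{sec:assoc}), we get, by the usual zig-zag/snake construction, a long exact sequence in homology of the form
$$\cdots \to H_i({ }^X R_*[X_1 \cap X_2]) \to H_i({ }^X R_*[X_1]) \oplus H_i({ }^X R_*[X_2]) \to H_i(R_*[X]) \to H_{i-1}({ }^X R_*[X_1\cap X_2]) \to \cdots$$
extending down to degree $1$, with the expected $0$ on the right end given by the definition of the chain complex (indices start at $1$).

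Next I would translate each term into the desired $HM$-module. For the middle term $H_i(R_*[X])$ this is by definition $HM_i[X]$. For the outer terms, I would invoke Proposition \ref{lem:lem18}: since $(X,X_1)$, $(X,X_2)$, and (by Lemma \ref{lem:relpairintersect}) $(X^1,X^1 \cap X^2)$, $(X^2,X^1 \cap X^2)$ are all relative pairs, Proposition \ref{lem:lem18} identifies $H_i({ }^X R_*[X_u]) \cong { }^X HM_i[X_u]$ for $u=1,2$. For the intersection term, I use Lemma \ref{lem:reltrans} to rewrite ${ }^X({ }^{X^1} R_i[X^1\cap X^2]) \cong { }^X R_i[X^1 \cap X^2]$, then again apply Proposition \ref{lem:lem18} to get $H_i({ }^X R_*[X_1 \cap X_2]) \cong { }^X HM_i[X_1 \cap X_2]$.

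The main obstacle I anticipate is really the careful bookkeeping around Proposition \ref{lem:lem18}: one must check its hypotheses apply uniformly to each piece, in particular for $X_1 \cap X_2$ viewed as a sub-precubical set of $X$ and not merely of $X_1$ or $X_2$. The transitivity statement Lemma \ref{lem:reltrans} is essential here and depends on freeness of the bimodules $R_i[-]$ (Lemma \ref{lem:free}). Otherwise, producing the connecting homomorphism is purely formal (the snake lemma in $R[X]$-bimodules), so the computational content of the theorem is essentially packaged into Lemma \ref{lem:shortMayer} and the preceding results.
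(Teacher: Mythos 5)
Your proof is correct and follows essentially the same route as the paper: promote the short exact sequence of Lemma \ref{lem:shortMayer} to a short exact sequence of chain complexes of $R[X]$-bimodules, apply the snake lemma in that abelian category, and identify the outer homology terms via Proposition \ref{lem:lem18} (with Lemmas \ref{lem:relpairintersect} and \ref{lem:reltrans} covering the intersection term). The paper is in fact terser, leaving the final identification implicit, so your explicit flag about verifying the relative pair hypothesis for $(X, X_1\cap X_2)$ — which holds by the same argument as Lemma \ref{lem:relpairintersect} with $X$ in place of $X^1$ — is a reasonable bit of extra care rather than a divergence.
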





    

\begin{example}
Consider again the ``two holes on the diagonal" and the ``two holes on the antidiagonal" examples, see Example \ref{ex:2holes}. We indicate in red the sub precubical set $X_1$ of $X$. $X_2$ is the precubical set in black, together with edges common to $X_1$: on the left picture, $X_2$ contains $j$, $h$, $f$, $l$, $e$, $b$, $g$, $D$ and the vertices at their boundary; on the central picture, $X_2$ contains $j$, $h$, $l$, $f$, $d$, $e$, $c$, $a$ and the vertices at their boundary; and on the right picture below, $X_2$ contains $j$, $h$, $l$, $f$, $E$, $g$, $e$, $b$ and the vertices at their boundary. On the left, the cover $(X_1,X_2)$ of the ``2 holes on the antidiagonal" is not a good cover: the 3 cube-chain $(C,D)$ is not a whiskering of a 3 cube-chain in $X_1$ nor in $X_2$, they do not even contain 3 cube-chains. In the center, the cover $(X_1,X_2)$ of the ``2 holes on the antidiagonal" is a good cover, as in the right hand side, the decomposition $(X_1,X_2)$ is a good cover for the ``2 holes on the diagonal example". But $(X,X_1)$ is not a relative pair for the central example. In  the examples on the left and on the right, $(X,X_1)$ and $(X,X_2)$ are relative pairs. 

\begin{center}
    \begin{minipage}{3.5cm}
    \[\begin{tikzcd}
  9 \arrow[red, r,"i"] \arrow[red,d,"k"] \arrow[red,dr,phantom,"C"] & 8 \arrow[red,d,"h"] \arrow[r,"j"] & 7 \arrow[d,"l"]\\
  6 \arrow[red,r,"d"] \arrow[red,d,"c"] & 5 \arrow[red,d,"e"] \arrow[dr,phantom,"D"] \arrow[r,"f"] & 4 \arrow[d,"g"] \\
  3 \arrow[red,r,"a"] & 2 \arrow[r,"b"] & 1
\end{tikzcd}
\]
\end{minipage}
    \begin{minipage}{3.5cm}
    \[\begin{tikzcd}
  9 \arrow[red, r,"i"] \arrow[red,d,"k"] \arrow[red,dr,phantom,"C"] & 8 \arrow[red,d,"h"] \arrow[r,"j"] & 7 \arrow[d,"l"]\\
  6 \arrow[red,r,"d"] \arrow[d,"c"] & 5 \arrow[red,d,"e"] \arrow[red,dr,phantom,"D"] \arrow[red,r,"f"] & 4 \arrow[red,d,"g"] \\
  3 \arrow[r,"a"] & 2 \arrow[red,r,"b"] & 1
\end{tikzcd}
\]
\end{minipage}
    \begin{minipage}{3.5cm}
    \[\begin{tikzcd}
  9 \arrow[red,r,"i"] \arrow[red,d,"k"]  & 8 \arrow[red,d,"h"] \arrow[r,"j"] \arrow[dr,phantom,"E"] & 7 \arrow[d,"l"]\\
  6 \arrow[red,r,"d"] \arrow[red,d,"c"] \arrow[red,dr,phantom,"F"] & 5 \arrow[red,d,"e"]  \arrow[r,"f"] & 4 \arrow[d,"g"] \\
  3 \arrow[red,r,"a"] & 2 \arrow[r,"b"] & 1
\end{tikzcd}
\]
\end{minipage}
\end{center}

We consider the Mayer-Vietoris exact sequences for the ``two holes on the diagonal'' example, with its good cover, pictured on the right above.  
Let us consider this exact sequence from 9 to 1, 
it is concentrated in dimension 1: 
\begin{itemize}
\item ${ }^X HM_1[X_1](9,1)=R^5$
\item ${ }^X HM_1[X_2](9,1)=R^5$
\item ${ }^X HM_1[X_1\cap X_2]=R^6$
\end{itemize}
And the corresponding Mayer-Vietoris sequence reads:
\[
\begin{tikzcd}[column sep=small]
0  \arrow[r] & R^6 \arrow[r] &  R^5 \oplus R^5 \arrow[r] &  HM_1[X](9,1) \arrow[r] &  0
\end{tikzcd}
\]
\noindent which implies that $HM_1[X](9,1)=R^4$. 
\end{example}









\section{Conclusion and future work}

In this paper, we studied a simple directed homology theory for certain precubical sets, based on the category of bimodules over the underlying path algebra. Doing so, we made links to other homology theories, first with persistent homology and then to natural homology, used as a directed topology invariant. 

Doing so, we unveiled the importance of the restriction of scalar functor between abelian categories of bimodules based on related algebras, and its homological properties. This showed when discussing the K\"unneth theorem, and other exact sequences. We will, in a forthcoming paper, axiomatize homology theories that take values in such ``varying abelian categories'', making use of slightly ``deformed'' exact sequences. 

We believe that this framework can be extended so as to deal with rewriting applications on e.g. monoids, see Appendix \ref{sec:digression}. 
Given that we have a reasonable notion of relative pair and relative homology exact sequence, another natural application would be to define a directed Conley index \cite{conleyindex}, using relative homology of a pair defined by the exit set of some suitable ``directed isolating neighborhood". A first step towards this endeavour would be to study a directed version of Wazewski's property and look for Morse like decomposition of directed spaces. This will require extending this work to deal with general directed spaces, we give some hints in Appendix \ref{sec:homdir}. 

One open question still concerns the right notion of $HM_0$ of a precubical set. This should definitely be linked to (pair, see \cite{paircomponents}) component categories in general, but this still has to be properly studied. This is linked to ``tameness'' issues in persistence, that we describe briefly in Appendix \ref{sec:tameness}, and that we believe are central to both persistent homology and directed homology theories. 

We also plan on using the nice computational methods (such as fringe representations, see e.g. \cite{miller2020data}) used in the context of persistence modules to make actual practical computations of directed invariants for precubical sets.











 
 


\appendix

\section{Associative algebras and modules over algebras}

\label{sec:A}

\paragraph{Algebras}

\label{sec:backgroundalgebra}
Let $R$ be a commutative field.  

\begin{definition}
\label{def:alg}
A \look{(unital) associative algebra} on $R$, or \look{$R$-algebra} $A=(A,+,.,\times)$ is an $R$-vector space $(A,+,.)$, with external multiplication by elements of the ring $R$ denoted by ., that has an internal monoid operation, which is an associative operation (``multiplication" or ``internal multiplication") $\times: A \times A \rightarrow A$ that is bilinear. We denote by $0$ the neutral elements for $+$ (which we use also for denoting the 0 of the ring $R$) and $1$ the neutral element (or identity) for $\times$. 
\end{definition}


\begin{definition}
\label{def:mor}
Let $A$ and $B$ be two $R$-algebras. A \look{morphism of $R$-algebras} $f: A \rightarrow B$  is a linear map from $A$ to $B$ seen as $R$-vector spaces, such that it commutes with the internal multiplication: 
$$f(a\times a')=f(a)\times f(a')$$
\end{definition}

We write $Alg$ for the \look{category of (unital) associative algebras} and morphisms as in Definition \ref{def:alg} and \ref{def:mor} above. 

An $R$-submodule $B$ of an $R$-algebra $A$ is an \look{$R$-subalgebra} of $A$ if the identity of $A$ belongs to $B$ and $b_1\times b_2 \in B$ for all $b_1, b_2 \in B$. 
An $R$-submodule $I$ of an $R$-algebra $A$ is a {right ideal} of $A$ (resp. left ideal of $A$) if $x\times a \in I$ (or $a\times x \in I$, respectively) for all $x \in I$ and $a \in A$. A two-sided ideal of $A$ (or simply an \look{ideal} of $A$) is both a left ideal and a right ideal of $A$.

It is easy to see that if $I$ is a two-sided ideal of an $R$-algebra $A$, then the quotient $R$-vector space $A/I$ has a unique $R$-algebra structure such that the canonical surjective linear map $\pi : A \rightarrow A/I$, $a \rightarrow a = a + I$, becomes an $R$-algebra homomorphism.

If $I$ is a two-sided ideal of $A$ and $m \geq 1$ is an integer, we denote by $I^m$ the two-sided ideal of $A$ generated by all elements $x_1\times x_2 \times \ldots \times x_m$, where $x_1, x_2, \ldots, x_m \in I$, that is, $I^m$ consists of all finite sums of elements of the form $x_1\times x_2\times \ldots \times x_m$, where $x_1,x_2,\ldots,x_m \in I$. We set $I^0 = A$.

The \look{(Jacobson)} radical $rad \ A$ of an $R$-algebra $A$ is the intersection of all the maximal right ideals in $A$.

Let $A$ and $B$ be $R$-algebras. Since $A$ and $B$ may both be regarded as $R$-vector spaces, their tensor product
$A\otimes_{R} B$
is also an $R$-vector space. This is the $R$-vector space where all elements can be written (non uniquely in general) as finite sums of elements of the form 
$a \otimes b$ with 
$a \in A$ and $b \in B$, with the property that $\otimes$ is bilinear. 

The \look{tensor product} can be given the structure of a ring by defining the product on elements of the form $a\otimes b$ by: 

$$(a_1\otimes b_1)\times (a_2\otimes b_2) = (a_1 \times a_2)\otimes (b_1\times b_2)$$
\noindent and then extending by linearity to all of $A \otimes_R B$. 
This ring is an $R$-algebra, associative and unital with identity element given by $1_A \times 1_B$ if $A$ and $B$ are unital. If $A$ and $B$ are commutative $R$-algebras, then the tensor product is commutative as well.

The tensor product turns the category of $R$-algebras into a symmetric monoi\-dal category (and even monoidal closed, with the internal Hom functor in $Alg$).



\paragraph{Modules over an associative algebra}



\begin{definition}[\cite{assocalg}]
Let $A$ be an (unital) $R$-algebra. A \look{right $A$-module} (or a right module over $A$) is a pair $(M, \bullet)$, where $M$ is an $R$-vector space and $\bullet : M \times A \rightarrow M$, $(m, a) \rightarrow m\bullet a$, is a binary operation satisfying the following conditions, for all $x, y \in M$, $a, b \in A$ and $\lambda \in R$:
\begin{itemize}
    \item $(x+y)\bullet a=x\bullet a+y\bullet a$ 
    \item $x\bullet (a+b)=x\bullet a+x\bullet b$ 
    \item $x\bullet (ab) = (x\bullet a)\bullet b$
\item $x\bullet 1=x$
\item $(x\lambda)\bullet a = x\bullet (a\lambda) = (x\bullet a)\lambda$
\end{itemize}
\end{definition}

The definition of a left $A$-module and $A$-bimodule is analogous. Throughout, we write $M$ or $M_A$ instead of $(M, \bullet)$. When in need for disambiguating formulas, we will write explicitely ${ }_A \bullet_M$ for the left action of $A$ on $M$ and ${ }_M \bullet_A$ for the right action of $A$ on $M$, or simply ${ }_A \bullet$ (resp. $\bullet_A$) when the context makes it clear on which module the action is taken, or simply again, $\bullet_M$ (resp. ${ }_M \bullet$) when the context makes it clear which algebra acts on $M$. We write $A_A$ and ${}_AA$ whenever we view the algebra $A$ as a right or left $A$-module, respectively, with $\bullet$ being the algebra multiplication. We write $A$ for the $A$-bimodule $A$. 
Note that an $A$-bimodule can be seen as a left (or right) $A\otimes A^{op}$-bimodule, where $A^{op}$ is the algebra which has the same elements as $A$ but with the multiplication $a \times_{A^{op}} b=b\times_{A} a$. 


A \look{morphism of (right) $A$-modules} $f: \ M \rightarrow N$ is a linear map between the underlying $R$-vector spaces of $M$ and $N$ which preserve the right action of the $R$-algebra $A$. 

We write ${ }_R Mod \ A$ for the category of left $A$-modules, $Mod_R \ A$ for the category of right $A$-modules, and ${ }_R Mod_R \ A$ for the category of $A$-bimodules. 

A module $M$ is said to be \look{finite dimensional} if the dimension $dim_R M$ of the underlying $R$-vector space of 
$M$ is finite. The category of left $A$-modules (resp. right and bi-modules) of finite dimension is denoted by ${ }_R mod \ A$ (resp. $mod_R \ A$, ${ }_R mod_R \ A$).

An $R$-subspace $M'$ of a right $A$-module $M$ is said to be an \look{$A$-submodule} of $M$ if $m\bullet a\in M$, forall $m\in M$ and  all $a\in A$. In this case the $R$-vector space $M/M'$ has a natural $A$-module structure such that the canonical epimorphism $\pi: \ M \rightarrow M/M'$ is an $A$-module homomorphism. Similarly for left-modules and bimodules. 


Let $M$ be a right $A$-module and let $I$ be a right ideal of $A$. It is easy to see that the set $M\bullet I$ consisting of all sums $m_1\bullet a_1 +\ldots +m_s\bullet a_s$, where $s \geq 1$, $m_1,\ldots,m_s \in M$ and $a_1,\ldots,a_s \in I$, is a sub right $A$-module of $M$. Similarly for left-modules (with left ideals) and bimodules (with two-sided ideals). 

A right $A$-module $M$ is said to be \look{generated} by the elements $m_1,\ldots, m_s$ of $M$ if any element $m\in M$ has the form $m=m_1\bullet a_1+\ldots+m_s\bullet a_s$ for some $a_1,\ldots,a_s \in A$. In this case, we write $M = m_1 A + \ldots+ m_sA$. A module $M$ is said to be \look{finitely generated} if it is generated by a finite subset of elements of $M$.

Finally, let $M$ be an $A$-bimodule and $N$ be a $B$-bimodule. Then the $R$-vector space which is the \look{tensor product} of the underlying $R$-vector spaces of $M$ and $N$ (elements of which are generated by tensors $m\otimes n$, $m\in M$ and $n\in N$, $\otimes$ being $R$-bilinear) can be given the structure of a $A\otimes B$-bimodule by setting: 
$$
(a\otimes b)\bullet m \otimes n \bullet (a' \otimes b') = (a\bullet m \bullet a')\otimes (b \bullet n \bullet b')
$$
\noindent for $a$, $a'$ in $A$ and $b$, $b'$ in $B$.

\section{Quiver and path algebras}

\label{sec:B}

\begin{definition}[\cite{assocalg}]
We say that an algebra $A$ is \look{connected} (or indecomposable) if $A$ is not isomorphic to a product of two algebras. 
\end{definition}

As is well known \cite{assocalg}, for $Q$ a quiver, $R[Q]$ is a connected algebra if and only if $Q$ is connected, as a (undirected) graph. 


\begin{definition} (\cite{assocalg})
\label{def:basic}
A finite dimensional $R$-algebra $A$ is \look{basic} if and only if the algebra $B = A/rad \ A$ is isomorphic to a product $R\times R \times \cdots \times R$ of copies of $R$.
\end{definition}

A path algebra is basic when it is acyclic (in the standard undirected sense). 

Associative algebras are closely linked to quivers, path algebras are not ``just'' examples of associative algebras, they are central to the theory of associative algebras. 

\begin{definition}[\cite{assocalg}]
\label{def:ordquiver}
Let $A$ be a basic and connected finite dimensional $R$-algebra and ${e_1,e_2,\ldots,e_n}$ be a complete set of primitive orthogonal idempotents of $A$. The (ordinary) \look{quiver of $A$}, denoted by $Q_A$, is defined as follows:
\begin{itemize}
\item The points of $Q_A$ are the numbers $1, 2,\ldots , n$, which are in bijective correspondence with the idempotents $e_1, e_2,\ldots , e_n$.
\item Given two points $a,b \in (Q_A)_0$, the arrows $\alpha : \ a \rightarrow b$ are in bijective correspondence with the vectors in a basis of the $R$-vector space $e_a(rad \ A/rad^2 \ A)e_b$.
\end{itemize}
\end{definition}

The fact that the definition above is legal is proved in \cite{assocalg} ($Q_A$ does not depend on the basis which is chosen, in particular). 

\begin{definition}[\cite{assocalg}] 
\label{def:boundquiver}
Let $Q$ be a finite quiver and $RQ$ be the \look{arrow ideal} of the path algebra $R[Q]$, i.e. the ideal generated by edges of $Q$ (elements of $Q_1$). A two-sided ideal $I$ of $R[Q]$ is said to be \look{admissible} if there exists $m \geq 2$ such that
$RQ^m \subseteq I \subseteq RQ^2$.
If $I$ is an admissible ideal of $R[Q]$, the pair $(Q,I)$ is said to be a \look{bound quiver}. The quotient algebra $R[Q]/I$ is said to be the algebra of the bound quiver $(Q,I)$ or, simply, a \look{bound quiver algebra}.
\end{definition}

\begin{theorem}[\cite{assocalg}]
\label{thm:algfromquiver}
Let $A$ be a basic and connected finite dimensional $R$- algebra. There exists an admissible ideal $I$ of $R[Q_A]$ such that $A \equiv R[Q_A]/I$. 
\end{theorem}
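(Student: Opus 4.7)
The plan is to follow the standard proof of Gabriel's theorem, which proceeds by constructing an explicit surjective algebra homomorphism $\phi : R[Q_A] \to A$ and then verifying that its kernel $I = \ker \phi$ is an admissible ideal, i.e., $R_{Q_A}^m \subseteq I \subseteq R_{Q_A}^2$ for some $m \geq 2$.

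First I would set up notation. Let $\{e_1,\dots,e_n\}$ be the complete set of primitive orthogonal idempotents used in Definition~\ref{def:ordquiver}, and let $J = \mathrm{rad}\,A$, which is nilpotent since $A$ is finite dimensional. Because $A$ is basic, $A/J \cong R^n$ with the $e_a + J$ mapping to the canonical primitive idempotents, so $e_a(A/J)e_b = 0$ for $a \neq b$ and $e_a(A/J)e_a = R e_a$. Combined with $A = \bigoplus_{a,b} e_a A e_b$, this yields $e_a A e_b = e_a J e_b$ whenever $a \neq b$ and $e_a A e_a = R e_a \oplus e_a J e_a$. For each arrow $\alpha : a \to b$ of $Q_A$, indexed by a basis of $e_a(J/J^2)e_b$, I would fix once and for all a lift $x_\alpha \in e_a J e_b$ of the corresponding basis vector.

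Next I would define $\phi : R[Q_A] \to A$ on generators by $\phi(e_a) = e_a$ and $\phi(\alpha) = x_\alpha$, extending by the path multiplication. This is well defined as an algebra map because the $e_a$ are orthogonal idempotents and the source/target conditions $x_\alpha = e_a x_\alpha e_b$ match the path composition rule of Definition~\ref{def:categoryalgebra}. For surjectivity I would first note that $\phi$ maps the arrow ideal $R_{Q_A}$ into $J$, so $\phi(R_{Q_A}^k) \subseteq J^k$. Fix $a,b$. The $x_\alpha$ with $\alpha : a \to b$ span $e_a J e_b$ modulo $e_a J^2 e_b$ by construction, so $\phi(R_{Q_A}) + J^2 \supseteq J$. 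Iterating and using that $J$ is nilpotent (so $J^m = 0$ for some $m \geq 2$), a descending induction gives $\phi(R_{Q_A}) = J$. Adding the idempotents then yields $\mathrm{Im}\,\phi = R\langle e_1,\dots,e_n\rangle + J = A$.

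For admissibility, the inclusion $R_{Q_A}^m \subseteq \ker\phi$ is immediate from $\phi(R_{Q_A}^m) \subseteq J^m = 0$. The inclusion $\ker\phi \subseteq R_{Q_A}^2$ is the delicate half. Take $u \in \ker\phi$ and decompose it as $u = \sum_a \lambda_a e_a + \sum_\alpha \mu_\alpha \alpha + v$ with $v \in R_{Q_A}^2$. Applying the projection $A \twoheadrightarrow A/J \cong R^n$ to $\phi(u) = 0$ kills $\phi(\alpha)$ and $\phi(v)$ (both in $J$), leaving $\sum_a \lambda_a e_a = 0$ in $R^n$, so all $\lambda_a = 0$. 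Then $\phi(\sum_\alpha \mu_\alpha \alpha) = -\phi(v) \in J^2$, and projecting to $J/J^2$ yields $\sum_\alpha \mu_\alpha [x_\alpha] = 0$. Since the $[x_\alpha]$ with source $a$ and target $b$ form by construction a basis of $e_a(J/J^2)e_b$, and the whole $J/J^2$ decomposes as $\bigoplus_{a,b} e_a(J/J^2)e_b$, all $\mu_\alpha$ vanish. Hence $u = v \in R_{Q_A}^2$, proving $\ker \phi \subseteq R_{Q_A}^2$ and completing the argument that $I := \ker \phi$ is admissible, so $A \cong R[Q_A]/I$.

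The main technical obstacle is the kernel inclusion $\ker\phi \subseteq R_{Q_A}^2$: it depends on basicness (to pin down $A/J$) and on the precise basis choice defining the arrows (to separate the length-one part). Everything else — defining $\phi$, surjectivity, and $R_{Q_A}^m \subseteq \ker\phi$ — is essentially bookkeeping using Lemma~\ref{lem:bimoddecomp} and nilpotency of $\mathrm{rad}\,A$.
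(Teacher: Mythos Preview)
Your proof is correct and is precisely the standard proof of Gabriel's theorem as given in the cited reference \cite{assocalg}. The paper itself does not prove this statement: it is quoted in the appendix as a background result from the literature, with no argument supplied, so there is nothing to compare against beyond noting that your construction of $\phi$, the surjectivity via the nilpotency of $J=\mathrm{rad}\,A$, and the two-sided bound on $\ker\phi$ match the textbook treatment exactly.
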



\section{A digression on resolutions of associative algebras and monoids}

\label{sec:digression}

We note that the chain complex we have been defining in Lemma \ref{lem:precubboundary} has a natural augmentation, which will actually link to classical tools used in the construction of the homology of associative algebras: 

\begin{definition}
\label{def:augmentation}
Let \look{$R_0[X]$} be the $R[X]$-bimodule whose underlying $R$-vector space is generated by pairs of reachable points of $X$, i.e. by $(x,y)\in X\times X$ such that there exists a 0-cube path from $x$ to $y$, and whose action of $R[X]$ is generated by the relations:  
$$p_{u,v} \bullet (x,y) \bullet q_{u',v'}=\left\{\begin{array}{ll}
(u,v') & \mbox{if $v=x$ and $y=v'$} \\
0 & \mbox{otherwise}
\end{array}\right.
$$
\label{def:R0}
\end{definition}




\begin{lemma}
The morphism of $R$-vector spaces $\epsilon: R_1[X] \rightarrow R_0[X]$ defined on generators $c=(c_1,\ldots,c_n)$, $c_i \in X_1$ and $e_a$, $a \in X_0$ as:
$$
\begin{array}{rcl}
\epsilon(c_1,\ldots,c_n) & = & (d^0(c_0),d^1(c_n)) \\
\epsilon(e_a) & = & (a,a)
\end{array}
$$
is an epimorphism of $R[X]$-bimodule, which is such that $\epsilon\circ \partial: R_2[X] \rightarrow R_0[X]$ is equal to zero. Therefore, $\epsilon$ defines an augmentation of the chain complex $(R_i[X],\partial)$.
\end{lemma}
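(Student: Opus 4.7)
The plan is to verify the three separate claims in succession: that $\epsilon$ is a morphism of $R[X]$-bimodules, that it is surjective, and that $\epsilon \circ \partial = 0$ on $R_2[X]$.

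First, I will extend $\epsilon$ by $R$-linearity, well-definedness as a linear map being immediate. To check the bimodule property, I will verify on generators that $\epsilon(p \bullet r \bullet q) = p \bullet \epsilon(r) \bullet q$, where $p = (p_1,\ldots,p_k)$ and $q = (q_1,\ldots,q_m)$ are $0$-cube chains of $X$ and $r$ is either a $0$-cube chain $(r_1, \ldots, r_n)$ or an idempotent $e_a$. If the concatenation $p \bullet r \bullet q$ in $R_1[X]$ is nonzero (which by Lemma \ref{lem:free} requires $d^1(p_k) = d^0(r_1)$ and $d^1(r_n) = d^0(q_1)$), it is a $0$-cube chain with endpoints $d^0(p_1)$ and $d^1(q_m)$, so $\epsilon$ of it is $(d^0(p_1), d^1(q_m))$. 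These are precisely the conditions under which $p \bullet \epsilon(r) \bullet q = p \bullet (d^0(r_1), d^1(r_n)) \bullet q$ is nonzero in $R_0[X]$ by Definition \ref{def:R0}, and it equals the same pair. If the concatenation is zero, the same mismatch of boundaries makes the right-hand side zero as well. The idempotent case $r = e_a$ is handled analogously.

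Next, surjectivity is immediate: any generating pair $(x,y)$ of $R_0[X]$ is by construction a reachable pair, so some $0$-cube chain $c$ from $x$ to $y$ exists (or $c = e_x$ when $x = y$), and $\epsilon(c) = (x,y)$.

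The real content lies in the identity $\epsilon \circ \partial = 0$ on $R_2[X]$. I will show it on each generating dimension-$1$ cube chain $c = (c_1, \ldots, c_l) \in R_2[X]$ of type $(n_1,\ldots,n_l)$ and conclude by linearity. The point is that every term $d_{k,\mathcal{I}}(c)$ appearing in $\partial c$ is a dimension-$0$ cube chain sharing the endpoints $v = d^0(c_1)$ and $w = d^1(c_l)$ of $c$, hence is sent by $\epsilon$ to the single element $(v,w) \in R_0[X]$; the problem therefore reduces to showing that the sum of scalar coefficients in $\partial c$ vanishes. By Remark \ref{rem:1cubechain}, exactly one index $k_0$ satisfies $n_{k_0} = 2$, the others having $n_k = 1$, so only $k = k_0$ contributes, with $r = 1$ and $\mathcal{I} \in \{\{0\}, \{1\}\}$. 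Since $sgn(\{0\}) = -1$ and $sgn(\{1\}) = +1$ while the prefactors $(-1)^{n_1+\ldots+n_{k_0-1}+k_0+r+1}$ coincide for both choices of $\mathcal{I}$, the two contributions cancel. The main (mild) obstacle is keeping the sign bookkeeping straight; as a sanity check, the cancellation is visible in Example \ref{ex:2holes}, where $\partial(k,F,b) = (k,d,e,b) - (k,c,a,b)$ with both terms starting at $9$ and ending at $1$, so $\epsilon(\partial(k,F,b)) = (9,1) - (9,1) = 0$.
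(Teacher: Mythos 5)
Your proof is correct and follows essentially the same route as the paper's: verify the bimodule identity $\epsilon(p \bullet r \bullet q) = p \bullet \epsilon(r) \bullet q$ on generators by matching the vanishing conditions on both sides, note surjectivity is immediate from the definition of $R_0[X]$, and then use Remark \ref{rem:1cubechain} to reduce $\epsilon \circ \partial$ on a $1$-cube chain to two contributions from the unique $2$-cell which cancel by the sign computation. Your framing of the last step (all $d_{k,\mathcal{I}}(c)$ map to the same pair $(v,w)$, so it suffices to check the coefficients sum to zero) is a slightly cleaner articulation of what the paper does, but it is the same argument.
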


\begin{proof}
Consider $p=(p_1,\ldots,p_k)$, $c=(c_1,\ldots,c_l)$ and $q=(q_1,\ldots,q_m)$ in $Ch^{=0}(X)$, and suppose (the other case is obvious) that $d^1(c_l)=d^0(q_1)$ and $d^1(p_k)=d^0(c_1)$, we have:
$$
\begin{array}{rcl}
\epsilon(p\bullet c \bullet q) & = & \epsilon(p_1,\ldots,p_k,c_1,\ldots,c_l,q_1,\ldots,q_m) \\
& = & (d^0(p_1),d^1(q_m)) \\
& = & (p_1,\ldots,p_k) \bullet (d^0(c_1),d^1(c_l)) \bullet (q_1,\ldots,q_m)\\
 & = & p\bullet \epsilon(c) \bullet q
\end{array}
$$
The fact that this defines an epimorphism of $R[X]$-bimodule is obvious, by definition of $R_0[X]$. 

Finally, we compute, for a 1-cube chain $c=(c_1,\ldots,c_l)$ where $c_k$ is the unique 2-cell (by Remark \ref{rem:1cubechain}) and all other $c_i$s are 1-cells: 
$$
\begin{array}{rcl}
\epsilon(\partial(c)) & = & \epsilon(d_{k,\{0\}}(c)-d_{k,\{1\}}(c))\\
& = & \epsilon(d_{k,\{0\}}(c))-\epsilon(d_{k,\{1\}}(c))\\
& = & (d^0(c_1),d^1(c_l))-(d^0(c_1),d^1(c_l)) \\
 & = & 0
\end{array}
$$
\end{proof}

The usual construction of the homology of associative algebras \cite{maclane2012homology}, for instance of monoids $\mathcal M$ in the case we are considering here, is done through a free resolution of $R$ considered as the trivial $R\mathcal{M}$-module (the action is the identity), by some free $R\mathcal{M}$-modules, where $R\mathcal{M}$ is the monoid algebra of $\mathcal{M}$. In general, only left (or right) modules would be considered here, but this could as well use  bimodules as we do in this paper.

The monoid algebra is indeed the categorical algebra of the monoid $\mathcal{M}$ considered as the one object category (called $*$), whose morphisms are (left) multiplication by elements of $\mathcal{M}$. And $R$ as a $R\mathcal{M}$-bimodule is just the analogue of our construction $R_0[\mathcal{M}]$ which is generated as an $R$-vector space only by the pair $(*,*)$ and the action on the left and on the right by $R[X]$ is the identity. 

Suppose $\mathcal{M}$ is generated by $\Sigma$, a given finite alphabet. 
Groves and Anick \cite{Groves,Anick} constructed a cubical complex $\mathcal{C(M)}$ out of the directed graph of reductions of a rewrite system, presenting $\mathcal{M}$, with vertices being words in $\Sigma^*$. Indeed, when $\mathcal{M}$ can be presented by a finite canonical rewrite system \cite{lechenadec}, the cubical complex $\mathcal{CM}$ constructs a free resolution by (left) $R\mathcal{M}$-modules of $R$, which is finite dimensional. 
This is Squier's property $FP_\infty$ \cite{squier}. 

Consider the algebra epimorphism: $q: \ R\mathcal{M}\rightarrow R$ which maps $\sum r_i m_i$ (where $r_i$ are coefficients in $R$ and $m_i$ are elements of the monoid $\mathcal{M}$) to $\sum r_i \in R$ and the extension of scalars $q_!: \ { }_{R\mathcal{M}} Mod \rightarrow { }_R Mod$. This extension of scalars is the classical tensor operation with $R$ seen as a trival (left) $R\mathcal{M}$-module, see Section \ref{sec:assoc}, which is not left exact in general (as a left adjoint, it is right exact), and creates homology. 
By the $FP_\infty$ property, this implies that the monoid homology (with $R$-vector space coefficients) is finite dimensional. 
 Hence a monoid $\mathcal{M}$ cannot be presented by a finite canonical rewrite system if the homology of $\mathcal{M}$ is not finite dimensional (as an $R$-vector space). 

We believe that Groves' cubical complex based free resolution of $R$ can be recast using our construction of a precubical set based free resolution of $R$, with critical $n$-pairs generating $n$-cells, this will be developed elsewhere. Note that it needs a substantial generalization to the framework developed in this paper, to deal with modules over non-unital algebras in particular, along the lines of what is sketched in Section \ref{sec:homdir}.

The interest of this link is twofold. First, we can look at resolutions of $R_0[\mathcal{M}]$ and not just $R$, which would give a finer view of the geometry of rewriting ``between two words in $\mathcal{M}$''. Second, instead of using the extension of scalars to $R$, we could use any finer invariants, using extension of scalars to e.g. $R[Y]$, with $Y$ a path in the derivation graph of $\mathcal{M}$ presented by a given finite canonical system, or any filtration such as the one based on the length of derivations, creating instead a potentially insightful one-parameter persistent homology. 
\section{Homology modules for directed spaces and infinite precubical sets}

\label{sec:homdir}

In this section, we see how we would extend our homology bimodules from finitely presented directed spaces (finite precubical sets) to infinite discrete or continuous directed spaces, and try to make some connections between the two. In order to do so, we are restricting the class of directed spaces we are considering to so called po-spaces: 

\begin{definition}[\cite{thebook}]
  \begin{enumerate}
  \item A \look{po-space} is a topological space $X$ with a (global) closed partial order $\leq$ (i.e. $\leq$ is a closed subset of $X \times X$). The unit segment $[0,1]$ with the usual topology and the usual order, is a po-space denoted $\J$ and called the directed interval.
  \item A \look{dimap} $f:X\to Y$ between po-spaces $X$ and $Y$ is a continuous map that respects the partial orders (is non-decreasing).
  \item A \look{dipath} $f:\J\to X$ is a dimap whose source is the interval $\J$.
  \end{enumerate}
\end{definition}

When $C$ is a finite precubical set whose underlying graph is a DAG, the geometric realization $\mid C\mid $ of $C$, is a po-space. 
But the path algebra $R[\mid C\mid ]$, introduced in \cite{goubault2023semiabelian}, and counterpart of $R[C]$ would definitely not be a finite dimensional algebra and the notion of module over the path algebra is more difficult to define, as the path algebra is not unital in general.

Instead, we take advantage of the Morita equivalence between (bi)modules on the (unital) algebra $R[C]$ and the representation of the underlying quiver $FC_{\leq 1}$, to take as definition of a bimodule in the case of the path (non-unital) algebra of a po-space $X$, any functor as below with values in the category $Vect$ of $R$-vector spaces: 
$$
M: \ P_X \times P_X^{op} \rightarrow Vect 
$$
\noindent where $P_X$ is the category whose objects are points $x$ of $X$, and morphisms from $x$ to $y$ are any dipath from $x$ to $y$, modulo increasing reparameterisations (this is called a trace in e.g. \cite{goubault2023semiabelian}). $M$ is a bimodule on the path algebra in the sense that it is the categorical presentation of it as in Definition \ref{def:categoricalrepresentation}. To each pair of points $(x,y)$ in $X$, it associates $M(x,y)$, an $R$-vector space, and to each extension morphism $\langle u,v \rangle : (x,y)\rightarrow (x',y')$, where $u$ is a trace from $x'$ to $x$, and $v$ a trace from $y$ to $y'$, it associates an $R$-vector space homomorphism from $M(x,y)$ to $M(x',y')$. 




Indeed, this point of view allows also to extend the approach we took here to infinite precubical sets. 

\section{Tameness issues}

\label{sec:tameness}

In this section, we show that for particular precubical sets $X$, there is an finite encoding in the sense of \cite{miller2020modules} of the first homology bimodule $HM_1[X]$ making it amenable to a tame, or finite presentation. 

First, we need to recap the definitions of directed homotopy (dihomotopy) and of the fundamental category \cite{thebook} of a directed space.

\begin{definition}
Let $X$ be a directed space and $f$ and $g$ two dipaths from $x \in X$ to $y \in X$. A \look{directed homotopy} (or \look{dihomotopy}) $H$ between $f$ and $g$ is a directed map $H: \ \vec I \times \vec I \rightarrow X$ such that $H(0,.)=f$, $H(1,.)=g$, $H(.,0)=x$ and $H(.,1)=y$. 

Two dipaths are elementary dihomotopic if there exists a dihomotopy between them. \look{Dihomotopy equivalence} is the transitive closure of elementary dihomotopy. Two dipaths $f$ and $g$ are \look{dihomotopic} is short for saying that $f$ and $g$ are dihomotopy equivalent. 
\end{definition}

Now we are in a situation to recap the definition of the fundamental category of a directed space: 

\begin{definition}
Let $X$ be a directed space. The \look{fundamental category} $\dipi{X}$ of $X$ is the category:
\begin{itemize}
\item whose objects are points $x \in X$
\item whose morphisms $[f]: \ x \rightarrow y$ are dihomotopy classes of dipaths $f: \ \vec I \rightarrow X$ from $x$ to $y$
\end{itemize}
\end{definition}

In \cite{components}, a way to reduce the fundamental category $\dipi{X}$ of a po-space $X$ has been defined, through the concept of component category. The main property is that ${\cal C}=\dipi{X}$ enjoys  
Proposition 7 of \cite{apcs} that we recall below (since for a po-space, $\dipi{X}$ is a loop-free category), after introducing Yoneda systems of a category:

\begin{definition}
Given a (small) category {\cal C}, a morphism $\sigma: \ x \rightarrow y$ is a \look{Yoneda morphism} when for each object $z \in {\cal C}$ such that $\mathcal{C}[y,z] \neq \emptyset$, the map $pre(\sigma): 
\mathcal{C}[y,z] \rightarrow \mathcal{C}[x,z]$ which associates to each $\tau \in \mathcal{C}[y,z]$, $pre(\sigma)=\tau \circ \sigma$ is a bijection, and when for each object $z \in \mathcal{C}$ such that $\mathcal{C}[z,x]\neq \emptyset$, the map $post(\sigma): \mathcal{C}[z,x] \rightarrow \mathcal{C}[z,y]$ which associates to each $\tau \in \mathcal{C}[z,x]$, $post(\sigma)=\sigma \circ \tau$ is a bijection. 
\end{definition}

Now, a Yoneda system \cite{components} is a particular set of Yoneda morphisms that enjoys nice ``stability'' properties: 

\begin{definition}
Let $\mathcal{C}$ be a small category, $\Sigma$ a subset of the set of morphisms of $\mathcal{C}$ is a \look{Yoneda system} if and only if:
\begin{itemize}
\item $\Sigma$ is stable under composition
\item $\Sigma$ contains all isomorphisms of $\mathcal{C}$ and is composed of Yoneda morphisms
\item $\Sigma$ is stable under pushouts with any morphism in $\mathcal{C}$
\item $\Sigma$ is stable under pullbacks with any morphism in $\mathcal{C}$
\end{itemize}
\end{definition}

We can now formulate: 

\begin{proposition} 
\label{prop7}
\label{lifting}
Let $\cal C$ be a category in which all endomorphisms are identities (this is true in particular
if $\cal C$ is loop-free, see e.g. \cite{scowls}).
Let $\Sigma$ be any Yoneda system 
on $\cal C$ and $C_1, C_2\subset Ob({\cal C})$ be two objects of ${\cal C}/\Sigma$ such that
  the set of morphisms (in ${\cal C}/\Sigma$) is {finite}.
  Then, for every $x_1\in C_1$ there exists $x_2 \in C_2$ such that the
  quotient map
  $${\cal C}(x_1,x_2)\to {\cal C}/_{\Sigma}(C_1,C_2),\; f\mapsto
  [f]$$
  is {bijective}.
\end{proposition}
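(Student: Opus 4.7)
The plan is to exploit the Yoneda property of $\Sigma$ in order to straighten zig-zags into honest morphisms of $\mathcal{C}$, and then to leverage the finiteness of $\mathcal{C}/\Sigma(C_1,C_2)$ together with the absence of nontrivial endomorphisms to identify a single target object $x_2$ that represents every class.

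First I would recall that each morphism class $[\varphi]\in \mathcal{C}/\Sigma(C_1,C_2)$ is represented by a finite zig-zag of arrows in $\mathcal{C}$, alternating between arrows of $\Sigma$ going backwards and arbitrary arrows going forwards. Because every $\sigma\in\Sigma$ is a Yoneda morphism, the maps $pre(\sigma)$ and $post(\sigma)$ are bijections on the relevant hom-sets. Moreover, the stability of $\Sigma$ under pushouts and pullbacks allows one to replace any ``backwards $\Sigma$-step followed by a forwards step'' by a forwards step followed by a backwards $\Sigma$-step, hence to transport morphisms across $\Sigma$-arrows. Iterating, each zig-zag from $C_1$ to $C_2$ can be straightened into an ordinary morphism $f\colon x_1\to y$ of $\mathcal{C}$ for some $y$ in the component $C_2$.

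Second, the finiteness of $\mathcal{C}/\Sigma(C_1,C_2)$ produces only finitely many targets $y_1,\ldots,y_n$ via this straightening. I would then repeatedly apply the stability of $\Sigma$ under pushouts to amalgamate the $y_i$ into a single common object $x_2\in C_2$: given $\sigma_i\colon y_i\to x_2$ in $\Sigma$, post-composition $post(\sigma_i)$ transports each representative $f_i\colon x_1\to y_i$ into a representative $\sigma_i\circ f_i\colon x_1\to x_2$, and by the Yoneda property this transport is a bijection onto the image in $\mathcal{C}/\Sigma(C_1,C_2)$. This yields surjectivity of $\mathcal{C}(x_1,x_2)\to\mathcal{C}/\Sigma(C_1,C_2)$. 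For injectivity, if $f,g\colon x_1\to x_2$ satisfy $[f]=[g]$, then they are connected by a zig-zag of $\Sigma$-moves; the Yoneda bijections pre- and post-compose with $\Sigma$-arrows into identifications, so that $f$ and $g$ differ (after transporting everything to $x_2$) by an endomorphism of $x_2$ built from $\Sigma$. By hypothesis this endomorphism must be the identity, so $f=g$.

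The main obstacle will be the amalgamation step: producing a \emph{single} $x_2\in C_2$ in which all finitely many straightenings simultaneously land. This is exactly where the finiteness hypothesis on $\mathcal{C}/\Sigma(C_1,C_2)$ and the stability of $\Sigma$ under pushouts are indispensable; without finiteness one could be forced into a transfinite colimit along $\Sigma$, and without pushout stability the common target might not stay in $C_2$. The no-endomorphism hypothesis plays the dual role of ensuring that these amalgamations do not accidentally identify two a priori distinct morphisms, which is what secures injectivity. Once these two points are under control, the bijectivity of the quotient map follows directly from the preservation of hom-sets under Yoneda transport.
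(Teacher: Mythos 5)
This proposition is not proved in the paper you are working from: the text quotes it verbatim as Proposition~7 of \cite{apcs} and then proceeds without giving an argument, so there is no ``paper's own proof'' to compare your attempt against. With that understood, here are remarks on the sketch itself.

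The outline is the right one. A Yoneda system $\Sigma$ is in particular stable under pushouts and pullbacks, which gives a (two-sided) calculus of fractions; this is what lets you straighten a zig-zag from $x_1$ to $C_2$ into a roof $x_1\to z\leftarrow y$ with the backward arrow in $\Sigma$, and hence represent $[\varphi]$ by the honest arrow $x_1\to z$ in $\mathcal C$, since the $\Sigma$-leg becomes an identity in the quotient. Choosing one such representative for each of the finitely many classes in $\mathcal C/\Sigma(C_1,C_2)$ and iterating the Ore condition inside $\Sigma$ to build a cocone over the resulting finitely many targets is exactly the amalgamation you describe, and it stays inside the $\Sigma$-component $C_2$. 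One thing you should record explicitly: whenever you invoke $pre(\sigma)$ or $post(\sigma)$, the Yoneda property only delivers a bijection under the nonemptiness proviso on the relevant hom-set. In your argument this is satisfied because the hom-sets in question already contain a chosen representative, but it needs to be said rather than assumed silently.

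The step that is genuinely off is the injectivity argument. You reduce $[f]=[g]$ to the claim that $f$ and $g$ ``differ by an endomorphism of $x_2$ built from $\Sigma$,'' and then invoke the hypothesis that endomorphisms of $\mathcal C$ are identities. But a zig-zag of $\Sigma$-moves relating two parallel arrows $f,g\colon x_1\to x_2$ is not an endomorphism of $x_2$ in $\mathcal C$, so the no-endomorphism hypothesis does not directly apply to it. The correct and cleaner argument uses the calculus of left fractions you already invoked for the straightening: $[f]=[g]$ forces the existence of a single $\sigma\colon x_2\to w$ in $\Sigma$ with $\sigma\circ f=\sigma\circ g$, and then injectivity of $post(\sigma)$ on the nonempty hom-set $\mathcal C(x_1,x_2)$ — the Yoneda property again — gives $f=g$ outright. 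That is all injectivity requires. The no-endomorphism hypothesis is therefore doing its work elsewhere in the proposition (in ensuring that $\mathcal C/\Sigma$ is well-defined as the quotient being used, and that the Ore amalgamation behaves as claimed); to pin down exactly where, you need to consult the proof in \cite{apcs} directly rather than reconstruct it.
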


Consider a bimodule $M: \ P_X \times P^{op}_X \rightarrow R\mbox{-Vect}$ that factors through a bimodule $M': \ \dipi{X} \times \dipi{X}^{op} \rightarrow R\mbox{-Vect}$, by map $f$, meaning, in plain english, that the linear maps of $R$-vector spaces, images $M(u,v)$ of morphisms of $P_X \times P^{op}_X$ $\langle u,v \rangle$, where $u$ and $v$ are traces, only depend on the dihomotopy class of $u$ and $v$. Then the component categories provide a map: 
$$
\pi: \ \dipi{X}\times \dipi{X}^{op} \rightarrow \dipi{X}/_\Sigma \times (\dipi{X}/_{\Sigma})^{op}
$$
\noindent and a bimodule $N$ on $\dipi{X}/_\Sigma$ such that $$M=(\pi\circ f)_* N=\mathop{\oplus}\limits_{q \in P_{X}\times P_{X}^{op}} N(\pi\circ f(q))$$
\noindent with this coproduct being finite in the case $X=\mid C\mid $ is the geometric realization of a finite precubical set $C$, since in that case, the dihomotopy classes between two points are finitely many (by cubical approximation, \cite{Fajstrup2005DipathsAD}). 
This is the notion of encoding of 
\cite{miller2020modules} of a $\dipi{X}\times \dipi{X}^{op}$-module $M$ by morphism $\pi\circ f$, generalized to a categorical setting (we are considering modules on categories, not just on posets). 
$HM_1[X]$ provides such cases of bimodules $M$, hence the category of components provides an encoding of $HM_1[X]$. It has been shown that for certain classes of precubical sets $X$, in particular those generated by PV terms without loops, $\mid X\mid $ has the property that it has finitely many components \cite{concur2005}, and the bimodule $HM_1[X]$ has then a finite encoding, in the sense of \cite{miller2020modules}, which is a notion of tameness, amenable to actual computations. 

We conjecture all $HM_i[X]$ are tame in that sense, for all $i\geq 2$ and for at least all finite precubical sets which are the semantics of PV terms without loops. 
We also believe the fringe representations of \cite{miller2020homological} should be useful for computational characterizations of the homology bimodules in that case. 

\end{document}

A $n$-polygraph is given by the following structure : 

\begin{center}
\begin{tikzpicture}[scale=1]
\matrix (m) [matrix of math nodes,row sep=3em,column sep=4em,minimum width=2em]
  {
     \scriptstyle \Sigma^*_0 & \scriptstyle \Sigma^*_1 & \ldots & \scriptstyle
\Sigma^*_{n-1} & \mbox{ } \\
     \scriptstyle \Sigma_0 & \scriptstyle \Sigma_1 & \ldots & \scriptstyle \Sigma_{n-1} & \scriptstyle \Sigma_n \\};
  \path[-stealth]
    (m-1-2) edge [out=-170,in=-10] node [above] {$\scriptstyle \overline{s}_0$} (m-1-1)
    (m-1-2) edge [out=170,in=10] node [below] {${\scriptstyle \overline{t}_0}$} (m-1-1)
    (m-1-3) edge [out=-170,in=-10] node [above] {$\scriptstyle \overline{s}_1$} (m-1-2)
    (m-1-3) edge [out=170,in=10] node [below] {${\scriptstyle \overline{t}_1}$} (m-1-2)
    (m-1-4) edge [out=-170,in=-10] node [above] {$\scriptstyle \overline{s}_{n-2}$} (m-1-3)
    (m-1-4) edge [out=170,in=10] node [below] {${\scriptstyle \overline{t}_{n-2}}$} (m-1-3)
(m-2-1) edge node [right] {$\scriptstyle i_0$} (m-1-1)
    (m-2-2) edge [out=135,in=-35] node [above] {$\scriptstyle {s}_0$} (m-1-1)
    (m-2-2) edge [out=115,in=-15] node [below] {${\scriptstyle {t}_0}$} (m-1-1)
(m-2-2) edge node [right] {$\scriptstyle i_1$} (m-1-2)
    (m-2-3) edge [out=135,in=-35] node [above] {$\scriptstyle {s}_1$} (m-1-2)
    (m-2-3) edge [out=115,in=-15] node [below] {${\scriptstyle {t}_1}$} (m-1-2)
(m-2-3) edge node [right] {$\scriptstyle i_2$} (m-1-3)
    (m-2-4) edge [out=135,in=-35] node [above] {$\scriptstyle {s}_{n-2}$} (m-1-3)
    (m-2-4) edge [out=115,in=-15] node [below] {${\scriptstyle {t}_{n-2}}$} (m-1-3)
(m-2-4) edge node [right] {$\scriptstyle i_{n-1}$} (m-1-4)
    (m-2-5) edge [out=135,in=-35] node [above] {$\scriptstyle {s}_{n-1}$} (m-1-4)
    (m-2-5) edge [out=115,in=-15] node [below] {${\scriptstyle {t}_{n-1}}$} (m-1-4)
;
\end{tikzpicture}
\end{center}

where $\overline{s}_k$, $\overline{t}_k$ are the extensions of the source and target
maps $s_k$ and $t_k$, with the usual conditions (see \cite{PMHdR}), and where
$\Sigma^*_i$ denotes the free $k$-category generated by the $k$-polygraph
$(\Sigma_0,\Sigma_1,\ldots,\Sigma_k)$ (with $i_k: \Sigma_k \rightarrow \Sigma^*_k$
the corresponding universal map).

The 1-category presented by such a $n$-polygraph $\Sigma$ is the category
denoted by $\overline{\Sigma}$ and defined as the quotient of 
the free category $\Sigma^*_1$ by the congruence generated by $\Sigma_2$. 

In what follows, we consider $(n,1)$-polygraphs 
i.e. higher categories in which all cells of dimension strictly greater than
1 are invertible. 

\subsection{The fundamental algebra of a pre-cubical set}

We first consider the analogous of the fundamental group, which is actually the analogous of the fundamental category in directed homology: 

\begin{definition}
\label{def:dirpi1}
Let $M$ be a precubical set and $M_{\leq 1}$ the quiver which is its truncation up to dimension 1. The fundamental algebra of $M$ is:
$$
\dipi{M}=R[M]/I
$$
where $I=\langle d^0_0(A)\times d^1_1(A)-d^0_1(A)\times d^1_0(A) \mid A \in M_2\rangle$ is the double-sided ideal generated by the boundary of each two cell of $M$. 
\end{definition}

The ideal $I$ of Definition \ref{def:dirpi1} is admissible in the sens of \cite{assocalg}, as relating paths of length at least 2 (and of bounded length), $(M_{\leq 1},I)$ is called a bound quiver and $\dipi{M}$ is the algebra of the bound quiver $(M_{\leq 1},I)$. 

\begin{remark}
Suppose now $R$ is an algebraically closed field, then we have a form of Eilenberg-Maclane space $K(A,1)$ for any (basic, connected) finite dimensional unital algebra. All these algebras are in linked to finite quivers, see \cite{assocalg}: all such algebras are path algebras of some finite connected quiver, and inversely, all finite connected quivers have their path algebra be such an algebra. 
The finite quiver $Q_A=K(A,1)$ corresponding to an algebra $A$ has as points, the integers 1 to $n$ given any complete set of primitive idempotents of $A$: $\{e_1,\ldots,e_n\}$. Given two points $a$, $b \in (Q_A)_0$, the arrows $\alpha : a \rightarrow b$ are in bijective correspondence with the vectors in a basis of the $R$-vector space $e_a(rad \ A/rad^2 \ A) e_b$.

Lemma 3.6 of \cite{assocalg} shows in particular that if $A=R[\dipi{M}]$, $K(A,1)=M_{\leq 1}$. "Quotients by admissible ideals do not identify vertices/idempotents": this is really the fundamental category, not a retract of it. 
\end{remark}

\subsection{Higher fundamental algebras of a pre-cubical set}

In what follows, we consider resolutions of $R[C_{\leq 1}]$-modules, for any precubical set $C$. 

\begin{lemma}
The $R$-algebra $R_0[C]$ of Definition \ref{def:R0} can be given the structure of a right $R_1[C]$-module. 
\end{lemma}

\begin{proof}
$R_0[X]$ is the algebra whose underlying $R$-vector space is generated by pairs of points $(x,y)$ of $X$ such that there exists a path from $x$ to $y$ in $X$, i.e. has as basis $X\times X$, and whose external multiplication is given by 
$$(x,y) \times (z,t)=\left\{\begin{array}{ll}
(x,t) & \mbox{if $y=z$} \\
0 & \mbox{otherwise}
\end{array}\right.
$$
Let $p$ be a path in $C$, from $x$ to $y$, vertices of the quiver $C_{\leq 1}$. The right action of $p$ on $(u,v)$, pair of points that generate $R_0[X]$ as an $R$-vector space is:
$(u,v)\bullet p=\left\{\begin{array}{ll}
(u,y) & \mbox{if $v=x$} \\
0 & \mbox{otherwise}
\end{array}\right.$
\end{proof}

\begin{lemma}
The $R$-algebra $R_0[C]$ of Definition \ref{def:R0} can be given the structure of a left $R_1[C]$-module. 
\end{lemma}

\begin{proof}
$R_0[X]$ is the algebra whose underlying $R$-vector space is generated by pairs of points $(x,y)$ of $X$ such that there exists a path from $x$ to $y$ in $X$, and whose external multiplication is given by 
$$(x,y) \times (z,t)=\left\{\begin{array}{ll}
(x,t) & \mbox{if $y=z$} \\
0 & \mbox{otherwise}
\end{array}\right.
$$
Let $p$ be a path in $C$, from $x$ to $y$, vertices of the quiver $C_{\leq 1}$. The left action of $p$ on $(u,v)$, pair of points that generate $R_0[X]$ as an $R$-vector space is:
$p \bullet (u,v)=\left\{\begin{array}{ll}
(x,v) & \mbox{if $u=y$} \\
0 & \mbox{otherwise}
\end{array}\right.$
\end{proof}

This shows that $R_0[X]$ has a $R_1[C]$-bimodule structure. 

\begin{lemma}
The morphism of $R$-algebras $\partial: R_1[X] \rightarrow R_0[X]$ defined in Lemma \ref{lem:augmentation} 
is an epimorphism of $R_1[X]$-module. 
\end{lemma}

\begin{proof}
Obvious. 
\end{proof}

---

SAVE

---

\section{Directed spaces and higher trace algebras}

Let $\I$ be the standard directed interval, i.e. the directed space induced by the po-space $([0,1],\leq)$, and $I$ be the directed space $[0,1]$ where all paths are directed. 
We start with a cubical construction, and show an equivalent simplicial construction in the next section.

\subsection{A cubical construction}

\todo[inline]{A refaire, version modules et pas algebres. Les algebres sont les coefficients.}

\begin{definition}
Let $X$ be a directed space, $i\geq 1$ an integer. 
We call $p$ an $i$-trace, or trace of dimension $i$ of $X$, from $p_s$ to $p_t$, a directed map 
$$p: \I \times I^{i-1} \rightarrow X$$ 
\begin{itemize}
\item modulo continuous increasing reparameterization in the first coordinate, and continuous reparameterizations in the other coordinates 
\item with $p(0,s_2,\ldots,s_i)$ not depending on $s_2,\ldots,s_i$ and equal to $p_s$ 
\item and $p(1,s_2,\ldots,t_i)$ constant as well, equal to $p_t$
\end{itemize}
We write $T_i(X)$ for the set of $i$-traces in $X$.
\end{definition}

\begin{definition}
A $i$-trace $p: \ I \times I^{i-1} \rightarrow X$ is degenerate if there exists an integer $i=2,\ldots, i-1$, such that $p(s_1,s_2, \ldots ,s_{i-1})$ does not depend on $s_i$.
\end{definition}

\begin{definition}
Let $X$ be a directed space. We define a sequence of categories $\T_i(X)$, $i=1,\ldots$ as follows: 
\begin{itemize}
    \item $\T_i(X)$ has as objects, all points of $X$ 
    \item and as morphisms from $s$ to $t$, all $i$-traces from $s$ to $t$
     \item the composition is the obvious concatenation of such $i$-traces, which is associative because everything is taken modulo reparameterization: 
     $$
     p * q(s_1,\ldots, s_i)=\left\{\begin{array}{ll}
     p(2s_1,s_2,\ldots,s_i) & \mbox{if $s_1\leq \frac{1}{2}$} \\
     q(2s_1-1,s_2,\ldots,s_i) & \mbox{if $\frac{1}{2} \leq s_1 \leq 1$}
     \end{array}\right.
     $$
\end{itemize}
\end{definition}

\begin{definition}
A $i$-trace $p: \ I \times I^{i-1} \rightarrow X$ is locally  degenerate if there exists a degenerate $i$-trace $q$, $i$-traces $u$ and $v$ such that $p=u*p*v$.
\end{definition}

\begin{definition}
Let $X$ be a directed space and $R$ a ring. We define a sequence of $R$-algebras $R_i[X]$, which we call the $i$-trace algebra, equal to the category algebras $R[\T_i(X)]$. 
\end{definition}

Constant $i$-traces $e_a$ on point $a \in X$ give rise to orthogonal idempotents in the algebra $R_i[X]$. 

\begin{lemma}
\label{lem:submodpath}
Let $X$ be a directed space. For all $a, b \in X$, $e_a R_i[X] e_b$ is the sub-module generated by $T_i(X)(a,b)$, the set of $i$-traces from $a$ to $b$.  
\end{lemma}

\begin{proof}
Obvious, see Section \ref{sec:idempotents}.
\end{proof}

\begin{lemma}
The set $D_i(X)$ of locally degenerate $i$-traces is a two-sided ideal of $R_i[X]$. 
\end{lemma}

\begin{proof}
Obvious. 
\end{proof}

\begin{definition}
\label{def:boundaries}
Let $X$ be a directed space. We define two sequences of boundary operators $d^0_{j}$ and $d^1_k$, $j=0,\ldots,i-1$ and $l=0,\ldots,i-1$ from $T_{i+1}(X)$ to $T_i(X)$. They act on $(i+1)$-traces from $p_s$ to $p_t$, giving $i$-traces again from $p_s$ to $p_t$, by, for $p: \I \times I^i \rightarrow X$ an $(i+1)$-trace: 
$$d^0_j(p)(s_1,\ldots,s_{i+1})=p(s_1,s_2,\ldots,0_{j+2},\ldots,s_{i})$$
$$d^1_k(p)(s_1,\ldots,s_{i+1})=p(s_1,s_2,\ldots,1_{k+2},\ldots,s_{i})$$
\noindent inserting respectively a 0 in position $j+2$ (resp. a 1 in position $k+2$) within tuple $(s_1,\ldots,s_{i+1})$
\end{definition}

\begin{remark}
The fact that the image of these boundary operators give $i$-traces is due to the fact that, indeed, $d^0_j(p)(0,\ldots)$ is $p_s$, 
$d^1_j(p)(1,\ldots)$ is $p_t$.

Of course the definition above is given for one representative modulo reparameterization, but does not depend on the representative, up to reparameterization. 
\end{remark}

\begin{definition}
\label{def:precub}
A precubical set is a graded set $M=(M_i)_{i \in \N}$ with two families
of operators:
$$
d^0_i, d^1_i: \ M_{n} \rightarrow M_{n-1}
$$ 
($i, j=0,\ldots,n-1$) satisfying the relations
$$
\begin{array}{ccc}
d^k_i \circ d^l_j & = & d^l_{j-1} \circ d^k_i
\end{array}
$$ 
($i < j$, $k, l = 0,1$)
\end{definition}

\begin{lemma}
\label{eq:precub}
Let $X$ be a directed space. The boundary operators of Definition \ref{def:boundaries} give the sequence of sets of higher traces $(T_{i+1}(X))_{i\geq 0}$ of $X$ the structure of a precubical set, that we denote by $C(X)$. Elements of dimension $i\geq 0$ are $(i+1)$-traces in this structure.
\end{lemma}

\begin{proof}
Obvious.
\end{proof}

\begin{lemma}
Let $X$ be a directed space. The precubical set $C(X)$ is the singular precubical set of the 0-trace space $X^{\I}$ (with the compact-open topology).
\end{lemma}

\begin{proof}
Direct consequence of curryfication. 
\end{proof}


\begin{lemma}
Let $X$ be a directed space. The boundary operators of Definition \ref{def:boundaries} induce morphisms between $R$-algebras $R_{i+1}[X]$ and $R_i[X]$. 
\label{lem:boundalg}
\end{lemma}

\begin{proof}
It is an easy calculation to see that $d^k_l(p \times  q)=d^k_l(p)\times d^k_l(q)$. The induced morphism of $R$-algebra is the linearization of $d^k_l$ on the underlying $R$-vector space of $(i+1)$-traces.
\end{proof}

\begin{lemma}
\label{lem:boundary}
Let $\partial$ be the linear operator defined from $R_{i+1}[X]$ to $R_{i}[X]$, on the basis of $R_{i+1}[X]$ by:
$$\partial(p)=\sum\limits_{j=0}^{i-1} (-1)^j d^0_j(p)-\sum\limits_{k=0}^{i-1} (-1)^k d^1_{k}(p)
$$
Then $\partial$ is a map of $R$-algebras and $\partial \circ \partial=0$
\end{lemma}

\begin{proof}
The fact that $\partial$ is a map of $R$-algebras is a direct consequence of Lemma \ref{lem:boundalg}. The fact that $\partial \circ \partial=0$ is well known. 
\end{proof}

\begin{lemma}
The map $\partial$ defined in Lemma \ref{lem:boundary} maps the two-sided ideal $D_{i+1}(X)$ of $R_{i+1}[X]$ into the two-sided ideal $D_i(X)$ of $R_{i}[X]$. 
\end{lemma}

\begin{proof}
It is known \cite{Kan,Serre,Massey} that $\partial$ maps degenerate $i$-cubes in $X^{\I}$, that is degenerate $(i+1)$-traces, to degenerate $(i-1)$-cubes in $X^{\I}$, that is, degenerate $i$-traces. As $\partial$ is in fact a morphism of algebras, see Lemma \ref{lem:boundalg}, this implies that $\partial$ maps $D_{i+1}(X)$ into $D_i(X)$.  
\end{proof}

\begin{definition}
Let $R_0[X]$ be the algebra whose underlying $R$-vector space is generated by pairs of reachable points of $X$, i.e. by $(x,y)\in X\times X$ such that there exists a 1-path (or equivalently, a path) from $x$ to $y$, and whose external multiplication is given by 
$$(x,y) \times (z,t)=\left\{\begin{array}{ll}
(x,t) & \mbox{if $y=z$} \\
0 & \mbox{otherwise}
\end{array}\right.
$$
\label{def:R0}
\end{definition}

\begin{lemma}
Let $\delta: R_1[X] \rightarrow R_0[X]$ defined by $\partial(p)=(x,y)$ for $p$ path from $x$ to $y$ in $X$ is an epimorphism of $R$-algebras. 
\label{lem:augmentation}
\end{lemma}

\begin{proof}
Obvious. 
\end{proof}


\begin{definition}
The homology algebra of a directed space $X$ is defined as the the semi-abelian homology in the category $Alg$ of the chain complex:
$$
\ldots \rightarrow^{\partial} R_{i+1}[X]/D_{i+1}(X) \rightarrow^{\partial} R_i[X]/D_i(X) \rightarrow^{\partial} \ldots \rightarrow^{\partial} R_1[X] \rightarrow^{0} R_0[X] \rightarrow^0 R
$$
\noindent i.e. the $n$th homology algebra $H_n[X]$ is
$$H_n[X]=Ker \ \partial / Im \partial$$
\noindent for $n\geq 1$ and 
$$H_0[X]=R_0[X]$$
\end{definition}

\begin{theorem}
Let $X$ be a directed space, $a, b \in X$, $R$ a ring. Then
$e_a H_n[X] e_b$ is the standard $n$th homology of the trace space from $a$ to $b$. 
\end{theorem}

\begin{proof}
By Lemma \ref{lem:submodpath}, 
First, it is easy to see that 
$$e_a H_n[X] e_b=e_a (Ker \ \partial) e_b/e_a (Im \ \partial) e_b$$ 
\noindent as $R$-vector spaces, since $\partial$ maps $(i+1)$-traces from $a$ to $b$ to $i$-traces from $a$ to $b$. 
(...)
\end{proof}

\begin{proposition}
Let $X$ and $Y$ be two directed spaces. Then if $X$ and $Y$ are dihomeomorphic that $H_n[X]$ is isomorphic, as an $R$-algebra, to $H_n[Y]$, for all $n \in \N$.
\end{proposition}

\begin{proof}

\end{proof}

\subsection{A simplicial construction}

We can do exactly the same with simplices in $X^{\I}$, giving a simplicial object in the semi-abelian category of algebras, see \cite{VanderLinden}.

\begin{definition}
\label{def:standardsimplex}
The standard simplex of dimension $n$ is $$
\Delta_n=\left\{(t_0,\ldots,t_n) \mid \forall i\in \{0,\ldots,n\}, \ t_i \geq 0 \mbox{ and } \sum\limits_{j=0}^n t_j=1\right\}
$$
For $n \in \N$, $n
\geq 1$ and $0 \leq k \leq n$, the $k$th ($n-1$)-face (inclusion) of the topological $n$-simplex is the subspace inclusion
$$\delta_k: \ \Delta_{n-1} \rightarrow \Delta_n$$
induced by the inclusion
$$(t_0,\ldots,t_{n-1}) \rightarrow (t_0,\ldots,t_{k-1},0,t_k,
\ldots, t_{n-1})$$
For $n \in \N$ and $0\leq k < n$, the 
$k$th degenerate 
$n$-simplex is the surjective map
$$
\sigma_k: \ \Delta_n \rightarrow \Delta_{n-1}$$
\noindent induced by the surjection: 
$$(t_0,\ldots,t_n)\rightarrow (t_0,\ldots,t_{k}+t_{k+1},\ldots,t_n)$$
\end{definition}

\begin{definition}
Let $X$ be a directed space, $i\geq 1$ an integer. 
We call $p$ an $i$-trace, or trace of dimension $i$ of $X$, from $p_s$ to $p_t$, a directed map 
$$p: \I \times \Delta_{i-1} \rightarrow X$$ 
\begin{itemize}
\item modulo continuous increasing reparameterization in the first coordinate, and continuous reparameterizations in the other coordinates 
\item with $p(0,s_1,\ldots,s_i)$ not depending on $s_2,\ldots,s_i$ and equal to $p_s$ 
\item and $p(1,s_1,\ldots,t_i)$ constant as well, equal to $p_t$
\end{itemize}
We write $T_i(X)$ for the set of $i$-traces in $X$.
\end{definition}

\begin{definition}
Let $X$ be a directed space. We define a sequence of categories $\T_i(X)$, $i=1,\ldots$ as follows: 
\begin{itemize}
    \item $\T_i(X)$ has as objects, all points of $X$ 
    \item and as morphisms from $s$ to $t$, all $i$-traces from $s$ to $t$
     \item the composition is the obvious concatenation of such $i$-traces, which is associative because everything is taken modulo reparameterization: 
     $$
     p * q(s_0,\ldots, s_i)=\left\{\begin{array}{ll}
     p(2s_0,s_1,\ldots,s_i) & \mbox{if $s_0\leq \frac{1}{2}$} \\
     q(2s_0-1,s_1,\ldots,s_i) & \mbox{if $\frac{1}{2} \leq s_0 \leq 1$}
     \end{array}\right.
     $$
\end{itemize}
\end{definition}


\begin{definition}
Let $X$ be a directed space and $R$ a ring. We define a sequence of $R$-algebras $R_i[X]$, which we call the $i$-trace algebra, equal to the category algebras $R[\T_i(X)]$. 
\end{definition}

Constant $i$-traces $e_a$ on point $a \in X$ give rise to orthogonal idempotents in the algebra $R_i[X]$. 

\begin{lemma}
\label{lem:submodpath}
Let $X$ be a directed space. For all $a, b \in X$, $e_a R_i[X] e_b$ is the sub-module generated by $T_i(X)(a,b)$, the set of $i$-traces from $a$ to $b$.  
\end{lemma}

\begin{proof}
Obvious, see Section \ref{sec:idempotents}.
\end{proof}



\begin{definition}
\label{def:boundaries}
Let $X$ be a directed space. We define:
\begin{itemize}
\item boundary operators $\delta_{j}$, $j=0,\ldots,i-1$ from $T_{i+1}(X)$ to $T_i(X)$. They act on $(i+1)$-traces from $p_s$ to $p_t$, giving $i$-traces again from $p_s$ to $p_t$, by, for $p: \I \times \Delta_i \rightarrow X$ an $(i+1)$-trace: 
$$\delta_j(p)=p\circ (Id,d_j)$$
\noindent with $d_j: \Delta_{i-1} \rightarrow \Delta_i$ is the inclusion map of Definition \ref{def:standardsimplex}
\item degeneracy operators $s_k$, $k=0,\ldots,i-1$ from $T_i(X)$ to $T_{i+1}(X)$ by, for $p: \I \times \Delta_{i-1} \rightarrow X$ an $i$-trace:
$$\sigma_k(p)=p\circ (Id,s_k)$$
\noindent where $s_k: \Delta_{i+1}\rightarrow \Delta_{i}$ is the surjective map of Definition \ref{def:standardsimplex}
\end{itemize}
\end{definition}

\begin{remark}
The fact that the image of these boundary operators give $i$-traces is due to the fact that, indeed, $\delta_j(p)(0,\ldots)$ is $p_s$, 
$\sigma_k(p)(1,\ldots)$ is $p_t$.

Of course the definition above is given for one representative modulo reparameterization, but does not depend on the representative, up to reparameterization. 
\end{remark}

\begin{lemma}
\label{lem:simpobjalg}
Let $X$ be a directed space. The boundary and degeneracy operators of Definition \label{def:boundsimplex} give the sequence of algebras $R[T_{i}(X)]$ the structure of a simplicial object in the category of algebras. The simplicial set $C(X)$ is the singular simplicial set of the 0-trace space $X^{\I}$ (with the compact-open topology).
\end{lemma}

\begin{proof}
Direct consequence of curryfication. 
\end{proof}


\begin{definition}
The homology algebra of a directed space $X$ is defined as the the semi-abelian homology \cite{VanderLinden} in the category $Alg$ of the simplicial object in $Alg$ defined in Lemma \ref{lem:simpobjalg}, shifted by one. 
For consistency purposes, we also set 
$$H_0[X]=R_0[X]$$
\end{definition}

\begin{theorem}
Let $X$ be a directed space, $a, b \in X$, $R$ a ring. Then
$e_a H_n[X] e_b$ is the standard $n$th homology of the trace space from $a$ to $b$. 
\end{theorem}








\begin{definition}
A pointed algebroid is a pointed category $(C,c)$ ($c$ being a distinguished object of $C$), enriched in the category of $R$-vector spaces. Therefore it is a category such that: 
\begin{itemize}
    \item for each $c \in C_0$, object of $C$, we write $e_c$ corresponding to the identity on $c$ within the $R$-vector space $C(c,c)$
    \item for each $c,d, e \in C_0$, $C(c,d)$ and $C(d,e)$ are $R$-vector spaces and the composition operation $\circ: \ C(c,d)\times C(d,e) \rightarrow C(c,e)$ is bilinear
\end{itemize}
\end{definition}

\begin{definition}
Let $(C,c)$ and $(D,d)$ be two pointed algebroids. A morphism $F: \ (C,c) \rightarrow (D,d)$ is an additive functor from $C$ to $D$ preserving the distinguished objects, i.e. 
\begin{itemize}
    \item for all $r \in R$, $f, g \in C(c,d)$, $F(r f+g)=rF(f)+F(g)$
    \item $F(c)=d$
\end{itemize}
\end{definition}

We write $Alg^R_*$ for the category of pointed algebroids over $R$. 

\begin{lemma}
$Alg^R_*$ has a zero object.
\end{lemma}

\begin{proof}
This is the pointed category with one object $(*,*)$ and morphisms $[s]: * \rightarrow *$, $s \in R$ such that $r[s]+[s']=[rs+s']$ for all $r \in R$ and $[s']\circ [s]=[s's]$. Indeed, for all $(C,c)$ pointed algebroid, we have a unique map $h$ from $(C,c)$ to $(*,*)$: $h(x)=*$ for all $x \in C_0$ and $h(Id_x)=[1]$, and for all $f$ morphism of $C$ from . 
On y arriverait dans les loop-free categories mais pas la!

Also, there is a unique map $g$ from $(*,*)$ to $(C,c)$ in the category $Alg^R_*$: $h(*)=c$ and $h([s])=s Id_c$ for all $[s] \in (*,*)$.
\end{proof}

\section{Pointed regular semi-algebroids}

\subsection{Semi-categories and regular semi-categories}

\begin{definition}[\cite{Regularsemicat}]
Semi-category
\end{definition}

\begin{definition}[\cite{Regularsemicat}]
Functors between semi-categories
\end{definition}

\begin{definition}[\cite{Regularsemicat}]
Natural transformations between functors of semi-categories
\end{definition}

Let $V$ be a category (which can be seen as a particular semi-category). 

\begin{definition}[\cite{Regularsemicat}]
Given a semi-category $G$, the category of presheaves on $G$ is the category $[G^{op}, V]$ of functors of semi-categories. The functor
$$
Y_G: \ G \rightarrow [G^{op},V]
$$
given by $Y_G(A)=G(-,A)$, 
is called the Yoneda functor of $G$.
\end{definition}

\begin{definition}[\cite{Regularsemicat}]
Consider a semi-category $G$, a category $C$ and functors of semi-categories $H : \ G^{op} \rightarrow V$, $F: \ G \rightarrow C$. The colimit $H*F$ of $F$ weighted by $H$, when it exists, is a pair $(L \in C, \lambda): \ H \rightarrow C(F(-), L)$,
inducing for every object $C \in C$ natural isomorphisms in $V$:
$$
Nat(H,C(F(-),C))
$$
\end{definition}

\begin{definition}[\cite{Regularsemicat}]
Let $G$ be a semi-category and $F : \ G^{op} \rightarrow V$ a contravariant functor of semi-categories. The functor $F$ is called a regular presheaf on $G$ when the canonical morphism $F*Y_g \rightarrow F$ is an isomorphism.
\end{definition}

\begin{definition}[\cite{Regularsemicat}]
A semi-category is regular when the canonical morphism $G: \ G^{op} \otimes G \rightarrow V$ is regular.
\end{definition}

\subsection{Semi-algebroids}

Let $R$ be a commutative ring. 
We recall that $Mod^R$, the category of $R$-vector spaces, is complete, co-complete, and abelian. 

\begin{definition}
A pointed regular semi-algebroid, or spralg, is a pair $(C,c)$ of a regular semi-category $C$ together with an object $c \in C$, enriched in the category of $R$-vector spaces. In particular, it is such that 
    for each $c,d, e \in C_0$, $C(c,d)$ and $C(d,e)$ are $R$-vector spaces and the composition operation $\circ: \ C(c,d)\times C(d,e) \rightarrow C(c,e)$ is bilinear.
\end{definition}

In a spralg, all $R$-vector spaces $C(d,d)$, $d \in C$, can be given the structure of an non-unital algebra, by $f \times g=g\circ f$, for all $f, g \in C(d,d)$.

\todo[inline]{Remark that if $C(d,d)$ is a finite dimensional $R$-algebra, then it has a global idempotent, that we write $e_d$.}


\begin{definition}
Let $(C,c)$ and $(D,d)$ be two pointed semi-algebroids. A morphism $F: \ (C,c) \rightarrow (D,d))$ is an additive functor from $C$ to $D$, preserving the distinguished objects:
\begin{itemize}
    \item 
for all $r \in R$, $f, g \in C(c,d)$, $F(r f+g)=rF(f)+F(g)$.
    \item $F(c)=d$
    \end{itemize}
\end{definition}

We write $SPRAlg^R_*$ for the category of pointed regular semi-algebroids over $R$. 

\begin{lemma}
$SPRAlg^R_*$ has a zero object. 
\end{lemma}

En general il ne va pas falloir prendre des algebres regulieres, du coup sur les modules correspondants, on n'aura pas toujours les bonnes proprietes (sauf qu'on on les appliquera aux trace algebroids). 

Par contre, du coup, la categorie des "non-unitary" (sans imposer 1.m=m) modules ne va pas etre abelienne...

\begin{proof}
This is the pointed regular semi-algebroid $(*,*)$ which has one object $*$ and $*(*,*)=0$, the zero object in $Mod^R$. It is not regular since it is just the non-unital algebra 0, which is not regular: there is indeed almost never an isomorphism between $M\otimes_* *$ and $M$.
A *-module is any $R$-vector space...

Indeed, consider any map $h$ from any pointed semi-algebroid $(C,c)$ to $(*,*)$. Then necessarily, $h(c)=*$, $h(d)=*$ for all $d \in C$ and $h(f)=0$ for all $f$ morphism in $C$. This is a well defined morphism in $SAlg^R_*$.
Nnow consider any map $h$ from $(*,*)$ to any pointed semi-algebroid $(C,d)$. Then necessarily, $h(*)=c$ and $h(0)=0$, which is a well defined morphism in $SAlg^R_*$.
%
%
%
\end{proof}

\begin{lemma}
$SPRAlg^R$ has products.
\end{lemma}

\begin{proof}

\end{proof}

\begin{lemma}
$SPRAlg^R$ has coproducts.
\end{lemma}

\begin{proof}
Il y a un pb avec les coproduits maintenant que c'est pointe? (peut-etre pas)
\end{proof}

\begin{lemma}
$SPRAlg^R$ has equalizers and kernels. 
\end{lemma}

\begin{proof}
As we have equalizers and a zero object, we have kernels. 
\end{proof}

\begin{lemma}
$SPRAlg^R$ has co-equalizers and co-kernels. 
\end{lemma}

\begin{proof}

\end{proof}

\subsection{Modules over $SPRAlg^R$}

We use: 
Corollary4.3 The category of regular presheaves on a regular semi-category is complete and cocomplete.

\section{Algebroids}

\begin{definition}
An algebroid is a category $C$ enriched in the category of $R$-vector spaces. Therefore it is a category such that: 
\begin{itemize}
    \item for each $c \in C_0$, object of $C$, we write $e_c$ corresponding to the identity on $c$ within the $R$-vector space $C(c,c)$
    \item for each $c,d, e \in C_0$, $C(c,d)$ and $C(d,e)$ are $R$-vector spaces and the composition operation $\circ: \ C(c,d)\times C(d,e) \rightarrow C(c,e)$ is bilinear
\end{itemize}
\end{definition}

\begin{definition}
Let $C$ and $D$ be two algebroids. A morphism $F: \ C \rightarrow D$ is an additive functor from $C$ to $D$ 
i.e., 
for all $r \in R$, $f, g \in C(c,d)$, $F(r f+g)=rF(f)+F(g)$
\end{definition}

We write $Alg^R$ for the category of algebroids over $R$. 

\begin{lemma}
$Alg^R$ is quasi pointed. 
\end{lemma}

\begin{proof}
Let $*$ be the algebroid with one object * and hom set reduced to the identity on *, with the $R$-vector space structure that identifies it with the 0 of the $R$-vector space. It is an algebroid indeed, which is actually the 0 algebra, seen as a one point algebroid. This is indeed the terminal object in the category of algebroids: if $C$ is any algebroid, there is a unique map from $C$ to $*$ which maps all objects of $c$ onto $*$ and all morphisms of $C$ onto $0$. 

Now $Alg^R$ has an initial object, which is the empty algebroid $\emptyset$, and obviously the unique map $\emptyset \rightarrow *$ is a monomorphism in $Alg^R$. 
\end{proof}

\begin{lemma}
$Alg_R$ has products. 
\end{lemma}

\begin{proof}
The product $A$ of two algebroids $A_1\times A_2$ has as objects the pairs $(a_1,a_2)$ with $a_1$ object of $A_1$ and $a_2$, object of $A_2$, and $A((a_1,a_2),(b_1,b_2))$ is the (bi)product of the $R$-vector spaces $A_1(a_1,b_1)$ with $A_2(a_2,b_2)$. This generalizes to arbitrary products. 
\end{proof}

\begin{lemma}
$Alg^R$ has equalizers. 
\end{lemma}

\begin{proof}

\end{proof}

Note that as $Alg^R$ has products as well, the kernel $Ker(f)$ for $f: \ A \rightarrow B$ can be defined as the following pullback: 

\begin{center}
\begin{tikzcd}
Ker(f) \arrow[r] \arrow[d,"ker(f)" left]
\arrow[dr, phantom, "\scalebox{1.5}{$\lrcorner$}" , very near start, color=black]
& * \arrow[d] \\
A \arrow[r, "f"] & B \\
\end{tikzcd}
\end{center}
But this will not be the notion of kernel we will need for defining our homology theory. 

\begin{lemma}
$Alg^R$ has co-products
\end{lemma}

\begin{proof}
The product $A_1 \times A_2$ of two algebroids  $A_1$ and $A_2$ is the algebroid $A$ which has as objects, the disjoint union of objects of $A_1$ with objects of $A_2$. The $R$-vector spaces from $a_1$ to $b_1$ in $A$, with $a_1, b_1 \in A_1$ is $A_1(a_1,b_1)$, from $a_2$ to $b_2$ in $A$, with $a_2, b_2 \in A_2$ is $A_2(a_2,b_2)$, and otherwise, is empty. 
This generalizes to arbitrary coproducts. 
\end{proof}

Indeed, $Alg^R$ is far from being an abelian category, it has no zero object (but this is a minor point), and coproducts are certainly very different from products, so it has no biproduct. 

\subsection{As a semi-exact category}

Let $\cal C$ be a category. An \emph{ideal} of $\cal C$ is a set of morphisms stable under left and right compositions by any (composable) 
morphism of $\cal C$.

Let $N$ be an ideal of $\cal C$. We call the morphisms in $N$, the \emph{null morphisms}. A \emph{null object} is an object of $\cal C$ whose
identity is null.

We say that $N$ is \emph{closed} if every null morphism factorises through a null object i.e. for every $f: A \rightarrow B \in N$, there exists
a null object $C$ and two morphisms $g: A \rightarrow C$ and $h: C \rightarrow B$ such that $f = h \circ g$.\\
The \emph{kernel} (with respect to $N$) of a morphism $f: A \rightarrow B$ of $\cal A$ is characterized (if it exists) up to isomorphism by the 
following property: 
\begin{itemize}
	\item $ker ~ f: Ker ~ f \rightarrow A$ such that $f \circ ker f \in N$
	\item for all $g: C \rightarrow A$ such that $f \circ g \in N$, there exists a unique $h: C \rightarrow Ker ~ f$ such that $g = ker ~ f \circ h$
\end{itemize}
We define dually, the \emph{cokernel}.

A \emph{semi-exact category} is a pair $({\cal C}, N)$ where $N$ is a closed ideal of the category $\cal C$ such that every morphism of $\cal C$
has a kernel and a cokernel with respect to $N$.

\begin{lemma}
$Alg^R$ is semi-exact with null object $*$.
\end{lemma}

\begin{proof}
For all $f: \ A \rightarrow B$, $ker(f): \ Ker(f) \rightarrow A$ is defined by:
\begin{itemize}
    \item Objects of $Ker(f)$ are the objects of $A$
    \item $Ker(f)(a,b)$ is the $R$-vector space which is the classical kernel $Ker \ [f]^b_a$ of the $R$-linear map $[f]_a^b$ induced by $f$ from $A(a,b)$ to $B(f(a),f(b))$
    \item $ker(f)$ is the functor from $Ker(f)$ to $A$ that is the identity on objects, and which is the classical kernel map in the category of $R$-vector spaces $ker \ [f]^b_a: \ Ker \ [f]^b_a \rightarrow A(a,b)$, on the hom sets. 
\end{itemize}
Similarly for cokernels. 
\end{proof}

We call \emph{normal mono} (resp. \emph{normal epi}), a morphism which is the kernel (resp. the cokernel) of a morphism.

\begin{lemma}
\label{lem:normalmonos}
Normal monos $f: \ A \rightarrow B$ are the additive functors that:
\begin{itemize}
\item are bijections on objects
\item induce injective $R$-linear maps $[f]^{a'}_a : \ A(a,a') \rightarrow B(f(a),f(a'))$ for all objects $a$, $a'$ in $A$
\item such that the collection $A(a,a')$ is a two-sided ideal of $A$
\end{itemize}
\end{lemma}

\begin{proof}
Obvious.
\end{proof}

\begin{lemma}
Normal epis $f: \ A \rightarrow B$ are the additive functors that: 
\begin{itemize}
\item $f$ is a bijection on objects
\item $f$ induces surjective $R$-linear maps $[f]^{a'}_a: \ A(a,a') \rightarrow B(f(a),f(a'))$ for all objects $a$, $a'$ in $A$
\item (...)
\end{itemize}
\end{lemma}

\begin{proof}
Obvious.
\end{proof}

We call \emph{image} of a morphism $f$, $im~  f = ker ~ cok~  f$ and \emph{coimage}, $coim ~ f = cok ~ ker ~ f$.

\subsubsection{As a homological category}

Now, to define homology, we have to be able to talk about sub-quotient as in the case of abelian groups i.e. if $K\subseteq H\subseteq G$ are abelian groups, we can define $H/K$. This is not the case in general groups even if $H$ and $K$ are normal sub-groups of $G$. 

\begin{definition}[\cite{Grandis1}]
We say that a morphism is \emph{exact} if it factories as $n\circ q$ with $q$, a normal epi and $n$, a normal mono.\\
A semi-exact category $({\cal C}, N)$ is said to be \emph{homological} if:
\begin{itemize}
	\item normal monos and normal epis are stable under composition
	\item if $m: B \rightarrow A$ is a normal mono and $q: A \rightarrow C$ is a normal epi with $m \geq ker ~q$ in $Sub(A)$ (i.e. there 
	exists $k$, which is unique and monic, such that $ker ~ q = m \circ k$) then $q \circ m$ is exact. 
	\end{itemize}
If $m: M \rightarrow A$ and $n: N \rightarrow A$ are two normal monos with $m \geq n$, and if $q = cok ~ n$, the object $coim ~ q \circ m$ 
(isomorph to $im ~ q \circ m$) defined up to isomorphism will be called a \emph{sub-quotient} of $A$ induced by $m \geq n$ and written $M/N$.
\end{definition}

\begin{lemma}
$Alg^R$ is homological.
\end{lemma}

\begin{proof}
Normal monos $f: \ A \rightarrow B$ and $g: \ B \rightarrow C$ are isos on objects, hence there composite $g\circ f$ is isos on objects. On each $R$-vector space $A(a,a')$, $[g\circ f]^{a'}_a: \ A(a,a') \rightarrow C(g\circ f(a),g \circ f(a'))$ is the composite of injective $R$-linear maps $[g]^{f(a')}_{f(a)}$ with $[f]^{a'}_a$, hence is an injective $R$-linear map. This shows that $g \circ f$ is a normal mono, by Lemma \ref{lem:normalmonos}. 
Similarly for normal epis. 

Suppose $m: \ B \rightarrow A$ is a normal mono, hence $m$ is iso on objects and induces injective $R$-linear maps $[m]^{b'}_b: \ B(b,b') \rightarrow C(m(b),m(b'))$. 
%
Suppose we are also given $q: \ A \rightarrow C$ a normal epi, hence iso on objects and inducing surjective maps $[q]^{a'}_a: \ A(a,a')\rightarrow C(q(a),q(a'))$. 

Now suppose $m \geq ker(q)$ in $Sub(A)$, i.e. there exists $k$ a unique mono such that $ker(q)=m\circ k$. Then we prove that $q \circ m$ is exact, that is, we find $u$ normal mono and $v$ normal epi such that $q \circ m=u \circ v$. 

We are defining $v: \ B \rightarrow Im (q \circ m)$ which is just $q \circ m$, and $u: \ Im (q \circ m) \rightarrow C$ is the inclusion. Obviously, $v$ is an iso on objects and is surjective onto its image, on all the $R$-vector spaces $B(b,b')$. Obviously as well, 
$u$ is an iso on objects and is injective on each hom sets, so is a normal mono. 
\end{proof}

\section{As a modular category}

The set of all normal subobjects $N$ of an algebroid $A$ is a lattice:
\begin{itemize}
\item whose order is inclusion, $N\leq M$ if and only if for all $a$, $b$ objects of $A$, $N(a,b)\subseteq M(a,b)$ as $R$-vector spaces,
\item meet is intersection: $N \cap M$ has a objects, all objects of $A$, and as hom sets,
the $R$-vector spaces $N(a,b)\cap M(a,b)$.
\item join is union: $N \cup M$ has a objects, all objects of $A$, and as hom sets,
the $R$-vector spaces $N(a,b)\oplus M(a,b)$, their direct sum. 
\item $\bot$ is the algebroid which has as objects all objects of $A$, and $\bot(a,b)=0$ as an $R$-vector space, for all $a, b \in A$. 
\item and $\top$ is the algebroid $A$ itself.
\end{itemize}

Moreover, it is a modular lattice that is if $X \leq B$ then $X \vee (A\wedge B) = (X\vee A)\wedge B$. We denote this lattice by $Nsb(F)$. If $\map{f}{A}{B}$ is a morphism in $Alg^R$, we can define a Galois connection $(f_*,f^*)$ where:
\begin{itemize}
	\item $\map{f_*}{Nsb(F)}{Nsb(G)}$  with $f_*(m) = im (f\circ m) = ker cok (f\circ m)$
	\item $\map{f^*}{Nsb(G)}{Nsb(F)}$  with $f^*(n) = ker ((cok n)\circ f)$
\end{itemize}
The condition of modularity can be expressed as every morphism $\map{f}{A}{B}$ satisfies:
\begin{itemize}
	\item[1)] for every $x \in Nsb(F)$, $f^*\circ f_*(x) = x \vee f^*(\bot)$
	\item[2)] for every $y \in Nsb(G)$, $f_*\circ f^*(y) = y \wedge f_*(\top)$
\end{itemize}

First, we characterize $f^*$ and $f_*$ in $Alg^R$:

\begin{lemma}
For $f: \ A \rightarrow B$ an additive functor from algebroid $A$ to algebroid $B$, $f^*$ is the inverse image functor on subobjects. It is given by, 
for all normal subobjects 
$m: \ M \rightarrow B$, $f^*(m): \ f^*(M) \rightarrow A$ is the inclusion map of $f^*(M)$ into $A$ with $f^*(M)$ being the algebroid with objects, all objects of $A$, and $$f^*(M)(a,a')=f^{-1}(M(f(a),f(a'))$$ 
\end{lemma}

\begin{proof}

\end{proof}

\begin{lemma}
For $f: \ A \rightarrow B$ an additive functor from algebroid $A$ to algebroid $B$, $f_*$ is the direct image functor on subobjects. It is given by, for all normal subobjects 
$n: \ N \rightarrow A$, $f_*(n): \ f_*(N) \rightarrow A$ is the inclusion map of $f_*(N)$ into $B$ with $f_*(N)$ being the algebroid with objects, all objects of $B$, and 
$$f_*(N)(b,b')=\bigoplus\limits_{a, a' \in A, f(a)=b, f(a')=b'} f(N(a,a'))$$ 
\end{lemma}

\begin{proof}

\end{proof}

\begin{lemma}
For $f: \ A \rightarrow B$ an additive functor from algebroid $A$ to algebroid $B$, $f^*(\bot)$ is the algebroid which has as objects, all objects of $A$, and
$$f^*(\bot)(a,a')=Ker(f)$$
\end{lemma}

\begin{proof}

\end{proof}

\begin{lemma}
For $f: \ A \rightarrow B$ an additive functor from algebroid $A$ to algebroid $B$, $f_*(\top)$ is the algebroid which has as objects, all objects of $B$, and
$$f_*(\top)(b,b')=Im(f)$$
\end{lemma}

\begin{proof}

\end{proof}

\begin{theorem}
$Alg^R$ is modular. 
\end{theorem}

\begin{proof}
We compute, 
for all normal subobjects 
$n: \ N \rightarrow A$, and all objects $a, a' \in A$: 
$$\begin{array}{rcl}
f^*(f_*(N))(a,a') & = & f^{-1}(f_*(N)(f(a),f(a')) \\
& = & f^{-1}\left(\bigoplus\limits_{u, v \in A, f(u)=f(a), f(v)=f(a')} f(N(u,v))\right)\\
& = & \bigoplus\limits_{u, v \in A, f(u)=f(a), f(v)=f(a')} f^{-1} \circ f(N(u,v)) \\
& = & \bigoplus\limits_{u, v \in A, f(u)=f(a), f(v)=f(a')} N(u,v)\oplus Ker(f)(u,v) \\
& = & (N \cup \bot)(a,a') ??
\end{array}
$$
Il faudrait une propriete sur les morphismes admis $f$?

Similarly, we compute, for all normal subobjects $m: \ M \rightarrow B$, and all objects $b,b' \in B$:
$$\begin{array}{rcl}
f_*(N)(f^*(M)(b,b') & = & \bigoplus\limits_{a, a' \in A, f(a)=b, f(a')=b'} f(f^*(M)(a,a')) \\
& = & \bigoplus\limits_{a, a' \in A, f(a)=b, f(a')=b'} f(f^{-1}(M(f(a),f(a'))) \\
& = & \bigoplus\limits_{a, a' \in A, f(a)=b, f(a')=b'} f(f^{-1}(M(b,b')) \\
& = & \bigoplus\limits_{a, a' \in A, f(a)=b, f(a')=b'} M(b,b')\cap Im(f)(b,b') \\
& = & \bigoplus\limits_{a, a' \in A, f(a)=b, f(a')=b'} (M \cap \top)(b,b')
\end{array}$$
\end{proof}

Et si on prenait des one-point algebroids? cad les algebres...
Donc les algebres seraient homologiques et modulaires, et peut-etre la categorie des modules sur des algebres aussi?

\section{Modules over algebroids}

\section{The category of algebroid modules}

\section{Relation to natural homology}

\section{Relation to persistence}

\subsection{Abelian and semi-abelian structures}

Note that the category of $A$-modules $Mod_A$ is abelian, but the category of algebra modules which we are defining below, is not: 
the kernel in $Mod_A$, $Ker \ h = \{m \in M \mid h(m) = 0\}$, the image, $Im \ h = \{h(m) \mid m \in M\}$, and the cokernel, $Coker \ h = N/Im \ h$, of an $A$-module homomorphism $h : \ M \rightarrow N$ have natural $A$-module structures.
The coproduct (which is also the product) of the right $A$-modules $M_1, \ldots , M_s$ is defined to be the $R$-vector space biproduct $M_1\oplus \ldots \oplus M_s$  equipped with an $A$-module structure defined by $(m_1,...,m_s)\bullet a = (m_1\bullet a,\ldots,m_s\bullet a)$ for $m_1 \in M_1,\ldots,m_s \in M_s$
and $a \in A$.

\begin{definition}
The category of algebra modules (over $R$), $Alg-Mod$ is the category:
\begin{itemize}
\item with objects, pairs $(A,M)$ with $A$ an algebra, and $M$ a $A$-module, 
\item with morphisms from $(A,M)$ to $(B,N)$,  pairs 
$(f,g)$ where $f$ is a morphism of algebras $f: \ A \rightarrow B$ and $g$ is a linear map between the underlying $R$-vector spaces $M$ and $N$ such that $g(x\bullet a)=g(x)\bullet f(a)$,
\item and compositions of $(f,g): \ (A,M) \rightarrow (B,N)$ with $(u,v): \ (B,N) \rightarrow (C,L)$ is $(u \circ f, v \circ g): \ (A,M) \rightarrow (C,L)$
\end{itemize}
\end{definition}

Indeed, it is clear that:
$$
\begin{array}{rcl}
v\circ g(m \bullet a) & = & v(g(m)\bullet f(a)) \\
 & = & v\circ g(m) \bullet u\circ f(a)
\end{array}
$$

\todo[inline]{Non, et on n'en a pas besoin}

\begin{lemma}
$Alg-Mod$ is semi-abelian
\end{lemma}

\begin{proof}
All constructions work naturally since $Alg$ is semi-abelian, and $Mod_A$, for any algebra $A$, is abelian. 

The zero object is $(0,0) \in Alg-Mod$. 

The product $(A,M)\times (B,N)$ of $(A,M)$ with $(B,N)$ is $(A\times B, M \times N)$ with $(m,n)\bullet (a,b)=(m\bullet a,n\bullet b)$. Indeed, given any maps $(f,g): \ (C,L) \rightarrow (A,M)$ and $(u,v): \ (C,L) \rightarrow (B,N)$, there exists a unique $(h,i): \ (C,L) \rightarrow (A,M) \times (B,N)$ such that the following diagram commutes: 
\[
\begin{tikzcd}
  (C,L)
  \arrow[drr, bend left, "{(f,g)}"]
  \arrow[ddr, bend right, "{(u,v)}" below]
  \arrow[dr, dashed, "{(h,i)}" description] & & \\
    & (A \times B,M \times N) \arrow[r, "{(\pi_1,\pi_1)}"] \arrow[d, "{(\pi_2,\pi_2)}"]
      & (A,M)  \\
& (B,N)
\end{tikzcd}
\]
This unique map is $(h,i)=((f,u),(g,v))$. Indeed, this is a well defined morphism in $Alg-Mod$ since, for any $(a,b)\in A\times B$, $(m,n)\in M\times N$:
$$
\begin{array}{rcl}
(g,v)((m,n)\bullet (a,b)) & = & (g,v)(m\bullet a,n\bullet b) \\
& = & (g(m\bullet a),v(n\bullet b)) \\
& = & (g(m)\bullet f(a),v(n)\bullet u(b)) \\
& = & (g(m),v(n))\bullet (f(a),u(b)) \\
& = & (g,v)(m,n) \bullet (f,u)(a,b)
\end{array}
$$

Similarly, the coproduct $(A,M) \coprod (B,N)$ is $(A \coprod B, M \coprod N)$, with the following canonical map: 
\[
\begin{tikzcd}[column sep=tiny]
& (A,M) \ar[dr,"{(in_1,in_1)}"] \ar[drr, "{(f,g)}", bend left=20]
&
&[1.5em] \\
    &
      & (A \coprod B,M \coprod N) \ar[r, dashed, "{(h,i)}"]
& (C,L) \\
& (B,N) \ar[ur,"{(in_2,in_2)}" below]\ar[urr, "{(u,v)}"', bend right=20]
&
&
\end{tikzcd}
\]
\noindent where $h$ is the unique map from $A \coprod B$ to $C$ in $Alg$ such that $h\circ in_1=g$, and where $i$ is the unique map from $M \coprod N$ such that $i \circ in_1=g$. We need only check that $(h,i)$ is indeed a morphism in $Alg-Mod$, i.e. that, for $c \in A \coprod B$ and $x \in M \coprod N$:
$$
\begin{array}{rcl}
i(x\bullet c)=i(x)\bullet h(c)
\end{array}
$$

\todo[inline]{To be completed}

We now need to check the Short Five Lemma: 
\[\begin{tikzcd}[column sep=2cm]
  Ker (f,g) \arrow[r, "ker {(f,g)}"] \arrow[d, "{(k,l)}" left]
    & (A,M) \arrow[d, "{(a,h)}"] \arrow[r, "{(f,g)}"] & (B,N) \arrow[d, "{(b,i)}"] \\
  Ker (f',g') \arrow[r, "ker {(f',g')}" below]
&  (A',M') \arrow[r, "{(f',g')}" below] & (B',N') \end{tikzcd}
\]
Suppose that in the diagram above, $(f,g)$ and $(f',g')$ are regular epimorphisms and $(k,l)$ and $(b,i)$ are isomorphisms, we have to show that $(a,h)$ is an isomorphism. First, we see that in $Alg-Mod$, $ker (f,g): \ Ker(f,g) \rightarrow (A,M)$ is actually the map $(ker \ f, ker \ g): \ (Ker(f), Ker(g)) \rightarrow (A,M)$, where the first kernel is computed in the category of $R$-algebras, and the second kernel is computed in the category of $Ker(f)$-modules. 
Then we note that regular epimorphisms in $Alg-Mod$ are pairs of regular epimorphisms $(f,g)$ with $f$ regular epimorphism in the category of $R$-algebras, and $g$ regular epimorphism in the category of $...$

\todo[inline]{To be completed}
\end{proof}

\begin{remark}
This is akin to the fact that natural systems in $\Ab$ with a fixed basis is Abelian, but not the category of all natural systems in $\Ab$, which is only homological, see e.g. \cite{Dubut}. 
\end{remark}

\begin{definition}
Let $X$ be a directed space. We define a sequence of categories $\T_i(X)$, $i=1,\ldots$ as follows: 
\begin{itemize}
    \item $\T_i(X)$ has as objects, all points of $X$ 
    \item and as morphisms from $s$ to $t$, all $i$-traces from $s$ to $t$
     \item the composition is the obvious concatenation of such $i$-traces, which is associative because everything is taken modulo reparameterization: 
     $$
     p * q(s_0,\ldots, s_i)=\left\{\begin{array}{ll}
     p(2s_0,s_1,\ldots,s_i) & \mbox{if $s_0\leq \frac{1}{2}$} \\
     q(2s_0-1,s_1,\ldots,s_i) & \mbox{if $\frac{1}{2} \leq s_0 \leq 1$}
     \end{array}\right.
     $$
\end{itemize}
\end{definition}

\begin{definition}
Let $X$ be a directed space and $R$ a ring. We define a sequence of $R_1[X]$-bimodules $R_i[X]$, which we call the $i$-trace module, whose underlying $R$-bimodule is generated by the set of all $i$-traces of $X$, and whose bimodule operation is equal to the pre and post compositions (whiskering) by 1-traces of $X$, when possible, 0 otherwise. 
\end{definition}

\subsection{Homology in differential graded algebras}

\begin{definition}
Let $X$ be a directed space. We define an augmented graded algebra $R[X]=(R_n[X])_{n\geq 0}$, as follows: 
\begin{itemize}
    \item $R_i[X]$ is generated as an $R$-vector space by all $i+1$-traces from any point $s$ to any point $t$, we write $R_i[X](s,t)$ for the sub $R$-vector space of $(i+1)$-traces from $s$ to $t$. 
     \item the multiplication is given is induced by a concatenation traces, which is associative because everything is taken modulo reparameterization. Let $p \in R_i[X](r,s)$ and $q \in R_j[X](s,t)$, we construct $p*q \in R_{i+j}[X](r,t)$ as follows: 
     $$
     p * q(x_0,\ldots,x_{i+j+1})=\left\{\begin{array}{ll}
     p(2x_0,x_1,\ldots,x_{i}) & \mbox{if $x_0\leq \frac{1}{2}$} \\
     q(2x_0-1,x_{i+1},\ldots,x_{i+j}) & \mbox{if $\frac{1}{2} \leq x_0 \leq 1$}
     \end{array}\right.
     $$
     \item the augmentation $\epsilon$ is the obvious map from $R_0[X]$ to $R$ which associates to all 1-trace, $1 \in R$. 
\end{itemize}
\end{definition}

\begin{lemma}
For any $X$ directed space, $R[X]$ can be given a differential, defined as follows, for $p$ a $i$-trace, $i\geq 2$ (and then, extended by linearity): 
$$
\partial(p) = \sum\limits_{j=0,\ldots, i-2} (-1)^j \delta_j(p)
$$
\end{lemma}

\begin{proof}
Direct consequence of the simplicial relations. 
\end{proof}


Indeed, we can extend by linearity the $\delta_j$ and $\sigma_k$ operators from $T_{i+1}(X)$ to $T_i(X)$, resp. from $T_i(X)$ to $T_{i+1}(X)$, to the underlying $R$-vector space of the algebra $R[X]$. 

Now, we have to prove that all these maps are algebra maps. We compute, for any $i$-trace $p$ from $r$ to $s$, and $q$ $j$-trace from $s$ to $t$: 
$$
\begin{array}{rcl}
\delta_j(p*q)(x_0,\ldots,i+j-1) & = & p*q(x_0,d_j(x_1,\ldots,x_{i+j-1})) \\
& = & \left\{ \begin{array}{cc}
\left\{\begin{array}{cc}
p(2x_0,x_1,\ldots,x_{j-1},0,x_j,\ldots,x_{i-1}) & \mbox{if $x_0\leq \frac{1}{2}$} \\
     q(2x_0-1,x_{i},\ldots,x_{i+j-1}) & \mbox{if $\frac{1}{2} \leq x_0 \leq 1$}
\end{array}\right.
& \mbox{if $j < i$} \\
\left\{\begin{array}{cc}
p(2x_0,x_1,\ldots,x_{i-1}) & \mbox{if $x_0\leq \frac{1}{2}$} \\
     q(2x_0-1,x_{i},\ldots,x_{j-1},0,x_j,\ldots,x_{i+j-1}) & \mbox{if $\frac{1}{2} \leq x_0 \leq 1$}
\end{array}\right.
& \mbox{if $j \geq i$} \\
\end{array}\right.
\end{array}
$$

\todo[inline]{Two algebra structures! One in which concatenation is between $i$-traces, and this gives a simplicial object in $Alg$. The other one which only gives a DGA (not proper!), and probably more complicated to interpret? Discuss also the homology as a $R_1[X]$-bimodule.}

\begin{lemma}
For any $X$ directed space, $R[X]$ is a differential algebra, i.e., for all $p$ $(i+1)$-trace, $q$, $(j+1)$-trace: 
$$
\partial(p*q)=\partial(p)*q+(-1)^i p*\partial(q)
$$
\end{lemma}

\begin{proof}
Direct calculation. 
\end{proof}

For the homology theory to function properly in the semi-abelian category $Alg$, we need the chain complex to be proper: 

\begin{lemma}
Morphism $\partial$ between algebra $R_i[X]$ and $R_{i-1}[X]$ is proper in $Alg$, that is, $Im(\partial)$ is a normal monomophism into $R_{i-1}[X]$.
\end{lemma}

\begin{proof}
$Im(\partial)$ is a two-sided ideal of $R_{i-1}[X]$. Let $u$ and $v$ be two $i$-traces and $f \in Im(\partial)$. We prove that $ufv$ is in $Im(\partial)$. Indeed, the only interesting case is when $f=\partial(g)$, $g \in R_{i}[X](a,b)$, $u\in R_{i-1}[X](a',a)$, and $v \in R_{i-1}[X](b,b')$. 
$\sigma(u)g\sigma(v)$?
(...)
\end{proof}

We need to consider the simplicial object in $Alg$ and take Moore normalization to get a proper chain complex, see BAER INVARIANTS IN SEMI-ABELIAN CATEGORIES II: HOMOLOGY. 

In that case, any short exact sequence of proper chain complex in $Alg$(??) to long exact sequences in homology...
(same paper). 

Is it functorial to go from $X$ directed space to thee proper chain complexes in algebras?




Constant $i$-traces on point $a \in X$ give rise to orthogonal idempotents in the algebra $R[X]$. Among these, we can single out the constant 1-traces on $a \in X$, we denote by $e_a$. 

\begin{lemma}
\label{lem:submodpath}
Let $X$ be a directed space. For all $a, b \in X$, $e_a R_i[X] e_b$ is the sub-$R$-vector space $R_i[X](a,b)$. 
\end{lemma}

\begin{proof}
Obvious, see Section \ref{sec:idempotents}.
\end{proof}





\begin{proof}
Direct consequence of curryfication. 
\end{proof}


\begin{definition}
The homology algebra of a directed space $X$ is defined as the semi-abelian homology \cite{VanderLinden} in the category $Alg$ of the differential graded algebra $R[X]$, shifted by one, i.e., for $i\geq 1$: 
$$
H_i[X]=Ker(\partial: \ R_{i}[X] \rightarrow R_{i-1}[X])/Im(\partial: \ R_{i+1}[X] \rightarrow R_i[X]) 
$$
For consistency purposes, we also set 
$$H_0[X]=R_0[X]$$
\end{definition}

As is well known, $H_i(X)$ is a graded algebra, the operation $*$ passing through homology, defining an operation, we still write $*$ from $H_i[X]\times H_j[X]$ to $H_{i+j}[X]$. 


\begin{lemma}
Let $X$ be a directed space, $a, b \in X$, $R$ a ring. Then
$e_a H_n[X] e_b$ is the standard $n$th homology of the trace space from $a$ to $b$. 
\end{lemma}

\begin{proof}
We first compute $Ker(\partial: \ R_i[X] \rightarrow R_{i-1}[X])$ for all $i \geq 1$. As an $R$-vector space, this is the direct sum of all $R$-vector spaces $e_a Ker(\partial) e_b$ for all $a, b \in X$, made up of cycles of dimension $i-1$ in the trace space $X^{\I}(a,b)$, since $\partial$ maps $R_i[X](a,b)$ to $R_{i-1}[X](a,b)$. Similarly, $Im(\partial: \ R_{i+1}[X] \rightarrow R_i[X])$ is the direct sum of all $e_a Im(\partial) e_b$, for the same reason, which is the boundaries of dimension $i-1$ in the trace space $X^{\I}(a,b)$. Overall, this implies that $e_a H_n[X] e_b$ is the (classical) homology of the trace space of $X$ between points $a$ and $b$. 
\end{proof}








\subsubsection{As a homological category}

Now, to define homology, we have to be able to talk about sub-quotient as in the case of abelian groups i.e. if $K\subseteq H\subseteq G$ are abelian groups, we can define $H/K$. This is not the case in general groups even if $H$ and $K$ are normal sub-groups of $G$. 

\begin{definition}[\cite{Grandis1}]
We say that a morphism is \emph{exact} if it factories as $n\circ q$ with $q$, a normal epi and $n$, a normal mono.\\
A semi-exact category $({\cal C}, N)$ is said to be \emph{homological} if:
\begin{itemize}
	\item normal monos and normal epis are stable under composition
	\item if $m: B \rightarrow A$ is a normal mono and $q: A \rightarrow C$ is a normal epi with $m \geq ker ~q$ in $Sub(A)$ (i.e. there 
	exists $k$, which is unique and monic, such that $ker ~ q = m \circ k$) then $q \circ m$ is exact. 
	\end{itemize}
If $m: M \rightarrow A$ and $n: N \rightarrow A$ are two normal monos with $m \geq n$, and if $q = cok ~ n$, the object $coim ~ q \circ m$ 
(isomorph to $im ~ q \circ m$) defined up to isomorphism will be called a \emph{sub-quotient} of $A$ induced by $m \geq n$ and written $M/N$.
\end{definition}

\begin{lemma}
$Alg^R$ is homological.
\end{lemma}

\begin{proof}
Normal monos $f: \ A \rightarrow B$ and $g: \ B \rightarrow C$ are isos on objects, hence there composite $g\circ f$ is isos on objects. On each $R$-vector space $A(a,a')$, $[g\circ f]^{a'}_a: \ A(a,a') \rightarrow C(g\circ f(a),g \circ f(a'))$ is the composite of injective $R$-linear maps $[g]^{f(a')}_{f(a)}$ with $[f]^{a'}_a$, hence is an injective $R$-linear map. This shows that $g \circ f$ is a normal mono, by Lemma \ref{lem:normalmonos}. 
Similarly for normal epis. 

Suppose $m: \ B \rightarrow A$ is a normal mono, hence $m$ is iso on objects and induces injective $R$-linear maps $[m]^{b'}_b: \ B(b,b') \rightarrow C(m(b),m(b'))$. 
%
Suppose we are also given $q: \ A \rightarrow C$ a normal epi, hence iso on objects and inducing surjective maps $[q]^{a'}_a: \ A(a,a')\rightarrow C(q(a),q(a'))$. 

Now suppose $m \geq ker(q)$ in $Sub(A)$, i.e. there exists $k$ a unique mono such that $ker(q)=m\circ k$. Then we prove that $q \circ m$ is exact, that is, we find $u$ normal mono and $v$ normal epi such that $q \circ m=u \circ v$. 

We are defining $v: \ B \rightarrow Im (q \circ m)$ which is just $q \circ m$, and $u: \ Im (q \circ m) \rightarrow C$ is the inclusion. Obviously, $v$ is an iso on objects and is surjective onto its image, on all the $R$-vector spaces $B(b,b')$. Obviously as well, 
$u$ is an iso on objects and is injective on each hom sets, so is a normal mono. 
\end{proof}

\section{As a modular category}

The set of all normal subobjects $N$ of an algebroid $A$ is a lattice:
\begin{itemize}
\item whose order is inclusion, $N\leq M$ if and only if for all $a$, $b$ objects of $A$, $N(a,b)\subseteq M(a,b)$ as $R$-vector spaces,
\item meet is intersection: $N \cap M$ has a objects, all objects of $A$, and as hom sets,
the $R$-vector spaces $N(a,b)\cap M(a,b)$.
\item join is union: $N \cup M$ has a objects, all objects of $A$, and as hom sets,
the $R$-vector spaces $N(a,b)\oplus M(a,b)$, their direct sum. 
\item $\bot$ is the algebroid which has as objects all objects of $A$, and $\bot(a,b)=0$ as an $R$-vector space, for all $a, b \in A$. 
\item and $\top$ is the algebroid $A$ itself.
\end{itemize}

Moreover, it is a modular lattice that is if $X \leq B$ then $X \vee (A\wedge B) = (X\vee A)\wedge B$. We denote this lattice by $Nsb(F)$. If $\map{f}{A}{B}$ is a morphism in $Alg^R$, we can define a Galois connection $(f_*,f^*)$ where:
\begin{itemize}
	\item $\map{f_*}{Nsb(F)}{Nsb(G)}$  with $f_*(m) = im (f\circ m) = ker cok (f\circ m)$
	\item $\map{f^*}{Nsb(G)}{Nsb(F)}$  with $f^*(n) = ker ((cok n)\circ f)$
\end{itemize}
The condition of modularity can be expressed as every morphism $\map{f}{A}{B}$ satisfies:
\begin{itemize}
	\item[1)] for every $x \in Nsb(F)$, $f^*\circ f_*(x) = x \vee f^*(\bot)$
	\item[2)] for every $y \in Nsb(G)$, $f_*\circ f^*(y) = y \wedge f_*(\top)$
\end{itemize}

First, we characterize $f^*$ and $f_*$ in $Alg^R$:

\begin{lemma}
For $f: \ A \rightarrow B$ an additive functor from algebroid $A$ to algebroid $B$, $f^*$ is the inverse image functor on subobjects. It is given by, 
for all normal subobjects 
$m: \ M \rightarrow B$, $f^*(m): \ f^*(M) \rightarrow A$ is the inclusion map of $f^*(M)$ into $A$ with $f^*(M)$ being the algebroid with objects, all objects of $A$, and $$f^*(M)(a,a')=f^{-1}(M(f(a),f(a'))$$ 
\end{lemma}

\begin{proof}

\end{proof}

\begin{lemma}
For $f: \ A \rightarrow B$ an additive functor from algebroid $A$ to algebroid $B$, $f_*$ is the direct image functor on subobjects. It is given by, for all normal subobjects 
$n: \ N \rightarrow A$, $f_*(n): \ f_*(N) \rightarrow A$ is the inclusion map of $f_*(N)$ into $B$ with $f_*(N)$ being the algebroid with objects, all objects of $B$, and 
$$f_*(N)(b,b')=\bigoplus\limits_{a, a' \in A, f(a)=b, f(a')=b'} f(N(a,a'))$$ 
\end{lemma}

\begin{proof}

\end{proof}

\begin{lemma}
For $f: \ A \rightarrow B$ an additive functor from algebroid $A$ to algebroid $B$, $f^*(\bot)$ is the algebroid which has as objects, all objects of $A$, and
$$f^*(\bot)(a,a')=Ker(f)$$
\end{lemma}

\begin{proof}

\end{proof}

\begin{lemma}
For $f: \ A \rightarrow B$ an additive functor from algebroid $A$ to algebroid $B$, $f_*(\top)$ is the algebroid which has as objects, all objects of $B$, and
$$f_*(\top)(b,b')=Im(f)$$
\end{lemma}

\begin{proof}

\end{proof}

\begin{theorem}
$Alg^R$ is modular. 
\end{theorem}

\begin{proof}
We compute, 
for all normal subobjects 
$n: \ N \rightarrow A$, and all objects $a, a' \in A$: 
$$\begin{array}{rcl}
f^*(f_*(N))(a,a') & = & f^{-1}(f_*(N)(f(a),f(a')) \\
& = & f^{-1}\left(\bigoplus\limits_{u, v \in A, f(u)=f(a), f(v)=f(a')} f(N(u,v))\right)\\
& = & \bigoplus\limits_{u, v \in A, f(u)=f(a), f(v)=f(a')} f^{-1} \circ f(N(u,v)) \\
& = & \bigoplus\limits_{u, v \in A, f(u)=f(a), f(v)=f(a')} N(u,v)\oplus Ker(f)(u,v) \\
& = & (N \cup \bot)(a,a') ??
\end{array}
$$
Il faudrait une propriete sur les morphismes admis $f$?

Similarly, we compute, for all normal subobjects $m: \ M \rightarrow B$, and all objects $b,b' \in B$:
$$\begin{array}{rcl}
f_*(N)(f^*(M)(b,b') & = & \bigoplus\limits_{a, a' \in A, f(a)=b, f(a')=b'} f(f^*(M)(a,a')) \\
& = & \bigoplus\limits_{a, a' \in A, f(a)=b, f(a')=b'} f(f^{-1}(M(f(a),f(a'))) \\
& = & \bigoplus\limits_{a, a' \in A, f(a)=b, f(a')=b'} f(f^{-1}(M(b,b')) \\
& = & \bigoplus\limits_{a, a' \in A, f(a)=b, f(a')=b'} M(b,b')\cap Im(f)(b,b') \\
& = & \bigoplus\limits_{a, a' \in A, f(a)=b, f(a')=b'} (M \cap \top)(b,b')
\end{array}$$
\end{proof}

Et si on prenait des one-point algebroids? cad les algebres...
Donc les algebres seraient homologiques et modulaires, et peut-etre la categorie des modules sur des algebres aussi?

\section{Modules over algebroids}

\section{The category of algebroid modules}

\section{Relation to natural homology}

\section{Relation to persistence}

\subsection{Abelian and semi-abelian structures}

Note that the category of $A$-modules $Mod_A$ is Abelian, but the category of algebra modules which we are defining below, is not: 
the kernel in $Mod_A$, $Ker \ h = \{m \in M \mid h(m) = 0\}$, the image, $Im \ h = \{h(m) \mid m \in M\}$, and the cokernel, $Coker \ h = N/Im \ h$, of an $A$-module homomorphism $h : \ M \rightarrow N$ have natural $A$-module structures.
The coproduct (which is also the product) of the right $A$-modules $M_1, \ldots , M_s$ is defined to be the $R$-vector space biproduct $M_1\oplus \ldots \oplus M_s$  equipped with an $A$-module structure defined by $(m_1,...,m_s)\bullet a = (m_1\bullet a,\ldots,m_s\bullet a)$ for $m_1 \in M_1,\ldots,m_s \in M_s$
and $a \in A$.

\begin{definition}
The category of algebra modules (over $R$), $Alg-Mod$ is the category:
\begin{itemize}
\item with objects, pairs $(A,M)$ with $A$ an algebra, and $M$ a $A$-module, 
\item with morphisms from $(A,M)$ to $(B,N)$,  pairs 
$(f,g)$ where $f$ is a morphism of algebras $f: \ A \rightarrow B$ and $g$ is a linear map between the underlying $R$-vector spaces $M$ and $N$ such that $g(x\bullet a)=g(x)\bullet f(a)$,
\item and compositions of $(f,g): \ (A,M) \rightarrow (B,N)$ with $(u,v): \ (B,N) \rightarrow (C,L)$ is $(u \circ f, v \circ g): \ (A,M) \rightarrow (C,L)$
\end{itemize}
\end{definition}

Indeed, it is clear that:
$$
\begin{array}{rcl}
v\circ g(m \bullet a) & = & v(g(m)\bullet f(a)) \\
 & = & v\circ g(m) \bullet u\circ f(a)
\end{array}
$$

\todo[inline]{Non, et on n'en a pas besoin}

\begin{lemma}
$Alg-Mod$ is semi-abelian
\end{lemma}

\begin{proof}
All constructions work naturally since $Alg$ is semi-abelian, and $Mod_A$, for any algebra $A$, is abelian. 

The zero object is $(0,0) \in Alg-Mod$. 

The product $(A,M)\times (B,N)$ of $(A,M)$ with $(B,N)$ is $(A\times B, M \times N)$ with $(m,n)\bullet (a,b)=(m\bullet a,n\bullet b)$. Indeed, given any maps $(f,g): \ (C,L) \rightarrow (A,M)$ and $(u,v): \ (C,L) \rightarrow (B,N)$, there exists a unique $(h,i): \ (C,L) \rightarrow (A,M) \times (B,N)$ such that the following diagram commutes: 
\[
\begin{tikzcd}
  (C,L)
  \arrow[drr, bend left, "{(f,g)}"]
  \arrow[ddr, bend right, "{(u,v)}" below]
  \arrow[dr, dashed, "{(h,i)}" description] & & \\
    & (A \times B,M \times N) \arrow[r, "{(\pi_1,\pi_1)}"] \arrow[d, "{(\pi_2,\pi_2)}"]
      & (A,M)  \\
& (B,N)
\end{tikzcd}
\]
This unique map is $(h,i)=((f,u),(g,v))$. Indeed, this is a well defined morphism in $Alg-Mod$ since, for any $(a,b)\in A\times B$, $(m,n)\in M\times N$:
$$
\begin{array}{rcl}
(g,v)((m,n)\bullet (a,b)) & = & (g,v)(m\bullet a,n\bullet b) \\
& = & (g(m\bullet a),v(n\bullet b)) \\
& = & (g(m)\bullet f(a),v(n)\bullet u(b)) \\
& = & (g(m),v(n))\bullet (f(a),u(b)) \\
& = & (g,v)(m,n) \bullet (f,u)(a,b)
\end{array}
$$

Similarly, the coproduct $(A,M) \coprod (B,N)$ is $(A \coprod B, M \coprod N)$, with the following canonical map: 
\[
\begin{tikzcd}[column sep=tiny]
& (A,M) \ar[dr,"{(in_1,in_1)}"] \ar[drr, "{(f,g)}", bend left=20]
&
&[1.5em] \\
    &
      & (A \coprod B,M \coprod N) \ar[r, dashed, "{(h,i)}"]
& (C,L) \\
& (B,N) \ar[ur,"{(in_2,in_2)}" below]\ar[urr, "{(u,v)}"', bend right=20]
&
&
\end{tikzcd}
\]
\noindent where $h$ is the unique map from $A \coprod B$ to $C$ in $Alg$ such that $h\circ in_1=g$, and where $i$ is the unique map from $M \coprod N$ such that $i \circ in_1=g$. We need only check that $(h,i)$ is indeed a morphism in $Alg-Mod$, i.e. that, for $c \in A \coprod B$ and $x \in M \coprod N$:
$$
\begin{array}{rcl}
i(x\bullet c)=i(x)\bullet h(c)
\end{array}
$$

\todo[inline]{To be completed}

We now need to check the Short Five Lemma: 
\[\begin{tikzcd}[column sep=2cm]
  Ker (f,g) \arrow[r, "ker {(f,g)}"] \arrow[d, "{(k,l)}" left]
    & (A,M) \arrow[d, "{(a,h)}"] \arrow[r, "{(f,g)}"] & (B,N) \arrow[d, "{(b,i)}"] \\
  Ker (f',g') \arrow[r, "ker {(f',g')}" below]
&  (A',M') \arrow[r, "{(f',g')}" below] & (B',N') \end{tikzcd}
\]
Suppose that in the diagram above, $(f,g)$ and $(f',g')$ are regular epimorphisms and $(k,l)$ and $(b,i)$ are isomorphisms, we have to show that $(a,h)$ is an isomorphism. First, we see that in $Alg-Mod$, $ker (f,g): \ Ker(f,g) \rightarrow (A,M)$ is actually the map $(ker \ f, ker \ g): \ (Ker(f), Ker(g)) \rightarrow (A,M)$, where the first kernel is computed in the category of $R$-algebras, and the second kernel is computed in the category of $Ker(f)$-modules. 
Then we note that regular epimorphisms in $Alg-Mod$ are pairs of regular epimorphisms $(f,g)$ with $f$ regular epimorphism in the category of $R$-algebras, and $g$ regular epimorphism in the category of $...$

\todo[inline]{To be completed}
\end{proof}

\begin{remark}
This is akin to the fact that natural systems in $\Ab$ with a fixed basis is Abelian, but not the category of all natural systems in $\Ab$, which is only homological, see e.g. \cite{Dubut}. 
\end{remark}

\begin{definition}
Let $X$ be a directed space. We define a sequence of categories $\T_i(X)$, $i=1,\ldots$ as follows: 
\begin{itemize}
    \item $\T_i(X)$ has as objects, all points of $X$ 
    \item and as morphisms from $s$ to $t$, all $i$-traces from $s$ to $t$
     \item the composition is the obvious concatenation of such $i$-traces, which is associative because everything is taken modulo reparameterization: 
     $$
     p * q(s_0,\ldots, s_i)=\left\{\begin{array}{ll}
     p(2s_0,s_1,\ldots,s_i) & \mbox{if $s_0\leq \frac{1}{2}$} \\
     q(2s_0-1,s_1,\ldots,s_i) & \mbox{if $\frac{1}{2} \leq s_0 \leq 1$}
     \end{array}\right.
     $$
\end{itemize}
\end{definition}

\begin{definition}
Let $X$ be a directed space and $R$ a ring. We define a sequence of $R_1[X]$-bimodules $R_i[X]$, which we call the $i$-trace module, whose underlying $R$-bimodule is generated by the set of all $i$-traces of $X$, and whose bimodule operation is equal to the pre and post compositions (whiskering) by 1-traces of $X$, when possible, 0 otherwise. 
\end{definition}

\section{The case of precubical sets}

\subsection{Background: precubical sets and higher directed paths}


Let $K$ be a precubical set. Using the commutation relation defining precubical sets as defined in Definition \ref{def:precub}, we can define $d^0: K_i \rightarrow K_0$ and $d^1: K_i \rightarrow K_0$ to be any composite of $i$ successive boundary operators of the form $d^0_i$ (resp. of the form $d^1_i$). 

As in \cite{Ziemianski}, we use the following notations, for a precubical set $M$\footnote{That we needed to change slightly because of different indexes conventions.}. 
Let us introduce a notation for arbitrary compositions of face maps. For a function $f : \ \{0,...,n-1\} \rightarrow \{0,1,*\}$ such that $\mid f^{-1}(*)\mid = m$, define a map $d_f : \ M_n \rightarrow M_m$ by
$$
d = d^{f(0)}_0 d^{f(1)}_1 \ldots d^{f(n-1)}_{n-1}
$$
where $d^*_i$ is, by convention, the identity map; we will also write $d_{f^{-1}(0),f^{-1}(1)}$ for $d_f$. Finally, let $d^0_A = d_{A,\emptyset}$, $d^1_A = d_{\emptyset,A}$.


The following definition is essential in the calculations on trace spaces done by Krzysztof Ziemianski \cite{Ziemianski,Ziemanski2}: 

\begin{definition}
Let $K$ be a pre-cubical set and let $v$, $w \in K_0$ be two of its vertices. A cube chain in $K$ from $v$ to $w$ is a sequence of cubes $c = (c_1,\ldots,c_l)$, where $c_k \in K_{n_k}$ and $n_k > 0,$ such that
\begin{itemize}
    \item $d^0(c_1) = v$,
\item $d^1(c_l)=w$,
\item $d^1(c_i) = d^0(c_{i+1})$ for $i = 1,\ldots, l - 1$.
\end{itemize}
\end{definition}

The sequence $(n_1,\ldots,n_l)$ will be called the type of a cube chain $c$, $dim(c) = n_1 + \ldots + n_l - l$ the dimension of $c$, and $n_1 + \ldots + n_l$, the length of $c$. These cube chains in $K$ from $v$ to $w$ will be denoted by $Ch(K)^w_v$, and the set of cube chains of dimension equal to $m$ (resp. less than $m$, less or equal to $m$) by $Ch^{=m}(K)^w_v$ (resp. $Ch^{<m}(K)^w_v$, $Ch^{\leq m}(K)^w_v$). Note that a cube chain has dimension 0 if and only if it contains 1-cubes only.

For a cube chain $c = (c_1,\ldots,c_l) \in Ch(K)^w_v$ of type $(n_1,\ldots,n_l)$, an integer $k \in \{1,\ldots,l\}$ and a subset $A \subseteq \{0,\ldots,n_k-1\}$ having $r$ elements, where $0 < r < n_k$, define a cube chain
$$
d_{k,A}(c) = (c_1,\ldots,c_{k-1},d^0_{\overline{A}}(c_k),d^1_A(c_k),c_{k+1},\ldots,c_l) \in Ch(K)^w_v$$ 
\noindent where $\overline{A} = \{0,\ldots,n_k-1\} \backslash A$. 


Then there is a boundary map defined in \cite{Ziemianski} from the $R$-vector space generated by cube chains of dimension $i+1$ to the $R$-vector space generated by cube chains of dimension $i$, defined as a linear combinations of such $d_{k,A}$ above. We refer the reader to \cite{Ziemianski} for more details. 




\subsection{The case of directed spaces}

This case is similar, but more involved since the corresponding path algebra is never unital ("infinite number of points"). 

\begin{definition}
Let $X$ be a directed space and $R$ a ring. $R_1[X]$, the category algebra on $T_1[X]$ is the path algebra of the underlying quiver of category $\T_1(X)$. 
\end{definition}

Constant $1$-traces $e_a$ on point $a \in X$ give rise to orthogonal idempotents in the algebra $R_1[X]$. As in general $X$ is not finite, $R_1[X]$ is a non-unital algebra. 

\begin{definition}
Let $X$ be a directed space and $R$ be a ring. We define $M_0[X]$ to be the following $R_1[X]$- closed bimodule. Its underlying $R$-vector space is generated by pairs $(x,y)$ of points in $X_0$, such that there exists a directed path from $x$ to $y$ in $X$.  The $R_1[M]$-bimodule operation is, for $(x,y)\in M_0[X]$, $p$ a 1-trace from $u'$ to $u$, and $q$ a 1-trace from $v$ to $v'$: 
$$p\bullet (x,y) \bullet q=\left\{\begin{array}{ll}
(u',v') & \mbox{if $u=x$ and $y=v$}\\
0 & \mbox{otherwise}
\end{array}\right.
$$
\end{definition}

\todo[inline]{Check: 
Indeed, the left (resp. the right) actions defined above give rise to left (resp. right ) closed $R_1[X]$-modules since, for instance for the right action, $\lambda_M(x,y)(e_y)=(x,y)$. 
}

\begin{definition}
Let $X$ be a directed space and $R$ a ring. We define a sequence of $R_1[X]$-closed bimodules $M_i[X]$, $i\geq 1$, which we call the $i$-trace module, whose underlying $R$-bimodule is generated by the set of all $i$-traces of $X$, and whose bimodule operation is, 
for $p$ a 1-trace from $u$ to $u'$ in $X$, $q$ a 1-trace from $v$ to $v$, $m$ an $i$-trace from $x$ to $y$ in $X$:
$$
     p \bullet m \bullet q(s_0,\ldots, s_i)=\left\{\begin{array}{ll}
     p(3s_0,s_1) & \mbox{if $s_0\leq \frac{1}{3}$} \\
     m(3s_0-1,s_1,\ldots,s_i) & \mbox{if $\frac{1}{3} \leq s_0 \leq \frac{2}{3}$} \\
     q(3s_0-2,s_1) & \mbox{if $\frac{2}{3} \leq s_0 \leq 1$} \\
     \end{array}\right.
     $$
     \noindent if $x=u$ and $y=v$, otherwise 0. We extend this by linearity on $p$, $q$ and $m$. 
\end{definition}

\todo[inline]{Check that it is a closed module.}

\begin{lemma}
\label{lem:submodpath}
Let $X$ be a directed space. For all $a, b \in X$, $e_a M_i[X] e_b$ is the sub-$R$-vector space generated by $T_i(X)(a,b)$, the set of $i$-traces from $a$ to $b$.  
\end{lemma}

\begin{proof}
Obvious. 
\end{proof}

\begin{lemma}
\label{lem:simpobjalg}
Let $X$ be a directed space. The boundary and degeneracy operators of Definition \ref{def:boundaries} give the sequence of $R_1[X]$-modules $M_i[X]$ the structure of a simplicial object in the category of $R_1[X]$-modules. 
\end{lemma}



\begin{proof}
Indeed, for $p$, $q$ 1-traces in $X$, and $m \in M_i[X]$, $\delta_j(p\bullet m \bullet q)=p\bullet \delta_j(m)\bullet q$, and similarly for degeneracy operator. 
\end{proof}


\begin{definition}
The homology module of a directed space $X$ is defined as the homology $(HM_{i+1})_{i\geq 0}$ in the abelian category $R_1[X]$-Mod of the simplicial object defined in Lemma \ref{lem:simpobjalg}, shifted by one. 
For consistency purposes, we also set 
$$HM_0[X]=M_0[X]$$
\end{definition}


\begin{lemma}
Let $X$ be a directed space, $a, b \in X$, $R$ a ring. Then
$e_a HM_n[X] e_b$, $n \geq 1$, is the standard $n$th homology of the trace space from $a$ to $b$. 
\end{lemma}

\begin{proof}
Direct consequence of Curryfication. 
\end{proof}



\begin{definition}
Let $A$ be a {\em non-unital} $R$-algebra. A right closed $A$-module (or a right closed module over $A$) is a pair $(M, \bullet)$, where $M$ is an $R$-vector space and $\bullet : M \times A \rightarrow M$, $(m, a) \rightarrow m\bullet a$, is a binary operation satisfying the following conditions, for all $x, y \in M$, $a, b \in A$ and $\lambda \in K$:
\begin{itemize}
    \item $(x+y)\bullet a=x\bullet a+y\bullet a$ 
    \item $x\bullet (a+b)=x\bullet a+x\bullet b$ 
    \item $x\bullet (ab) = (x\bullet a)\bullet b$
\item $(x\lambda)\bullet a = x\bullet (a\lambda) = (x\bullet a)\lambda$
\end{itemize}
Furthermore, $\lambda_M : \ M \rightarrow Hom_A(A,M)$ given by $\lambda_M(m)(r) = m\bullet r$ is an isomorphism.
\end{definition}

\begin{remark}
Most of what we will be developing in the sequel will hold both for modules over unital algebras and closed modules over non-unital algebras. We will indifferently write $Mod_A$ for the category of (closed) modules over the (non-unital) algebra $A$. $Mod_A$ is known to be an Abelian category (and even a Grothendieck category \cite{firmnotabelian}). 
\end{remark}

\subsection{Quivers and algebras}

Associative algebras are closely linked to quivers, path algebras are not "just" examples of associative algebras, they are central to the theory of associative algebras. 

\begin{definition}[\cite{assocalg}]
\label{def:ordquiver}
Let $A$ be a basic and connected finite dimensional $R$-algebra and ${e_1,e_2,\ldots,e_n}$ be a complete set of primitive orthogonal idempotents of $A$. The (ordinary) quiver of $A$, denoted by $Q_A$, is defined as follows:
\begin{itemize}
\item The points of $Q_A$ are the numbers $1, 2,\ldots , n$, which are in bijective correspondence with the idempotents $e_1, e_2,\ldots , e_n$.
\item Given two points $a,b \in (Q_A)_0$, the arrows $\alpha : \ a \rightarrow b$ are in bijective correspondence with the vectors in a basis of the $R$-vector space $e_a(rad \ A/rad^2 \ A)e_b$.
\end{itemize}
\end{definition}

The fact that the definition above is legal is proved in \cite{assocalg} ($Q_A$ does not depend on the basis which is chosen, in particular). 

\begin{definition}[\cite{assocalg}] 
Let $Q$ be a finite quiver and $RQ$ be the arrow ideal of the path algebra $R[Q]$. A two-sided ideal $I$ of $R[Q]$ is said to be admissible if there exists $m \geq 2$ such that
$RQ^m \subseteq I \subseteq RQ^2$.
If $I$ is an admissible ideal of $R[Q]$, the pair $(Q,I)$ is said to be a bound quiver. The quotient algebra $R[Q]/I$ is said to be the algebra of the bound quiver $(Q,I)$ or, simply, a bound quiver algebra.
\end{definition}

\begin{theorem}[\cite{assocalg}] 
Let $A$ be a basic and connected finite dimensional $R$- algebra. There exists an admissible ideal $I$ of $R[Q_A]$ such that $A \equiv R[Q_A]/I$. 
\end{theorem}

\section{Examples}






\begin{example}[Filled-in square]
We consider now the following cubical complex $X$: 
\[\begin{tikzcd}
  4 \arrow[r,"a"] \arrow[d,"b"] \arrow[dr,phantom,"C"]
    & 2 \arrow[d,"c"] \\
  3 \arrow[r,"d"]
& 1 \end{tikzcd}
\]
\noindent with a two cell $C$ filling in the corresponding hole. 

We had the following path algebra for the empty square, i.e. for $R_1[X]$: 
$$
\begin{pmatrix}
R & 0 & 0 & 0 \\
R & R & 0 & 0 \\
R & 0 & R & 0 \\
R^2 & R & R & R 
\end{pmatrix}
$$
That we quotient by the (admissible) sub-$R_1[X]$-module generated by $ac-bd$, which is $Im \ \partial$ for $\partial: \ R_2[X] \rightarrow R_1[X]$. $Im \ \partial$ can be seen also as the two-sided ideal of the algebra $R_1[X]$ generated by $ac-bd$. The resulting quotient of $R_1[X]$-module can then be seen as an $R$-algebra, which is the following matrix algebra: 
$$
\begin{pmatrix}
R & 0 & 0 & 0 \\
R & R & 0 & 0 \\
R & 0 & R & 0 \\
R & R & R & R 
\end{pmatrix}
$$

\end{example}



The principle of the equivalence above is simple. 
Given an algebra $A=R[Q]/I$, and a $A$-module $M$, associate:
\begin{itemize}
\item to each vertex $a \in Q$ the $R$-vector space $M_a= M\bullet (e_a+I)$ (the $e_a+I$ form a complete set of primitive idempotents of $A$),
\item and $\phi_\alpha: \ M_a \rightarrow M_b$ for $\alpha: \ a \rightarrow b$ an arrow in $Q$ the obvious map which associates to each $x \in M_a$, $x \bullet \alpha \in M_b$
\end{itemize}



\begin{lemma}
Let $A=R[Q]/I$, where $Q$ is a finite connected quiver and $I$ an admissible ideal of $R[Q]$. Then there exists an equivalence of categories $F: \ {}_R mod_R \ A \rightarrow rep_R(FQ,I)$
\todo[inline]{Define this notation in the background section?}
\end{lemma}

\begin{proof}
For $M$ an $A$-bimodule, define $F(M)$ as in Definition \ref{lem:frombimodtorep}. Now, consider $f: \ M \rightarrow N$ a morphism of $A$-bimodules and $(a,b)$ a vertex in $FQ$. Let $x \in M_{a,b}$, which therefore has the form $(e_a+i)\bullet m \bullet (e_b+j)$ where $m\in M$, $i \in I$ and $j \in I$. Define $F(f)_{a,b}(x)$ to be $(e_a+i)\bullet f(m) \bullet (e_b+j) \in N_{a,b}$. We have to check now that for all arrows $u$ from $a'$ to $a$ and $v$, arrow from $b$ to $b'$ in $Q$, $\phi^N_{u,v} F(f)_{a,b}=F(f)_{a',b'} \phi^M_{u,v}$. We compute, for $x \in M_{a,b}$ of the form $x=(e_a+i)\bullet m \bullet (e_b+j)$ for some $m\in M$, $i\in I$ and $j\in I$: 
$$
\begin{array}{rcl}
\phi^N_{u,v} F(f)_{a,b}(x) & = & \phi^N_{u,v}((e_a+i)\bullet f(m) \bullet (e_b+j))\\
& = & u\bullet ((e_a+i) \bullet f(m) \bullet (e_b+j)) \bullet v \\
& = & u(e_a+i) \bullet f(m) \bullet (e_b+j) v \\
& = & u \bullet f(m) \bullet v 
\end{array}
$$
\noindent since $u$ (resp. $v$) is an arrow from $a'$ to $a$ (resp. from $b$ to $b'$) in $Q$, making $u(e_a+i)=u$ (resp. $(e_b+j)v=v$) in the quiver algebra $R[Q]/I$ by $I$, whereas: 
$$
\begin{array}{rcl}
F(f)_{a',b'} \phi^M_{u,v}(x) & = & F(f)_{a',b'}(u\bullet x \bullet v)\\
& = & F(f)_{a',b'}(e_{a'}u\bullet m \bullet v e_{b'}) \\
& = & e_{a'} f(u\bullet m \bullet v) e_{b'}\\
& = & e_{a'} u \bullet f(m) \bullet v e_{b'} \\
& = & u \bullet f(m) \bullet v \\
& = & \phi^N_{u,v} F(f)_{a,b}(x)
\end{array}
$$
Now, define the following transform $G: \ rep_R(FQ,I) \rightarrow {}_R mod_R \ A$. 

Let $(M_{a,b},\phi_{u,v})$ be a representation of $FQ$. Define $G(M_{a,b},\phi_{u,v})$ to be the $A$-bimodule which is, as an $R$-vector space, $\coprod\limits_{(a,b)\in FQ} M_{a,b}$, with the following left and right action of elements $a$ and $b$ in $A$. 

As $A=R[Q]/I$, $a$ (resp. $b$) is of the form $(u_1+i_1)(u_2+i_2)\ldots (u_m+i_m)$ (resp. $(v_1+j_1)(v_2+j_2)\ldots (v_n+j_n)$) where $i_k \in I$ (resp. $j_l \in I$) and $u_k$ (resp. $v_l$) is an arrow in $Q$ from $x_k$ to $x_{k+1}$ (resp. from $y_l$ to $y_{l+1}$), with $x_1=a'$ and $x_{m+1}=a$ (resp. $y_1=b$ and $y_{n+1}=b'$).

Define the action $\bullet$ of $u$ and $v$ on $m \in M_{a,b}$
$$u\bullet m \bullet v = \phi_{u_1,e_b}$$
\end{proof}

\begin{lemma}
Let $X \subseteq Y$ be a sub-precubical set of precubical set $Y$. 
Let $A$ be the path algebra of $Y$, $A=R[Y]$. 
Any $R[X]$-bimodule $M$ can be naturally viewed as an $A$-bimodule.
\end{lemma}

\begin{proof}
    This is done as follows. Consider $p$ and $q$ be two basis elements of $A$, i.e. two 0-cube chains in $Y$ and $m \in M$. We can write $p=(p_1, p_2, \ldots, p_m)$ (resp. $q=(q_1, q_2, \ldots, q_n)$) with $p_i \in X_1$ or $p_i \in Y_1\backslash X_1$ (resp. $q_j \in X_1$ or $q_j \in Y_1 \backslash X_1$), then we define their actions on $m$ as $p\bullet m \bullet q=0$ if at least one of the $p_i$ or $q_j$ is in $Y_1 \backslash X_1$, otherwise it is computed with the $\bullet$ operation of $M$ as a $R[X]$-bimodule.
    \todo[inline]{Check axioms. Naturality as well?}
\end{proof}

and $i_A: \ A \rightarrow X$ be the corresponding injective morphism. 
As $i_A$ is injective on objects, the path algebra construction is functorial and gives rise to $j_A: \ R_1[A] \rightarrow R_1[X]$, which is again an injection of algebras. 

We can now see any $R_1[A]$-bimodule $M$ as a $R_1[X]$-bimodule using the extension functor ${j_A}_!$, ${j_A}_!(M)$. 


Let us characterize $R_n[A]$ as a $R_1[X]$-bimodule now. 
${j_A}_!(R_n[A])$ can be identified with $R_1[X] \times R_n[A]\times R_1[X]$ modulo $\sim$ as follows: \begin{multline}
    ((x_1,\ldots,x_k)
    (a_1,\ldots,a_m)
    ,(p_1,\ldots,p_n)
    ,(a'_1,\ldots,a'_{m'})
    (x'_1,\ldots,x'_{k'}))
    \sim \\((x_1,\ldots,x_k)_{u_1,v_1},(a_1,\ldots,a_m)
    (p_1,\ldots,p_n)
    (a'_1,\ldots,a'_{m'})
    , (x'_1,\ldots,x'_{k'}))
    \end{multline}
\noindent where $p_i \in A$, $a_j, a'_j \in A_1$ and $x_l, x'_l \in X_1$, and with the obvious left and right action of $R_1[M]$ on these elements. We can thus write any elements $a$ of ${j_A}_!(R_n[A])$ in a unique manner as a sum:
$$
a=\sum\limits_{i, r_i \in R, y_i \in E[X,A], z_i\in F[X,A], a_i \in R_n[A]}
r_i(y_i,a_i,z_i)
$$
\noindent where $E[X,A]$ (resp. $F[X,A]$) denotes the 0-cube chains of $X$ of the form $(x_1,\ldots,x_l)$ with $x_l \in X_1\backslash A_1$ (resp. $x_1 \in X_1\backslash A_1$).

Now, we have a morphism $i$ of $R_1[X]$-bimodule from ${j_A}_!(R_n[A])$ to $R_n[X]$ which to any such element $a$ associates $\sum\limits_{i, r_i \in R, y_i \in E[X,A], z_i\in F[X,A], a_i \in R_n[A]}
r_i y_i \bullet a_i \bullet z_i$. 

\todo[inline]{Is this injective?? Probably not since $()_!$ is a left adjoint hence right-exact... whereas $()^*$ is a right adjoint hence left-exact! Encore que, le meme argument que pour montrer que $R_i[X]$ est libre comme $R_1[X]$-bimodule pour $i\geq 2$ a l'air de marcher?}

\todo[inline]{Il faut essayer tout ca sur un exemple}

Thus we can write in the $R_1[M]$-bimodule $R_n[X]$, $g=(x_1,\ldots,x_{k'},a_1,\ldots,a_{n'},y_1,\ldots,y_{l'})$ with all $a_j \in A_1$ and all $x_i \in X_1 \backslash A_1$, $y_m \in X_1\backslash A_1$. This is obtained by taking $k'$ to be the greatest of all indices with $x_i \in X_1\backslash A_1$, and $l'$ the lowest of all indices with $y_m \in X_1 \backslash A_1$. Thus $g$ is of the form $y \bullet a \bullet z$ with $y \in E[X,A]$, $z\in F[X,A]$ and $a \in R_n[A]$ which is in the image of $i$. 

\begin{example}
We consider the relative homology sequence: 
\begin{center}
\begin{tikzcd}
\ldots \arrow[r] & HM_{i+1}(X,A) \arrow[r,"\partial^*"] & H_i(R^X[A]) \arrow[r,"f"] & HM_i(X) \arrow[r] & HM_{i}(X,A) \arrow[r] & \ldots \arrow[r] & HM_{0}(X) \arrow[r] & 0
\end{tikzcd}
\end{center}

\todo[inline]{To be continued}

Let us look at the following "empty square" precubical set $X$: 
\[\begin{tikzcd}
  4 \arrow[r,"a"] \arrow[d,"b"]
    & 2 \arrow[d,"d"] \\
  3 \arrow[r,"c"]
& 1 \end{tikzcd}
\]
We got the following path algebra: 
$$
\begin{pmatrix}
R & 0 & 0 & 0 \\
R & R & 0 & 0 \\
R & 0 & R & 0 \\
R^2 & R & R & R 
\end{pmatrix}
$$
Consider $A \subseteq X$ is the sub-precubical set generated by the arrow $a$ from 4 to 2. 
$R[X]/R[A]$ is generated as an $R$-vector space by $[d]$, $[b]$, $[c]$, $[bc]$, $[ad]$, $[1]$, $[3]$. The action of $R[X]$ is given by $a\bullet [d]=[ad]$, $4 \bullet [b]=[4b]=[b]$ etc. the rest is trivial. It is generated as a $R[X]$-bimodule by $g=[1]+[3]$ as $[d]=d \bullet [1]$, $[ad]=ad \bullet [1]=ad\bullet g$, $[b]=b \bullet [3]=b\bullet g$, $[bc]=bc \bullet [1]=bc \bullet g$, $[1]=1\bullet g$, $[3]=3\bullet g$ and $[c]=[3]\bullet c=g \bullet c$. 
\todo[inline]{Relations?? necessairement libre non? Non car on a $a \bullet [d]=[ad]=[a]\bullet d=0$? Donc ptet que $[ad]$ n'est pas un generateur? C'est ce qu'on a vu a priori dans la preuve...}

Now consider $R[X]/R[A]$ as a $R[A]$-bimodule, the actions are now just given by: $a\bullet [d]=[ad]$, and it is generated, as a $R[A]$-bimodule, by $[1]$, $[2]$, $[3]$, $[4]$, $[d]$, $[b]$, $[c]$ and $[bc]$, since $a \bullet [d]=[ad]$ is the only relation we get.

\todo[inline]{Lister toutes les relations entre les generateurs etc.}
\end{example}

\section{From natural homology to homology in algebras and vice-versa}

\label{sec:naturalsystems}


\subsection{Background on natural systems, and composition pairing}

\paragraph{Natural systems}
Given a category $\B$, we consider the category of factorisation of $\B$, denoted $\Fact{\B}$, in which 0-cells are 1-cells of $\B$, and in which 1-cells from $f$ to $f'$ are \emph{extensions} $(u,v)$, \ie pairs of 1-cells of $\B$ such that $ufv =f'$. Composition is given by $$(u,v)(u',v') = (u'u,vv'),$$ and the identity at $\map{f}{x}{y}$ is the pair $(1_x , 1_y)$. 

We also define subcategories $\RFact{\B}$ and $\LFact{\B}$ of $\Fact{\B}$, having the same 0-cells as $\Fact{\B}$, but taking only extensions of the form $(1,v)$ or $(u,1)$, respectively. $\RFact{\B}$ and $\LFact{\B}$ generate the factorisation category; for more information on these subcategories, and the rest of this section, we refer the reader to \cite{Porter2}. 


A \emph{natural system} on a category $\B$ with value in category $\gp$, is a functor
$$\map{D}{\Fact{\B}}{\gp} .$$ 
When the codomain of such a functor is $\ab$, $\textbf{Set}$, etc. we can talk about natural systems of abelian groups, sets, etc. For the rest of this paper, we will consider the case where $\gp$ is the category of $R$-vector spaces. 


Such a functor associates an $R$-vector space $D_{f}$ to each 1-cell $f$ of $\B$, and to each extension $(u,v)$, a homomorphism of $R$-vector spaces  $\map{D(u,v)}{D_f}{D_{ufv}}$. We denote the category of natural systems (of $R$-vector spaces) over $\B$, which is in fact the functor category $\gp^{\Fact{\B}}$, by $\natsys{\B,\gp}$. The morphisms of this category are natural transformations between functors.

We now define the category of natural systems in which we let the category $\B$ vary. Denoted $\natsys{\B}{\gp}$, it has
\begin{itemize}
\item as objects all pairs $({\cal C},D)$ where $D$ is a natural system on the small category $\cal C$
\item as morphisms all pairs
$$(\Phi,\tau): ({\cal C},D)\rightarrow ({\cal C}',D')$$
\noindent where $\Phi : C \rightarrow C'$ is a functor and where $\tau: D \rightarrow \Phi^*D'$ is a natural transformation of functors. Here $\Phi^*D': FC' \rightarrow \cal M$ is given by
$$(\Phi^*D')(f)=D'(\Phi f)$$
\noindent for $f \in Mor(C')$ and $\Phi^*D'(x,y)=D'(\Phi(x),\Phi(y))$. 
\item composition of morphisms $(\Psi,\sigma)$ with $(\Phi,\tau)$ is given by
$$(\Psi,\sigma) \circ (\Phi,\tau)=(\Psi \circ \Phi,(\Phi^*\sigma) \circ \tau)$$
\end{itemize}

\paragraph{Composition pairing} In order to describe the corresponding constraint for natural systems, we must give some definitions.
First, we define the category $\Pairs{\B}$ of pairs; its 0-cells are pairs $(\map{f}{x}{y}, \map{g}{y}{z})$ of composable 1-cells of $\B$, and 1-cells are pairs $(\map{u}{x'}{x},\map{v}{z}{z'})$ such that 
$$(uf)(gv)=ufgv,$$
and therefore correspond to pairs of arrows $((u,1),(1,v))$ in $\LFact{\B}\times \RFact{\B}$. This allows us to define functors
\begin{align*}
P_1 : \Pairs{\B} & \longrightarrow \LFact{\B}\times \RFact{\B} &  P_2:  \Pairs{\B} &\longrightarrow \Fact{\B} \\
           (f,g) &\longmapsto (f,g)                                        &              (f,g) &\longmapsto fg \\
           (u,v) &\longmapsto ((u,1),(1,v))                             &             (u,v) &\longmapsto (u,v)
\end{align*}
Composing these with $(\times)\circ (D, D)$ and $D$, respectively, where $\times$ is the 0-composition in $\gp[1]$, we obtain functors $D_{\circ}$ and ${}_{\circ}D$ respectively. Explicitly, we have
\begin{align*}
D_{\circ}: \Pairs{\B} & \longrightarrow \gp      &   {}_{\circ}D: \Pairs{\B} &\longrightarrow \gp \\
           (f,g) &\longmapsto D_f \times D_g                                        &              (f,g) &\longmapsto D_{fg} \\
           (u,v) &\longmapsto D(u,1)\times D(1,v)                             &             (u,v) &\longmapsto D(u,v).
\end{align*}

Given a natural system $D$ on a category $\B$, a \emph{composition pairing} \cite{Porter2} associated to $D$ is a natural transformation $\twomap{\nu}{D_{\circ}}{{}_{\circ}D}$, such that the following are satisfied:
\begin{itemize}

\item \emph{The cocycle condition:} for arrows $f,g$ and $h$ such that the composite $fgh$ is defined, we require commutativity of the following diagram:
\begin{center}

\begin{tikzpicture}[scale=1]
\matrix (m) [matrix of math nodes,row sep=3em,column sep=4em,minimum width=2em]
  {
      D_{f}\times D_{g}\times D_{h} &  D_{fg}\times D_{h} \\
      D_{f}\times D_{gh} & D_{fgh} \\};
  \path[-stealth]
    (m-1-1) edge  node [left] {$id_{D_{f}}\times \nu_{g,h}$} (m-2-1)
            edge node [above] {${\nu_{f,g}\times id_{D_{h}}}$} (m-1-2)
(m-2-1) edge node [above] {${\nu_{f,gh}}$} (m-2-2)
    (m-1-2) edge  node [right] {$\nu_{fg,h}$} (m-2-2);
\end{tikzpicture}
\end{center}

\item \emph{The unit conditions:} for every arrow $\map{f}{x}{y}$ of $\B$, we require commutativty of the following diagrams:
\begin{center}

\begin{tikzpicture}
\matrix (m) [matrix of math nodes,row sep=3em,column sep=4em,minimum width=2em]
  {
       D_{f}  & D_{f} \times D_{1_{y}} \\
                                      &   D_{f}\times I \\ };
\path[-stealth]
    (m-2-2) edge   node [below] {$\cong$} (m-1-1)
                 edge  node [right] {$1_{D_{f}}\times\nu_{y}$} (m-1-2)
 (m-1-2) edge  node [above] {$\nu_{f,1_{y}}$} (m-1-1);
\end{tikzpicture}
\begin{tikzpicture}
\matrix (m) [matrix of math nodes,row sep=3em,column sep=4em,minimum width=2em]
  {
        D_{1_{x}}\times D_{f}  & D_{f}\\
        I\times D_{f}   & \\ };
        \draw[-stealth]
    (m-2-1) edge  node [left] {$\nu_{x}\times 1_{D_{f}}$} (m-1-1)
            edge node [below] {$\cong$} (m-1-2)
(m-1-1) 
             edge node [above] {$\nu_{1_{x},f}$} (m-1-2);      
\end{tikzpicture}
.
\end{center}
\end{itemize}

The fact that $\nu$ is a natural transformation $D_{\circ} \Longrightarrow {}_{\circ}D$ means that for every object $(f,g)$ and morphism $\map{(u,v)}{(f,g)}{(uf,gv)}$ of $\Pairs{\B}$, we get a commutative diagram
\begin{center}

\begin{tikzpicture}[scale=1]
\matrix (m) [matrix of math nodes,row sep=3em,column sep=4em,minimum width=2em]
  {
      D_{f}\times\ D_{g} & D_{fg} \\
      D_{uf}\times D_{gv} &  D_{ufgv} \\};
  \path[-stealth]
    (m-1-1) edge  node [above] {$\nu_{f,g}$} (m-1-2)
     (m-1-1) edge  node [left] {$D(u,1)\times D(1,v) = D^{2}(u,v)$} (m-2-1)
(m-2-1) edge  node [below] {$\nu_{uf,gv}$} (m-2-2)
    (m-1-2) edge  node [right] {$D_{2}(u,v) = D(u,v)$} (m-2-2);
\end{tikzpicture}
\end{center}

We denote by $\natsys{\B}{\gp}$ the category of natural systems with composition pairing with value in $\gp$. 

Note that in the particular case of natural systems with value in abelian groups, the two categories agree: 
$\natsys{\B}{\ab} = \natsysnu{\B}{\ab}$. 

\subsection{From natural systems of $R$-vector spaces with composition pairing to $R$-algebras, and vice-versa}

Let $({\cal C},D,\nu)\in \natsysnu{\B}{\gp}$ where $D$ is a natural system with values in $R$-vector spaces, on the small category $\cal C$, and $\nu$ is a composition pairing on $D$. 

\begin{theorem}
We associate to $({\cal C},D,\nu)\in \natsysnu{\B}{\gp}$ the $R$-algebra $R[D,\nu]$: \begin{itemize}
    \item whose underlying module is the coproduct 
    $$
    \coprod\limits_{(a,b) \in Obj(\C), \ \exists f \in \C(a,b)} D_f
    $$
    \item the external multiplication $a\times b$ for $a$, $b \in R[D,\nu]$ is defined on each component of the coproduct as: 
    $$
    a \times b =\left\{\begin{array}{ll}
    \nu_{f,g}(a,b) & \mbox{if $f$ and $g$ are composable and $a\in D_f$, $b \in D_g$} \\
    0 & \mbox{otherwise}
    \end{array}\right.
    $$
    \noindent and then, extended by bilinearity. 
\end{itemize}
\label{thm:objnatsystoalg}
\end{theorem}

\begin{proof}
The cocycle condition shows the associativity of $a\times b$. Distributivity over addition is ensured by naturality of $\nu$.
\end{proof}



Conversely, we have:

\begin{theorem}
let $A$ be a basic and connected finite dimensional $R$-algebra. Construct a natural system with composition pairing as follows: 
\begin{itemize}
    \item Consider $\C$ the free category generated by quiver $Q_A$ of Definition \ref{def:ordquiver}
    \item Construct a natural system as follows. We construct a functor $F: {\cal F C}\rightarrow R-mod$ with: 
    \begin{itemize}
        \item Consider $f$ an object of ${\cal FC}$. 
        
        it corresponds to a directed path $(p_1,\ldots,p_l)$ from some $a$ to some $b$ in $Q_A$, hence to an element of $e_a A e_b$. We set $$F(f)=e_a A e_b$$
        \item Consider $\langle u ,v \rangle$ a morphism from $f$ to $g$ in $\cal FC$. $f$ corresponds to a path from $a$ to $b$ in $Q_A$, $g$ to a path from $a'$ to $b'$, $u$ to a path from $a'$ to $a$ and finally $v$ to a path from $b$ to $b'$. We set 
        $$F(\langle u,v \rangle(x)=u\times x \times v
        $$
        \noindent $u$ seen as an element of $e_{a'} A e_a$, $v$ as an element of $e_b A e_{b'}$, for any $x\in e_a A e_b$. 
    \end{itemize}
    \item Construct a natural transformation $\nu: \ F_o \rightarrow {}_o F$ as follows: 
    $$
    \nu(u,v)=u \times v
    $$
    \noindent where $f$ is a path from $a$ to $b$ in $Q_A$, $g$ is a path from $b$ to $c$ in $Q_A$, $u \in e_a A e_b$, $v \in e_b A e_c$. 
\end{itemize}
\label{thm:objalgtonat}
\end{theorem}

\begin{proof}
Obvious
\end{proof}

Finally: 

\begin{theorem}
The maps defined in Theorems \ref{thm:objalgtonat} and \ref{thm:objnatsystoalg}
are inverse of one another. 
\end{theorem}

\begin{proof}
To be checked later. 
\end{proof}


\begin{example}[Empty square]
\label{ex:emptysquarerep}
Let us look back at: 
\[\begin{tikzcd}
  4 \arrow[r] \arrow[d]
    & 2 \arrow[d] \\
  3 \arrow[r]
& 1 \end{tikzcd}
\]
We got the following path algebra: 
$$
\begin{pmatrix}
R & 0 & 0 & 0 \\
R & R & 0 & 0 \\
R & 0 & R & 0 \\
R^2 & R & R & R 
\end{pmatrix}
$$
as $R_1[X]$. As a $R_1[X]$-bimodule, it is in particular a right $R_1[X]$-module, that corresponds to the $R$-linear representation: 
\[
\begin{tikzcd}[row sep=3cm,column sep=2cm,ampersand replacement=\&]
M_4=R \arrow[r,"\left( \begin{array}{c} 1 \\ 0 \end{array} \right)"] \arrow[d,"\left( \begin{array}{c} 1 \\ 0 \end{array} \right)" left] 
\& M_2 =R^2 \arrow[d,"{{\left( \begin{array}{ccc} 1 & & 0 \\ 0 & & 1 \\ 0 & & 0 \\ 0 & & 0 \\ 0 & & 0 \end{array} \right)}}"]\\ 
  M_3= R^2 \arrow[r,"{\begin{pmatrix} 0  & 0 \\ 0  & 0 \\ 0  & 0  \\ 1  & 0 \\ 0  & 1 \end{pmatrix}}" below] \& M_1=R^5
\end{tikzcd}
\]
\indent since $M_4$ is generated (as an $R$-vector space) by $e_4$, $M_2$ is generated by path $(4, 2)$ and $e_2$ (in that order), $M_3$ is generated by $(4, 3)$ and $e_3$ and $M_1$ is generated by (in that order again), $(4,2,1)$, $(2,1)$, $e_1$, $(4,3,1)$, $(3,1)$.
\end{example}

\begin{example}[Filled in square]
In the case of the filled in square: 
\[\begin{tikzcd}
  4 \arrow[r,"a"] \arrow[d,"b"] \arrow[dr,phantom,"C"]
    & 2 \arrow[d,"c"] \\
  3 \arrow[r,"d"]
& 1 \end{tikzcd}
\]
The corresponding $R$-linear representation is:
\[\begin{tikzcd}[row sep=3cm,column sep=2cm,ampersand replacement=\&]
  M_4=R \arrow[r,"\left( \begin{array}{cc} 1 \\ 0 \end{array} \right)"] \arrow[d,"\left( \begin{array}{cc} 1 \\ 0 \end{array} \right)" left]
 \& M_2=R^2 \arrow[d,"{\begin{pmatrix} \\ 0  & 1 \\ 0  & 0 \\ 1  & 0 \\ 0  & 0 \end{pmatrix}}"] 
 \\
  M_3=R^2 \arrow[r,"{\begin{pmatrix}  0  & 0 \\ 0  & 0  \\ 1  & 0 \\ 0  & 1 \end{pmatrix}}" below]
\& M_1=R^4 
\end{tikzcd}
\]
The difference between this representation of $R_1[X]$ of the one of Example \ref{ex:emptysquarerep} is that the first column of the matrix mapping $M_3$ to $M_1$ takes path $(4,3)$ to both $(4,3,1)$ and $(4,3,2)$ which are equated, similarly for the map from $M_2$ to $M_1$. 
\end{example}

\begin{example}[Empty cube]
The representation of $R_1[X]$ that corresponds to $H_2[C]$ is: 
\[\begin{tikzcd} 
    &  R^2\arrow[rr,"(1)"] \arrow[dl,"(1)"] & &   R^3  \arrow[dl,"(1)"] \\
    R^3     \arrow[crossing over,rr,"(1)"] & & R^6 \\
      & R \arrow[rr,"(1)"] \arrow[uu,"(1)"] \arrow[dl,"(1)"] & &  R^2  \arrow[dl,"(1)"] \arrow[uu,"(1)"] \\
    R^2 \arrow[rr,"(1)"] \arrow[uu,"(1)"] & & R^3 \arrow[uu,"(1)",crossing over]
 \end{tikzcd}\]
 The $R^6$ in the matrix algebra above comes from the fact that there is one path modulo relations of length 3 ending in node 1, 1 path of length 2 ending in 1, three paths of length 1 ending in 1 and one path of length 0 at 1. 
 \todo[inline]{To be completed}
\end{example}


\paragraph{Quivers and path algebras}
(see for instance \cite{assocalg} again)


\begin{example}[directed $S^1$]
\label{ex:dirS1}
Consider the simple loop: 
\[
\begin{tikzcd}
1 \arrow[out=0,in=90,loop]
\end{tikzcd}
\]
Its path algebra is easily seen to be the algebra of polynomials in one indeterminate $R[t]$. Indeed, the basis of the $R$-vector space of dipaths is in bijection with $\{1, t, t^2, \ldots\}$ (where $t^i$ denotes the unique path of length $i$), and the algebra multiplication adds up lengths of dipaths. 
\end{example}

\begin{remark}
This has to be compared with the different "directed $S^1$" that is acyclic as a quiver, treated in Example \ref{ex:kronecker}. 
\end{remark}

\begin{example}
\label{ex:multS1}
Let us treat now the case of $n$ loops: 
\[
\begin{tikzcd}
1 \arrow[out=0,in=30,loop,swap,"\alpha_1"]
  \arrow[out=90,in=120,loop,swap,"\alpha_2"]
  \arrow[out=180,in=210,loop,swap,"\cdots"]
  \arrow[out=270,in=300,loop,swap,"\alpha_n"]
\end{tikzcd}
\]
Its path algebra is the free algebra on $n$ generators $t_1,\ldots, t_n$, or the "polynomials" in $n$ non-commutative indeterminates $t_1,\ldots,t_n$, $R(t_1,\ldots,t_n)$. Its abelianization, as an algebra, gives the classical algebra of polynomials in $n$ indeterminates $K[t_1,\ldots,t_n]$.
\end{example}

\begin{definition}
Let $Y$ be a sub-precubical complex of $X$. We define 
the relative $i$th-homology $R[X]$-bimodule $HM_{i}(X,Y)$ as the homology of the quotient of $R_i[X]$ with the sub-$R[X]$-bimodule $R^X_i[Y]=R[X]\bullet R_i[Y] \bullet R[X]$ within the category of $R[X]$-bimodules, with boundary operator defined in the quotient as $\partial [x] = [\partial x]$, $[x]$ denoting a class with representative $x$ in $R_i[X]/R[X]\bullet R_i[Y]\bullet R[X]$.
\end{definition}

The definition is valid as we are going to see. Consider an element $[x]$ of $R_i[X]/R[X]\bullet R_i[Y]\bullet R[X]$. Suppose $[y]=[x]$ in $R_i[X]/R[X]\bullet R_i[Y]\bullet R[X]$, thus there exists $a \in R_i[Y]$, $p\in R[X]$ and $q \in R[X]$ with $y=x+p\bullet a \bullet q$. Thus $\partial [y]=[\partial y]=[\partial x + \partial (p\bullet a \bullet q)]=[\partial x]+[p\bullet \partial a \bullet q]=\partial [x]$, and the class of $\partial([x])$ in $R_i[X]/R[X]\bullet R_i[Y]\bullet R[X]$ does not depend on the particular representative chosen for $[x]$.

---

\begin{proof}
As we have seen already, for all $i \in \N$, $R^X_i[A]=R[X]\bullet R_i[A] \bullet R[X]$ is a sub-$R[X]$-bimodule of $R_i[X]$,  the inclusion map $f_i: \ R[X]\bullet R_i[A] \bullet R[X] \rightarrow R_i[X]$ being indeed injective. 

We also have the projection map $\pi_i$ from the $R_1[M]$-bimodule $R_i[X]$ onto $R_i[X,A]=R_i[X]/R[X]\bullet R_i[A] \bullet R[X]$, which is surjective by construction. 
Finally, 
the kernel of $\pi_i$ is the image of $f_i$ by construction as well, giving us the following short exact sequence of $R[X]$-bimodules: 

\begin{center}
\begin{tikzcd}
0 \arrow[r] & R^X_i[A] \arrow[r,"f_i"] & R_i[X] \arrow[r,"\pi_i"] & R_i[X,A] \arrow[r] & 0
\end{tikzcd}
\end{center}

Consider $\partial$, which we have seen already, is a morphism of $R[X]$-bimodules from $R_i[X]$ to $R_{i-1}[X]$. As $A$ is a sub-precubical set of $X$, all maps $d_{k,B}: Ch^{=i-1}(X)^w_v \rightarrow Ch^{=i-2}(X)^w_v$ restrict to 
$d_{k,B}: Ch^{=i-1}(A)^w_v \rightarrow Ch^{=i-2}(A)^w_v$ and similarly with $\partial$, which restricts to a map from the $R$-vector space generated by $(i-1)$-cube chains of $A$ to $(i-2)$-cube chains of $A$. Since $\partial$ is a morphism of $R[X]$-bimodules, this actually induces a morphism from $R^M_i[A]$ to $R^M_{i-1}(A)$. Finally, 
we have already seen that $\partial$ induces also a map from $R_i[X,A]$ to $R_{i-1}[X,A]$. 

Hence the short exact sequence of $R[X]$-bimodules above is actually a short exact sequence of chains of $R[X]$-bimodules. The category of $R[X]$-bimodules being an Abelian category, this induces the long exact sequence of $R[X]$-bimodules of the proposition. 
\end{proof}




Let us now look more in detail at elements of $HM_{i}(X,A)$. 
First, a cycle $[c]_{R^X_i[A]} \in Ker \partial_{\mid R_i[X]/R^X_i[A]\rightarrow R_{i-1}[X]/R^X_{i-1}[A]}$ is a class modulo $R^X_i[A]$ of $c \in R_i[X]$ such that $\partial c \in R^X_{i-1}[A]$, and in that sense, it is called classically a $A$-relative cycle. 

Such a $A$-relative cycle is trivial in $HM_i(X,A)$ if it is a $A$-relative boundary, i.e. is such that $c=\partial b+a$ with $b \in R_{i+1}[X]$ and $a \in R^X_{i}[A]$. 

As in the classical case of singular or simplicial homology, the connecting homomorphism 
$\partial^*$ 
sends an element $[c] \in HM_{i+1}(X,A)$
represented by an 
$A$-relative cycle 
$c\in R_{i+1}[X]$, to the class represented by the boundary 
$\partial c \in 
R_i[A] \subseteq  
R_i[X]$.

A natural question arises as to whether $H_i(R^X[A])$ is isomorphic to the sub-$R[X]$-bimodule generated by $HM_i(A)$ within $HM_i(X)$, $R[X]\bullet HM_i(A) \bullet R[X]$. 
Indeed, there is a natural map $h: \ R[X]\bullet HM_i(A) \bullet R[X] \rightarrow H_i(R^X[A])$ which goes as follows: for all elements of the form $p\bullet [c] \bullet q \in R[X]\bullet HM_i(A) \bullet R[X]$ where $p, q \in R[X]$ and $[c]$ is the class of $c \in Ker \partial_{\mid  R_i[A]\rightarrow R_{i-1}[A]}$ modulo $Im \ \partial_{\mid  R_{i+1}[A] \rightarrow R_i[A]}$ we associate $h(p\bullet [c] \bullet q)$ which is the class $[p\bullet c \bullet q]$ of $p \bullet c \bullet q$ modulo $R[X] \bullet Im \ \partial_{\mid  R_{i+1}[A] \rightarrow R_i[A]} \bullet R[X]$. 

Whether this map is an isomorphism or not, depends on some properties of $X$ and how $A$ is included in $X$. 
In non CAT(0) cases \cite{CAT0} for instance, such as the subdivided hollow cube, a 1-path $p\bullet c \bullet q$ with $p$ and $q$ in $X$ and $c \in A$ can be dihomotopic to $p'\bullet c' \bullet q'$ without having $c$ being dihomotopic to $c'$. This means that already for $H_1(R^X[A])$, there are cases in which $h$ is not an isomorphism. 


The simplest case in which this is an isomorphism is when $A_1=X_1$, meaning that $R_i[A]$ is already a $R[X]$-bimodule. Another simple case is when $X$ is a CAT(0) precubical set. Hence the two following corollaries of Theorem \ref{prop:relhomology}:

\todo[inline]{Remettre en un seul corollaire.}
\begin{corollary}
\label{cor:one}
Suppose $A$ is a sub-precubical set of $X$ and $A_1=X_1$. We have the following relative homology sequence: 

\begin{center}
    \begin{tikzcd}[arrow style=math font,cells={nodes={text height=2ex,text depth=0.75ex}}]
    & & 0 \arrow[draw=none]{d}[name=X,shape=coordinate]{} \\
       HM_1(X,A) \arrow[curarrow=X]{urr}{} 
       & HM_{1}(X) \arrow[l] \arrow[draw=none]{d}[name=Y, shape=coordinate]{} & \arrow[l] \cdots \\
       HM_{i}(X,A) \arrow[curarrow=Y]{urr}{} & HM_{i}(X) \arrow[l] \arrow[draw=none]{d}[name=Z,shape=coordinate]{} & HM_i(A) \arrow[l] \\
       HM_{i+1}(X,A) \arrow[curarrow=Z]{urr}{} & HM_{i+1}(X) \arrow[l] & \cdots \arrow[l]
   \end{tikzcd}
\end{center}
\end{corollary}

\begin{corollary}
Suppose $A$ is a sub-precubical set of the CAT(0) precubical set $X$. We have the following relative homology sequence: 

\begin{center}
    \begin{tikzcd}[arrow style=math font,cells={nodes={text height=2ex,text depth=0.75ex}}]
    & & 0 \arrow[draw=none]{d}[name=X,shape=coordinate]{} \\
       HM_1(X,A) \arrow[curarrow=X]{urr}{} 
       & HM_{1}(X) \arrow[l] \arrow[draw=none]{d}[name=Y, shape=coordinate]{} & \arrow[l] \cdots \\
       HM_{i}(X,A) \arrow[curarrow=Y]{urr}{} & HM_{i}(X) \arrow[l] \arrow[draw=none]{d}[name=Z,shape=coordinate]{} & H_i(R^X[A]) \arrow[l] \\
       HM_{i+1}(X,A) \arrow[curarrow=Z]{urr}{} & HM_{i+1}(X) \arrow[l] & \cdots \arrow[l]
   \end{tikzcd}
\end{center}
\end{corollary}

\begin{definition}
Let $Y$ be a sub-precubical set of $X$ and $M$ be a $R[Y]$-bimodule. Indeed, this inclusion defines an inclusion of algebras $i: \ R[Y] \rightarrow R[X]$. We define $M^X$ to be the following $R[X]$-bimodule: 
\begin{itemize}
\item It is, as an $R$-vector space, the underlying $R$-vector space of $M$
\item The left action of $R[X]$ on $M^X$ is defined as:
$$
p\bullet_{R[X]} m = \left\{\begin{array}{ll}
p\bullet_{R[A]} m & \mbox{if $p \in R[A]$} \\
0 & \mbox{otherwise}
\end{array}\right.
$$
\item The right action of $R[X]$ on $M^X$ is defined in a similar manner.
\end{itemize}
\end{definition}

\begin{remark}
\label{rem:relativerestriction}
Indeed, when $Y$ is a sub-precubical set of $X$, $R[Y]$ is the quotient of $R[X]$ by the bilateral ideal $(X\backslash Y)$ generated by $x \in X_1\backslash Y_1$ and $e_p$, with $p\in X_0\backslash Y_0$: 0-cube chains in $A$ are identified with 0-cube chains of $X$ where we equate any element outside of $Y_1$ to 0, or any constant path out of $Y$ to 0.

Call $g$ the canonical map of algebras $g: \ R[X] \rightarrow R[X]/(X \backslash Y)$. Then, for any $R[Y]$-bimodule $M$, $M^X$ is $g^*(M)$, the restriction of coefficients of $M$ along $g$.
\end{remark}

Example \ref{ex:kronecker} and the directed circle $\diS$: 
\todo[inline]{Pb, ce n'est pas vraiment un cubical complex, il faudrait prendre le carre? Ni meme proper length covering!!!}
\[
\begin{tikzcd}
1 \arrow[r,bend left,"\alpha"] \arrow[r,bend right,swap,"\beta"] & 2
\end{tikzcd}
\]

We saw that the corresponding path algebra is, is the following Kronecker algebra: 
$$
\begin{pmatrix}
R & 0 \\
R^2 & R
\end{pmatrix}
$$
\noindent and indeed $HM_0[\diS]$ the bimodule $R[\diS]$ with the same matrix representation as the one for $R[\diS]$ above, whereas $HM_i[\diS]$ is 0 for all $i\geq 2$. 

The tensor product $\diS\otimes \diS$ is: 
\[
\begin{tikzcd}[column sep=1.5cm,row sep=1.5cm]
1\otimes 2' \arrow[r,bend left,"\alpha\otimes 2'"] \arrow[r,bend right,swap,"\beta\otimes 2'"] & 2\otimes 2' \\
1\otimes 1' \arrow[u,bend left,"1 \otimes \alpha'"] \arrow[u,bend right, swap,"1 \otimes \beta'"] \arrow[r,bend left,"\alpha\otimes 1'"] \arrow[r,bend right,swap,"\beta\otimes 1'"] & 2\otimes 1' \arrow[u,bend left,"2 \otimes \alpha'"] \arrow[u,bend right, swap,"2 \otimes \beta'"]
\end{tikzcd}
\]
\noindent plus the four 2-cells $\alpha\otimes \alpha'$, $\alpha \otimes \beta'$, $\beta\otimes \alpha'$ and $\beta\otimes \beta'$. 

Now, $HM_2[\diS\otimes \diS]$ is:
$$
\begin{pmatrix}
R & 0 \\
R^2 & R
\end{pmatrix}
$$
\noindent and is indeed equal to $HM_1[\diS]\otimes HM_1[\diS]$
\todo[inline]{A verifier, $HM_1$ du produit tensoriel va etre non nul? En fait ca doit etre le produit des $H_0$ des path spaces...mais je pense que le $H_0$ d'une des deux composantes est toujours nul?}